\documentclass[10pt]{amsart}
\usepackage{latexsym}
\usepackage{amsmath,amssymb,amsthm,ascmac}
\usepackage{wrapfig}
\usepackage[all]{xy}
\usepackage{proof}
\usepackage{graphicx,xcolor}
\usepackage{pifont}

\numberwithin{equation}{section}

\theoremstyle{plain}
\newtheorem{theorem}{Theorem}[section]

\newtheorem{lemma}[theorem]{Lemma}

\newtheorem{cor}[theorem]{Corollary}
\newtheorem{prop}[theorem]{Proposition}

\newtheorem{fact}{Fact}
\newtheorem{obs}[theorem]{Observation}

\theoremstyle{definition}
\newtheorem{definition}[theorem]{Definition}
\newtheorem{example}[theorem]{Example}

\newtheorem*{remark}{Remark}
\newtheorem*{claim}{Claim}

\newtheorem*{notation}{Notation}
\newtheorem*{ack}{Acknowledgements}

\DeclareFontFamily{U}{dmjhira}{}
\DeclareFontShape{U}{dmjhira}{m}{n}{ <-> dmjhira }{}

\DeclareRobustCommand{\yo}{\text{\usefont{U}{dmjhira}{m}{n}\symbol{"48}}}
\DeclareMathOperator*{\colim}{colim}

\definecolor{qq}{rgb}{.0,.5,.5}

\newcommand{\N}{\mathbb{N}}
\newcommand{\om}{\omega}

\newcommand{\A}{\mathcal{A}}

\newcommand{\C}{\mathcal{C}}
\newcommand{\D}{\mathcal{D}}

\newcommand{\K}{\mathcal{K}}

\newcommand{\R}{\mathbb{R}}

\newcommand{\ittm}{{\ding{52}}}
\newcommand{\dame}{{\ding{56}}}

\newcommand{\at}[1]{|_{#1}}

\newcommand{\pair}[1]{\langle #1 \rangle}

\newcommand{\inj}{\rightarrowtail}
\newcommand{\surj}{\twoheadrightarrow}

\newcommand{\mono}{\rightarrowtail}
\newcommand{\embed}{\hookrightarrow}
\newcommand{\leftembed}{\hookleftarrow}

\newcommand{\edge}[1]{\overset{#1}{\longrightarrow}}

\newcommand{\leftedge}[1]{\overset{#1}{\longleftarrow}}

\newcommand{\monoedge}[1]{\overset{#1}{\inj}}
\newcommand{\embedge}[1]{\overset{#1}{\embed}}
\newcommand{\xedge}[1]{\xrightarrow{#1}}

\newcommand{\acts}{\curvearrowright}

\newdir{ >}{{}*!/-10pt/@{>}} 

\title{Atomic toposes, choice spectra and model-theoretic stability}
\author{Takayuki Kihara}
\begin{document}

\maketitle

\begin{abstract}
For a suitable class of structures $\K$, let ${\rm Sh}(\K_\lambda^{\rm op},J_{\rm at})$ denote the atomic sheaf topos over the opposite of the category $\K_\lambda$ of structures in $\K$ of cardinality at most $\lambda$, with embeddings as morphisms.
In this article, we analyze the relationship between model-theoretic properties of $\K$ and logical properties of ${\rm Sh}(\K_\lambda^{\rm op},J_{\rm at})$.

For an infinite cardinal $\lambda$, we show that $\K$ is Galois $\lambda$-stable if and only if the sheaf topos ${\rm Sh}(\K_\lambda^{\rm op},J_{\rm at})$ satisfies the axiom of choice for $\lambda^+$-indexed families.
We further show that every structure in $\K_\lambda$ extends to a maximal structure if and only if ${\rm Sh}(\K_\lambda^{\rm op},J_{\rm at})$ satisfies the internal axiom of choice.
If $\K_{\rm fin}$ is a class of finite structures, the cardinalities of the structures in each connected component of $\K_{\rm fin}$ have a finite upper bound if and only if ${\rm Sh}(\K^{\rm op}_{\rm fin},J_{\rm at})$ satisfies the internal axiom of (countable) choice.

Furthermore, via the topological Galois representation, we establish a correspondence between the criteria for the axiom of choice for atomic sheaf toposes and those for permutation models in set theory.
\end{abstract}

\section{Introduction}\label{sec:Introduction}
Sheaves for the double-negation topology have long been a subject of interest to logicians.
Lawvere \cite{Law71} and Tierney \cite{Tie72} first related Cohen forcing to the $\neg\neg$-sheaf topos ${\rm Sh}(\mathbb{P},\neg\neg)$ over a poset $\mathbb{P}$.
In general, one may introduce a sheaf topos ${\rm Sh}(\C,\neg\neg)$ not only for a poset $\mathbb{P}$ but also for a small category $\C$, so it seems interesting to investigate how properties of the category $\C$ are linked to logical properties of the topos ${\rm Sh}(\C,\neg\neg)$.


In particular, for a class $\K$ of (first-order) structures, it is natural to study the sheaf topos ${\rm Sh}(\K_\lambda^{\rm op},\neg\neg)$ over the opposite of the category $\K_\lambda$ of structures in $\K$ of cardinality at most $\lambda$, with embeddings as morphisms.
If $\mathcal{K}$ has the amalgamation property, then the double-negation topology is indeed an atomic topology $J_{\rm at}$, so the corresponding sheaf topos ${\rm Sh}(\K_\lambda^{\rm op},J_{\rm at})$ is called an atomic topos.
Atomic toposes associated with categories of models have been studied from a model-theoretic perspective by Caramello, in connection with countable categoricity and Fra\"{i}ss\'{e} constructions \cite{Car12,Car14}.

The subject of this article is the striking relationship between model-theoretic properties of $\K$ and logical properties of ${\rm Sh}(\K_\lambda^{\rm op},J_{\rm at})$.
Our main results state that, for a sufficiently well-behaved class $\K$ of structures and an (external) infinite cardinal $\lambda$:
\begin{itemize}
\item Theorem \ref{thm:main-stability-thm}: $\K$ is Galois $\lambda$-stable if and only if ${\rm Sh}(\K_\lambda^{\rm op},J_{\rm at})$ satisfies the axiom of choice for $\lambda^+$-indexed families.
\item Theorem \ref{thm:IAC-maximalcofinal}: Every structure in $\K_\lambda$ extends to a maximal structure if and only if ${\rm Sh}(\K_\lambda^{\rm op},J_{\rm at})$ satisfies the internal axiom of choice.
\item Theorem \ref{thm:EAC-main-thm}: In addition, every maximal structure in such a $\K_\lambda$ is rigid if and only if ${\rm Sh}(\K_\lambda^{\rm op},J_{\rm at})$ satisfies the external axiom of choice.
\item Theorem \ref{thm:main-finite-structures}: If $\K_{\rm fin}$ is a class of finite structures, the cardinalities of the structures in each connected component of $\K_{\rm fin}$ have a finite upper bound if and only if ${\rm Sh}(\K^{\rm op}_{\rm fin},J_{\rm at})$ satisfies the internal axiom of (countable) choice.
\end{itemize}

Examples for concrete classes of structures are summarized in Table \ref{table-1} (for an infinite cardinal $\lambda$).
Readers familiar with model-theoretic stability theory may infer from Table \ref{table-1} that type spectra and stability play a central role.
More precisely, the relevant notion is that of Galois $1$-types for embeddings (pointed extensions).

\begin{table}[t]\centering\small
\begin{tabular}{c|cccc}
Class $\mathcal{K}$ & $\lambda$-indexed  &  $\lambda^+$-indexed & Internal & Internal \\
of structures & choice & choice & dependent choice & choice \\ \hline
Sets & \ittm & \ittm & \ittm &\dame  \\
Vector spaces & \ittm & \ittm & \ittm& \dame \\
Groups & \ittm &  \dame & \ittm &\dame \\
Abelian groups & \ittm & \ittm & \ittm &\dame \\
Linear orders & \ittm & \dame & \ittm &\dame \\
Well-orders & \ittm & \dame & \dame & \dame \\
Boolean algebras & \ittm & \dame & \ittm &\dame \\
[0.5em]
\end{tabular}
\caption{\small Classes of structures $\mathcal{K}$ and the internal logic of the atomic sheaf topos ${\rm Sh}(\mathcal{K}_\lambda^{\rm op},J_{\rm at})$}\label{table-1}
\end{table}


By Theorem \ref{thm:main-stability-thm}, Shelah's Stability Spectrum Theorem {\cite[Theorem III.5.15]{She90}} becomes a choice spectrum.
Let $T$ be a countable complete first-order theory, and ${\rm Mod}(T)$ be the category of models of $T$ and elementary embeddings.
We have the following four-classification (Example \ref{exa:stable-spectra}):
\begin{enumerate}
\item If $T$ is $\om$-stable:
${\rm Sh}({\rm Mod}(T)_\lambda^{\rm op},J_{\rm at})$ satisfies the axiom of choice for every (external-)set indexed families.
\item If $T$ is superstable, but not $\om$-stable:
${\rm Sh}({\rm Mod}(T)_\lambda^{\rm op},J_{\rm at})$ satisfies the axiom of choice for $\lambda^+$-families if and only if $\lambda$ is greater than or equal to the least cardinal in which $T$ is stable.
\item If $T$ is stable, but not superstable:
${\rm Sh}({\rm Mod}(T)_\lambda^{\rm op},J_{\rm at})$ satisfies the axiom of choice for $\lambda^+$-families if and only if $\lambda^\om=\lambda$.
\item If $T$ is unstable:
for any infinite cardinal $\lambda$, ${\rm Sh}({\rm Mod}(T)_\lambda^{\rm op},J_{\rm at})$ does not satisfy the axiom of choice for $\lambda^+$-families.
\end{enumerate}

One of our key techniques is Mejak's choice-criterion \cite{Mej19} for atomic toposes.
Via the topological Galois representation \cite{Car16,CaLa19}, we further link this to Zapletal's choice-criterion \cite{Zap26} for Fraenkel--Mostowski permutation models in set theory:
For a well-behaved class $\mathcal{K}$ of structures (some mild conditions with the strong amalgamation property and a monster model $M_\K$) and infinite cardinals $\lambda$ and $\kappa$, the following five conditions are equivalent (Corollary \ref{cor:AC-criterion-main}).
\begin{enumerate}
\item The atomic topos ${\rm Sh}(\K_\lambda^{\rm op},J_{\rm at})$ satisfies the axiom of choice for (external) $\kappa$-families.
\item Every $\kappa$-fan $\{A\embed B_\alpha\}_{\alpha<\kappa}$ admits a cocone in $\K_\lambda$. 
\item For every $\kappa$-fan $\{A\embed B_\alpha\}_{\alpha<\kappa}$ in $\K_\lambda$, in the topos of continuous ${\rm Aut}(M_\K/A)$-actions, the product 
$\displaystyle{\prod_{\alpha<\kappa}{\rm Aut}(M_\K/A)/{\rm Aut}(M_\K/B_\alpha)}$ is nonempty.
\item Zapletal's AC-criterion: $(M_\K,\lambda)$ is dynamically $\kappa$-complete (Definition \ref{def:Zap-dynamical-completeness}).
\item The permutation model $W_\lambda[[M_\K]]$ satisfies the axiom of choice for (internal) $\kappa$-families.
\end{enumerate}

Similarly, the following five conditions are equivalent (Corollary \ref{cor:DC-criterion-main}).
\begin{enumerate}
\item The atomic topos ${\rm Sh}(\K_\lambda^{\rm op},J_{\rm at})$ satisfies the axiom of dependent choice.
\item Every $\om$-chain $\{A_n\embed A_{n+1}\}_{n\in\om}$ admits a cocone in $\K_\lambda$. 
\item For every $\om$-chain $\{A_n\embed A_{n+1}\}_{n\in\om}$ in $\K_\lambda$, in the topos of continuous ${\rm Aut}(M_\K/A_0)$-actions, the inverse limit, 
$\displaystyle{\lim_{n\in\om^{\rm op}}{\rm Aut}(M_\K/A_0)/{\rm Aut}(M_\K/A_n)}$, is nonempty.
\item Zapletal's DC-criterion: $(M_\K,\lambda)$ is DC-complete (Definition \ref{def:Zap-DC-completeness}).
\item The permutation model $W_\lambda[[M_\K]]$ satisfies the axiom of dependent choice.
\end{enumerate}

(1)$\Leftrightarrow$(2) is essentially due to Mejak \cite{Mej19} (see Theorems \ref{lem:cone-characterization-countable-choice} and \ref{lem:cone-characterization-dependent-choice}).
(2)$\Leftrightarrow$(3) is due to the topological Galois representation \cite{Car16,CaLa19} combined with Lemma \ref{lemma:cocone-in-slice-topos}.
For (3)$\Leftrightarrow$(4), we see that (4) is exactly the result of explicitly calculating the product or inverse limit in (3) (Propositions \ref{prop:AC-critetion-contG-Zap} and \ref{prop:DC-critetion-contG-Zap}).
(4)$\Leftrightarrow$(5) is due to Zapletal \cite{Zap26}.

Therefore, the characterization of the axiom of choice in terms of model-theoretic stability applies to Fraenkel--Mostowski permutation models as well.

\section{Atomic topos}

For readers approaching the subject from non-categorical logic, we give an introduction as elementary  as possible.
We explain the minimal tools needed in this paper; for further background on sheaves and topos theory, see \cite{Elephant,Elephant2,SGL}.
This section is particularly relevant to \cite[C3.5]{Elephant2}.

\subsection{Atomic topology}

A Grothendieck topology $J$ on a category $\C$ specifies a binary relation between a $\C$-object $p$ and a family $U$ of $\C$-morphisms into $p$, expressed as ``$U$ is a $J$-cover of $p$.''
In this article, the only Grothendieck topology we consider is the atomic topology.

\begin{definition}
Let $\C$ be a category.
A Grothendieck topology $J$ on $\C$ is called the {\bf atomic topology} if:
\begin{enumerate}
\item It is non-degenerate; that is, $\emptyset$ is not a $J$-cover of any object $p$.
\item For every morphism $q\edge{a}p$, the singleton $\{q\edge{a}p\}$ is a $J$-cover of $p$.
\end{enumerate}

We denote the atomic topology by $J_{\rm at}$.
We often call a morphism $q\edge{a}p$ a $J_{\rm at}$-cover of $p$.
Note that a family $U$ of morphisms into $p$ is a $J_{\rm at}$-cover of $p$ if and only if $U\not=\emptyset$.
\end{definition}

A pair $(\C,J)$ consisting of a small category $\C$ and a Grothendieck topology $J$ is called a {\bf site}.
When $J$ is the atomic topology, it is called an atomic site.

The categorical analogue of downward directedness is called the right Ore condition.
Recall that a cone over a family of morphisms $\{X_n\xedge{f_n} Y\}_{n\in I}$ is a family of morphisms $\{T\xedge{x_n} X_n\}_{n\in I}$ such that $f_n\circ x_n=f_m\circ x_m$ for all $n,m\in I$.
A category $\mathcal{C}$ satisfies the {\bf right Ore condition} if every pair of morphisms $\{ X_i\to Y\}_{i<2}$ admits a cone.
Equivalently, for any morphisms $a\edge{x}d$ and $b\edge{y}d$, there exist morphisms $c\edge{z}a$ and $c\edge{w}b$ such that $x\circ z=y\circ w$.
The dual notion is called the left Ore condition.

\begin{obs}\label{obs:right-Ore-dense-atomic}
The double-negation topology on a small category $\mathcal{C}$ satisfying the right Ore condition is the atomic topology.
\qed
\end{obs}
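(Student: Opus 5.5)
We must show that, under the right Ore condition, the double-negation topology on $\C$ coincides with the atomic topology $J_{\rm at}$. We use the standard description of the double-negation topology on presheaves: a sieve $S$ on $p$ is a $\neg\neg$-cover if and only if for every morphism $q\edge{f}p$ there is a morphism $r\edge{g}q$ with $f\circ g\in S$. In other words, $S$ is a $\neg\neg$-cover exactly when it is \emph{dense below} $p$. Since a Grothendieck topology is determined by its covering sieves, and a family covers for $J_{\rm at}$ precisely when the sieve it generates is nonempty, it suffices to prove that a sieve on $p$ is dense below $p$ if and only if it is nonempty.

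One direction needs no hypothesis. If $S$ is dense below $p$, apply density to ${\rm id}_p$: we obtain some $r\edge{g}p$ with $g\in S$, so $S\neq\emptyset$. As a consequence, $\neg\neg$ is non-degenerate.

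For the converse, let $S$ be a nonempty sieve on $p$, say containing $a\edge{h}p$. Take an arbitrary $q\edge{f}p$. By the right Ore condition applied to the pair $\{f,h\}$, there are morphisms $c\edge{z}q$ and $c\edge{w}a$ with $f\circ z=h\circ w$. Since $S$ is a sieve and $h\in S$, we have $h\circ w\in S$, hence $f\circ z\in S$. So $S$ is dense below $p$.

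Together the two directions show that the covering sieves of the double-negation topology are exactly the nonempty sieves, that is, the covers of $J_{\rm at}$. This gives both clauses of the definition: non-degeneracy, and the fact that every singleton $\{q\edge{a}p\}$ covers. The only place requiring care is the initial identification of the $\neg\neg$-covers with the dense sieves. For that we cite the standard characterization of the double-negation topology (e.g.\ \cite[A4.5.9]{Elephant}): the closure operator $\neg\neg$ on sieves sends $S$ to the sieve of those $f$ such that every further composite $f\circ g$ can be refined into $S$. Everything else is direct.
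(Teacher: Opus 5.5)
Your argument is correct, and it is the standard reasoning the paper leaves implicit (the observation is stated there without proof). The $\neg\neg$-covering sieves are the dense sieves, and under the right Ore condition a sieve is dense exactly when it is nonempty. Your second step is also the computation showing that nonempty sieves are stable under pullback, which is why the right Ore condition is needed for $J_{\rm at}$ to be a Grothendieck topology at all.
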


All of our concrete examples satisfy this condition, so every topology considered below is atomic.

\subsection{Sheaf}

A presheaf on a small category $\C$ is a functor $\C^{\rm op}\to{\bf Set}$.
We will be particularly interested in presheaves on $\K^{\rm op}$ for a class $\K$ of structures; these are functors $\K\to{\bf Set}$.

\begin{notation}
Let $X\colon\C^{\rm op}\to{\bf Set}$ be a presheaf.
For a $\C$-object $p$, we often write $X_p$ for $X(p)$.
For a $\C$-morphism $q\edge{a}p$, we often denote $X(p)\xedge{X(a)}X(q)$ by $-\at a$.
Thus it sends $x\in X_p$ to $x\at a\in X_q$.
\end{notation}

For presheaves $X,Y$ on $\C$, a morphism of presheaves $f\colon X\to Y$ is a natural transformation.
Explicitly, it is a family of functions $(f_p\colon X_p\to Y_p)_{p\in\C}$ such that $f_p(x)\at a=f_q(x\at a)$ for every $\C$-morphism $q\edge{a}p$ and every $x\in X_p$.

\begin{notation}
We denote the representable presheaf by $\yo p$.
That is, for each $p\in\C$, let $(\yo p)_q$ be the set ${\rm Hom}_\C(q,p)$ of all $\C$-morphisms from $q$ to $p$.
For $r\edge{a}q$ and $(q\edge{f}p)\in(\yo p)_q$, we have $f\at a=f\circ a$.

Yoneda's lemma says that
if $X$ is a presheaf on $\C$ then, for each $\C$-object $p$, the morphisms $\yo p\to X$ may be identified with $X_p$.
\end{notation}

Given a morphism $q\edge{a}p$, an {\bf $a$-matching} element is an element $x\in X_q$ such that, for all morphisms $s,t\colon r\to q$, if $a\circ s=a\circ t$, then $x\at s=x\at t$.

\begin{obs}\label{obs:characterization-atomic-sheaf-presheaf}
Let $\C$ be a small category satisfying the right Ore condition.
For a presheaf $X$ on a small category $\C$, the following two conditions are equivalent:
\begin{enumerate}
\item $X$ is a $J_{\rm at}$-sheaf.
\item For every $J_{\rm at}$-cover $q\edge{a}p$, every $a$-matching element $x\in X_q$ extends uniquely to an element $\hat{x}\in X_p$; that is, $\hat{x}\at a=x$.\qed
\end{enumerate}
\end{obs}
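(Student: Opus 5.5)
Since this observation carries \qed, the paper regards it as a direct unwinding of definitions; I would prove it by comparing the two descriptions of the gluing data. Recall the general sheaf condition: for a $J$-covering sieve $S$ on $p$, every matching family $(x_f)_{f\in S}$ (with $x_f\in X_{\mathrm{dom} f}$ and $x_f\at g = x_{f\circ g}$) has a unique amalgamation $\hat x\in X_p$ with $\hat x\at f = x_f$ for all $f\in S$. Under the right Ore condition, $J_{\rm at}$ is a genuine Grothendieck topology, and its covering sieves are exactly the nonempty sieves. It therefore suffices to check the sheaf condition on sieves generated by a single morphism $a\colon q\to p$, because every nonempty sieve contains such a generated sieve $\langle a\rangle$. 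If $X$ satisfies the sheaf condition for all $\langle a\rangle$, it satisfies it for every nonempty sieve $S\supseteq\langle a\rangle$. Uniqueness on $S$ follows from uniqueness on $\langle a\rangle$. For existence, glue the restricted family on $\langle a\rangle$ to get $\hat x$. Then for $f\colon r\to p$ in $S$, apply the right Ore condition to $f$ and $a$ to obtain $s,t$ with $f s = a t$. This gives $\hat x\at f\at s = x_{as} = x_f\at s$. Both $\hat x\at f$ and $x_f$ are then amalgamations of the same matching family on the nonempty (hence covering) sieve $\langle s\rangle$ pulled back to $r$, so uniqueness there forces $\hat x\at f = x_f$.

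The main step is the bijection between matching families for $\langle a\rangle$ and $a$-matching elements $x\in X_q$. Given a matching family, set $x = x_a$. If $a s = a t$, then $x\at s = x_{as} = x_{at} = x\at t$, so $x$ is $a$-matching. Conversely, given an $a$-matching $x$, define $x_{a\circ g} := x\at g$. This is well defined exactly because of the $a$-matching condition: if $a g = a g'$ then $x\at g = x\at g'$. The assignment is clearly matching, and the two constructions are mutually inverse. Under this bijection, an amalgamation $\hat x$ of the family is exactly an element with $\hat x\at a = x$, since then $\hat x\at(a g) = x\at g$ holds automatically. Hence unique amalgamation over $\langle a\rangle$ is precisely statement (2), and both directions follow.

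The only delicate point I expect is the passage from arbitrary nonempty sieves to single generated sieves, that is, the use of the Ore condition and of the pullback-stability of covers described above. Everything else is bookkeeping. One could instead cite \cite[C3.5]{Elephant2}, where atomic sheaves are characterized in exactly this way.
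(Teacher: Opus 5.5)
Your argument is correct and is the routine unwinding the paper intends; the paper gives no proof and simply marks the statement as an observation. The key points are that $a$-matching elements are the same as matching families on the sieve generated by $a$, and that the right Ore condition reduces an arbitrary nonempty sieve to a singly generated one. The only slip is notational: in the existence step the middle term should read $x_{at}=x_{fs}$, not $x_{as}$, which does not typecheck.
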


We shall take this as the definition of a $J_{\rm at}$-sheaf.
For a site $(\C,J)$, the category of $J$-sheaves on $\C$ is a topos, denoted by ${\rm Sh}(\C,J)$.

\begin{lemma}[Epimorphisms $=$ local surjections]\label{fact:SGL-epi-characterization-cover}
For a morphism $f\colon X\to Y$ in ${\rm Sh}(\mathcal{C},J_{\rm at})$, the following are equivalent:
\begin{enumerate}
\item $f$ is an epimorphism.
\item For every $\C$-object $p$ and every $y\in Y_p$, there exist a $J_{\rm at}$-cover $q\edge{a}p$ and an element $x\in X_q$ such that $f_q(x)=y\at a$.
\end{enumerate}
\end{lemma}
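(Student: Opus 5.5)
Both directions go through the image factorization of $f$ in the sheaf topos. The key object is the sub-presheaf $I\subseteq Y$ of locally-in-image elements:
\[
I_p=\{\,y\in Y_p : \text{there exist } q\edge{a}p \text{ and } x\in X_q \text{ with } f_q(x)=y\at a\,\}.
\]
I assume, as in Observation~\ref{obs:characterization-atomic-sheaf-presheaf}, that $\C$ satisfies the right Ore condition.

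For (2)$\Rightarrow$(1), I argue directly rather than through images. Let $g,h\colon Y\to Z$ be sheaf morphisms with $g\circ f=h\circ f$, and fix $y\in Y_p$. Choose $q\edge{a}p$ and $x\in X_q$ as in (2). Then
\[
g_p(y)\at a=g_q(y\at a)=g_q(f_q(x))=h_q(f_q(x))=h_p(y)\at a .
\]
By the uniqueness clause in Observation~\ref{obs:characterization-atomic-sheaf-presheaf}, restriction along a $J_{\rm at}$-cover is injective on a sheaf. To see this, note that $g_p(y)\at a$ is trivially $a$-matching and has $g_p(y)$ and $h_p(y)$ as two extensions. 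Hence $g_p(y)=h_p(y)$, so $g=h$ and $f$ is epi.

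For (1)$\Rightarrow$(2), I first check that $I$ is a sub-presheaf. Given $y\in I_p$ witnessed by $(a,x)$ and a morphism $b\colon p'\to p$, apply the right Ore condition to $a$ and $b$ to get $c\colon r\to q$ and $d\colon r\to p'$ with $a\circ c=b\circ d$. Then $f_r(x\at c)=y\at a\at c=(y\at b)\at d$, so $y\at b\in I_{p'}$ witnessed by $(d,x\at c)$. Next I check that $I$ is a sheaf. Let $z\in I_q$ be $a$-matching for $q\edge{a}p$. Since $z$ is also an $a$-matching element of $Y_q$, it has a unique extension $\hat z\in Y_p$, and uniqueness in $I$ follows from uniqueness in $Y$. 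It remains to show $\hat z\in I_p$: if $z$ is witnessed by $(c,x)$, then $\hat z\at(a\circ c)=z\at c=f(x)$, so $\hat z$ is witnessed by $(a\circ c,x)$. Finally, $f$ factors through $I\embed Y$, since $x\in X_p$ witnesses $f_p(x)\in I_p$ via the identity.

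Since $f$ is epi and factors through the mono $I\embed Y$, that mono is also epi. A topos is balanced, so $I\embed Y$ is an isomorphism, i.e.\ $I=Y$, which is exactly condition (2).

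If one wants to avoid citing balancedness, it can be replaced by an explicit argument. Compare the two maps $Y\to\Omega$ given by the characteristic map of $I$ and the constant map to true; both agree after composing with $f$. The only real obstacle is therefore the sheaf verification for $I$, in particular the closure of $I$ under restriction, which is exactly where the right Ore condition is used.
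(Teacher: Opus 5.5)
The paper gives no proof of this lemma; it is quoted as the standard fact from Mac Lane--Moerdijk, as the label indicates. Your argument is correct and is essentially that standard proof specialized to $J_{\rm at}$. Your $I$ is the $J_{\rm at}$-closure of the presheaf image of $f$, hence the image of $f$ in ${\rm Sh}(\C,J_{\rm at})$, so $f$ is epi exactly when $I=Y$; and (2)$\Rightarrow$(1) is just separatedness of $Z$ along the cover $a$.

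The one difference from the general proof is where closure of $I$ under restriction comes from: you get it from the right Ore condition, whereas the general proof uses pullback-stability of covering sieves. This is not an extra hypothesis on the lemma, because stability together with non-degeneracy of $J_{\rm at}$ already forces $\C$ to satisfy the right Ore condition.
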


A Grothendieck topology $J$ on a small category $\C$ is {\bf subcanonical} if every representable presheaf $\yo X$ on $\C$ is a $J$-sheaf.
If $J_{\rm at}$ is subcanonical, we call $(\C,J_{\rm at})$ a subcanonical atomic site.

It is known that a topology is subcanonical if and only if all of its covering sieves are effective epimorphic (see e.g.~\cite[C2.2]{Elephant2}).
In the case of an atomic topology, it is characterized using the following notion.

\begin{definition}\label{def:prestrict}
A {\bf strict epimorphism} is a morphism $X\edge{f}Y$ such that the following holds:
Let $X\edge{g}Z$ be such that $f\circ x=f\circ y$ implies $g\circ x=g\circ y$ for any $x,y\colon T\to X$.
Then there is a unique $Y\edge{t}Z$ such that $g=t\circ f$.
\end{definition}

Note that a strict epimorphism is always an epimorphism, since if $g:=u\circ f=v\circ f$ then we get a unique $t$ such that $g=t\circ f$, which means $t=u=v$.
The dual of a strict epimorphism is called a {\bf strict monomorphism}.

\begin{lemma}[\cite{Mar25}]\label{lem:subcanonical-strict}
The following are equivalent:
\begin{enumerate}
\item The atomic topology $J_{\rm at}$ on $\C$ is subcanonical.
\item Every $\C$-morphism $A\edge{f} B$ is a strict epimorphism.
\end{enumerate}
\end{lemma}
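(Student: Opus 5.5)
Both implications reduce to unwinding the sheaf condition of Observation \ref{obs:characterization-atomic-sheaf-presheaf} for a representable presheaf $\yo Z$; no right Ore hypothesis is needed for this direction of the argument. For a morphism $A\edge{f}B$ and an object $Z$, an $f$-matching element of $(\yo Z)_A={\rm Hom}_\C(A,Z)$ is a morphism $A\edge{g}Z$ such that $f\circ x=f\circ y$ implies $g\circ x=g\circ y$ for all $x,y\colon T\to A$. This is exactly the hypothesis on $g$ in Definition \ref{def:prestrict}. An extension of $g$ along $f$ is an element $t\in(\yo Z)_B$ with $t\at f=t\circ f=g$, which is exactly the factorization required in that definition. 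So the proof consists of matching these two descriptions carefully.

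\emph{(2)$\Rightarrow$(1).} Fix $Z\in\C$. We show that $\yo Z$ satisfies condition (2) of Observation \ref{obs:characterization-atomic-sheaf-presheaf}. Let $A\edge{f}B$ be a $J_{\rm at}$-cover; every morphism is one. Let $g\in(\yo Z)_A$ be $f$-matching. By the translation above, $g$ satisfies the hypothesis of Definition \ref{def:prestrict}. Since $f$ is a strict epimorphism, there is a unique $t\colon B\to Z$ with $t\circ f=g$, i.e.\ a unique $\hat g=t$ with $\hat g\at f=g$. Hence $\yo Z$ is a sheaf, and $J_{\rm at}$ is subcanonical.

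\emph{(1)$\Rightarrow$(2).} Let $A\edge{f}B$ be any morphism, and let $A\edge{g}Z$ satisfy the hypothesis of Definition \ref{def:prestrict}. Then $g$ is an $f$-matching element of $(\yo Z)_A$. Since $\{f\}$ is a $J_{\rm at}$-cover and $\yo Z$ is a sheaf, $g$ extends uniquely to some $t\in(\yo Z)_B$ with $t\circ f=g$. Thus $f$ is a strict epimorphism.

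The only point needing care is that Observation \ref{obs:characterization-atomic-sheaf-presheaf} is stated under the right Ore condition, whereas the lemma is stated for an arbitrary $\C$. I would handle this by taking the matching-family formulation as the definition of a $J_{\rm at}$-sheaf, as the paper announces. Alternatively, one can use the general sheaf condition for the sieve generated by $f$. A matching family for that sieve is determined by its value $g$ at $f$, since all its members factor through $f$ and compatibility follows from the matching condition. So the sieve-level condition coincides with the single-morphism condition verified above, and the argument goes through without any Ore hypothesis.
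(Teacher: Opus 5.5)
Your proof is correct and is essentially the paper's own argument. In both, the premise of strictness for $g\colon A\to Z$ is exactly that $g\in(\yo Z)_A$ is $f$-matching, and the conclusion is exactly the unique extension $t\in(\yo Z)_B$ with $t\at f=g$, so the single-morphism sheaf condition of Observation \ref{obs:characterization-atomic-sheaf-presheaf} for all $\yo Z$ literally says that every morphism is strict.

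One small caveat on your closing remark: reducing arbitrary nonempty covering sieves to singly generated ones uses the right Ore condition, as does the fact that $J_{\rm at}$ is a Grothendieck topology at all. So ``without any Ore hypothesis'' is accurate only under the paper's convention of taking the single-morphism condition as the definition of a $J_{\rm at}$-sheaf.
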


\begin{proof}
Think of a morphism $X\edge{f}Y$ as a $J_{\rm at}$-cover of $Y$, and a morphism $X\edge{g}Z$ as an element $g\in(\yo Z)_X$.
Note that $g\at x$ is defined as $g \circ x$.
The premise of strictness is precisely that $g\in(\yo Z)_X$ is an $f$-matching element.
The conclusion of strictness is the unique existence of some $t\in(\yo Z)_Y$ such that $g=t\at f$.
In other words, Observation \ref{obs:characterization-atomic-sheaf-presheaf} can be restated as the assertion that all morphisms are strict.
\end{proof}

\subsection{Atom}

An atom is the categorical analogue of a singleton.

\begin{definition}
An object $X\not=\mathbf{0}$ is an {\bf atom} if the only subobjects of $X$ are $\mathbf{0}\mono X$ and $X\mono X$.
\end{definition}

\begin{obs}\label{obs:atomic-epi-nontrivial}~
\begin{enumerate}
\item If $X\not=\mathbf{0}$ and $A$ is an atom, then every morphism $X\to A$ is an epimorphism.
\item The image ${\rm Im}(f)\embed Y$ of a morphism $A\edge{f}Y$ from an atom $A$ is an atom.
\end{enumerate}
\end{obs}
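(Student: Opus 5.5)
Both parts are general facts about a topos, so I will argue with the standard epi--mono factorization and the behaviour of pullbacks along monomorphisms, which hold in any topos and hence in ${\rm Sh}(\C,J_{\rm at})$. I will also use that in a topos the initial object $\mathbf{0}$ is strict: any morphism $Z\to\mathbf{0}$ is an isomorphism. The plan is to prove (1) directly and then derive (2) from (1).

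For (1), let $f\colon X\to A$ with $X\not=\mathbf{0}$ and $A$ an atom.
\begin{enumerate}
\item Factor $f$ as $X\surj {\rm Im}(f)\mono A$, an epimorphism followed by a monomorphism.
\item Since $A$ is an atom, the subobject ${\rm Im}(f)$ is either $\mathbf{0}$ or all of $A$.
\item If ${\rm Im}(f)=\mathbf{0}$, we get a morphism $X\to\mathbf{0}$. By strictness of $\mathbf{0}$ this forces $X\cong\mathbf{0}$, contradicting $X\not=\mathbf{0}$.
\item Hence ${\rm Im}(f)\mono A$ is an isomorphism, and $f$ is an epimorphism, being an epimorphism followed by an isomorphism.
\end{enumerate}
The only non-formal input is the strictness of $\mathbf{0}$. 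In the sheaf setting it can also be checked by hand: a sheaf admitting a map to $\mathbf{0}$ has empty sections at every stage, because $\mathbf{0}_p=\emptyset$ by non-degeneracy of $J_{\rm at}$.

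For (2), let $f\colon A\to Y$ with $A$ an atom, and factor it as $A\edge{e}{\rm Im}(f)\embed Y$.
\begin{enumerate}
\item ${\rm Im}(f)\not=\mathbf{0}$: otherwise $A$ maps to $\mathbf{0}$ and so $A\cong\mathbf{0}$ by strictness, contradicting that atoms are nonzero.
\item Take a subobject $S\mono{\rm Im}(f)$ and form its pullback $e^*S\mono A$ along $e$. Since $A$ is an atom, $e^*S$ is $\mathbf{0}$ or $A$.
\item If $e^*S=A$, then $e$ factors through $S$. So $S$ is a subobject of ${\rm Im}(f)$ through which the epimorphism $e$ factors, and hence $S={\rm Im}(f)$.
\item If $e^*S=\mathbf{0}$, use that epimorphisms in a topos are stable under pullback. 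The pullback $e^*S\to S$ of $e$ is then an epimorphism from $\mathbf{0}$. Then $S$ is a quotient of $\mathbf{0}$; since $\mathbf{0}\to S$ factors through the subobject $\mathbf{0}\mono S$ and is epi, $S\cong\mathbf{0}$.
\end{enumerate}
Hence the only subobjects of ${\rm Im}(f)$ are $\mathbf{0}$ and ${\rm Im}(f)$, so ${\rm Im}(f)$ is an atom.

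The main point needing care is pullback-stability of epimorphisms and strictness of $\mathbf{0}$. Both are standard topos facts; alternatively, they can be verified concretely using Lemma \ref{fact:SGL-epi-characterization-cover}.
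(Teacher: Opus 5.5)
Your proof is correct. The paper states this observation without proof, and your argument (image factorization, strictness of $\mathbf{0}$, and pullback-stability of epimorphisms in a topos) is the standard justification it tacitly relies on.

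Two small points. First, the step ``hence $S={\rm Im}(f)$'' in (2) uses that a topos is balanced: the monomorphism $S\mono{\rm Im}(f)$ is also an epimorphism because $e$ factors through it. That is worth stating explicitly. Second, you announce that (2) will be derived from (1), but your proof of (2) is self-contained and never uses (1). That is harmless.
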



A set $G$ of objects of a category $\C$ is a generating family if, for any $\C$-morphisms $f,g\colon X\to Y$, if $fx=gx$ holds for every morphism $(T\edge{x}X)\in G$, then we have $f=g$.

\begin{obs}\label{obs:yo-generator}
Let $(\C,J_{\rm at})$ be a subcanonical atomic site.
\begin{enumerate}
\item For every $\C$-object $p$, $\yo p$ is an atom.
\item $\{\yo p:p\in\C\}$ is a generating family for ${\rm Sh}(\C,J_{\rm at})$.
\item Every $J_{\rm at}$-sheaf $X$ is a coproduct of atoms.
\item For every atom $A$, there exists an epimorphism $\yo p\surj A$.
\end{enumerate}
\end{obs}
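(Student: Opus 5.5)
The plan is to prove the four items in order. Throughout, the standing hypothesis that $(\C,J_{\rm at})$ is a subcanonical atomic site lets us treat each $\yo p$ as a sheaf. We also use the explicit sheaf condition of Observation \ref{obs:characterization-atomic-sheaf-presheaf} and the description of epimorphisms in Lemma \ref{fact:SGL-epi-characterization-cover}.

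For (1), first note $\yo p\neq\mathbf{0}$, since ${\rm id}_p\in(\yo p)_p$. Now let $S\mono\yo p$ be a subsheaf that is not $\mathbf{0}$, so some $S_q$ contains an element $f\colon q\to p$. I want to show $S=\yo p$, i.e.\ ${\rm id}_p\in S_p$; then closure of $S$ under restriction gives every $g=({\rm id}_p)\at g$. Regard $f$ as a $J_{\rm at}$-cover of $p$. The element $f\in S_q$ is $f$-matching: if $f\circ s=f\circ t$ then $f\at s=f\at t$ by definition. Since $S$ is a sheaf, $f$ extends uniquely to some $\hat f\in S_p$ with $\hat f\circ f=f$. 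The same $f$, viewed in $\yo p$, is extended both by $\hat f$ and by ${\rm id}_p$. Uniqueness in the sheaf $\yo p$ then gives $\hat f={\rm id}_p\in S_p$. The only delicate point is comparing these two extensions inside $\yo p$, and subcanonicity is exactly what makes that comparison legitimate.

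For (2), I use Yoneda. Morphisms $\yo p\to X$ correspond to elements of $X_p$, so if $\phi,\psi\colon X\to Y$ agree after precomposition with every $\yo p\to X$, then $\phi_p(x)=\psi_p(x)$ for all $p$ and all $x\in X_p$. Hence $\phi=\psi$.

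For (3), the first step is to show that every sheaf is a union of atoms. Each $x\in X_p$ gives a map $\yo p\to X$, and its image is an atom by (1) and Observation \ref{obs:atomic-epi-nontrivial}(2). By (2), $X$ is the union of these images. Next I show two atomic subobjects are either equal or disjoint. Their intersection is a subobject of each, so it is $\mathbf{0}$ or all of both, in which case they coincide. Finally, in a Grothendieck topos, a family of pairwise disjoint subobjects whose union is $X$ presents $X$ as their coproduct, because coproducts are disjoint and unions are images of coproducts. So $X$ is the coproduct of its distinct atomic subobjects.

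For (4), let $A$ be an atom. Since $A\neq\mathbf{0}$, there is some $p$ with $A_p\neq\emptyset$; otherwise $A$ would be the initial sheaf. Choose $x\in A_p$ and let $\yo p\to A$ be the corresponding map. Its domain is nonzero, so it is an epimorphism by Observation \ref{obs:atomic-epi-nontrivial}(1).

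The main obstacle is (3): justifying that a disjoint union of subobjects is a coproduct. I would cite the standard topos fact that a family of pairwise disjoint subobjects is always a coproduct of those subobjects, which gives the result directly.
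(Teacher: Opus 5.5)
Your proposal is correct. For (1), (2) and (4) it is essentially the paper's argument. In (1) the paper likewise extends the $a$-matching element $a\in S_q$ to some $i\in S_p$ and concludes $i=1_p$ ``by uniqueness''. Your remark that this is uniqueness of extensions in the sheaf $\yo p$ just makes explicit where subcanonicity is used; equivalently, every $\C$-morphism is an epimorphism by Lemma \ref{lem:subcanonical-strict}. In (4) the paper passes through the canonical epimorphism from the coproduct of generators onto $A$. Choosing an element $x\in A_p$ directly amounts to the same thing, and both rely on $\mathbf{0}$ being the empty presheaf, which is a sheaf because $J_{\rm at}$ is non-degenerate.

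The genuine difference is (3): the paper gives no argument and simply cites \cite[Proposition 2.3]{Car12}, whereas you give a self-contained proof. That proof is sound. The images of the maps $\yo p\to X$ are atoms by (1) and Observation \ref{obs:atomic-epi-nontrivial}(2), they cover $X$, and distinct atomic subobjects are disjoint. For the last step, disjointness of coproducts is not quite enough. You also need that coproducts in a Grothendieck topos are stable under pullback, so that the kernel pair of $\sum_iA_i\to X$ is $\sum_{i,j}A_i\times_XA_j\simeq\sum_iA_i$ (the diagonal). This makes the map monic, hence an isomorphism onto its image, which is $X$.

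Your route buys an elementary proof and a canonical decomposition: the summands are exactly the atomic subobjects of $X$. The citation buys generality. As the paper remarks right after the proof, (3) holds for any atomic site without subcanonicity, while your argument depends on it through (1). It adapts by replacing $\yo p$ with its sheafification, which is an atom in any atomic site.
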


\begin{proof}
(1)
In the category of $J_{\rm at}$-sheaves, let $S\mono\yo p$ be a subobject distinct from $\mathbf{0}$.
Since $S\not=\mathbf{0}$, there exists some $(q\edge{a}p)\in S_q$.
Clearly, $q\edge{a}p$ is an $a$-matching element.
Since $S$ is a $J_{\rm at}$-sheaf, it extends uniquely to some $(p\edge{i}p)\in S_p$; i.e., $i\at a=i\circ a=a$.
By uniqueness, $i=1_p\colon p\to p$, and hence $S=\yo p$.

(2)
For morphisms $f,g\colon A\to B$, if $f\not=g$, there exist $p\in\C$ and $x\in A_p$ such that $f_p(x)\not=g_p(x)$.
By Yoneda's lemma, $x\in A_p$ may be regarded as a morphism $\tilde{x}\colon\yo p\to A$, and then $f\tilde{x}\not=g\tilde{x}$.

(3)
See e.g.~\cite[Proposition 2.3]{Car12}.

(4)
If $\{G_i\}_i$ is a generating family, there is a canonical epimorphism $\sum_{i,G_i\edge{a}A} G_i\surj A$.
Since $A\not=\mathbf{0}$, we must have $G_i\not=\mathbf{0}$ for some $i\in I$.
Then we get a morphism $G_i\to A$.
By Observation \ref{obs:atomic-epi-nontrivial} (1), this is an epimorphism.
Since $\{\yo p\}_p$ is a generating family, there is therefore an epimorphism $\yo p\surj A$ from an atom.
\end{proof}

Here, even without assuming subcanonicity, one can obtain the corresponding results by sheafifying $\yo p$.
Thus, for instance, Observation \ref{obs:yo-generator} (3) holds without the assumption of subcanonicity; see \cite[Proposition 2.3]{Car12}.

\section{Axioms of choice}

We introduce the axiom of choice within a topos.
For background and details, see also \cite[D4.5]{Elephant2} and \cite{Mej19}.

\begin{definition}[see e.g.~\cite{Mej19}]
Let $\mathcal{E}$ be a topos.
\begin{enumerate}
\item $\mathcal{E}$ satisfies the {\bf axiom of choice for $\kappa$-indexed families} for an (external) cardinal $\kappa$, if, for every set $I$ of cardinality $\kappa$ and every family of epimorphisms $\{g_i\colon X_i\surj Y_i\}_{i\in I}$, the product $\prod_{i\in I}g_i\colon\prod_{i\in I}X_i\to\prod_{i\in I}Y_i$ is also an epimorphism.
\item $\mathcal{E}$ satisfies the {\bf internal axiom of choice} if, whenever $g\colon X\surj Y$ is an epimorphism, $g^Z\colon X^Z\to Y^Z$ is also an epimorphism for every object $Z$.
\item $\mathcal{E}$ satisfies the {\bf external axiom of choice} if every epimorphism splits; that is, for every epimorphism $f\colon X\surj I$, there exists a morphism $I\edge{s}X$ such that $fs=1_I$.
\item $\mathcal{E}$ satisfies the {\bf internal axiom of dependent choice} if, for every sequence of epimorphisms $\cdots \surj X_2\surj X_1\surj X_0$, the projection from the limit $\lim_nX_n$ to $X_0$ is also an epimorphism.
\end{enumerate}
\end{definition}

Then, for $\kappa\geq\om$, we have the following (see e.g.~\cite{Mej19}):
\[
\xymatrix@R=10pt{
\mbox{\small External AC} \ar@{=>}[d]& \\ 
\mbox{\small Internal AC} \ar@{=>}[r]\ar@{=>}[d] & \mbox{\small AC for $\kappa$-families} \ar@{=>}[d]\\
\mbox{\small Internal DC} \ar@{=>}[r] & \mbox{\small AC for $\om$-families}
}
\]

Here, AC and DC stand for the axiom of choice and dependent choice, respectively.
The AC for $\om$-families is also known as the (internal) axiom of countable choice.
We now discuss sufficient conditions for the axioms of choice and dependent choice to hold in an atomic sheaf topos.

\begin{lemma}\label{lem:atomic-topos-embedding-to-epi}
Suppose that a category $\C$ satisfies the right Ore condition and that the atomic topology $J_{\rm at}$ is subcanonical.
For every $\C$-morphism $X\edge{f} Y$, the morphism $\yo X\xedge{\yo f}\yo Y$ is an epimorphism in ${\rm Sh}(\C,J_{\rm at})$.
\end{lemma}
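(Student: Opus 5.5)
The plan is to verify condition (2) of Lemma~\ref{fact:SGL-epi-characterization-cover} for the natural transformation $\yo f\colon\yo X\to\yo Y$. Subcanonicity guarantees that $\yo X$ and $\yo Y$ are $J_{\rm at}$-sheaves, so $\yo f$ is a morphism of ${\rm Sh}(\C,J_{\rm at})$. Concretely, $(\yo f)_q$ sends $g\colon q\to X$ to $f\circ g\colon q\to Y$.

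So fix a $\C$-object $p$ and an element $y\in(\yo Y)_p$, that is, a morphism $y\colon p\to Y$. We must find a $J_{\rm at}$-cover $q\edge{a}p$ and some $x\in(\yo X)_q$ with $(\yo f)_q(x)=y\at a$. Unwinding the definitions, this asks for morphisms $a\colon q\to p$ and $x\colon q\to X$ with
\[
f\circ x = y\circ a .
\]
This is exactly what the right Ore condition provides: apply it to the pair $X\edge{f}Y$ and $p\edge{y}Y$ to obtain a cone $q\edge{x}X$, $q\edge{a}p$ with $f\circ x=y\circ a$. Since every morphism into $p$ is a $J_{\rm at}$-cover, $a$ qualifies. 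Lemma~\ref{fact:SGL-epi-characterization-cover} then gives that $\yo f$ is an epimorphism.

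No step here is a real obstacle; the argument is a direct unwinding. The only point needing care is that the epimorphism is taken in the sheaf topos rather than in presheaves. This is why subcanonicity is used, so that $\yo X$ and $\yo Y$ are themselves sheaves and no sheafification intervenes, and why one invokes the local-surjectivity criterion of Lemma~\ref{fact:SGL-epi-characterization-cover} rather than pointwise surjectivity, which generally fails.
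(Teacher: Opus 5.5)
Your proof is correct and follows the paper's argument essentially verbatim. You verify local surjectivity of $\yo f$ via Lemma~\ref{fact:SGL-epi-characterization-cover} by applying the right Ore condition to $f$ and $y$, which yields $f\circ x=y\circ a$ with $a$ automatically a $J_{\rm at}$-cover; your remark that subcanonicity is what makes $\yo X$ and $\yo Y$ sheaves matches the paper's implicit use of that hypothesis.
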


\begin{proof}
For each $\C$-object $A$, $(\yo f)_A\colon (\yo X)_A\to(\yo Y)_A$ is given by $a\mapsto f\circ a$.
To prove that $\yo f$ is locally surjective (Lemma \ref{fact:SGL-epi-characterization-cover}), let $y\in(\yo Y)_A$ be given, which is a morphism $A\edge{y}Y$.
Applying the right Ore condition to $X\edge{f}Y$ and $A\edge{y}Y$, we obtain morphisms $B\edge{a}A$ and $B\edge{x}X$ such that $f\circ x=y\circ a$.
Note that $B\edge{a}A$ is a $J_{\rm at}$-cover of $A$ and that $x\in(\yo X)_B$.
We then have $(\yo f)_B(x)=f\circ x=y\circ a=y\at a$.
This proves that $\yo f$ is locally surjective, hence an epimorphism.
\end{proof}

\begin{theorem}[\cite{Mej19}]\label{lem:cone-characterization-countable-choice}
Suppose that the atomic topology $J_{\rm at}$ on a category $\mathcal{C}$ is subcanonical.
Then, for a set $I$, the following are equivalent:
\begin{enumerate}
\item In $\C$, every family of morphisms $\{p_n\edge{a_n}p\}_{n\in I}$ with a common codomain admits a cone.
\item ${\rm Sh}(\mathcal{C},J_{\rm at})$ satisfies the axiom of choice for $I$-indexed families.
\end{enumerate}
\end{theorem}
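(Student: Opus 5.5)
For (1)$\Rightarrow$(2), I will show directly that products of $I$-indexed families of epimorphisms are local surjections, using Lemma \ref{fact:SGL-epi-characterization-cover}. Let $\{g_n\colon X_n\surj Y_n\}_{n\in I}$ be epimorphisms in ${\rm Sh}(\C,J_{\rm at})$. Products of sheaves are computed pointwise, so $(\prod_n Y_n)_p=\prod_n (Y_n)_p$. Take $p\in\C$ and $y=(y_n)_n\in\prod_n (Y_n)_p$. For each $n$, since $g_n$ is locally surjective, there are a morphism $p_n\edge{a_n}p$ and an element $x_n\in (X_n)_{p_n}$ with $(g_n)_{p_n}(x_n)=y_n\at{a_n}$. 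Choosing these externally uses the axiom of choice in the metatheory. By (1), the family $\{a_n\}_{n\in I}$ has a cone $\{q\edge{b_n}p_n\}_{n\in I}$, and we put $c:=a_n\circ b_n\colon q\to p$, which is independent of $n$. Then $x:=(x_n\at{b_n})_n\in\prod_n(X_n)_q$ satisfies
\[(g_n)_q(x_n\at{b_n})=(g_n)_{p_n}(x_n)\at{b_n}=y_n\at{a_n}\at{b_n}=y_n\at c.\]
So $(\prod g_n)_q(x)=y\at c$, and $\prod g_n$ is an epimorphism. This direction does not need subcanonicity.

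For (2)$\Rightarrow$(1), I will apply choice to the epimorphisms $\yo a_n\colon \yo p_n\to\yo p$. These are sheaves by subcanonicity, and they are epimorphisms because $\yo p$ is an atom and $\yo p_n\neq\mathbf 0$ (Observations \ref{obs:yo-generator}(1) and \ref{obs:atomic-epi-nontrivial}(1)). This route avoids the right Ore condition that Lemma \ref{lem:atomic-topos-embedding-to-epi} would require. By (2), $\prod_n \yo a_n\colon \prod_n\yo p_n\to\prod_n \yo p$ is an epimorphism. Let $\delta\in(\prod_n\yo p)_p$ be the diagonal element $(1_p)_n$. Local surjectivity gives a morphism $q\edge{c}p$ and an element $(b_n)_n\in\prod_n(\yo p_n)_q$ with $a_n\circ b_n=1_p\circ c=c$ for all $n$. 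Thus $\{b_n\}$ is a cone over $\{a_n\}$.

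I expect the main obstacle to be the converse direction's claim that $\yo a_n$ is an epimorphism. One has to check that $\yo p_n$ is nonzero, which holds since it contains $1_{p_n}$, and that Observation \ref{obs:atomic-epi-nontrivial}(1) applies in the sheaf topos. Also, if $I=\emptyset$ both conditions are trivially true (any object gives an empty cone, and the empty product of epimorphisms is the identity on $\mathbf 1$), so no nonemptiness issue arises.
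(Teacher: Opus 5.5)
Your proposal is correct and follows the paper's proof essentially step for step: the same restriction-along-a-cone argument for local surjectivity in (1)$\Rightarrow$(2), and, for (2)$\Rightarrow$(1), the same local lift of the diagonal element $(1_p)_{n\in I}$ along $\prod_n\yo a_n$. The only divergence is that you show $\yo a_n$ is epi via atomicity of $\yo p$ rather than Lemma \ref{lem:atomic-topos-embedding-to-epi}, which works. It does not actually avoid the right Ore condition, though: that condition is exactly what makes $J_{\rm at}$ a Grothendieck topology, and it underlies Observation \ref{obs:characterization-atomic-sheaf-presheaf}, on which Observation \ref{obs:yo-generator}(1) rests.
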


\begin{proof}
(1)$\Rightarrow$(2)
To prove the axiom of choice for $I$-indexed families in the topos ${\rm Sh}(\mathcal{C},J_{\rm at})$, we show that an $I$-indexed product of epimorphisms is an epimorphism.
By Lemma \ref{fact:SGL-epi-characterization-cover}, given a family of local surjections $\{f_n\colon X_n\surj Y_n\}_{n\in I}$, it suffices to prove that $f:=\prod_nf_n\colon\prod_nX_n\to\prod_nY_n$ is locally surjective.
Let $p$ be a $\C$-object, and $y=(y_n)_{n\in I}\in\prod_n Y_{n,p}$ be given.

By local surjectivity of $f_n\colon X_n\to Y_n$ for each $n$, there exist a $J_{\rm at}$-cover $p_n\edge{a_n}p$ of $p$ and $x_n\in X_{n,p_n}$ such that $f_{n,p_n}(x_n)=y_n\at{a_n}$.
By assumption, the family $\{a_n\}_{n\in I}$ of morphisms admits a cone $\{q\edge{t_n}p_n\}_{n\in I}$.
Let $a=(q\edge{t_0}p_0\edge{a_0}p)$.
By the definition of a cone, $a=a_n\circ t_n$ holds for every $n$.
Now set $x_n'=x_n\at{t_n}\in X_{n,q}$.
Then we get
\[f_{n,q}(x_n')=f_{n,q}(x_n\at{t_n})=f_{n,p_n}(x_n)\at{t_n}=y_n\at{a_n}\at{t_n}=y_n\at a.\]

Thus, if we define $x=(x_n')_{n\in I}$, then $x\in\prod_nX_{n,q}$ and $f_q(x)=(y_n\at a)_{n\in I}=y\at a$.
This proves that $f$ is locally surjective.

(2)$\Rightarrow$(1):
Let $\{p_n\edge{a_n}p\}_{n\in I}$ be a family of $\C$-morphisms.
By Lemma \ref{lem:atomic-topos-embedding-to-epi}, $\yo a_n\colon \yo p_n\to \yo p$ is an epimorphism.
The axiom of choice implies that the $I$-indexed product $\Psi=\prod_{n\in I}\yo(a_n)\colon\prod_{n\in I}\yo p_n\to\prod_{n\in I}\yo p$ is also an epimorphism.

Since $1_p\in(\yo p)_p$, we have $(1_p)_{i\in I}\in(\prod_{n\in I}\yo p)_p$.
By local surjectivity of $\Psi$, there exist a $J_{\rm at}$-cover $q\edge{c}p$ and elements $x_n\in(\yo p_n)_q$ such that $\Psi_q((x_n)_{n\in I})=1_p\at c$.
The $n$th component of the left-hand side is $(\yo a_n)_q(x_n)=a_n\circ x_n$, while the right-hand side is $1_p\at c=1_p\circ c=c$.
Consequently, $a_n\circ x_n=c$ for every $n\in I$, and hence $\{q\edge{x_n}p_n\}_{n\in I}$ is a cone.
\end{proof}

Recall that a cone over a sequence of morphisms $\{X_{n+1}\edge{f_n} X_n\}_{n\in\N}$ is a family of morphisms $\{T\edge{x_n} X_n\}_{n\in\N}$ such that $f_n\circ x_{n+1}=x_n$ for every $n\in\N$.

\begin{theorem}[\cite{Mej19}]\label{lem:cone-characterization-dependent-choice}
Suppose that the atomic topology $J_{\rm at}$ on a category $\mathcal{C}$ is subcanonical.
Then the following are equivalent.
\begin{enumerate}
\item In $\mathcal{C}$, every sequence of morphisms $\{p_{n+1}\edge{a_n} p_n\}_{n\in\N}$ admits a cone.
\item ${\rm Sh}(\mathcal{C},J_{\rm at})$ satisfies the internal axiom of dependent choice.
\end{enumerate}
\end{theorem}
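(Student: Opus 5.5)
The argument will mirror the proof of Theorem \ref{lem:cone-characterization-countable-choice}. Limits of sheaves are computed pointwise, so $(\lim_n X_n)_p=\lim_n X_{n,p}$. Epimorphisms are local surjections by Lemma \ref{fact:SGL-epi-characterization-cover}.

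(1)$\Rightarrow$(2). Let $\cdots\surj X_2\edge{g_1}X_1\edge{g_0}X_0$ be a sequence of epimorphisms, and let $y_0\in X_{0,p}$. I will build inductively a sequence of covers together with elements. Set $p_0=p$. Given $p_n$ and $y_n\in X_{n,p_n}$, local surjectivity of $g_n$ yields a morphism $p_{n+1}\edge{a_n}p_n$ and an element $y_{n+1}\in X_{n+1,p_{n+1}}$ with $g_{n}(y_{n+1})=y_n\at{a_n}$. This step uses dependent choice in the metatheory, which is assumed in ${\bf Set}$.

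By (1), the sequence $\{a_n\}$ has a cone $\{q\edge{t_n}p_n\}$ with $a_n\circ t_{n+1}=t_n$. Put $x_n=y_n\at{t_n}\in X_{n,q}$. Then
\[g_n(x_{n+1})=g_n(y_{n+1})\at{t_{n+1}}=y_n\at{a_n}\at{t_{n+1}}=y_n\at{t_n}=x_n,\]
so $(x_n)_n\in(\lim_n X_n)_q$. Its projection to $X_{0,q}$ is $y_0\at{t_0}$, and $q\edge{t_0}p$ is a $J_{\rm at}$-cover. Hence the projection $\lim_nX_n\to X_0$ is locally surjective, and therefore an epimorphism.

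(2)$\Rightarrow$(1). Given $\{p_{n+1}\edge{a_n}p_n\}$, Lemma \ref{lem:atomic-topos-embedding-to-epi} makes each $\yo a_n\colon\yo p_{n+1}\to\yo p_n$ an epimorphism; that lemma assumes the right Ore condition, so I will assume it here too, as the paper implicitly does. Subcanonicity ensures these $\yo p_n$ are sheaves. Internal DC then makes $\lim_n\yo p_n\to\yo p_0$ an epimorphism. Apply local surjectivity to $1_{p_0}\in(\yo p_0)_{p_0}$. This gives a cover $q\edge{c}p_0$ and a compatible family $(x_n)\in\lim_n(\yo p_n)_q$ with $x_0=c$. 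Compatibility reads $a_n\circ x_{n+1}=x_n$, which is exactly a cone $\{q\edge{x_n}p_n\}$.

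The main subtlety is the inductive construction in (1)$\Rightarrow$(2). The covers must be chosen successively, each refining the last, rather than all at once over $p$. That is why a sequence-cone, rather than a fan-cone as in Theorem \ref{lem:cone-characterization-countable-choice}, is the right hypothesis. The other point to watch is that the cone equations must match the restriction identities; the displayed computation above confirms that they do.
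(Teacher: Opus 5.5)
Your proposal is correct and follows the paper's proof essentially step by step. For (1)$\Rightarrow$(2) you inductively choose successive covers and elements and then restrict along a cone over the sequence, and for (2)$\Rightarrow$(1) you apply local surjectivity of $\lim_n\yo p_n\to\yo p_0$ at $1_{p_0}$. Your side remarks are accurate refinements that the paper leaves tacit: the use of dependent choice in the metatheory, and the implicit right Ore hypothesis needed for Lemma \ref{lem:atomic-topos-embedding-to-epi}.
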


\begin{proof}
(1)$\Rightarrow$(2):
To prove the axiom of dependent choice in the topos ${\rm Sh}(\mathcal{C},J_{\rm at})$, let $\{X_\infty\edge{\pi_n}X_n\}_{n\in\N}$ be the limit of a sequence of epimorphisms $\{f_n\colon X_{n+1}\surj X_n\}_{n\in\N}$.
We shall show that the projection $\pi_0\colon X_\infty\to X_0$ is an epimorphism.
Limits of sheaves are computed componentwise; thus $X_{\infty,p}$ is the limit of $\{X_{n+1,p}\xedge{f_{n,p}}X_{n,p}\}$ in ${\bf Set}$.

To verify that the projection $\pi_0$ is locally surjective, take a $\C$-object $p_0$ and an element $y_0\in X_{0,p_0}$.
Inductively assume that we have chosen an object $p_n$ and an element $y_n\in X_{n,p_n}$.
By local surjectivity of $f_n$, there are a $J_{\rm at}$-cover $p_{n+1}\edge{a_n}p_n$ of $p_n$ and an element $y_{n+1}\in X_{n+1,p_{n+1}}$ such that $f_{n,p_{n+1}}(y_{n+1})=y_n\at{a_n}$.

By assumption, the sequence $\{p_{n+1}\edge{a_n}p_n\}_{n\in\N}$ of morphisms admits a cone $\{p\edge{t_n}p_n\}_{n\in\N}$.
In particular, $a_n\circ t_{n+1}=t_n$.
Then
\begin{align*}
f_{n,p}(y_{n+1}\at{t_{n+1}})&=f_{n,p_{n+1}}(y_{n+1})\at{t_{n+1}}\\
&=y_n\at{a_n}\at{t_{n+1}}=y_n\at{a_n\circ t_{n+1}}=y_n\at{t_n}.
\end{align*}
This means $x=(y_n\at{t_n})_{n\in\N}\in X_{\infty,p}$.
By the definition of $x$, $\pi_{0,p}(x)=y_0\at{t_0}$, proving that $\pi_0$ is locally surjective.

(2)$\Rightarrow$(1):
Let $(p_{n+1}\edge{a_n}p_n)_{n\in\N}$ be a sequence of $\C$-morphisms.
By Lemma \ref{lem:atomic-topos-embedding-to-epi}, $\yo a_n\colon \yo p_{n+1}\to\yo p_n$ is an epimorphism.
By the axiom of dependent choice, $\pi_0\colon\lim_n\yo p_n\to\yo p_0$ is also an epimorphism.

Now take $1_{p_0}\in(\yo p_0)_{p_0}$.
By local surjectivity of $\pi_0$, there exist a $J_{\rm at}$-cover $q\edge{c}p_0$ and an element $x\in(\lim_n\yo p_n)_q$ such that $\pi_0(x)=1_{p_0}\at c=1_{p_0}\circ c=c$.
By the definition of the limit, the element is of the form $x=(q\edge{x_n} p_n)_{n\in\N}$, where $x_n=a_n\circ x_{n+1}$ and $x_0=c$.
It follows that $\{q\edge{x_n}p_n\}_{n\in\om}$ is a cone.
\end{proof}

\begin{remark}
The implications (1)$\Rightarrow$(2) in Theorem \ref{lem:cone-characterization-countable-choice} and Theorem \ref{lem:cone-characterization-dependent-choice} are due to Mejak \cite{Mej19}.
Subcanonicity is used only in the proof of (2)$\Rightarrow$(1).
\end{remark}

%
%
%
%

\section{Structures and embeddings}

\subsection{Classes of structures as categories}\label{sec:class-str-cat}

In this article, we are primarily interested in the category of structures in $\K$ and embeddings, where $\mathcal{K}$ is a fixed class of structures.
If the reader wishes to avoid abstract arguments, the reader may simply think of $\K$ as a {\bf class of first-order structures}, and skip Section \ref{sec:class-str-cat}.

We declare the following as the minimum requirements that must be satisfied by what is called a structure:
\begin{itemize}
\item Every structure $A$ is a set $|A|\in{\bf Set}$ equipped with additional data.
\item Transportability \cite[Definition 5.28]{Joy}: If $A$ is a structure, $X$ is a set, and $\sigma\colon |A|\to X$ is a renaming of elements (i.e., a bijection), then the additional data on $A$ can be transported to $X$; that is, there is a structure $A^\sigma$ such that $|A^\sigma|=X$ and $\sigma$ lifts to an isomorphism $A\edge{\simeq} A^\sigma$.
\end{itemize}

We also declare the following as the minimum requirements that must be satisfied by what is called an embedding:
\begin{itemize}
\item Every homomorphism $h\colon A\to B$ is a function $|h|\colon |A|\to |B|$ satisfying a certain condition.
\item Every embedding is an injective homomorphism satisfying a certain condition.
\item Every surjective embedding must be an isomorphism.
\end{itemize}

Therefore, we proceed with the discussion within the following framework.


\begin{definition}
A {\bf concrete category of structures and embeddings} (CCSE) is a category $\K$ equipped with a faithful functor $|-|\colon\K\to{\bf Set}$ such that:
\begin{enumerate}
\item If $f\colon A\to B$ is a morphism in $\K$, then $|f|\colon |A|\to |B|$ is an injection.
\item $|-|$ is conservative; that is, if $|f|\colon|A|\to|B|$ is a bijection, then $f\colon A\to B$ is an isomorphism in $\K$.
\item $|-|$ is transportable (a.k.a.~an isofibration); that is, for every $A\in\K$ and every bijection $\sigma\colon |A|\to X$ in ${\bf Set}$, there is an isomorphism $A\edge{\bar{\sigma}}A^\sigma\in\K$ such that $|A^\sigma|=X$ and $|\bar{\sigma}|=\sigma$.
\end{enumerate}

We often write simply $A$ for $|A|$ and $f$ for $|f|$.
An object in $\K$ is called a {\bf structure}, and a morphism $A\to B$ in $\K$ is called an {\bf embedding} or an {\bf extension}, often written as $A\embed B$.
\end{definition}

For an infinite cardinal $\lambda$, let $\K_\lambda$ be the class of all structures in $\K$ of cardinality at most $\lambda$.
\[
\K_\lambda=\{A\in\K:\mbox{the cardinality of the carrier $|A|$ is at most $\lambda$}\}.
\]
We often think of $\K_\lambda$ as a full subcategory of $\K$.

Since $|-|\colon\K\to{\bf Set}$ is faithful, $\K$ is locally small, but size issues may still arise.
For this reason, we usually assume the following condition.

\begin{definition}
A {\bf structure-class} is a CCSE $(\K,|-|)$ which is fiber-small; that is, $\{A\in\K:|A|=X\}$ is a set for any $X$.
\end{definition}

\begin{obs}\label{obs:essentially-small}
If $\K$ is a structure-class, then $\K_\lambda$ is essentially small for any cardinal $\lambda$.
\end{obs}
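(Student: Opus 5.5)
The plan is to pick one representative structure from each isomorphism class of $\K_\lambda$ by putting every structure on a fixed ground set, and then to use fiber-smallness to see that this produces only a set of objects. Concretely, fix the set $\lambda$ itself, viewed as the set of ordinals below $\lambda$. We show that every $A\in\K_\lambda$ is isomorphic to a structure whose carrier is a subset $X\subseteq\lambda$. Then the full subcategory $\mathcal{S}$ of $\K_\lambda$ spanned by
\[
\bigcup_{X\subseteq\lambda}\{B\in\K:|B|=X\}
\]
is a small category, and it is equivalent to $\K_\lambda$.

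The first step is to replace each structure by an isomorphic one on a subset of $\lambda$.
\begin{itemize}
\item Let $A\in\K_\lambda$. Since $|A|$ has cardinality at most $\lambda$, there is an injection $|A|\to\lambda$; choosing one uses the external axiom of choice in the metatheory.
\item Let $X\subseteq\lambda$ be its image and $\sigma\colon|A|\to X$ the resulting bijection.
\item By transportability (condition (3) in the definition of a CCSE), there is an isomorphism $A\edge{\bar\sigma}A^\sigma$ in $\K$ with $|A^\sigma|=X$.
\item Since $|A^\sigma|=X$ has cardinality at most $\lambda$, $A^\sigma\in\K_\lambda$, and so $A^\sigma$ lies in $\mathcal{S}$.
\end{itemize}

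The second step is to check that $\mathcal{S}$ is small.
\begin{itemize}
\item The power set $\mathcal{P}(\lambda)$ is a set.
\item For each $X\subseteq\lambda$, fiber-smallness says that $\{B\in\K:|B|=X\}$ is a set.
\item A set-indexed union of sets is a set, so the objects of $\mathcal{S}$ form a set.
\item Each hom-class ${\rm Hom}_\K(B,C)$ injects into the set of functions $|B|\to|C|$, because $|-|$ is faithful. Hence the hom-classes are sets.
\end{itemize}

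The third step is to conclude. The inclusion $\mathcal{S}\embed\K_\lambda$ is fully faithful because it is a full subcategory inclusion. By the first step it is essentially surjective. Hence it is an equivalence, and $\K_\lambda$ is essentially small.

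The only delicate point is the size bookkeeping. One has to make sure that all the fibers being gathered are indexed by the set $\mathcal{P}(\lambda)$. If instead one indexed by all sets of cardinality at most $\lambda$, that would be a proper class. Fixing the ground set $\lambda$ avoids this, and transportability is exactly what makes it possible to move every structure onto a subset of $\lambda$.
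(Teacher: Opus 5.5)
Your proof is correct and takes essentially the same route as the paper. Both transport each structure onto a carrier from a fixed set of candidate sets, get a set of objects from fiber-smallness and small hom-sets from faithfulness, and conclude because the full inclusion is fully faithful and essentially surjective. The only cosmetic difference is that the paper transports onto ordinals $\alpha\leq\lambda$ via a bijection $|A|\to\alpha$, whereas you transport onto arbitrary subsets $X\subseteq\lambda$ indexed by $\mathcal{P}(\lambda)$; either index set works.
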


\begin{proof}
Given $A\in\K_\lambda$, there is a bijection $\sigma\colon |A|\edge{\simeq}\alpha$ for some $\alpha\leq\lambda$.
By transportability, there is an isomorphism $A\edge{\simeq} A^\sigma\in\K_\lambda$ such that $|A^\sigma|=\alpha$.
Let $\K_\lambda^-$ be the full subcategory of $\K_\lambda$ consisting of structures whose carrier are ordinals $\alpha\leq\lambda$.
Then $\K_\lambda^-$ is small, since $\K_\lambda^-$ is a set by fiber-smallness and $|-|$ is faithful.
It is easy to check that $\K_\lambda^-$ is equivalent to $\K_\lambda$.
\end{proof}

Except for Section \ref{sec:set-theory}, transportability and fiber-smallness are used only in the proof of Observation \ref{obs:essentially-small}, so all arguments remain valid even if these axioms are replaced by the essential smallness of $\K_\lambda$ for all cardinals $\lambda$.
When considering the category of (pre-)sheaves on $\K_\lambda^{\rm op}$, we identify $\K_\lambda$ with the equivalent small category $\K_\lambda^-$.


\begin{example}
For every first-order theory $T$, the category of models of $T$ and (elementary) embeddings is a structure-class.
\end{example}

Therefore, we consider a structure-class as an abstraction of a class of structure.
Here, note that transportability corresponds to the isomorphism invariance which always holds for abstract elementary classes (AECs) in model theory \cite{She09}.
We always assume that $\K$ is a structure-class.

\begin{remark}
The notion of structure-classes has some overlap with the framework of ``concrete accessible categories with concrete monomorphisms'' in the sense of \cite{LiRo16}.
There are some differences: Our definition does not assume accessibility and uses the cardinality of the underlying set as the size measure instead of a presentability rank (thus, the downward L\"owenheim-Skolem property is not assumed), and our definition assumes transportability, which has a significant role in Section \ref{sec:set-theory}.
\end{remark}

\subsection{Amalgamation}
In model theory, the left Ore condition is known as the amalgamation property.

\begin{definition}
A structure-class $\mathcal{K}$ has the {\bf amalgamation property} ({\rm AP}) if, for all structures $A,B,C\in\mathcal{K}$ and all embeddings $A\embedge{f}B$ and $A\embedge{g}C$, there exist a structure $D\in\mathcal{K}$ and embeddings $B\embedge{h}D$ and $C\embedge{k}D$ such that $h\circ f=k\circ g$.
\[
\xymatrix{
	A \ar[rr]^f \ar[d]_g & & B \ar[d]^h \\
	C \ar[rr]_k & & D
}
\]
\end{definition}

Such $(h,k)$ is often called an amalgamation of $(f,g)$.

\begin{obs}\label{obs:right-Ore-dense-atomic2}
$\mathcal{K}$ has the {AP} if and only if it satisfies the left Ore condition.

In this case, the opposite category $\mathcal{K}^{\rm op}$ satisfies the right Ore condition, so by Observation \ref{obs:right-Ore-dense-atomic}, the double-negation topology is the atomic topology.
Indeed, each embedding $A\embed B$ gives a cover of $A$.
\qed
\end{obs}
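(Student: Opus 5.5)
The plan is to unfold the definitions and observe that the left Ore condition on $\K$ is literally the amalgamation property. After that, apply the duality between left and right Ore conditions and invoke Observation \ref{obs:right-Ore-dense-atomic}.

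First, the equivalence. The left Ore condition, dual to the right Ore condition, says that every pair of morphisms $\{Y\to X_i\}_{i<2}$ with a common domain admits a cocone. That is, for $A\embedge{f}B$ and $A\embedge{g}C$ there exist $D$ and morphisms $B\embedge{h}D$, $C\embedge{k}D$ with $h\circ f=k\circ g$. Since morphisms of $\K$ are by definition the embeddings, this is verbatim the amalgamation property, so both directions are immediate.

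Second, the passage to $\K^{\rm op}$. A cocone over a pair in $\K$ is exactly a cone over the corresponding pair in $\K^{\rm op}$, so the left Ore condition for $\K$ is the right Ore condition for $\K^{\rm op}$. To apply Observation \ref{obs:right-Ore-dense-atomic} we need a small category. Here I would use Observation \ref{obs:essentially-small}: replace $\K_\lambda$ by the equivalent small category $\K_\lambda^-$, whose structures have carriers that are ordinals $\alpha\le\lambda$, as the paper does when forming sheaf toposes. Restricting to $\K_\lambda$ is only legitimate if the amalgam $D$ can be chosen of size at most $\lambda$. So either AP is read as holding within $\K_\lambda$, or one notes that the observation is meant for whichever small full subcategory is closed under amalgamation. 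This size bookkeeping is the only delicate point, and I would handle it by stating that AP is assumed for the category under consideration. The Ore condition transfers along the equivalence $\K_\lambda^-\simeq\K_\lambda$ because it is invariant under equivalence.

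Finally, the description of covers. Under the atomic topology, every single morphism into an object is a cover. A morphism $B\to A$ in $\K^{\rm op}$ is an embedding $A\embed B$ in $\K$, so each embedding $A\embed B$ gives a $J_{\rm at}$-cover of $A$. There is no real obstacle beyond the size remark above.
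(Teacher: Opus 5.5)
Your proposal is correct and matches the paper, which records this as immediate from the definitions: the AP is literally the left Ore condition (cocones over spans), which is the right Ore condition for $\K^{\rm op}$, and then Observation \ref{obs:right-Ore-dense-atomic} gives that the double-negation topology is atomic, so every embedding $A\embed B$ is a cover of $A$. Your remark on smallness (passing to the small equivalent category $\K_\lambda^-$ and requiring the amalgam to stay in $\K_\lambda$) matches how the paper uses the observation, since it later always assumes the AP for $\K_\lambda$ itself.
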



A class of structures often has the following stronger property, which has also been extensively studied in model theory:

\begin{definition}
A structure-class $\mathcal{K}$ has the {\bf strong amalgamation property} (strong AP) if every pair $(A\embedge{f}B,\ A\embedge{g}C)$ has an amalgamation $(B\embedge{h}D,\ C\embedge{k}D)$ such that the intersection of the images of $B$ and $C$ in $D$ coincides with the image of $A$ in $D$ in ${\bf Set}$:
\[h[B]\cap k[C]=hf[A]=kg[A].\]
\end{definition}


The following condition is always satisfied by the class $\K$ of all models of a first-order theory, so it usually receives little attention; however, in our context, it must be explicitly stated.

\begin{definition}[\cite{LiRo16}]
A structure-class $\K$ satisfies the {\bf coherence axiom} if for every $\K$-morphisms $A\embedge{g}C$ and $B\embedge{h}C$, and every function $f\colon |A|\to|B|$, if $|g|=|h|\circ f$ then there exists a $\K$-morphism $\bar{f}\colon A\to B$ such that $|\bar{f}|=f$.
\end{definition}

Indeed, the coherence axiom is a condition explicitly contained in the definition of AEC in model theory \cite{She09}.
Therefore, every AEC forms a structure-class satisfying the coherence axiom.

\begin{prop}\label{prop:stAP-subcanonical}
Let $\mathcal{K}$ be a structure-class such that $\K_\lambda$ satisfies the strong AP and the coherence axiom.
Then the atomic topology $J_{\rm at}$ on $\mathcal{K}_\lambda^{\rm op}$ is subcanonical.
\end{prop}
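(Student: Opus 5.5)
By Lemma \ref{lem:subcanonical-strict}, it suffices to show that every morphism of $\K_\lambda^{\rm op}$ is a strict epimorphism. Equivalently, every embedding $A\embedge{f}B$ in $\K_\lambda$ should be a strict monomorphism in $\K_\lambda$.

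First I unwind the dual of Definition \ref{def:prestrict}. Let $Z\embedge{g}B$ be an embedding in $\K_\lambda$ such that, for all embeddings $x,y\colon B\embed T$ in $\K_\lambda$, $x\circ f=y\circ f$ implies $x\circ g=y\circ g$. We must produce a unique embedding $Z\embedge{t}A$ with $g=f\circ t$.

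Uniqueness is immediate. The map $|f|$ is injective, so $f$ is a monomorphism. Hence $f\circ t=f\circ t'$ forces $|t|=|t'|$, and faithfulness of $|-|$ gives $t=t'$.

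For existence, the plan is to test the hypothesis on $g$ against a strong amalgamation of $f$ with itself. Apply the strong AP in $\K_\lambda$ to the pair $(A\embedge{f}B,\ A\embedge{f}B)$. This gives $D\in\K_\lambda$ and embeddings $h,k\colon B\embed D$ with $h\circ f=k\circ f$ and
\[
h[B]\cap k[B]=hf[A].
\]
By the hypothesis on $g$, we get $h\circ g=k\circ g$.

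Now take any $z\in|Z|$. Then $h(g(z))=k(g(z))$ lies in $h[B]\cap k[B]=hf[A]$, so $h(g(z))=h(f(a))$ for some $a\in|A|$. Injectivity of $|h|$ gives $g(z)=f(a)$, and this $a$ is unique because $|f|$ is injective. Hence $g[Z]\subseteq f[A]$, and we obtain a function $\tau\colon|Z|\to|A|$ with $|g|=|f|\circ\tau$.

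Finally, the coherence axiom applied to $Z\embedge{g}B$ and $A\embedge{f}B$ lifts $\tau$ to a $\K$-morphism $t\colon Z\to A$ with $|t|=\tau$. Then $|f\circ t|=|g|$, and faithfulness gives $f\circ t=g$. Since $\K_\lambda$ is a full subcategory, $t$ lies in $\K_\lambda$, so $f$ is a strict monomorphism as required.

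The only genuinely nontrivial step is the amalgamation-of-$f$-with-itself trick, which converts the set-theoretic intersection condition into the inclusion $g[Z]\subseteq f[A]$. Strong AP is essential here, because plain AP would allow $h=k$. Coherence is then exactly what is needed to turn the induced set map into a morphism.
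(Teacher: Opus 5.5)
Your proof is correct and is essentially the paper's argument. The paper verifies the sheaf condition of Observation~\ref{obs:characterization-atomic-sheaf-presheaf} for $\yo A$ directly, whereas you phrase it as strictness of every embedding via Lemma~\ref{lem:subcanonical-strict}, which is just a restatement of that condition. The key steps are the same: strongly amalgamate the embedding with itself to force the image inclusion, then lift the set map via coherence, with uniqueness coming from monicity.
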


\begin{proof}
For a structure $A\in\mathcal{K}$, we show that $\yo A$ is a sheaf.
Since we are working in the opposite category $\mathcal{K}^{\rm op}$, $(\yo A)_B$ is the set of all embeddings from $A$ to $B$.
We use Observation \ref{obs:characterization-atomic-sheaf-presheaf} to show that $\yo A$ is a $J_{\rm at}$-sheaf.

Take an embedding $P\embedge{i}Q$ and an $i$-matching element $x\in(\yo A)_Q$.
By the definition of an $i$-matching element in the opposite category, $si=ti$ implies $sx=tx$.
By assumption, we obtain a strong amalgamation $h,k$ of two copies of $P\embedge{i}Q$.
\[
\xymatrix{
	P \ar[rr]^i \ar[d]_i & & Q \ar[d]^h \\
	Q \ar[rr]_k & & R
}
\]

Here, $h[Q]\cap k[Q]=hi[P]$ in ${\bf Set}$.
For every $a\in A$, we have $x(a)\in Q$.
By the definition of an $i$-matching element, the amalgamation identity $hi=ki$ implies $hx(a)=kx(a)\in h[Q]\cap k[Q]=hi[P]$.
Since $h$ is injective, $x(a)\in i[P]$.
Thus $x[A]\subseteq i[P]$.

Since $i$ is injective, we have a function $i^-\colon i[P]\to P$ such that $i^-(i(z))=z$ in ${\bf Set}$.
Then the function $x':=i^{-}\circ x\colon A\to P$ satisfies $|i|\circ x'=|x|$.
By coherence, there is a morphism $\hat{x}\colon A\to P$ such that $|\hat{x}|=x'$.
Then $i\circ \hat{x}=x$, and $\hat{x}\in(\yo A)_P$.
Here, such $\hat{x}$ is unique since $i$ is an monomorphism.
The condition in Observation \ref{obs:characterization-atomic-sheaf-presheaf} is therefore satisfied.
\end{proof}

We say that a structure-class $\K_\lambda$ is a {\bf subcanonical AP class} if it satisfies the coherence axiom and has the AP, and the atomic topology $J_{\rm at}$ on $\K_\lambda^{\rm op}$ is subcanonical.

\begin{definition}
$\mathcal{K}$ has the {\bf joint embedding property} (JEP) if, for any structures $A,B\in\mathcal{K}$, there exists a structure $C\in\mathcal{K}$ such that $A,B\embed C$.
\end{definition}

\begin{prop}\label{prop:JEP-inhabited}
If $\mathcal{K}_\lambda$ is a subcanonical AP class, the following two conditions are equivalent:
\begin{enumerate}
\item $\mathcal{K}_\lambda$ has the JEP.
\item In $\mathrm{Sh}(\mathcal{K}_\lambda^{\rm op},J_{\rm at})$, every representable presheaf $\yo X$ is inhabited.
\end{enumerate}
\end{prop}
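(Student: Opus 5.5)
We first have to fix what ``inhabited'' means for an object $Z$ of the topos: the unique morphism $Z\to\mathbf{1}$ is an epimorphism. The plan is to compute the terminal sheaf and then apply the local-surjectivity criterion of Lemma \ref{fact:SGL-epi-characterization-cover}. The terminal presheaf $\mathbf{1}$, with $\mathbf{1}_B=\{*\}$ for every $B$, is a $J_{\rm at}$-sheaf: a matching element $*\in\mathbf{1}_Q$ extends uniquely to $*\in\mathbf{1}_P$. Hence $\mathbf{1}$ is the terminal object of ${\rm Sh}(\K_\lambda^{\rm op},J_{\rm at})$. Since $\K_\lambda$ is a subcanonical AP class, each $\yo X$ is a sheaf, and $\yo X\to\mathbf{1}$ is a morphism of sheaves. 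By Lemma \ref{fact:SGL-epi-characterization-cover}, $\yo X$ is inhabited if and only if for every structure $B\in\K_\lambda$ there are a $J_{\rm at}$-cover of $B$ in $\K_\lambda^{\rm op}$ and an element of $\yo X$ at its domain. Unwinding the opposite category, the condition reads:
\[
\text{for every } B\in\K_\lambda \text{ there are an embedding } B\embed C \text{ in } \K_\lambda \text{ and an embedding } X\embed C.
\]
Here we use that $(\yo X)_C$ is the set of embeddings $X\embed C$, exactly as in the proof of Proposition \ref{prop:stAP-subcanonical}. Any element of $(\yo X)_C$ is such an embedding, and any embedding $B\embed C$ is a cover of $B$.

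For (1)$\Rightarrow$(2), let $X$ and $B$ be given. The JEP in $\K_\lambda$ yields $C\in\K_\lambda$ with embeddings $B\embed C$ and $X\embed C$. Then $B\embed C$ is the required cover and $X\embed C$ is the required element of $(\yo X)_C$. So $\yo X\to\mathbf{1}$ is locally surjective, hence an epimorphism.

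For (2)$\Rightarrow$(1), let $A,B\in\K_\lambda$. Since $\yo A$ is inhabited, the local surjectivity of $\yo A\to\mathbf{1}$ at the object $B$ and the element $*\in\mathbf{1}_B$ gives a cover $B\embed C$ together with some $x\in(\yo A)_C$, that is, an embedding $A\embed C$. So $C$ jointly embeds $A$ and $B$.

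The only point needing care is the identification of the terminal object and of ``inhabited'' with epimorphism to $\mathbf{1}$. Everything else is a direct translation of Lemma \ref{fact:SGL-epi-characterization-cover}.
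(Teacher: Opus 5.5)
Your proof is correct and takes essentially the same route as the paper. You read ``$\yo X$ is inhabited'' as $\yo X\to\mathbf{1}$ being an epimorphism, and translate this via local surjectivity (Lemma \ref{fact:SGL-epi-characterization-cover}) into: for each $B$ there is a cover $B\embed C$ with $(\yo X)_C\neq\emptyset$, which is exactly the JEP. Your explicit check that the constant presheaf $\mathbf{1}$ is the terminal sheaf is a small detail the paper leaves implicit.
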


\begin{proof}
For $\yo X$ to be inhabited, that is, for $\yo X\to\mathbf{1}$ to be an epimorphism (equivalently, locally surjective by Lemma \ref{fact:SGL-epi-characterization-cover}), means that for every structure $P\in\mathcal{K}_\lambda$ there is a cover $P\embed Q$ such that $(\yo X)_Q\not=\emptyset$.
Equivalently, there exists an embedding $X\embed Q$.
In other words, for every structure $P$, there is a structure $Q$ together with embeddings $P\embed Q$ and $X\embed Q$.
Thus every representable presheaf is inhabited (that is, $\yo X\to\mathbf{1}$ is an epimorphism for every $X\in\mathcal{K}_\lambda$) if and only if every pair $X,P\in\mathcal{K}_\lambda$ embeds into some $Q\in\mathcal{K}_\lambda$, which is precisely the assertion that $\mathcal{K}_\lambda$ has the joint embedding property.
\end{proof}

\begin{remark}
If we view a cocone as a generalized amalgamation, then Mejak's choice criterion (Theorems \ref{lem:cone-characterization-countable-choice} and \ref{lem:cone-characterization-dependent-choice}) in $\K^{\rm op}$ can also be regarded as a generalized amalgamation property for $\K$.
\end{remark}

\section{Choice versus stability}\label{sec:choice-stable}

\subsection{Type and stability}

In model theory, a type is a consistent list of properties satisfied by an element (in an extension) of a structure.
More precisely, given a structure $M$ and a subset $A\subseteq M$, it is the set of all formulas $\varphi(z,\bar{a})$ with parameters from $A$ that are satisfied by an element $z\in M$.
There are several equivalent definitions of a type; we adopt the following one, which is particularly well suited to the atomic topology.

\begin{definition}
Let $\mathcal{K}$ be a structure-class.
\begin{enumerate}
\item For structures $A,M\in\mathcal{K}$, a pair $(f,z)$ consisting of an embedding $A\embedge{f}M$ and an element $z\in M$ is called a {\bf pointed extension} (over $A$).
\item Pointed extensions $(A\embedge{f}M,s)$ and $(A\embedge{g}N,t)$ are equivalent if there exists an amalgamation $M\embedge{h}L$ and $N\embedge{k}L$ of $f$ and $g$ such that $h(s)=k(t)$.
\end{enumerate}
\end{definition}

\begin{lemma}
If $\K$ has the AP, the equivalence between pointed extensions over $A$ yields an equivalence relation.
\end{lemma}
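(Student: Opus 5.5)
We verify reflexivity, symmetry and transitivity in turn; only transitivity actually uses the AP.

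\emph{Reflexivity.} For a pointed extension $(A\embedge{f}M,s)$, take $L=M$ and $h=k=1_M$. Then $h\circ f=k\circ f$ and $h(s)=k(s)$, so $(f,s)$ is equivalent to itself.

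\emph{Symmetry.} If $(h,k)$ witnesses that $(f,s)$ is equivalent to $(g,t)$, then the swapped pair $(k,h)$ is an amalgamation of $(g,f)$. It satisfies $k(t)=h(s)$, so it witnesses that $(g,t)$ is equivalent to $(f,s)$.

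\emph{Transitivity.} Suppose $(A\embedge{f}M,s)\sim(A\embedge{g}N,t)$ is witnessed by $M\embedge{h}L$ and $N\embedge{k}L$ with $hf=kg$ and $h(s)=k(t)$. Suppose also $(A\embedge{g}N,t)\sim(A\embedge{e}P,u)$ is witnessed by $N\embedge{h'}L'$ and $P\embedge{k'}L'$ with $h'g=k'e$ and $h'(t)=k'(u)$. Apply the AP to the span $L\overset{k}{\hookleftarrow}N\embedge{h'}L'$. This gives $L\embedge{m}D$ and $L'\embedge{m'}D$ with $mk=m'h'$.

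We claim that $(m h,\ m' k')$ witnesses $(f,s)\sim(e,u)$.

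First, it is an amalgamation of $f$ and $e$:
\[
mhf=mkg=m'h'g=m'k'e.
\]
Second, it identifies the points:
\[
mh(s)=mk(t)=m'h'(t)=m'k'(u).
\]
Hence $(f,s)\sim(e,u)$.

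The only real step is transitivity, and it reduces to one use of the AP: amalgamate the two witnessing structures $L$ and $L'$ over the common middle structure $N$. The rest is diagram chasing, using that embeddings are functions, so composition commutes with evaluation at points. If one works inside $\K_\lambda$, the AP must be applied there so that $D$ stays in the same class.
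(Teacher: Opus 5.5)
Your proof is correct and takes the same approach as the paper: transitivity is proved by amalgamating the two witnessing structures over the middle extension (the paper's pair $(u_1,v_1)$ is your $(k,h')$), and then the same chain of equalities checks the commuting square and the identification of the points. Your explicit checks of reflexivity and symmetry, which the paper leaves implicit, are a harmless addition.
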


\begin{proof}
To show transitivity, let $(A\embedge{m_i}M_i;a_i)_{i\leq 2}$ be pointed extensions such that $(m_0;a_0)\simeq(m_1;a_1)\simeq(m_2;a_2)$.
Then $(m_0,m_1)$ has an amalgamation $(u_0,u_1)$, and $(m_1,m_2)$ has an amalgamation $(v_1,v_2)$ such that $u_0(a_0)=u_1(a_1)$ and $v_1(a_1)=v_2(a_2)$.
By the AP, $(u_1,v_1)$ has an amalgamation $(s,t)$:
\begin{align*}
\xymatrix{
	M_0 \ar[d]_{u_0} & & A \ar[rr]^{m_2} \ar[ll]_{m_0} \ar[d]_{m_1} & & M_2 \ar[d]^{v_2}\\
	N_{01} \ar[rrd]_{s} & & M_1 \ar[rr]_{v_1} \ar[ll]^{u_1} & & N_{12} \ar[lld]^{t}\\
	& & L & &
}
\end{align*}

Then, $(su_0,tv_2)$ is an amalgamation of $(m_0,m_2)$, and we have  $su_0(a_0)=su_1(a_1)=tv_1(a_1)=tv_2(a_2)$.
Therefore, we get $(m_0;a_0)\simeq(m_2;a_2)$.
\end{proof}

\begin{definition}
An equivalence class of pointed extensions $(A\embedge{f}M,z)$ with domain $A$ is called a {\bf Galois type} over $A$.
\end{definition}

\begin{remark}
Galois types for elementary embeddings coincide with the usual notion of types in model theory \cite[Chapter 8]{Bal09}.
Galois types for embeddings (for a suitable class of structures) are known to correspond to {\bf quantifier-free types} (see \cite[Remark 3.8]{Vas17}).
\end{remark}

\begin{remark}
We use the $\lambda$-restricted version of Galois type.
For an AEC with the usual L\"owenheim--Skolem hypotheses this agrees with the standard definition of Galois type above the L\"owenheim--Skolem cardinal.
\end{remark}

\begin{definition}
$\K$ is {\bf Galois $\lambda$-stable} if, for every structure $A\in\K_\lambda$, there are at most $\lambda$ Galois types over $A$ (in $\K_\lambda$).
\end{definition}

\begin{example}\label{exa:LO-non-stable}
The class $\mathcal{L}$ of linear orders is not Galois $\om$-stable:
consider the ordering of the rationals $\mathbb{Q}\in\mathcal{L}_\om$.
For each real number $r\in\mathbb{R}$, the suborder $\mathbb{Q}\cup\{r\}\subseteq\mathbb{R}$ is also a countable linear order.
Clearly, for each $r\in\R$, $(\mathbb{Q}\embed\mathbb{Q}\cup\{r\};r)$ gives a distinct Galois type over $\mathbb{Q}$.
Thus there are uncountably many distinct Galois types over $\mathbb{Q}$, so $\mathcal{L}$ is not $\om$-stable.
One may also show that it is not Galois $\lambda$-stable for any $\lambda$; see e.g.~\cite{Kei76}.
\end{example}


\begin{theorem}\label{thm:main-stability-thm}
For an infinite cardinal $\lambda$, suppose that $\K_\lambda$ is a subcanonical AP class satisfying the $\lambda$-TV (Definition \ref{def:TV}). Then the following three conditions are equivalent.
\begin{enumerate}
\item $\K$ is Galois $\lambda$-stable.
\item ${\rm Sh}(\K^{\rm op}_\lambda,J_{\rm at})$ satisfies the axiom of choice for $\lambda^+$-indexed families.
\item ${\rm Sh}(\K^{\rm op}_\lambda,J_{\rm at})$ satisfies the axiom of choice for all (external-)set indexed families.
\end{enumerate}
Here $\lambda^+$ denotes the least cardinal greater than $\lambda$.
\end{theorem}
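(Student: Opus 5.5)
By Theorem \ref{lem:cone-characterization-countable-choice}, applied to the subcanonical atomic site $(\K_\lambda^{\rm op},J_{\rm at})$, conditions (2) and (3) become statements about fans in $\K_\lambda$. A family of $\K^{\rm op}_\lambda$-morphisms with a common codomain is a fan $\{A\embed B_i\}_{i\in I}$ of embeddings. A cone over it in $\K_\lambda^{\rm op}$ is a cocone $\{B_i\embed C\}_{i\in I}$ in $\K_\lambda$ that commutes over $A$. So I will prove
\[
\text{(1)}\Rightarrow\text{every set-indexed fan in }\K_\lambda\text{ has a cocone}\Rightarrow\text{every }\lambda^+\text{-fan has a cocone}\Rightarrow\text{(1)}.
\]
The middle implication is trivial. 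Hence (3)$\Rightarrow$(2) is immediate, and the work lies in (2)$\Rightarrow$(1) and (1)$\Rightarrow$(3).

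For (2)$\Rightarrow$(1) I argue contrapositively by pigeonhole. Suppose some $A\in\K_\lambda$ has more than $\lambda$ Galois types. Choose $\lambda^+$ pairwise distinct types, realized by pointed extensions $(A\embedge{f_\alpha}B_\alpha,b_\alpha)$ for $\alpha<\lambda^+$. By (2) and Mejak's criterion, the fan $\{f_\alpha\}_{\alpha<\lambda^+}$ has a cocone $\{B_\alpha\embedge{g_\alpha}C\}$ with $C\in\K_\lambda$. Since $|C|\le\lambda<\lambda^+$, there are $\alpha\ne\beta$ with $g_\alpha(b_\alpha)=g_\beta(b_\beta)$. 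As $g_\alpha f_\alpha=g_\beta f_\beta$, the pair $(g_\alpha,g_\beta)$ is an amalgamation of $(f_\alpha,f_\beta)$ identifying the two points. Hence the two types coincide, which is a contradiction. This direction uses neither the $\lambda$-TV nor anything beyond subcanonicity.

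For (1)$\Rightarrow$(3), let $\{A\embedge{f_i}B_i\}_{i\in I}$ be an arbitrary fan in $\K_\lambda$. I would proceed in two steps.

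First, reduce the index set to size at most $\lambda$. Call two extensions $f_i,f_j$ equivalent if there is an isomorphism $B_i\cong B_j$ over $A$, or more generally if each embeds into the other over $A$ in a compatible way. Equivalent extensions can share a single slot of the cocone. Galois $\lambda$-stability bounds the number of $1$-types over each structure of size $\le\lambda$ by $\lambda$. I expect the $\lambda$-TV hypothesis to be exactly what converts this bound into a bound of $\lambda$ on the number of relevant extension types over $A$, i.e.\ that an extension of $A$ inside $\K_\lambda$ is controlled by realizations of $\le\lambda$ Galois types. For example, one can enumerate $B_i$ as a sequence of length $\le\lambda$ and successively take the type of the next element over $A$ together with the previous ones.

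Second, amalgamate the remaining $\le\lambda$ representatives $B_\xi$, $\xi<\mu\le\lambda$, by a transfinite chain $A\embed C_0\embed C_1\embed\cdots$. At successor steps, the AP amalgamates $C_\xi$ with $B_\xi$ over $A$. At limit steps, one needs a union of the chain that remains in $\K_\lambda$; since the length is $\le\lambda$ and each stage has size $\le\lambda$, the cardinality stays at most $\lambda$. An alternative that avoids unions is to realize all $\le\lambda$ types at once in a single structure of size $\lambda$, using stability, and then use the $\lambda$-TV to embed every $B_i$ over $A$.

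The main obstacle is this second half of (1)$\Rightarrow$(3): showing that stability, combined with the $\lambda$-TV, really reduces an arbitrary fan to $\le\lambda$ extension types, and then that these can be jointly embedded over $A$ into one structure of size $\le\lambda$. I would first try to read the $\lambda$-TV as directly guaranteeing a structure in $\K_\lambda$ over $A$ that realizes any prescribed $\le\lambda$ types. From such a structure I would build a $\lambda$-saturated-over-$A$ object $C\in\K_\lambda$ into which every $B_i$ embeds over $A$. That single $C$, with the resulting embeddings $B_i\embed C$, is the desired cocone.
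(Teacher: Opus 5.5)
Your (3)$\Rightarrow$(2) and (2)$\Rightarrow$(1) are correct and are exactly the paper's argument (Theorem \ref{thm:choice-implies-stability}). The gap is in (1)$\Rightarrow$(3). You leave it as a plan, and its first step is false: Galois $\lambda$-stability does not bound by $\lambda$ the number of extensions of $A$ up to isomorphism, or up to bi-embeddability, over $A$. For instance, abelian groups with embeddings are Galois $\aleph_0$-stable, since the type of $b$ over $A$ is determined by the least $n>0$ with $nb\in A$ (if any), together with $nb$. Yet the groups $\bigoplus_{p\in S}\mathbb{Z}/p\mathbb{Z}$, for $S$ a set of primes, are $2^{\aleph_0}$ pairwise non-bi-embeddable countable extensions of the trivial group. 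Your proposed count, which records the type of each new element of $B_i$ over $A$ and the earlier elements, ranges over up to $\lambda^\lambda=2^\lambda$ sequences, because each type lives over a different base. The $\lambda$-TV cannot repair this, since it only supplies concrete colimits of chains of length at most $\lambda$. So an arbitrary fan cannot be cut down to a $\lambda$-fan. Your second step, amalgamating a $\lambda$-fan along a chain, is fine, but it does not reach (3).

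Your alternative is a single $C\in\K_\lambda$ over $A$ into which every extension of $A$ embeds. This is precisely the paper's target: it is a universal extension, and Theorem \ref{thm:choice-vs-universal-extension} shows that this property is equivalent to (3). But you give no argument that such a $C$ exists, and that existence is the whole content of the direction. Realizing all types over $A$ once does not suffice, because embedding $B\supseteq A$ requires realizing types over ever larger bases.

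The paper's Theorem \ref{thm:Shelah-AEC} first builds a continuous chain $A=M_0\embed M_1\embed\cdots\embed M_\lambda$ in which each $M_\alpha\embed M_{\alpha+1}$ realizes all Galois types over $M_\alpha$. This uses stability at every $M_\alpha$, the AP at successors, and the $\lambda$-TV at limits. Given $A\embed B$, it then builds a parallel chain $(n_{\alpha\beta}\colon N_\alpha\embed N_\beta)$ with $N_0=B$ and embeddings $e_\alpha\colon M_\alpha\embed N_\alpha$.
\begin{enumerate}
\item At stage $\alpha=\langle\beta,\gamma\rangle$ (with $\beta\le\langle\beta,\gamma\rangle$), take the $\gamma$th element of $N_\beta$ and its image in $N_\alpha$. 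Its type over $M_\alpha$ is realized in $M_{\alpha+1}$, and an amalgamation identifies the two points.
\item By clause (2) of the $\lambda$-TV, every element of $N_\lambda$ comes from some $N_\beta$, so this bookkeeping makes $e_\lambda$ surjective.
\item Conservativity of $|-|$ then makes $e_\lambda$ an isomorphism, and $e_\lambda^{-1}n_{0\lambda}\colon B\embed M_\lambda$ is the required embedding over $A$.
\end{enumerate}
This push-forward-and-absorb trick replaces defining an embedding $B\to C$ element by element, which is not available for an abstract structure-class.
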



(3)$\Rightarrow$(2) is immediate.
We first prove (2)$\Rightarrow$(1).

\begin{theorem}\label{thm:choice-implies-stability}
Suppose that $\K_\lambda$ is a subcanonical AP class.
If ${\rm Sh}(\K^{\rm op}_\lambda,J_{\rm at})$ satisfies the axiom of choice for $\lambda^+$-indexed families, then $\K$ is Galois $\lambda$-stable.
\end{theorem}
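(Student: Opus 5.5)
We argue by contraposition, using Mejak's criterion (Theorem \ref{lem:cone-characterization-countable-choice}) in the category $\K_\lambda^{\rm op}$. Since $J_{\rm at}$ on $\K_\lambda^{\rm op}$ is subcanonical, choice for $\lambda^+$-indexed families says: every family $\{A\embed B_\alpha\}_{\alpha<\lambda^+}$ of embeddings in $\K_\lambda$ with common domain $A$ (a cone in $\K_\lambda^{\rm op}$) admits a cocone. That is, there are $D\in\K_\lambda$ and embeddings $g_\alpha\colon B_\alpha\embed D$ with $g_\alpha\circ f_\alpha$ independent of $\alpha$. So suppose $\K$ is not Galois $\lambda$-stable. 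Then some $A\in\K_\lambda$ carries at least $\lambda^+$ distinct Galois types in $\K_\lambda$, and we choose representatives $(A\embedge{f_\alpha}B_\alpha,z_\alpha)$ for $\alpha<\lambda^+$, pairwise inequivalent, with every $B_\alpha\in\K_\lambda$.

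By the criterion this fan has a cocone $\{B_\alpha\embedge{g_\alpha}D\}_{\alpha<\lambda^+}$ in $\K_\lambda$. Write $e:=g_\alpha\circ f_\alpha\colon A\embed D$ for the common composite. Since $|D|\le\lambda<\lambda^+$, the pigeonhole principle gives $\alpha\neq\beta$ with $g_\alpha(z_\alpha)=g_\beta(z_\beta)$. Now $(g_\alpha,g_\beta)$ is an amalgamation of $(f_\alpha,f_\beta)$, because $g_\alpha f_\alpha=e=g_\beta f_\beta$, and it identifies the two points. By the definition of equivalence of pointed extensions, $(f_\alpha,z_\alpha)\simeq(f_\beta,z_\beta)$. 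This contradicts the choice of pairwise distinct types, so $\K$ is Galois $\lambda$-stable.

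The main thing to check is bookkeeping. The cone condition in $\K_\lambda^{\rm op}$ is exactly a cocone with a common composite in $\K_\lambda$, which is what an amalgamation requires. Galois types are computed inside $\K_\lambda$, so the representatives and the amalgam $D$ all lie in $\K_\lambda$; this is what gives the cardinality bound $|D|\le\lambda$. Only subcanonicity is needed here, for the direction (2)$\Rightarrow$(1) of Mejak's theorem. AP is used only to make type equivalence well defined, and the $\lambda$-TV hypothesis is not needed for this direction. Picking $\lambda^+$ representatives uses choice in the ambient set theory, which is harmless.
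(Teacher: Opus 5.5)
Your proof is correct and is essentially the paper's argument: the paper also picks $\lambda^+$ representatives of pairwise distinct Galois types and obtains a cocone from Theorem \ref{lem:cone-characterization-countable-choice}; it then observes that distinctness forces $\alpha\mapsto d_\alpha(z_\alpha)$ to be injective into a structure of size at most $\lambda$, which is your pigeonhole step read contrapositively. One small correction to your bookkeeping remark: AP is not used only to make type equivalence well defined, because AP is the right Ore condition on $\K_\lambda^{\rm op}$, which is what makes $J_{\rm at}$ a topology at all and what Lemma \ref{lem:atomic-topos-embedding-to-epi} relies on inside the direction (2)$\Rightarrow$(1) of Mejak's criterion.
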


\begin{proof}
Suppose that $\K$ is not Galois $\lambda$-stable.
Then there are at least $\lambda^+$ Galois types over some $A\in\K_\lambda$.
Thus, for each $\alpha<\lambda^+$, we may choose a pointed extension $(A\embedge{e_\alpha}M_\alpha;z_\alpha)$, with all the resulting Galois types distinct.

If ${\rm Sh}(\K_\lambda^{\rm op},J_{\rm at})$ satisfies the axiom of choice for $\lambda^+$-indexed families, by subcanonicity, Theorem \ref{lem:cone-characterization-countable-choice} implies that $(A\embedge{e_\alpha}M_\alpha)_{\alpha<\lambda^+}$ admits a cocone $(M_\alpha\embedge{d_\alpha}N)_{\alpha<\lambda^+}$ in $\K_\lambda$.
Thus $d_\alpha e_\alpha=d_\beta e_\beta$ for all $\alpha,\beta<\lambda^+$.
If $\alpha\not=\beta$, the Galois types of $(e_\alpha;z_\alpha)$ and $(e_\beta;z_\beta)$ are distinct, so we must have $d_\alpha(z_\alpha)\not=d_\beta(z_\beta)$.
But $\{d_\alpha(z_\alpha):\alpha<\lambda^+\}\subseteq N\in\K_\lambda$, which would imply $\lambda^+\leq\lambda$, a contradiction.

It follows that ${\rm Sh}(\K_\lambda^{\rm op},J_{\rm at})$ does not satisfy the axiom of choice for $\lambda^+$-indexed families.
\end{proof}

We next give an equivalent condition for Theorem \ref{thm:main-stability-thm} (3).

\begin{definition}\label{def:universal-extension-prop}
A {\bf universal extension} of $M\in\K_\lambda$ is an extension $M\embedge{u} U_M\in\K_\lambda$ satisfying the following condition:
for every embedding $M\embedge{f} N\in\K_\lambda$, there exists an embedding $N\embedge{g} U_M$ such that $g\circ f=u$.

The class $\K_\lambda$ has the {\bf universal extension property} if every structure $M\in\K_\lambda$ has a universal extension.
\end{definition}

\begin{example}\label{exa:set-universal-extension}
The class $\mathrm{Set}$ of sets has the universal extension property:
for a set $M\in\mathrm{Set}_\lambda$, consider the inclusion map $M\embed M+\lambda=:U_M$.
For every injection $M\monoedge{f}N\in\mathrm{Set}_\lambda$, the cardinality of $N\setminus f[M]$ is at most $\lambda$, so there is necessarily an injection $N\setminus f[M]\mono \lambda\embed U_M$.
\end{example}

\begin{theorem}\label{thm:choice-vs-universal-extension}
Suppose that $\K_\lambda$ is a subcanonical AP class.
Then the following are equivalent:
\begin{enumerate}
\item $\K_\lambda$ has the universal extension property.
\item ${\rm Sh}(\K_\lambda^{\rm op},J_{\rm at})$ satisfies the axiom of choice for any (external-)set indexed families.
\end{enumerate}
\end{theorem}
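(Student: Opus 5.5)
By Theorem \ref{lem:cone-characterization-countable-choice}, applied in $\K_\lambda^{\rm op}$ (subcanonicity is assumed), condition (2) is equivalent to the following: for every set $I$, every family of embeddings $\{A\embedge{f_i}B_i\}_{i\in I}$ in $\K_\lambda$ with common domain admits a cocone in $\K_\lambda$. So the plan is to show that the universal extension property is equivalent to this cocone condition for all index sets.

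For (1)$\Rightarrow$(2), take a family $\{A\embedge{f_i}B_i\}_{i\in I}$ and a universal extension $A\embedge{u}U_A$. For each $i$, universality applied to $f_i$ gives $B_i\embedge{g_i}U_A$ with $g_i\circ f_i=u$. Hence $\{g_i\}_{i\in I}$ is a cocone with vertex $U_A\in\K_\lambda$. No cardinality restriction on $I$ enters, since one vertex serves every index.

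For (2)$\Rightarrow$(1), fix $M\in\K_\lambda$. The idea is to take the family of \emph{all} extensions of $M$ in $\K_\lambda$, but that is a proper class, so we first pass to a set of representatives.

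1. Use Observation \ref{obs:essentially-small}: every extension $M\embedge{f}N$ with $N\in\K_\lambda$ is isomorphic, under $M$, to one whose codomain lies in the small skeleton $\K_\lambda^-$.
2. Let $I$ be the set of all embeddings $M\embedge{f}N$ with $N\in\K_\lambda^-$; this is a set because $\K_\lambda^-$ is small.
3. By (2) and Theorem \ref{lem:cone-characterization-countable-choice}, the family indexed by $I$ has a cocone $\{N_f\embedge{g_f}U\}_{f\in I}$ with $U\in\K_\lambda$. Put $u:=g_f\circ f$, which is independent of $f$ by the cocone condition.
4. Check universality. Given an arbitrary $M\embedge{f'}N'$ in $\K_\lambda$, choose an isomorphism $\sigma\colon N'\to N$ with $N\in\K_\lambda^-$, and set $f:=\sigma f'$. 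Then $g_f\sigma$ is an embedding $N'\embed U$ satisfying $g_f\sigma f'=g_f f=u$.

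Thus $M\embedge{u}U$ is a universal extension, and (1) follows.

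The main obstacle is the size issue in (2)$\Rightarrow$(1). The axiom of choice is only available for external sets, so the index family must be made small, and this is exactly what the skeleton from Observation \ref{obs:essentially-small} provides. We should also confirm that Theorem \ref{lem:cone-characterization-countable-choice} is used only with a set $I$, which the reduction to $\K_\lambda^-$ ensures. Amalgamation is not needed in the argument beyond ensuring, via Observation \ref{obs:right-Ore-dense-atomic2}, that the topology considered is the atomic one.
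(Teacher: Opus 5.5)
Your proposal is correct and follows the paper's proof essentially step for step. In (1)$\Rightarrow$(2) you take a single universal extension as the vertex of a cocone over any fan, and in (2)$\Rightarrow$(1) you apply Theorem~\ref{lem:cone-characterization-countable-choice} to the set of all extensions of $M$ into the small skeleton $\K_\lambda^-$. The paper does the same. Your step 4, which transports an arbitrary extension $M\embed N'$ along an isomorphism $N'\cong N\in\K_\lambda^-$, simply spells out the passage from $\K_\lambda^-$ back to $\K_\lambda$ that the paper leaves implicit in its identification of the two categories.
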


\begin{proof}
($\Rightarrow$)
By Theorem \ref{lem:cone-characterization-countable-choice}, it suffices to show that every family of morphisms $(M\embedge{f_i} N_i)_{i\in I}$ admits a cocone in $\K_\lambda$.
Choose $M\embedge{u}U_M$ satisfying the universal extension property.
For each $i\in I$, there is then an embedding $N_i\embedge{g_i}U_M$ such that $g_i\circ f_i=u$.
Thus $(N_i\embedge{g_i}U_M)_{i\in I}$ is the desired cocone.

($\Leftarrow$)
Let $\K_\lambda^-$ be a small full subcategory of $\K_\lambda$ as in Observation \ref{obs:essentially-small}.
For $M\in\K_\lambda^-$, take the family $(M\embedge{i_k} N_k)_{k\in K}$ of all extensions in $\K_\lambda^-$.
This is possible because $\K_\lambda^-$ is small.
By Theorem \ref{lem:cone-characterization-countable-choice} with subcanonicity, the assumption (2) implies the existence of a cocone $(N_k\embedge{g_k} U_M)_{k\in K}$.
Set $u:=g_k\circ i_k\colon M\embed U_M$.
Since this is a cocone, the definition of $u$ is independent of $k$.

Every extension $M\embed N\in\K_\lambda^-$ is of the form $M\embedge{i_k}N_k$, and for $N_k\embedge{g_k}U_M$ we have $g_k\circ i_k=u$.
Therefore, $\K_\lambda^-$ has the universal extension property.
\end{proof}

\begin{remark}
By Theorem \ref{thm:choice-implies-stability} and Theorem \ref{thm:choice-vs-universal-extension}, the universal extension property implies Galois $\lambda$-stability.
\end{remark}

\subsection{Universal extension theorem}

In the setting of classical model theory, it is known that $\lambda$-stability implies the universal extension property.

\begin{fact}[{Shelah \cite[Claim II.1.16]{She09}}; see also {\cite[Chapter 10]{Bal09}}]\label{fact:Shelah-AEC}
Suppose that $\mathcal{K}$ is an abstract elementary class (AEC), $\K_\lambda\not=\emptyset$, and it has the AP.
Then Galois $\lambda$-stability implies the universal extension property.
\end{fact}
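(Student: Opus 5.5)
Although this Fact is cited from Shelah, a proof would follow the standard construction of a universal extension as the union of a chain of length $\lambda$. Fix $M\in\K_\lambda$. The goal is an extension $M\embedge{u}U_M$ with $|U_M|\le\lambda$ such that every $M\embedge{f}N\in\K_\lambda$ factors through $u$. First I would build a continuous increasing chain $(M_i)_{i<\lambda}$ in $\K_\lambda$ with $M_0=M$. The key property is that $M_{i+1}$ realizes every Galois type over $M_i$. To get it, use Galois $\lambda$-stability to list the types over $M_i$ as $(p_j)_{j<\lambda}$. Choose pointed extensions $(M_i\embed N_j;z_j)$ realizing them. Then amalgamate these $\lambda$ extensions one at a time along an inner continuous chain, using the AP at successor steps and unions at limits. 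The AEC axioms give that the unions stay in $\K$, and the L\"owenheim--Skolem number lets us keep every stage in $\K_\lambda$.

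Actually a single chain of length $\lambda$ is not enough, since $N$ may have $\lambda$ elements that must be embedded coherently. So I would take the chain of length $\lambda\cdot\lambda$, or better $\lambda^+$-many steps but cut down: the standard choice is to let $U_M=\bigcup_{i<\lambda}M_i$ where each step realizes all types over $M_i$. Each $M_i$ still has size $\le\lambda$, so $|U_M|\le\lambda$ and $U_M\in\K_\lambda$, using the chain axiom of AECs.

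Given $M\embedge{f}N$, enumerate $N\setminus f[M]$ as $(a_k)_{k<\lambda}$. Build $N$ as a continuous chain $N_k$ with $N_0=f[M]$ and $a_k\in N_{k+1}$, via L\"owenheim--Skolem, and with each $N_k$ of size less than or equal to $\lambda$. Then construct by induction embeddings $g_k\colon N_k\embed M_{i_k}$ that cohere with $u$ and are increasing. At a successor step, first amalgamate over $N_k\cong g_k[N_k]\subseteq M_{i_k}$ to push $N_{k+1}$ over $M_{i_k}$. The element $a_k$ then has a Galois type over $M_{i_k}$, which is realized in $M_{i_k+1}$. Matching that type gives an amalgam witnessing that $a_k$ can be sent into $M_{i_k+1}$.

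The main obstacle is the successor step. Realizing a type only places one element, not the whole $N_{k+1}$. So one needs either types of $\lambda$-tuples, handled by stability for $1$-types transferring to tuples by iteration, or Shelah's trick of realizing types over models of size $\lambda$ and using the coherence axiom to extend $g_k$ along the amalgam. I would follow Shelah: by stability, construct $M_{i+1}$ to be universal over $M_i$ for extensions of the form $M_i\embed N'$ generated by one new element. Coherence then lets the partial maps be glued, and at limits one takes unions.
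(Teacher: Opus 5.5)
Your first step, building $(M_i)$ so that each $M_i\embed M_{i+1}$ realizes every Galois type over $M_i$ by $\lambda$ successive amalgamations with unions at limits, is exactly the paper's pre-saturation lemma. A chain of length $\lambda$ does suffice. The gap is the successor step of the embedding argument, which you flag but do not close. You have $g_k\colon N_k\embed M_{i_k}$, you amalgamate to some $L$ containing $M_{i_k}$ and $N_{k+1}$, and you realize the type of the single element $a_k$ over $M_{i_k}$ by some $b\in M_{i_k+1}$. This only produces a further amalgam $L'$ of $L$ and $M_{i_k+1}$ over $M_{i_k}$ in which $a_k$ and $b$ are identified. The other new elements of $N_{k+1}$ sit in $L'$ outside the image of $M_{i_k+1}$; a model containing $N_k\cup\{a_k\}$ may add $\lambda$ of them in an AEC. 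So no $g_{k+1}\colon N_{k+1}\embed M_{i_k+1}$ results, and the induction stalls.

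None of your proposed repairs works in a general AEC. \emph{Tuple types:} Galois $\lambda$-stability for $1$-types does not bound types of $\lambda$-tuples. Already for pure sets there are $2^\lambda$ types of $\lambda$-tuples over a set of size $\lambda$. \emph{One-point extensions:} there is no canonical extension of $M_i$ ``generated by one new element''. L\"owenheim--Skolem gives some model containing $M_i\cup\{a\}$, but not one determined by the type of $a$, so there is nothing for stability to count. In this paper such spans appear only as an extra hypothesis. \emph{Coherence:} the axiom only says that if $g=h\circ f$ for embeddings $g,h$ into a common structure and a mere function $f$, then $f$ is an embedding. It does not glue partial maps.

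The missing idea, which is how the paper (following Shelah and Baldwin) argues, is to reverse the direction. Do not push pieces of $N$ into the $M$-chain; instead grow $N$ along it.
\begin{enumerate}
\item Build a continuous chain $N_0=N\embed N_1\embed\cdots\embed N_\lambda$ with embeddings $e_\alpha\colon M_\alpha\embed N_\alpha$ forming commuting squares, where $e_0$ is the given $M_0\embed N$.
\item At stage $\alpha=\pair{\beta,\gamma}$, using a pairing with $\beta\le\pair{\beta,\gamma}$, take the image in $N_\alpha$ of the $\gamma$-th element of $N_\beta$. Realize its type over $M_\alpha$ (via $e_\alpha$) in $M_{\alpha+1}$, and let the witnessing amalgam be $N_{\alpha+1}$ together with $e_{\alpha+1}$.
\item Take colimits at limit stages.
\end{enumerate}
Every element of every $N_\beta$, not just of the original $N$, is eventually brought into the image of the $M$-chain. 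Hence $e_\lambda$ is surjective, so it is an isomorphism by conservativity, and $e_\lambda^{-1}\circ n_{0\lambda}\colon N\embed M_\lambda$ is the required embedding over $M_0$. This needs only $1$-types over models and the chain axiom. It uses no tuple types, no spans, no gluing of partial maps, and no L\"owenheim--Skolem decomposition of $N$ into submodels.
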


However, since this is a theorem in the context of AECs, it needs to be adjusted to be consistent with our setting.
The following is an adaptation of one of the AEC axioms to our setting.

\begin{definition}\label{def:TV}
A structure-class $\K$ satisfies the {\bf Tarski-Vaught $\lambda$-chain axiom} ($\lambda$-TV) if for every ordinal $0<\delta\leq\lambda$ and every chain $(X_i\embed X_j)_{i<j<\delta}$ in $\K_\lambda$:
\begin{enumerate}
\item $\colim_{i<\delta}X_i$ exists in $\K$.
\item The canonical map $|\colim_{i<\delta}X_i|\to\colim_{i<\delta}|X_i|$ is bijective.
\end{enumerate}
\end{definition}

In other words, $\K$ has concrete $\lambda$-chain colimits.
Replacing $\K$ in condition (1) with $\K_\lambda$ makes no difference:
This is because the union of sequences of length at most $\lambda$ formed from sets of cardinality at most $\lambda$ has cardinality at most $\lambda$.

\begin{theorem}\label{thm:Shelah-AEC}
Let $\K$ be a structure-class.
Suppose that $\K_\lambda$ satisfies the AP and the $\lambda$-TV.
If $\K$ is Galois $\lambda$-stable, then $\K_\lambda$ has the universal extension property.
\end{theorem}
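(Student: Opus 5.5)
Given $M\in\K_\lambda$, the plan is to build $U_M$ as the union of a continuous chain $(M_i)_{i\le\lambda}$ in $\K_\lambda$, with $M_0=M$. Each successor step $M_i\embed M_{i+1}$ should realize every Galois type over $M_i$. By the $\lambda$-TV, limit stages and the final union $U_M:=\colim_{i<\lambda}M_i$ exist in $\K_\lambda$, because they are concrete unions of at most $\lambda$ sets of size at most $\lambda$. Put $u\colon M\embed U_M$ for the canonical map.

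For the successor step, Galois $\lambda$-stability gives at most $\lambda$ Galois types over $M_i$. Choose representatives $(M_i\embedge{e_\alpha}N_\alpha;z_\alpha)$ for $\alpha<\mu\le\lambda$. I then amalgamate them along $M_i$ one at a time by transfinite recursion of length $\mu$. Set $P_0=M_i$. At stage $\alpha+1$, apply the AP to $M_i\embed P_\alpha$ and $e_\alpha$ to get $P_\alpha\embed P_{\alpha+1}$ together with $N_\alpha\embed P_{\alpha+1}$ commuting over $M_i$. At limit stages take colimits, which exist by the $\lambda$-TV since $\mu\le\lambda$. Take $M_{i+1}=\colim_{\alpha<\mu}P_\alpha$; its cardinality is at most $\lambda$. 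By construction every type over $M_i$ is realized in $M_{i+1}$ by an element whose pointed extension is equivalent to $(e_\alpha;z_\alpha)$. Throughout I have to keep track of sizes and check that the colimit cocone maps are $\K$-morphisms, which is what the $\lambda$-TV supplies.

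To embed a given $M\embedge{f}N$ into $U_M$ over $M$, enumerate $N=\{n_\beta:\beta<\lambda\}$, repeating elements if $N$ is smaller. By recursion on $\beta\le\lambda$ I build an increasing chain of extensions $N\embed N_\beta$ together with embeddings $M_\beta\embedge{g_\beta}N_\beta$. These are arranged so that $g_\beta$ restricted along $M\embed M_\beta$ equals $N\embed N_\beta$ composed with $f$, and so that $n_\gamma$ for $\gamma<\beta$ lies in the image of $g_\beta$. This is a back-and-forth that goes "back" only: $g_\beta$ identifies $M_\beta$ with a substructure through which everything factors. At successor steps, the pointed extension $(M_\beta\embedge{g_\beta}N_\beta;n_\beta)$ has a Galois type over $M_\beta$, and that type is realized in $M_{\beta+1}$. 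The witnessing amalgamation of $M_\beta\embed M_{\beta+1}$ with $g_\beta$ produces $N_{\beta+1}$ and $g_{\beta+1}$ sending the realizing element to the image of $n_\beta$. Limits again use the $\lambda$-TV, and all objects stay in $\K_\lambda$.

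At the end, $g_\lambda\colon U_M\embed N_\lambda$ has image containing the image of $N$. The step I expect to be the main obstacle is converting this into an embedding $N\embed U_M$. My first attempt is the coherence axiom applied to $N\embed N_\lambda$ and $g_\lambda$, which would give the required map directly. Coherence is not among the hypotheses of Theorem \ref{thm:Shelah-AEC}, so if that fails I would run the recursion with the roles reversed. In that version I embed $N_\beta\embed M_{\beta+1}$ directly: realize the type of the whole finite-stage extension in $M_{\beta+1}$, building maps $h_\beta\colon N^{(\beta)}\to M_{\gamma}$ on an increasing chain of substructures, which requires realizing types of extensions rather than single points. Alternatively I would use the AP to turn the equivalence of pointed extensions into a map into $M_{\beta+1}$ itself. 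Commutation $g\circ f=u$ then follows from the invariant maintained over $M$.
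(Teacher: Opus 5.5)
Your construction of $U_M$ as the top of a continuous chain whose successor steps realize every Galois type over the previous stage matches the paper, and so does the one-directional recursion building $g_\beta\colon M_\beta\embed N_\beta$. The gap is the final step you flag yourself. You only schedule the elements of $N$, so the image of $g_\lambda$ contains the image of $N$ but generally not the new elements that the amalgamations add to the $N_{\beta+1}$. Hence $g_\lambda$ is merely an embedding $U_M\embed N_\lambda$, and pulling $N$ back through it needs the coherence axiom, which is not assumed.

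Neither fallback closes the gap. Realizing ``the type of the whole finite-stage extension'' would require saturation for types of tuples or structures, whereas Galois $\lambda$-stability and your successor steps only handle $1$-types. Equivalence of pointed extensions only yields a common amalgam of $N_\beta$ and $M_{\beta+1}$, never a map $N_\beta\to M_{\beta+1}$, however the AP is applied.

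The missing idea is to make $g_\lambda$ surjective onto all of $N_\lambda$ and then invert it.
\begin{enumerate}
\item As soon as each $N_\beta$ is built, enumerate it as $(x^\beta_\gamma)_{\gamma<\lambda}$.
\item Fix a bijection $\pair{-,-}\colon\lambda\times\lambda\to\lambda$ with $\beta\le\pair{\beta,\gamma}$.
\item At stage $\alpha=\pair{\beta,\gamma}$, realize in $M_{\alpha+1}$ the type of $(M_\alpha\embedge{g_\alpha}N_\alpha;\,a)$, where $a$ is the image of $x^\beta_\gamma$ in $N_\alpha$. The witnessing amalgamation gives $N_{\alpha+1}$ and $g_{\alpha+1}$ exactly as in your step.
\item By clause (2) of the $\lambda$-TV, every element of $N_\lambda$ is the image of some $x^\beta_\gamma$ with $\beta,\gamma<\lambda$. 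It was put into the image of $g_{\pair{\beta,\gamma}+1}$, so by your commuting squares it lies in $g_\lambda[U_M]$.
\item Thus $|g_\lambda|$ is a bijection. By conservativity of $|-|$, which is an axiom of a structure-class (unlike coherence), $g_\lambda$ is an isomorphism.
\item Then $g:=g_\lambda^{-1}\circ(N\embed N_\lambda)$ is a genuine $\K$-morphism, and $g\circ f=u$ follows from your commutation invariant over $M$.
\end{enumerate}
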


We prepare for the proof of this theorem.

\begin{definition}
Let $p$ be a Galois type over $M\in\K_\lambda$.
An element $a\in|N|$ in an extension $M\embedge{i} N\in\K_\lambda$ {\bf realizes} $p$ if $p$ is the Galois type of $(i;a)$.
\end{definition}

\begin{definition}
An extension $A\embedge{i}S$ in $\K_\lambda$ is a {\bf pre-saturation} if, for every Galois type $p$ over $A$, some element $a\in |S|$ in the extension $i$ realizes $p$.
\end{definition}

The following can be viewed as an abstraction of \cite[Lemma 10.3]{Bal09}.

\begin{lemma}\label{lem:pre-saturation}
Suppose that $\K_\lambda$ satisfies the AP and the $\lambda$-TV.
If $\K$ is Galois $\lambda$-stable, then every structure $A\in\K_\lambda$ has a pre-saturation $A\embed S\in\K_\lambda$.
\end{lemma}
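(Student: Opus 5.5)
Fix $A\in\K_\lambda$. By Galois $\lambda$-stability there are at most $\lambda$ Galois types over $A$, so we enumerate them as $(p_\alpha)_{\alpha<\mu}$ with $\mu\leq\lambda$. If $\mu=0$ then the identity $A\embed A$ is already a pre-saturation. Otherwise choose, for each $\alpha<\mu$, a pointed extension $(A\embedge{e_\alpha}M_\alpha;z_\alpha)$ in $\K_\lambda$ of type $p_\alpha$. We then build, by transfinite recursion on $\alpha\leq\mu$, a continuous chain $(S_\alpha)_{\alpha\leq\mu}$ in $\K_\lambda$ with $S_0=A$ and embeddings $A\embedge{j_\alpha}S_\alpha$ compatible along the chain. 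At stage $\alpha$ each $p_\beta$ with $\beta<\alpha$ should be realized in $S_\alpha$ over $j_\alpha$.

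\emph{Successor step.} Given $S_\alpha$ and $j_\alpha$, apply the AP to the pair $(A\embedge{j_\alpha}S_\alpha,\ A\embedge{e_\alpha}M_\alpha)$. This gives $S_\alpha\embedge{h}S_{\alpha+1}$ and $M_\alpha\embedge{k}S_{\alpha+1}$ with $h j_\alpha=k e_\alpha$, and we set $j_{\alpha+1}=hj_\alpha$. The pointed extension $(j_{\alpha+1};k(z_\alpha))$ is equivalent to $(e_\alpha;z_\alpha)$, witnessed by the amalgamation $(1_{S_{\alpha+1}},k)$, so $k(z_\alpha)$ realizes $p_\alpha$.

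\emph{Limit step.} For a limit $\delta\leq\mu\leq\lambda$, use the $\lambda$-TV to form $S_\delta=\colim_{\alpha<\delta}S_\alpha$. Its carrier is the union, so $S_\delta\in\K_\lambda$, and $j_\delta$ is the induced map from $A$.

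The key fact is that realization is preserved along extensions. If $a\in S_\alpha$ realizes $p$ over $j_\alpha$ and $S_\alpha\embedge{g}S'$ with $j'=gj_\alpha$, then $(j';g(a))$ is equivalent to $(j_\alpha;a)$, witnessed by the amalgamation $(g,1_{S'})$, which is the same argument as in the successor step. Hence $S:=S_\mu$ with $A\embedge{j_\mu}S$ realizes every $p_\alpha$, and it is the required pre-saturation.

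The main obstacle is bookkeeping rather than mathematics. We must make sure all structures stay in $\K_\lambda$: the AP holds in $\K_\lambda$, so amalgams stay there, and at limits we use $\mu\leq\lambda$ together with the cardinality remark after Definition~\ref{def:TV}. The $\lambda$-TV is stated only for chains indexed by ordinals $0<\delta\leq\lambda$, which is exactly why we need the enumeration with $\mu\leq\lambda$. Finally, the colimit cocone maps $S_\alpha\to S_\delta$ must be genuine $\K$-morphisms, which is part of the statement that the colimit exists in $\K$.
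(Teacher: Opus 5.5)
Your proof is correct and follows essentially the same route as the paper: enumerate the at most $\lambda$ Galois types over $A$, amalgamate a representative at each successor stage, take $\lambda$-TV colimits at limit stages, and observe that realizations persist along the chain. Indexing by $\mu\leq\lambda$ (including the vacuous case $\mu=0$) and checking explicitly that realization is preserved under further extension only spell out details that the paper's $\lambda$-indexed version leaves implicit.
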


\begin{proof}
By Galois $\lambda$-stability of $\K$, there is a $\lambda$-indexed enumeration $\{p_\alpha\}_{\alpha<\lambda}$ of all Galois types over $A\in\K_\lambda$.
For each $\alpha<\lambda$ choose a representative $(A\embedge{i_\alpha} M_\alpha;c_\alpha)$ from $p_\alpha$.
We construct a chain of structures $(S_\alpha)_{\alpha\leq\lambda}$ and embeddings $A\embedge{e_\alpha} S_\alpha$ in $\K_\lambda$.

Let $S_0=A$ and $e_0=1_A$.
At each successor step, 
$A\embedge{e_\alpha}S_\alpha$ and $A\embedge{i_\alpha}M_\alpha$ are amalgamated by $S_\alpha\embedge{s_\alpha}S_{\alpha+1}$ and $M_\alpha\embedge{v_\alpha}S_{\alpha+1}$.
Then let $e_{\alpha+1}$ be the composition $A\embedge{e_\alpha}S_\alpha\embedge{s_\alpha}S_{\alpha+1}$.
Then the element $v_\alpha(c_\alpha)\in S_{\alpha+1}$ in the extension $e_{\alpha+1}\colon A\embed S_{\alpha+1}$ realizes $p_\alpha$.
\[
\xymatrix{
	A \ar[rr]^{i_\alpha}\ar[d]_{e_\alpha} \ar[drr]^{e_{\alpha+1}} & & M_\alpha \ar[d]^{v_\alpha}\\
	S_\alpha \ar[rr]_{s_\alpha} & & S_{\alpha+1}
}
\]

At each limit step $\delta\leq\lambda$, by the $\lambda$-TV, we have $S_\delta=\colim_{\alpha<\delta}S_\alpha\in\K_\lambda$, so we have $s_{\alpha\delta}\colon S_\alpha\embed S_\delta$ for each $\alpha<\delta$.
Then, in the extension $e_\delta:=s_{\alpha\delta}e_\alpha\colon A\embed S_\delta$ (which do not depend on $\alpha<\delta$ as $(s_{\alpha\delta})_{\alpha<\delta}$ is a cocone), the element $s_{\alpha\delta}v_\alpha(c_\alpha)$ realizes $p_\alpha$.
Consequently, $A\embedge{e_\lambda}S_\lambda$ is a pre-saturation.
\end{proof}

\begin{lemma}
Suppose that $\K_\lambda$ satisfies the AP and the $\lambda$-TV, and that $\K$ is Galois $\lambda$-stable.
For every $M_0\in\K_\lambda$, there is a chain $(m_{\alpha\beta}\colon M_\alpha\embed M_\beta)_{\alpha\leq \beta\leq\lambda}$ in $\K_\lambda$ such that:
\begin{enumerate}
\item For every $\alpha<\lambda$, $m_{\alpha,\alpha+1}\colon M_\alpha\embed M_{\alpha+1}$ is a pre-saturation.
\item For every limit ordinal $\delta\leq\lambda$, $M_\delta=\colim_{i<\delta}M_i$.
\end{enumerate}

Such a chain is called a continuous pre-saturation chain.
\end{lemma}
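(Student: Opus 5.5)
The chain is built by transfinite recursion on $\alpha\leq\lambda$, with the successor steps supplied by Lemma \ref{lem:pre-saturation} and the limit steps by the $\lambda$-TV. Throughout the recursion I maintain the invariant that all $M_\beta$ ($\beta\leq\alpha$) lie in $\K_\lambda$, and that the $m_{\beta\gamma}$ ($\beta\leq\gamma\leq\alpha$) form a functor on the ordinal $\alpha+1$. That is, $m_{\beta\beta}=1_{M_\beta}$ and $m_{\gamma\delta}\circ m_{\beta\gamma}=m_{\beta\delta}$.

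The base case sets $M_0$ to the given structure and $m_{00}=1_{M_0}$. At a successor stage $\alpha+1\leq\lambda$ with $\alpha<\lambda$, I apply Lemma \ref{lem:pre-saturation}, which is available because $\K_\lambda$ has the AP and the $\lambda$-TV and $\K$ is Galois $\lambda$-stable. It gives a pre-saturation $M_\alpha\embedge{m_{\alpha,\alpha+1}}M_{\alpha+1}$ with $M_{\alpha+1}\in\K_\lambda$. I then define $m_{\beta,\alpha+1}:=m_{\alpha,\alpha+1}\circ m_{\beta\alpha}$ for $\beta\leq\alpha$ and $m_{\alpha+1,\alpha+1}=1$. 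Functoriality is immediate from functoriality at stage $\alpha$, and condition (1) holds by construction.

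At a limit stage $\delta\leq\lambda$, the family $(m_{\beta\gamma})_{\beta\leq\gamma<\delta}$ is a chain in $\K_\lambda$ of length $\delta$ with $0<\delta\leq\lambda$. The $\lambda$-TV then gives a colimit $M_\delta=\colim_{\beta<\delta}M_\beta$ in $\K$ whose carrier is in bijection with $\colim_{\beta<\delta}|M_\beta|$, so $M_\delta$ is a union of at most $\lambda$ sets each of size at most $\lambda$. Hence $|M_\delta|\leq\lambda$ and $M_\delta\in\K_\lambda$, as already remarked after Definition \ref{def:TV}. I take $m_{\beta\delta}$ to be the colimit coprojections and $m_{\delta\delta}=1$. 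The coprojections form a cocone, so they satisfy $m_{\gamma\delta}\circ m_{\beta\gamma}=m_{\beta\delta}$ and functoriality persists. Condition (2) holds by definition.

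The only point needing care is that the coprojections of the colimit are genuinely $\K$-morphisms, that is, embeddings. This is automatic here, because the colimit is taken inside the category $\K$, whose morphisms are by definition the embeddings. Fullness of $\K_\lambda$ in $\K$ then places the whole chain in $\K_\lambda$. Beyond this, the recursion uses choices at the successor steps (of a pre-saturation) and at the limit steps (of a colimit). This is unproblematic externally in ZFC, since we may fix choice functions in advance on the small category $\K_\lambda^-$ of Observation \ref{obs:essentially-small}. I expect no substantial obstacle: the content lies entirely in Lemma \ref{lem:pre-saturation} and the $\lambda$-TV, and the limit-stage cardinality bound is the only point to state explicitly.
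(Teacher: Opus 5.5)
Your proposal is correct and is the same argument as the paper's proof: a transfinite recursion that uses Lemma \ref{lem:pre-saturation} at successor stages and the $\lambda$-TV at limit stages, with the cardinality remark after Definition \ref{def:TV} keeping everything in $\K_\lambda$. Your extra bookkeeping (functoriality of the chain, embeddings as coprojections, the limit-stage cardinality bound) only makes explicit what the paper's one-line proof leaves implicit.
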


\begin{proof}
Use Lemma \ref{lem:pre-saturation} at each successor stage, and the $\lambda$-TV at each limit stage.
\end{proof}

The following can be viewed as an abstraction of \cite[Lemma 10.5 (1)]{Bal09}.

\begin{lemma}
Suppose that $\K_\lambda$ satisfies the AP and the $\lambda$-TV.
If $(m_{\alpha\beta}\colon M_\alpha\embed M_\beta)_{\alpha\leq\beta\leq\lambda}$ is a continuous pre-saturation chain, then $m_{0\lambda}\colon M_0\embed M_\lambda$ is a universal extension.
\end{lemma}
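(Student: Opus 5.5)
Given an embedding $M_0\embedge{f}N$ with $N\in\K_\lambda$, I will construct an embedding $N\embedge{g}M_\lambda$ with $g\circ f=m_{0\lambda}$. The plan is to enumerate $|N|\setminus f[|M_0|]$ as $(b_\alpha)_{\alpha<\lambda}$; if it has fewer than $\lambda$ elements, repeat entries or pad. I will then build, by transfinite recursion on $\alpha\leq\lambda$, a continuous chain $(N_\alpha)_{\alpha\leq\lambda}$ in $\K_\lambda$ together with embeddings $M_\alpha\embedge{h_\alpha}N_\alpha$, maps $N_\alpha\embed N_\beta$, and embeddings $N_\alpha\embedge{g_\alpha}M_{\alpha+1}$. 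The chain starts with $N_0=N$ and $h_0=f$.

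The invariants I will maintain are as follows. First, the maps $h_\alpha$ form a natural transformation between the two chains. Second, $g_\alpha\circ h_\alpha=m_{\alpha,\alpha+1}$. Third, $g_\alpha$ is compatible with the chain, i.e.\ $g_\beta\circ(N_\alpha\embed N_\beta)=m_{\alpha+1,\beta+1}\circ g_\alpha$. Fourth, the image of $b_\alpha$ in $N_{\alpha+1}$ lies in the image of $h_{\alpha+1}$. The Tarski--Vaught axiom supplies colimits at limit stages, and then $h_\delta$ and $g_\delta$ are induced by the universal property of the colimit.

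At successor steps the key move is the standard Galois-type back-and-forth. Consider the element $b_\alpha$ viewed in $N_\alpha$ over $M_\alpha$ via $h_\alpha$. Its Galois type over $M_\alpha$ is realized in the pre-saturation $M_\alpha\embed M_{\alpha+1}$ by some $c$. By the definition of equivalence of pointed extensions, there is an amalgamation $N_\alpha\embed N_{\alpha+1}$, $M_{\alpha+1}\embedge{h_{\alpha+1}}N_{\alpha+1}$ over $M_\alpha$ that identifies $b_\alpha$ with $c$. This places the image of $b_\alpha$ inside $h_{\alpha+1}[M_{\alpha+1}]$.

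At the end, every element of $N$ lands in $h_\lambda[M_\lambda]$, since $N$ is covered either by $M_0$ or by some $b_\alpha$. So the composite $N\embed N_\lambda$ factors set-theoretically through the injective $|h_\lambda|$, and the coherence axiom, which is assumed in a subcanonical AP class, lifts this factorisation to an embedding $g\colon N\embed M_\lambda$ with $h_\lambda\circ g=(N\embed N_\lambda)$. Since $h_\lambda$ is injective, $g\circ f=m_{0\lambda}$ follows from the naturality $h_\lambda\circ m_{0\lambda}=(N_0\embed N_\lambda)\circ h_0$.

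The main obstacle is that the lemma as stated assumes only the AP and the $\lambda$-TV, not coherence. Without coherence, the factorisation at the end would have to be avoided. The alternative is to arrange that $N_\lambda$ is isomorphic to $M_\lambda$ via $h_\lambda$, which requires $h_\lambda$ to be surjective so that conservativity applies. Surjectivity would need a dovetailed enumeration that also absorbs the new elements of each $N_{\alpha+1}$ that do not lie in $h_{\alpha+1}[M_{\alpha+1}]$, realizing their types at later pre-saturation steps. Bookkeeping $\lambda\times\lambda$ tasks into $\lambda$ steps with a pairing function handles this.

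With $h_\lambda$ a bijection, it is an isomorphism, and $g:=h_\lambda^{-1}\circ(N\embed N_\lambda)$ is the required embedding. I expect the careful dovetailing, together with checking that elements added at limit stages are already covered (they come from earlier stages by concreteness of colimits), to be the delicate part.
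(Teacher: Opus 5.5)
Your fallback argument is the paper's proof. You enumerate every $N_\beta$ as $(x^\beta_\gamma)_{\gamma<\lambda}$ and use a pairing $\lambda\times\lambda\to\lambda$ with $\beta\leq\pair{\beta,\gamma}$, so that the task for $x^\beta_\gamma$ is scheduled after $N_\beta$ exists. At stage $\alpha=\pair{\beta,\gamma}$ you realize the type of the image of $x^\beta_\gamma$ over $M_\alpha$ in the pre-saturation $M_\alpha\embed M_{\alpha+1}$ and amalgamate accordingly, taking $\lambda$-TV colimits at limits. Clause (2) of the $\lambda$-TV then makes $h_\lambda$ surjective, and conservativity lets you invert it. You were right that this route, not your first plan, is what the lemma as stated needs. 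The first plan is a genuinely lighter variant: it only absorbs the at most $\lambda$ elements of $N$ itself, so it needs no bookkeeping over new elements of later $N_\beta$, and it uses the coherence axiom instead of conservativity to produce $g$. It is correct, but only under coherence, which is not a hypothesis here (nor of Theorem \ref{thm:Shelah-AEC}). It would still suffice for the application in Theorem \ref{thm:main-stability-thm}, where $\K_\lambda$ is a subcanonical AP class.

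You should delete the embeddings $g_\alpha\colon N_\alpha\embed M_{\alpha+1}$, the invariant $g_\alpha\circ h_\alpha=m_{\alpha,\alpha+1}$, and its compatibility condition. Your construction cannot maintain them. Already $g_0$ would make $m_{1\lambda}\circ g_0$ the desired map, since it would say $M_0\embed M_1$ is universal for $N$. A pre-saturation only realizes Galois types of single elements, not whole extensions. Nothing later in your argument uses the $g_\alpha$, so removing them leaves the proof intact.
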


\begin{proof}
To see that $m_{0\lambda}\colon M_0\embed M_\lambda$ is a universal extension, let $M_0\embedge{e} N$ be an embedding in $\K_\lambda$.
We construct embeddings $(M_i\embedge{e_i}N_i)_{i\leq\lambda}$ and a chain $(N_i\embedge{n_{ij}}N_j)_{i\leq j\leq\lambda}$ in $\K_\lambda$ such that the following diagram commutes:
\[
\xymatrix{
	M_i\ar[rr]^{e_i}\ar[d]_{m_{ij}} & & N_i \ar[d]^{n_{ij}}\\
	M_j\ar[rr]_{e_j} & & N_j
}
\]

First put $N_0=N$, $e_0=e$.

\medskip
\noindent
{\it Successor stage:}
If $N_\beta$ has already been constructed, let $(x^\beta_\gamma)_{\gamma<\lambda}$ be an enumeration of all elements of $N_\beta$.
For any infinite cardinal $\lambda$, we have a bijection $\pair{-,-}\colon\lambda\times\lambda\edge{\simeq}\lambda$ such that $\beta\leq \langle \beta,\gamma\rangle$.

At the $\alpha$th successor stage, if $\alpha$ is of the form $\pair{\beta,\gamma}$ consider $a:=n_{\beta\alpha}(x^\beta_\gamma)\in N_\alpha$.
Let $p$ be the Galois type of $(M_\alpha\embedge{e_\alpha}N_\alpha;a)$.
Since $m_{\alpha,\alpha+1}\colon M_\alpha\embed M_{\alpha+1}$ is a pre-saturation, some $b\in M_{\alpha+1}$ realizes $p$.
That is, $e_\alpha$ and $m_{\alpha,\alpha+1}$ are amalgamated by $n_{\alpha,\alpha+1}$ and $e_{\alpha+1}$ such that $e_{\alpha+1}(b)=n_{\alpha,\alpha+1}(a)=n_{\beta,\alpha+1}(x^\beta_\gamma)$.
\[
\xymatrix{
	M_\alpha\ar[rr]^{e_\alpha}\ar[d]_{m_{\alpha,\alpha+1}} & & N_\alpha \ar[d]^{n_{\alpha,\alpha+1}}\\
	M_{\alpha+1}\ar[rr]_{e_{\alpha+1}} & & N_{\alpha+1}
}
\]

\noindent
{\it Limit stage:}
At a limit stage $\delta$, by the $\lambda$-TV, we get $N_\delta=\colim_{\beta<\delta}N_\beta$.
By the induction hypothesis, we have the following diagram for $\alpha\leq \beta<\lambda$:
\[
\xymatrix{
	M_\alpha\ar[rr]^{e_\alpha}\ar[d]_{m_{\alpha\beta}} & & N_\alpha \ar[d]_{n_{\alpha\beta}} \ar[rr]^{n_{\alpha\delta}} & & N_\delta\\
	M_\beta\ar[rr]_{e_\beta} & & N_\beta \ar[rru]_{n_{\beta\delta}} & &
}
\]
Thus, we get a cocone $(n_{\beta\delta}e_\beta\colon M_\beta\embed N_\delta)_{\beta<\delta}$.
By continuity, we have $M_\delta=\colim_{\beta<\delta}M_\beta$; hence, by the universal property, we obtain $M_\delta\embedge{e_\delta}N_\delta$ such that $e_\delta m_{\beta\delta}=n_{\beta\delta}e_\beta$.

\medskip
\noindent
{\em Verification:}
We claim that the embedding $M_\lambda\embedge{e_\lambda}N_\lambda$ is an isomorphism.
Since $|-|$ is conservative, it suffices to show that $e_\lambda$ is a bijection.
For every $y\in N_\lambda$, by the condition (2) in the $\lambda$-TV axiom, there are $\beta<\lambda$ and $x\in N_\beta$ such that $n_{\beta\lambda}(x)=y$.
Now, $x$ is of the form $x^\beta_\gamma$.
Then put $\alpha=\pair{\beta,\gamma}$.
At the $\alpha$th successor stage, we find some $b\in M_{\alpha+1}$ such that $n_{\beta,\alpha+1}(x^\beta_\gamma)=e_{\alpha+1}(b)$.
Then, we get $y=n_{\beta\lambda}(x^\beta_\gamma)=n_{\alpha+1,\lambda}e_{\alpha+1}(b)=e_\lambda m_{\alpha+1,\lambda}(b)$; that is, for $z:=m_{\alpha+1,\lambda}(b)\in M_\lambda$, we have  $e_\lambda(z)=y$.
This shows that $e_\lambda$ is surjective.
Since an embedding is always injective, this verifies the claim.

Then, consider $g:=e^{-1}_\lambda n_{0,\lambda}\colon N_0\embed M_\lambda$.
Since $n_{0\lambda}e_0=e_\lambda m_{0\lambda}$, we get 
$g\circ e=e^{-1}_\lambda n_{0\lambda} e_0=m_{0\lambda}$.
This shows that $m_{0\lambda}\colon M_0\embed M_\lambda$ is a universal extension.
\end{proof}

This concludes the proof of Theorem \ref{thm:Shelah-AEC}.
Combining Theorem \ref{thm:choice-vs-universal-extension} with this yields Theorem \ref{thm:main-stability-thm} (1)$\Rightarrow$(3).


\section{The internal axiom of choice}\label{sec:IAC-structure}

In this section, we characterize the internal axiom of choice in terms of the existence of maximal structures.

\begin{definition}
A structure $A\in\K$ is maximal (in $\K$) if, for every extension $A\embedge{f}B\in\K$, $f$ is an isomorphism.

A structure-class $\K_\lambda$ has the {\bf maximal extension property} if, for every $A\in\K_\lambda$, there exists an embedding $A\embed M\in\K_\lambda$ into a maximal structure (in $\K_\lambda$).
\end{definition}

\begin{theorem}\label{thm:IAC-maximalcofinal}
Suppose that $\K_\lambda$ is a subcanonical AP class.
Then the following are equivalent:
\begin{enumerate}
\item $\K_\lambda$ has the maximal extension property.
\item ${\rm Sh}(\K_\lambda^{\rm op},J_{\rm at})$ satisfies the internal axiom of choice.
\end{enumerate}
\end{theorem}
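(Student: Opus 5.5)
The plan is to reduce both directions to the behaviour of slices over representables. Write $\C=\K_\lambda^{\rm op}$. For $Q\in\K_\lambda$ there is the standard equivalence
\[{\rm Sh}(\C,J_{\rm at})/\yo Q\simeq{\rm Sh}(\C/Q,J_{\rm at}),\]
where the objects of $\C/Q$ are the embeddings $Q\embed B$. I will use two standard facts about internal choice, both taken from \cite[D4.5]{Elephant2}:
\begin{itemize}
\item[(a)] the internal axiom of choice is stable under slicing;
\item[(b)] it descends along well-supported objects: if $E\to\mathbf{1}$ is an epimorphism and $\mathcal{E}/E$ satisfies the internal axiom of choice, then so does $\mathcal{E}$. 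This holds because pullback along $E\to\mathbf{1}$ is logical, so it preserves exponentials, and it reflects epimorphisms.
\end{itemize}

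For (1)$\Rightarrow$(2), first let $M$ be maximal. Every object $M\embed B$ of $\C/M$ is an isomorphism, so between any two objects of $\C/M$ there is exactly one morphism over $M$. Hence $\C/M$ is equivalent to the terminal category, and ${\rm Sh}(\C,J_{\rm at})/\yo M\simeq{\bf Set}$. Now put $E=\sum_{M\text{ maximal}}\yo M$, the sum taken over representatives in the small category $\K_\lambda^-$. By the maximal extension property, every $A$ admits an embedding $A\embed M$ with $M$ maximal, i.e.\ a $J_{\rm at}$-cover of $A$ carrying an element of $E$. So $E\to\mathbf{1}$ is locally surjective, hence an epimorphism by Lemma \ref{fact:SGL-epi-characterization-cover}. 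The slice $\mathcal{E}/E$ is the product $\prod_M {\bf Set}$. This satisfies external, hence internal, choice, since exponentials and epimorphisms are computed componentwise. Descent (b) then gives (2).

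For (2)$\Rightarrow$(1), fix $A$. The aim is an embedding $A\embed Q$ such that ${\rm Sh}(\C/Q,J_{\rm at})$ satisfies the \emph{external} axiom of choice. Given such $Q$, maximality follows quickly. Take any embedding $e\colon Q\embed B$. By Lemma \ref{lem:atomic-topos-embedding-to-epi} applied in $\C/Q$, the map $\yo(e)$ is an epimorphism onto the terminal object $\yo(1_Q)$ of the slice. A splitting is, by Yoneda, an element of $(\yo B)_Q$ over $Q$, namely an embedding $r\colon B\embed Q$ with $r\circ e=1_Q$. Then $e$ is surjective, so by conservativity $e$ is an isomorphism. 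Thus $Q$ is maximal and extends $A$.

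To find $Q$ I would use the characterisation, for Boolean toposes (ours is Boolean, being atomic), that the internal axiom of choice means every epimorphism splits \emph{locally}: for each epi $g\colon X\surj Y$ there is an epimorphism $F\to\mathbf{1}$ such that $F^*g$ splits in $\mathcal{E}/F$ (\cite[D4.5]{Elephant2}). Since ${\rm Sh}(\C,J_{\rm at})$ is a coproduct of atoms, each covered by some $\yo P$ (Observation \ref{obs:yo-generator}), every atom over $\yo A$ is reached from a representable $\yo Q$ with $A\embed Q$.

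The main obstacle is uniformity. A local splitting obtained for one epimorphism need not work for all epimorphisms in the slice. I would handle this with a single test epimorphism that encodes all extensions at once, namely
\[\textstyle\sum_{e\colon A\embed B}\yo B\surj\yo A,\]
the sum over all embeddings in $\K_\lambda^-$. Split it locally over some $\yo Q$. Then I would restrict that section along every further extension $Q\embed Q'$, and check that naturality of the section forces every extension of $Q$ to retract back onto $Q$. If this direct argument fails, the fallback is to use (a) directly: each slice over $\yo Q$ again satisfies the internal axiom of choice. It would then remain to show that an atomic topos with internal choice contains a slice over a representable that is equivalent to ${\bf Set}$, arguing with the atom $\yo Q$ and its subobject lattice of pullbacks.
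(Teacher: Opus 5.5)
Your direction (1)$\Rightarrow$(2) is correct, and it takes a different route from the paper. The paper proves $\yo M$ projective and goes through ${\sf AtPAx}$ and Lemma \ref{lem:proj-prod}. You instead note $\mathcal{E}/\yo M\simeq{\bf Set}$ for maximal $M$, slice over the well-supported object $E=\sum_M\yo M$ to get ${\bf Set}^I$, and descend along the logical, epi-reflecting functor $E^*$. This is clean and close in spirit to Section \ref{sec:groupoid}.

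The gap is in (2)$\Rightarrow$(1). Your test epimorphism $g\colon\sum_{e\colon A\embed B}\yo B\surj\yo A$ splits outright. The summand indexed by $e=1_A$ is $\yo A$, on which $g$ is the identity, so its coprojection is a global section. A local splitting of $g$ therefore carries no information: the trivial section is natural and restricts along every $Q\embed Q'$ without producing any retraction. So no argument about its naturality can force extensions of $Q$ to retract onto $Q$. The underlying problem is that the base of your test epimorphism is fixed at $A$. Maximality of $Q$, however, is about splitting $\yo e\colon\yo B\to\yo Q$ for extensions $e$ of $Q$ itself, and $Q$ is only determined after the local section is chosen. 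The fallback does not help either: $\mathcal{E}/\yo Q\simeq{\rm Sh}(\C/Q,J_{\rm at})$ is again an atomic topos of the same kind, so invoking (a) merely restates the problem.

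The missing idea is a local section that is uniform over all bases simultaneously. The paper gets it from $S=\prod_i{\rm Sec}(f_i)$, the product over representatives of all epimorphisms between atoms. This $S$ is well-supported by Lemma \ref{lem:sec-object-epi} together with choice for $I$-indexed families. An atom $P$ of $A\times S$ is then projective with respect to atoms, hence projective (Lemma \ref{lem:atomic-projective}), hence of the form $\yo M$ with $M$ maximal (Lemma \ref{lem:maximal-projective-atom}).

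In your language the same works with the single epimorphism $h=\sum_{(Q,\,e\colon Q\embed B)}\yo e$, indexed over all $Q$ in $\K_\lambda^-$:
\begin{enumerate}
\item Its object of sections is $\prod{\rm Sec}(\yo e)$, which is well-supported by internal choice.
\item An atom of $\yo A\times\prod{\rm Sec}(\yo e)$ is covered by some $\yo R$. This gives $A\embed R$ and, for every $e\colon R\embed B$, a map $x_e\colon\yo R\times\yo R\to\yo B$ with $\yo e\circ x_e=\pi_2$.
\item Precomposing $x_e$ with the diagonal splits $\yo e$, and your own retraction argument then shows that $R$ is maximal.
\end{enumerate}
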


We prepare for the proof.

\subsection{Maximal structures and projective objects}

\begin{definition}
An object $P$ is projective if, for every epimorphism $f\colon X\surj Y$ and every morphism $P\edge{y}Y$, there exists a morphism $P\edge{x}X$ such that $fx=y$.
\end{definition}

The external axiom of choice asserts that every object is projective, while the internal axiom of choice asserts that every object is internally projective.

\begin{lemma}\label{lem:maximal-equ-projec}
Suppose that $\K_\lambda$ is a subcanonical AP class.
Then the following are equivalent:
\begin{enumerate}
\item $M$ is a maximal structure in $\K_\lambda$.
\item $\yo M$ is projective.
\end{enumerate}
\end{lemma}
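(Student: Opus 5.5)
By Yoneda, a morphism $\yo M\to Y$ is the same thing as an element $y\in Y_M$. So $\yo M$ is projective iff every epimorphism $f\colon X\surj Y$ of sheaves is surjective at stage $M$, i.e.\ $f_M\colon X_M\to Y_M$ is onto. Recall that in $\K_\lambda^{\rm op}$ a $J_{\rm at}$-cover of $M$ is an embedding $M\embed N$. Restriction along an embedding $a\colon M\embed N$ is written $-\at a\colon X_M\to X_N$. Lemma \ref{fact:SGL-epi-characterization-cover} then says $f$ is epi iff for each $y\in Y_M$ there are an embedding $a\colon M\embed N$ and some $x\in X_N$ with $f_N(x)=y\at a$. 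Both directions will be read off from this description.

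(1)$\Rightarrow$(2). Suppose $M$ is maximal in $\K_\lambda$, let $f\colon X\surj Y$ be epi, and let $y\in Y_M$. Local surjectivity gives an embedding $a\colon M\embed N$ in $\K_\lambda$ and $x\in X_N$ with $f_N(x)=y\at a$. By maximality $a$ is an isomorphism, so we may set $x':=x\at{a^{-1}}\in X_M$ (restriction along the inverse, which is a morphism $M\embed N$ in the direction appropriate to $\K$). Functoriality and naturality of $f$ then give
\[f_M(x')=f_N(x)\at{a^{-1}}=y\at a\at{a^{-1}}=y.\]
Hence $f_M$ is surjective, and $\yo M$ is projective. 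This direction uses neither subcanonicity nor the AP beyond the basic setup.

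(2)$\Rightarrow$(1). Suppose $\yo M$ is projective and let $a\colon M\embed N$ be any embedding in $\K_\lambda$. By Lemma \ref{lem:atomic-topos-embedding-to-epi}, which uses the AP (right Ore in $\K_\lambda^{\rm op}$) and subcanonicity, $\yo a\colon\yo N\to\yo M$ is an epimorphism of sheaves. Note the variance: in $\K_\lambda^{\rm op}$ the morphism is $N\to M$, and $(\yo N)_P$ is the set of embeddings $N\embed P$. Projectivity applied to the element $1_M\in(\yo M)_M$ produces some $x\in(\yo N)_M$, that is, an embedding $x\colon N\embed M$, with $(\yo a)_M(x)=x\circ a=1_M$. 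So $a$ is a split mono with retraction $x$, and $x$ is itself an embedding, hence injective. Since $x\circ a=1_M$, $x$ is surjective as well. Thus $|x|$ is a bijection, and conservativity of $|-|$ makes $x$ an isomorphism. Then $a=x^{-1}$ is an isomorphism, and $M$ is maximal.

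The main point to watch is the variance bookkeeping in $\K_\lambda^{\rm op}$: restriction along $a$ corresponds to composing with $a$ in $\K$, and the sheaf $\yo N$ consists of embeddings out of $N$. Once that is kept straight, the key observation is the one in (2)$\Rightarrow$(1): a retraction that is itself an embedding is injective and surjective, so the built-in conservativity of the CCSE turns the splitting into an isomorphism.
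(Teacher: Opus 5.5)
Your proof is correct and follows essentially the same route as the paper. For (1)$\Rightarrow$(2) you use local surjectivity and invert the cover by maximality; for (2)$\Rightarrow$(1) you split the epimorphism $\yo a$ to obtain an embedding $x\colon N\embed M$ with $x\circ a=1_M$. The only deviation is the last step: you conclude from bijectivity of $|x|$ together with conservativity of $|-|$, whereas the paper cancels the monomorphism in $s(is)=s$ to get $is=1$, and either step works.
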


\begin{proof}
(1)$\Rightarrow$(2)
Suppose that $M$ is maximal.
Take any $f\colon X\surj Y$ and $y\colon\yo M\to Y$.
By the Yoneda lemma, the latter corresponds to an element $y\in Y_M$.
Since epimorphisms are locally surjective, there exist a cover $M\embedge{c} N$ and an element $x\in X_N$ such that $f_N(x)=y\at c$.
By maximality of $M$, $c$ is an isomorphism.
Considering $c^{-1}\colon N\embed M$, we have $x\at c^{-1}\in X_M$ and $f_M(x\at {c^{-1}})=f_N(x)\at {c^{-1}}=(y\at c)\at {c^{-1}}=y$.
The element $x\at {c^{-1}}\in X_M$ may be identified with a morphism $x\colon\yo M\to X$, and $fx=y$.
Thus $\yo M$ is projective.

(2)$\Rightarrow$(1)
Suppose that $\yo M$ is projective, and take any $M\embedge{i} N$.
By subcanonicity, Lemma \ref{lem:atomic-topos-embedding-to-epi} implies that $i':=\yo i\colon\yo N\surj \yo M$ is an epimorphism.
Since $\yo M$ is projective, there exists a section $\yo M\edge{s'}\yo N$.
By subcanonicity and Yoneda's lemma, this corresponds to a morphism $N\embedge{s} M$.
Indeed, $i's'=1$ implies $si=1$.
Moreover, $s$ is monic, and $s(is)=(si)s=s$, so $is=1$.
Thus $i$ is an isomorphism.
Since $i$ was arbitrary, $M$ is maximal.
\end{proof}


\begin{lemma}\label{lem:maximal-projective-atom}
Suppose that $\K_\lambda$ is a subcanonical AP class.
For every projective atom $P$, there exists a maximal $M\in\K_\lambda$ such that $P\simeq\yo M$.
\end{lemma}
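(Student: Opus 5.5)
By Observation \ref{obs:yo-generator} (4), there is an epimorphism $e\colon\yo M\surj P$ for some $M\in\K_\lambda$. Since $P$ is projective, $e$ splits: there is $s\colon P\to\yo M$ with $es=1_P$. Then $se\colon\yo M\to\yo M$ is an idempotent whose image is $P$. Since $\yo M$ is representable and the site is subcanonical, Yoneda gives $se=\yo h$ for a unique $\K_\lambda$-morphism $h\colon M\embed M$, and $h$ is idempotent: $hh=h$. Every embedding is injective, hence monic, so $hh=h\cdot 1_M$ forces $h=1_M$. Thus $se=1$, so $e$ is an isomorphism and $P\simeq\yo M$. By Lemma \ref{lem:maximal-equ-projec}, projectivity of $\yo M$ then gives that $M$ is maximal.

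One point needs care: Yoneda requires maps between representables to correspond to $\K_\lambda$-morphisms, with the variance reversed because the site is $\K_\lambda^{\rm op}$. An endomorphism of $\yo M$ corresponds to an element of $(\yo M)_M={\rm Hom}_{\K_\lambda}(M,M)$, so it is indeed an embedding $M\embed M$. The identity $hh=h$ also holds in the correct order, since composing idempotents in either order gives the same equation. The monicity argument uses faithfulness of $|-|$ together with injectivity.

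I expect no serious obstacle. The only delicate step is translating the idempotent $se$ into an embedding and using that idempotent monomorphisms are identities. If the variance caused trouble, I would argue directly instead. The image of $se$ is $P$, which is a nonzero subobject of the atom $\yo M$ (Observation \ref{obs:yo-generator} (1)), so $se$ is epi. An epic idempotent is the identity, and $e$ is then an isomorphism.
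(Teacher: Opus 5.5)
Your proof is correct. Its skeleton matches the paper's: an epimorphism $e\colon\yo M\surj P$ from Observation \ref{obs:yo-generator} (4), a section $s$ from projectivity, a proof that the splitting is invertible, and then Lemma \ref{lem:maximal-equ-projec} for maximality.

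The only difference is how you prove the splitting is invertible. The paper argues exactly as in your fallback: $s\colon P\mono\yo M$ is a monomorphism from a nonzero object into the atom $\yo M$ (Observation \ref{obs:yo-generator} (1)), hence an isomorphism.

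Your primary argument takes another route. It uses full faithfulness of Yoneda, which subcanonicity provides, to write the idempotent $se$ as $\yo h$ for an idempotent $h\colon M\embed M$. It then forces $h=1_M$ because embeddings are monic. This bypasses the atomicity of $\yo M$ and proves slightly more: a representable has no nontrivial retracts. The argument would also work on an arbitrary subcanonical atomic site. There, by Lemma \ref{lem:subcanonical-strict}, every morphism of the site is an epimorphism, so idempotents in the site are identities.

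The paper's version is shorter only because it reuses the atom structure it has already established.
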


\begin{proof}
By subcanonicity and Observation \ref{obs:yo-generator}, $\{\yo A:A\in\K_\lambda\}$ is a generating family consisting of atoms.
For every atom $P$, Observation \ref{obs:yo-generator} (4) gives a structure $M\in\K_\lambda$ and an epimorphism $\yo M\to P$.
If $P$ is projective, there is a section $P\monoedge{s}\yo M$.
Since $P$ and $\yo M$ are both atoms, $s$ is an isomorphism.
Therefore, $P\simeq\yo M$.
Since $P$ is projective, Lemma \ref{lem:maximal-equ-projec} implies that $M$ is maximal.
\end{proof}

\subsection{The atomic presentation axiom}
The presentation axiom $\textsf{PAx}$, also called ${\sf CoSHEP}$ or ${\sf EPC}$, is a weakening of the external axiom of choice \cite{Acz78,Bla79}.
It asserts that there are enough projectives: for every object $A$, there exists an epimorphism $P\surj A$ from a projective object.

\begin{prop}
Let $\mathcal{E}$ be an atomic Grothendieck topos.
Then, the following are equivalent:
\begin{enumerate}
\item $\mathcal{E}$ satisfies the presentation axiom $\mathsf{PAx}$.
\item $\mathcal{E}$ satisfies the atomic presentation axiom $\mathsf{AtPAx}$: for every atom $A$, there exists an epimorphism $P\surj A$ from a projective atom $P$.
\end{enumerate}
\end{prop}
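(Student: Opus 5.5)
We prove the two directions separately, using that in an atomic Grothendieck topos every object is a coproduct of atoms (Observation \ref{obs:yo-generator} (3), valid without subcanonicity as remarked after it) and that the image of a morphism out of an atom is an atom (Observation \ref{obs:atomic-epi-nontrivial} (2)).

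For (2)$\Rightarrow$(1), let $A$ be any object and write $A\cong\sum_{i\in I}A_i$ with each $A_i$ an atom. By $\mathsf{AtPAx}$, choose for each $i$ an epimorphism $P_i\surj A_i$ from a projective atom; this uses choice in the external set-theoretic metatheory only. The induced map $\sum_i P_i\to\sum_i A_i$ is an epimorphism, since coproducts of epimorphisms in a topos are epimorphisms: epis are the left class of a factorization system, or one can argue directly via the universal property of coproducts. It remains to see that $\sum_i P_i$ is projective. Given an epimorphism $f\colon X\surj Y$ and a map $y\colon\sum_iP_i\to Y$, lift each restriction $y\circ\iota_i\colon P_i\to Y$ through $f$ using projectivity of $P_i$, and assemble the lifts via the coproduct. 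If $A=\mathbf{0}$, the empty coproduct $\mathbf{0}$ is trivially projective.

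For (1)$\Rightarrow$(2), let $A$ be an atom and take an epimorphism $e\colon P\surj A$ with $P$ projective. Write $P\cong\sum_{j\in J}Q_j$ with each $Q_j$ an atom. First, each summand $Q_j$ is projective. Given $f\colon X\surj Y$ and $y\colon Q_j\to Y$, extend $y$ to $P$ by choosing arbitrary maps on the other summands. This is the delicate step, since $Y$ may have no global elements over the other summands. To avoid it, use instead that $Q_j$ is a retract of $P$ when $Q_j\neq\mathbf{0}$: coproduct injections in a topos are complemented monomorphisms, but a retraction still needs a map $\sum_{k\neq j}Q_k\to Q_j$, which also may not exist.

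The cleaner route is to pull $f$ back. Given $f\colon X\surj Y$ and $y\colon Q_j\to Y$, form $X'=X\times_Y Q_j\to Q_j$. This is an epimorphism because epis are pullback-stable in a topos. Then $X'+\sum_{k\neq j}Q_k\to P$ is an epimorphism, and projectivity of $P$ gives a section $s$. Composing $s\circ\iota_j$ lands in $X'+\sum_{k\neq j}Q_k$. Since $Q_j$ is an atom and coproducts in a topos are disjoint and stable, the pullbacks of the two summands along $s\iota_j$ give a decomposition of $Q_j$ into two complementary subobjects. Commutativity over $P$ forces the part mapping into $\sum_{k\neq j}Q_k$ to factor through $Q_j\cap\sum_{k\neq j}Q_k=\mathbf{0}$, so all of $Q_j$ maps into $X'$. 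Composing with $X'\to X$ yields the required lift of $y$. Thus each $Q_j$ is a projective atom.

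Finally, since $A\neq\mathbf{0}$ and $e$ is epi, $P\neq\mathbf{0}$. Some $Q_j$ has $e\circ\iota_j$ nonzero, because otherwise $e$ factors through $\mathbf{0}$ and $A=\mathbf{0}$. By Observation \ref{obs:atomic-epi-nontrivial} (1), $e\circ\iota_j\colon Q_j\to A$ is then an epimorphism from a projective atom, which proves $\mathsf{AtPAx}$.

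The main obstacle is the summand-projectivity step, which I handle by the pullback-and-disjointness argument above.
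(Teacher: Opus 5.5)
Your proof is correct and follows the same route as the paper: decompose into atoms, use that a coproduct of projective atoms is projective for (2)$\Rightarrow$(1), and use that an atomic summand of a projective object is projective for (1)$\Rightarrow$(2). The paper only says ``one may see'' for that last fact. Your pullback-and-disjointness argument supplies it correctly: the factorization of $s\iota_j$ through $X'$ is a section of $X'\to Q_j$ because $\iota_j$ is monic, which you might state explicitly. The abandoned attempts that precede it can simply be deleted.
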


\begin{proof}
(1) Let $A$ be an atom.
By ${\sf PAx}$, there is an epimorphism $e\colon P\surj A$ from a projective object $P$.
By Observation \ref{obs:yo-generator} (3), decompose $P$ into atoms $P\simeq\sum_{j\in J}P_j$.
One may see that each $P_j$ is also projective.
Since $P_j\not\simeq 0$ and $A$ is an atom, $e\circ\iota_j\colon P_j\to A$ is an epimorphism from a projective atom.

(2)$\Rightarrow$(1):
By Observation \ref{obs:yo-generator} (3), every object $X$ can be decomposed as $X\simeq \sum_{i\in I} A_i$.
For each atom $A_i$, by ${\sf AtPAx}$, we have an epimorphism $p_i\colon P_i\surj A_i$ from a projective atom $P_i$.
Then $\sum_i p_i\colon \sum_i P_i\to \sum_i A_i\simeq X$ is an epimorphism, where one may check that $\sum_iP_i$ is a projective.
\end{proof}

\begin{theorem}\label{thm:maximal-cofinal-theorem}
Suppose that $\K_\lambda$ is a subcanonical AP class.
Then the following are equivalent.
\begin{enumerate}
\item $\K_\lambda$ has the maximal extension property.
\item ${\rm Sh}(K_\lambda^{\rm op},J_{\rm at})$ satisfies the atomic presentation axiom ${\sf AtPAx}$.
\end{enumerate}
\end{theorem}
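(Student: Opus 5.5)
The plan is to prove both implications directly, using Lemma \ref{lem:maximal-equ-projec} (maximal structures correspond exactly to projective representables) and Lemma \ref{lem:maximal-projective-atom} (every projective atom is of the form $\yo M$ with $M$ maximal). The translation between embeddings in $\K_\lambda$ and epimorphisms between representables is Lemma \ref{lem:atomic-topos-embedding-to-epi}, together with subcanonicity and Yoneda.

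For (1)$\Rightarrow$(2), let $A$ be an atom. By Observation \ref{obs:yo-generator} (4) there is an epimorphism $\yo B\surj A$ for some $B\in\K_\lambda$. By the maximal extension property choose an embedding $B\embedge{i}M$ with $M$ maximal. Lemma \ref{lem:atomic-topos-embedding-to-epi} makes $\yo i\colon\yo M\to\yo B$ an epimorphism, so the composite $\yo M\surj\yo B\surj A$ is an epimorphism. By Lemma \ref{lem:maximal-equ-projec}, $\yo M$ is projective, and by Observation \ref{obs:yo-generator} (1) it is an atom. This gives ${\sf AtPAx}$.

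For (2)$\Rightarrow$(1), let $A\in\K_\lambda$. The atom $\yo A$ admits an epimorphism $e\colon P\surj\yo A$ from a projective atom $P$. By Lemma \ref{lem:maximal-projective-atom}, $P\simeq\yo M$ for some maximal $M$, so we may regard $e$ as a morphism $\yo M\to\yo A$. By subcanonicity and Yoneda, $e$ corresponds to an element of $(\yo A)_M$, that is, an embedding $A\embed M$ in $\K_\lambda$. (The variance is correct because we work over $\K_\lambda^{\rm op}$, so $(\yo A)_M={\rm Hom}_\K(A,M)$.) Hence $A$ extends to a maximal structure.

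There is no genuine obstacle here. The only points needing care are the following.
\begin{enumerate}
\item The variance bookkeeping in the Yoneda step just described.
\item Checking that Lemma \ref{lem:maximal-projective-atom} applies as stated: its proof takes a section of $\yo M\surj P$, which exists by projectivity of $P$, and two atoms joined by a monomorphism are isomorphic.
\end{enumerate}
Epimorphicity of $\yo M\to A$ in the first implication also follows independently from Observation \ref{obs:atomic-epi-nontrivial} (1), since $\yo M\neq\mathbf{0}$. This gives a cross-check that does not rely on composing epimorphisms.
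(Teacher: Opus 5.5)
Your proposal is correct and follows essentially the same proof as the paper. For (1)$\Rightarrow$(2) you compose $\yo M\surj\yo N\surj A$, using Lemma~\ref{lem:atomic-topos-embedding-to-epi} and Lemma~\ref{lem:maximal-equ-projec}. For (2)$\Rightarrow$(1) you identify the projective atom as $\yo M$ with $M$ maximal via Lemma~\ref{lem:maximal-projective-atom}, then read off the embedding $N\embed M$ by subcanonicity and Yoneda.
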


\begin{proof}
(1)$\Rightarrow$(2)
We prove the atomic presentation axiom.
By Observation \ref{obs:yo-generator} (4), for every atom $A$ there exists an epimorphism $\yo N\surj A$.
By the maximal extension property, there is a morphism $N\embed M$ into a maximal $M$.
By Lemma \ref{lem:atomic-topos-embedding-to-epi}, this induces an epimorphism $\yo M\surj \yo N$.
By Lemma \ref{lem:maximal-equ-projec} (1)$\Rightarrow$(2), $\yo M$ is a projective atom.
Thus the composite epimorphism $\yo M\surj\yo N\surj A$ from a projective atom establishes the atomic presentation axiom.

(2)$\Rightarrow$(1)
Take any structure $N\in\K_\lambda$.
By the atomic presentation axiom, there exists a morphism $P\surj \yo N$ from some projective atom $P$.
By Lemma \ref{lem:maximal-projective-atom}, $P$ is of the form $\yo M$ for some maximal $M$.
By subcanonicity, $\yo M\to\yo N$ corresponds to a morphism $N\to M$.
\end{proof}


\subsection{${\sf AtPAx}=$ the internal axiom of choice}

\begin{theorem}\label{thm:apPAx-to-IAC}
The following are equivalent:
\begin{enumerate}
\item ${\rm Sh}(\C,J_{\rm at})$ satisfies the atomic presentation axiom ${\sf AtPAx}$.
\item ${\rm Sh}(\C,J_{\rm at})$ satisfies the internal axiom of choice.
\end{enumerate}
\end{theorem}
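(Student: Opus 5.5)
The plan is to work entirely with atoms, using two standard facts about Grothendieck toposes (see \cite[D4.5]{Elephant2}). First, the internal axiom of choice is equivalent to \emph{local splitting}: for every epimorphism $f\colon X\surj Y$ there is an epimorphism $U\surj\mathbf{1}$ such that $U\times f\colon U\times X\to U\times Y$ splits. Second, coproducts of projective objects are projective. The key preliminary lemma, which I will prove first, is a rigidity statement for projective atoms, valid in any atomic topos.

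\emph{Rigidity lemma.} The lemma has two parts.
\begin{enumerate}
\item Every endomorphism $e\colon P\to P$ of a projective atom is an isomorphism.
\item If $P$ is a projective atom, $B$ is an atom, and some projective atom $Q$ admits an epimorphism onto $B$, then every morphism $B\to P$ is an isomorphism.
\end{enumerate}
For (1): $e$ is epi by Observation \ref{obs:atomic-epi-nontrivial}(1), so projectivity gives a section $s$ with $es=1_P$. This $s$ is a mono between atoms, hence an iso, and therefore $e$ is an iso. For (2): take $Q\surj B$. The composite $Q\surj B\to P$ is epi, so it splits by projectivity of $P$. The splitting is a mono $P\mono Q$ between atoms, hence an iso, so $P\simeq Q$. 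Under this identification the composite $Q\to B\to P$ is an endomorphism of a projective atom, hence an iso by (1). It follows that $B\to P$ is split epi, and that it is mono because $Q\surj B$ is epi; so it is an iso.

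(1)$\Rightarrow$(2). Decompose $\mathbf{1}\simeq\sum_i A_i$ into atoms (Observation \ref{obs:yo-generator}(3)). By ${\sf AtPAx}$ choose projective atoms $P_i\surj A_i$, and set $U=\sum_iP_i$; then $U\surj\mathbf{1}$ is epi. Now fix an epimorphism $f\colon X\surj Y$. I claim that $U\times Y$ is projective. It is the coproduct of the objects $P_i\times Y$, and each $P_i\times Y$ is a coproduct of atoms $B$ each of which carries a projection $B\to P_i$. Each such $B$ is covered by a projective atom, by ${\sf AtPAx}$. The rigidity lemma (2) therefore gives $B\simeq P_i$, so $B$ is projective, and hence $U\times Y$ is a coproduct of projectives and is projective. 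Consequently the epimorphism $U\times f\colon U\times X\surj U\times Y$ splits, with the splitting taken in ${\rm Sh}(\C,J_{\rm at})$ and then regarded over $U$. By the local splitting characterization, the internal axiom of choice holds.

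(2)$\Rightarrow$(1). This direction is the main obstacle, because IAC does not directly produce global projectives. I will proceed as follows. Fix an atom $A$ and an epi $\yo p\surj A$ (more precisely, from the sheafification of $\yo p$). Form the single ``universal'' epimorphism
\[
f\colon\sum_{(q\edge{a}r)}\yo q\surj\sum_r\yo r,
\]
where the sum on the left is indexed by all $\C$-morphisms $q\edge{a}r$ and $f$ sends the summand indexed by $a$ into the summand $\yo r$ via $\yo a$. This $f$ is epi by Lemma \ref{lem:atomic-topos-embedding-to-epi}, applied summandwise (or its sheafified analogue). IAC yields $U\surj\mathbf{1}$ and a section of $U\times f$. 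Choose an atom $V\subseteq U$ and an atom $P$ of $V\times\yo p$ that maps onto $A$ by the projection; such a $P$ exists because $V\times\yo p\to\yo p$ is epi, since $V\surj\mathbf{1}$ is epi by Observation \ref{obs:atomic-epi-nontrivial}(1). Restricting the section to $P$ shows that $P\to\yo p$, together with every composite $P\to\yo r$ factoring through it, lifts along every cover $\yo q\surj\yo r$.

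It then remains to show that $P$ is projective. Following the cone argument of Theorem \ref{lem:cone-characterization-countable-choice}, I reduce projectivity of an atom to lifting elements along local surjections. Given $X\surj Y$ and $P\to Y$, precompose with an epi $\yo s\surj P$ to obtain an element of $Y_s$. Local surjectivity then gives a cover $t\to s$ and an element of $X_t$ lying over its restriction. The section built into $P$ lets me factor $P$ through this cover, which produces the required lift $P\to X$. The delicate point, and the step I expect to be hardest, is making this factorization compatible with the epi $\yo s\surj P$ rather than just with $\yo s$. I will try to handle it by choosing $P$ as an atom that is minimal in the sense that every morphism into it from an atom over $V$ is an isomorphism, using the rigidity lemma in reverse.
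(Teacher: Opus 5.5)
Your direction (1)$\Rightarrow$(2) is sound but takes a different route from the paper. Local splitting does characterize the internal axiom of choice. Your rigidity lemma (2) is the key step inside the paper's Lemma~\ref{lem:proj-prod}, and its extra hypothesis that $B$ be covered by a projective atom is unnecessary: any $B\to P$ from an atom is epi, projectivity of $P$ splits it, and the splitting is a mono between atoms, hence an isomorphism. The paper avoids the local-splitting characterization and shows $f^Z$ epi directly. It covers each atom of $Y^Z$ by a projective atom $P$ and lifts along $f$ from the projective object $P\times Z$.

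The direction (2)$\Rightarrow$(1) has a genuine gap. Your ``universal'' epimorphism $f\colon\sum_{a}\yo q\to\sum_r\yo r$ is already globally split: send the summand $\yo r$ identically to the summand indexed by $1_r$. So invoking the internal axiom of choice for it gives you nothing. More fundamentally, a section of a \emph{coproduct} of covers only selects, for each element, \emph{some} cover along which it lifts (possibly the identity). It never gives a lift along a prescribed cover. Hence the claim that $P\to\yo p$ and its composites lift `along every cover $\yo q\surj\yo r$' does not follow.

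There is also a smaller error: $V\to\mathbf{1}$ need not be epi, since $\mathbf{1}$ need not be an atom; you should take $P$ among the atoms of $U\times\yo p$ instead. Finally, even with lifts along all representable covers, the obstruction you flag yourself is left unresolved. $P$ is not representable, so a map $P\to Y$ is not an element at a stage you can cover, and `choosing $P$ minimal' is not an argument.

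The paper closes both holes with two ingredients.
\begin{itemize}
\item Lemma~\ref{lem:atomic-projective} reduces projectivity of an atom $P$ to lifting along epimorphisms \emph{between atoms}. Given $X\surj Y$ and $y\colon P\to Y$, pull back to the atom ${\rm Im}(y)$ and pick an atom of the pullback. This sidesteps representability entirely.
\item Every atom is a quotient of a (sheafified) representable, so the epimorphisms between atoms form a set $\{f_i\colon B_i\surj C_i\}_{i\in I}$ up to isomorphism. The internal axiom of choice makes each ${\rm Sec}(f_i)\to\mathbf{1}$ epi (Lemma~\ref{lem:sec-object-epi}). Via the axiom of choice for $I$-indexed families, $S=\prod_i{\rm Sec}(f_i)\to\mathbf{1}$ is then also epi.
\end{itemize}
An atom $P$ of $A\times S$ mapping onto $A$ then carries parametrized sections $x_i\colon P\times C_i\to B_i$ of all $f_i$ simultaneously. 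For any $y\colon P\to C_i$, the composite $x_i\circ\pair{1,y}$ is the required lift. It is this \emph{product} of section objects, not a coproduct of covers, that encodes `lifts along every cover'.
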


We prepare to relate the atomic presentation axiom to the internal axiom of choice.
In the following, we work in the atomic sheaf topos ${\rm Sh}(\C,J_{\rm at})$.
We do not assume subcanonicity.

\begin{lemma}\label{lem:proj-prod}
If $P$ is a projective atom, then $P\times Z$ is projective for every $Z$.
\end{lemma}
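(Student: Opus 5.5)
The plan is to decompose $P\times Z$ into atoms and show that every atomic summand is isomorphic to $P$ itself. Since a coproduct of projectives is projective, this finishes the argument.

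First I will record that an arbitrary coproduct of projective objects is projective in ${\rm Sh}(\C,J_{\rm at})$. Let $f\colon X\surj Y$ be an epimorphism and let $y\colon\sum_j Q_j\to Y$ be a morphism. Restrict $y$ along each coprojection $\iota_j$ to get $y\iota_j\colon Q_j\to Y$. By projectivity of $Q_j$, lift this to $x_j\colon Q_j\to X$ with $fx_j=y\iota_j$. Then $[x_j]_j\colon\sum_jQ_j\to X$ satisfies $f\circ[x_j]_j=y$ by the universal property of the coproduct. The empty coproduct $\mathbf{0}$ is projective by this argument, or because it is initial, so the case $P\times Z\simeq\mathbf{0}$ needs no separate treatment.

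Next, by Observation \ref{obs:yo-generator} (3), which holds without subcanonicity as remarked after it, we can write $P\times Z\simeq\sum_{k\in K}Q_k$ with each $Q_k$ an atom. It then suffices to show that each $Q_k$ is projective.

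The key step is this. Fix an atom $Q=Q_k$ together with its coprojection $Q\to P\times Z$, and let $\pi\colon Q\to P\times Z\to P$ be the composite with the first projection. Since $Q\neq\mathbf{0}$ and $P$ is an atom, Observation \ref{obs:atomic-epi-nontrivial} (1) says that $\pi$ is an epimorphism. Because $P$ is projective, applying projectivity to $\pi$ and $1_P$ gives a section $s\colon P\to Q$ with $\pi s=1_P$. In particular $s$ is a monomorphism. On the other hand $P\neq\mathbf{0}$ and $Q$ is an atom, so by Observation \ref{obs:atomic-epi-nontrivial} (1) again $s$ is an epimorphism. In a topos a morphism that is both mono and epi is an isomorphism, so $Q\simeq P$, and hence $Q$ is projective.

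Combining the steps, $P\times Z$ is a coproduct of copies of $P$, and is therefore projective by the first step. I expect no serious obstacle. The only points needing care are that the atomic decomposition and Observation \ref{obs:atomic-epi-nontrivial} are used in ${\rm Sh}(\C,J_{\rm at})$ without subcanonicity, and that the mono-plus-epi argument uses balancedness of the topos.
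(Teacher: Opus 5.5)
Your proposal is correct and follows essentially the same route as the paper. The paper also decomposes $P\times Z$ into atoms, splits the epimorphism from each atomic summand onto $P$ using projectivity of $P$, concludes that each summand is isomorphic to $P$, and finishes by noting that a coproduct of projectives is projective. Where the paper leaves them implicit, you spell out the coproduct-of-projectives argument and the balancedness step (mono plus epi implies iso).
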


\begin{proof}
By Observation \ref{obs:yo-generator}, there is an atomic decomposition $P\times Z\simeq\sum_{i\in I}A_i$.
For each $A_i$, consider the projection $f\colon A_i\embed\sum_{i\in I}A_i\surj P$.
Since this is a morphism between atoms, it is an epimorphism by Observation \ref{obs:atomic-epi-nontrivial} (1).

Since $P$ is projective, for $1_P\colon P\to P$ there exists a morphism $P\edge{x} A_i$ such that $fx=1_P$.
Now $x$ is a section of $f$, hence is monic; since $A_i$ and $P$ are atom (so $P\not=\mathbf{0}$), we get $A_i\simeq P$.
Therefore, $P\times Z\simeq\sum_{i\in I}P$.
A coproduct of projective objects is projective, so $P\times Z$ is projective.
\end{proof}

\begin{lemma}\label{lem:atomic-projective}
For an atom $P$, the following are equivalent:
\begin{enumerate}
\item $P$ is projective.
\item For every epimorphism $f\colon A\surj B$ between atoms and every morphism $P\edge{y}B$, there exists a morphism $P\edge{x}A$ such that $fx=y$.
\end{enumerate}
\end{lemma}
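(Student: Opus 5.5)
(1)$\Rightarrow$(2) is immediate, since an epimorphism between atoms is in particular an epimorphism. For (2)$\Rightarrow$(1), I would reduce a general lifting problem to liftings between atoms. The tool is the atomic decomposition of Observation \ref{obs:yo-generator} (3), together with two facts: coproducts in a Grothendieck topos are disjoint and stable under pullback, and epimorphisms are stable under pullback.

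So let $f\colon X\surj Y$ be an epimorphism and $y\colon P\to Y$ a morphism. We may assume $P\not=\mathbf{0}$, since $\mathbf{0}$ is trivially projective and in any case is not an atom.

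First, locate the image of $P$ in a single atom. Write $Y\simeq\sum_{j\in J}B_j$ with each $B_j$ an atom. Pulling the coproduct back along $y$ gives $P\simeq\sum_j y^*B_j$. Each $y^*B_j\mono P$ is a subobject of the atom $P$, so it is $\mathbf{0}$ or all of $P$. By disjointness, exactly one index $j_0$ has $y^*B_{j_0}\simeq P$. Hence $y$ factors as $P\edge{y'}B_{j_0}\embed Y$.

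Next, restrict $f$ over that atom. Pull $f$ back along the coproduct injection $B_{j_0}\embed Y$ to obtain $f'\colon X'\to B_{j_0}$, where $X'\embed X$. Then $f'$ is an epimorphism, by pullback-stability of epis. Decompose $X'\simeq\sum_{i\in I}A_i$ into atoms. Since $B_{j_0}\not=\mathbf{0}$ and $f'$ is epi, $X'\not=\mathbf{0}$, so some $A_{i_0}$ exists. By Observation \ref{obs:atomic-epi-nontrivial} (1), the restriction $f'\iota_{i_0}\colon A_{i_0}\to B_{j_0}$ is an epimorphism between atoms.

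Now apply (2) to this epimorphism and $y'$, obtaining $x'\colon P\to A_{i_0}$ with $f'\iota_{i_0}x'=y'$. Set $x$ to be the composite $P\to A_{i_0}\embed X'\embed X$. Then $fx$ equals $P\edge{y'}B_{j_0}\embed Y$, which is $y$, by commutativity of the pullback square.

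The main obstacle I expect is the first step: justifying that a morphism from an atom into a coproduct factors through a single summand. This rests on the extensivity of the topos (disjoint, universal coproducts) together with the fact that subobjects of an atom are trivial. I would state it explicitly as a small auxiliary observation before running the argument above.
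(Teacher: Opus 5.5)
Your proof is correct and follows essentially the same route as the paper: restrict to an atomic target, pull $f$ back along it, choose one atom of the resulting domain, apply (2) there, and compose back into $X$. The only difference is the choice of atomic target: the paper uses ${\rm Im}(y)\embed Y$, which is already an atom by Observation \ref{obs:atomic-epi-nontrivial} (2), so it does not need your extensivity argument that a morphism from an atom factors through a single summand of $Y$.
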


\begin{proof}
(1)$\Rightarrow$(2):
Immediate.

(2)$\Rightarrow$(1):
Take any $f\colon X\surj Y$ and $P\edge{y}Y$.
Since $P$ is an atom, Observation \ref{obs:atomic-epi-nontrivial} (2) implies that ${\rm Im}(y)\embed Y$ is also an atom.
Consider the pullback $\pi\colon X\times_Y{\rm Im}(y)\surj {\rm Im}(y)$.
By Observation \ref{obs:yo-generator}, in an atomic sheaf topos its domain decomposes as a coproduct $\sum_{i\in I}A_i$ of atoms.
Since $\pi$ is an epimorphism, $I\not=\emptyset$.
Hence there exists a morphism $A_i\edge{b}{\rm Im}(y)$ from an atom.
By Observation \ref{obs:atomic-epi-nontrivial} (1), this is an epimorphism.

By the assumed projectivity with respect to atoms, applied to the corestriction $\bar{y}\colon P\surj {\rm Im}(y)$ of $y$ and to $A_i\edge{b} {\rm Im}(y)$, there exists $P\edge{a}A_i$ such that $ba=\bar{y}$.
Composing, we obtain $x\colon P\edge{a}A_i\embed \sum_{i\in I}A_i\surj X$, and this satisfies $fx=y$.
\end{proof}

\begin{definition}
For an epimorphism $f\colon A\surj B$ between atoms, define ${\rm Sec}(f)$ by the following pullback.
\[
\xymatrix{
{\rm Sec}(f) \ar[rr]\ar[d] & & A^B \ar[d]^{f^B} \\
1 \ar[rr] & & B^B 
}
\]

A morphism $T\to {\rm Sec}(f)$ then corresponds to a morphism $T\times B\edge{x} A$ satisfying $fx=\pi_B$.
In terms of sheaves, this says explicitly that $f_px_p(t,b)=b$ for every $p\in\C$.
\end{definition}

The external axiom of choice asserts that if $f$ is an epimorphism, then $f$ admits a section.

\begin{lemma}\label{lem:sec-object-epi}
Assume the internal axiom of choice.
For every epimorphism $f\colon A\surj B$ between atoms, ${\rm Sec}(f)\to 1$ is also an epimorphism.
\end{lemma}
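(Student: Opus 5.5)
We prove the lemma by expressing ${\rm Sec}(f)$ as a pullback of $f^B$ and using the fact that epimorphisms are stable under pullback in a topos.

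First, by the internal axiom of choice applied to the epimorphism $f\colon A\surj B$ with exponent $Z:=B$, the morphism $f^B\colon A^B\to B^B$ is an epimorphism.

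Next, by its definition, ${\rm Sec}(f)$ is the pullback of $f^B$ along the global element $1\to B^B$. This global element is the exponential transpose of $\pi_B\colon 1\times B\to B$, i.e.\ the name of the identity $1_B$. Since $f^B$ is an epimorphism, its pullback ${\rm Sec}(f)\to 1$ is an epimorphism as well.

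The only point needing care is justifying pullback-stability of epimorphisms. In a topos this is a standard fact, since epimorphisms are regular and regular epimorphisms are pullback-stable. If a self-contained argument is preferred, one can check it in the sheaf description using Lemma \ref{fact:SGL-epi-characterization-cover}. Pullbacks of sheaves are computed componentwise, so an element of $1_p$ gives, via the element $\ulcorner 1_B\urcorner\at{p}\in(B^B)_p$, a local preimage along $f^B$ over some cover $q\edge{a}p$. That preimage lies in ${\rm Sec}(f)_q$, because the pullback square at stage $q$ is a pullback of sets. This gives local surjectivity of ${\rm Sec}(f)\to 1$.

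I expect no real obstacle. The hypothesis that $A$ and $B$ are atoms plays no role in this lemma; it will matter only later, when ${\rm Sec}(f)$ is combined with projective atoms.
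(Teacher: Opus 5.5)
Your proposal is correct and matches the paper's proof: the internal axiom of choice with exponent $B$ makes $f^B$ an epimorphism, and ${\rm Sec}(f)\to 1$ is its pullback along the name of $1_B$, hence an epimorphism. Your extra justification of pullback-stability, and your remark that the atom hypothesis plays no role here, are both accurate.
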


\begin{proof}
Since $f$ is an epimorphism, the internal axiom of choice implies that $f^B\colon A^B\surj B^B$ is also an epimorphism.
Pullbacks of epimorphisms are epimorphisms, so ${\rm Sec}(f)\surj 1$ is an epimorphism.
\end{proof}

\begin{proof}[Proof of Theorem \ref{thm:apPAx-to-IAC}]
(1)$\Rightarrow$(2):
To prove the internal axiom of choice, take an epimorphism $f\colon X\surj Y$ and an object $Z$.
We shall show that $f^Z\colon X^Z\to Y^Z$ is an epimorphism.

By Observation \ref{obs:yo-generator}, decompose $Y^Z$ into atoms and choose an atom $A\embedge{i} Y^Z$.
By the atomic presentation axiom, there exists an epimorphism $r\colon P\surj A$ from a projective atom.
Uncurrying the composite $\hat{y}\colon P\edge{r}A\edge{i}Y^Z$ gives a morphism $P\times Z\edge{y} Y$.
By Lemma \ref{lem:proj-prod}, $P\times Z$ is projective, so there exists a morphism $P\times Z\edge{x}X$ such that $fx=y$.
Currying it yields a morphism $P\edge{\hat{x}}X^Z$.
Then $f^Z\hat{x}=\hat{y}=r\circ i$.

Now $r\circ i\colon P\surj A\mono Y^Z$ is an image factorization. Comparing it with the image factorization $X^Z\surj {\rm Im}(f^Z)\mono Y^Z$ of $f^Z$, we see that $A\mono Y^Z$ is a subobject of ${\rm Im}(f^Z)\mono Y^Z$.
Thus, for the atomic decomposition $Y^Z=\sum_{i\in I}A_i$, every $A_i$ is a subobject of ${\rm Im}(f^Z)$, and therefore ${\rm Im}(f^Z)\simeq Y^Z$.
Hence $f^Z$ is an epimorphism.

(2)$\Rightarrow$(1):
Fix an atom $A$.
By Observation \ref{obs:yo-generator} (4), every atom is a quotient of an atom of the form $\yo p$, so the isomorphism classes of atoms may be taken to form a set.
We may therefore choose a family of representatives $\{f_i\colon B_i\surj C_i\}_{i\in I}$ of all epimorphisms between atoms.
By Lemma \ref{lem:sec-object-epi}, each ${\rm Sec}(f_i)\surj 1$ is an epimorphism; by the axiom of choice for $I$-indexed families, $S:=\prod_{i\in I}{\rm Sec}(f_i)\surj 1$ is also an epimorphism.
Then its pullback $A\times S\surj A$ is an epimorphism, so we get $A\times S\not\simeq \mathbf{0}$,

Decompose $A\times S\surj A$ into atoms, choose one of them, and denote it by $P$.
We then obtain $r\colon P\embed\sum_{i\in I}P_i\simeq A\times S\surj A$.
Since this is a morphism between atoms, Observation \ref{obs:atomic-epi-nontrivial} (1) implies that $r\colon P\surj A$ is an epimorphism.
There is also a morphism $P\to A\times S\to S=\prod_{i\in I}{\rm Sec}(f_i)$, which consequently yields each component $P\to {\rm Sec}(f_i)$.
This corresponds to a parametrized section $x_i\colon P\times C_i\to B_i$, satisfying $f_ix_i=\pi_{C_i}$.

We show that $P$ is projective with respect to atoms.
Given an epimorphism $f_i\colon B_i\surj C_i$ between atoms and any morphism $P\edge{y} C_i$, consider $P\xedge{\pair{1,y}}P\times C_i\edge{x_i}B_i$. Then $f_ix_i\pair{1,y}=y$.
Thus $P$ is projective with respect to atoms.
By Lemma \ref{lem:atomic-projective}, $P$ is in fact projective.
We have therefore obtained an epimorphism $p\colon P\surj A$ from a projective object.
\end{proof}

\begin{proof}[Proof of Theorem \ref{thm:IAC-maximalcofinal}]
This follows from Theorem \ref{thm:maximal-cofinal-theorem} and Theorem \ref{thm:apPAx-to-IAC}.
\end{proof}

\subsection{The groupoid of maximal structures}\label{sec:groupoid}

Theorem \ref{thm:IAC-maximalcofinal} (1)$\Rightarrow$(2) can be explained in another way, which has the advantage of not assuming subcanonicity.
Below we abbreviate $\K=\K_\lambda$.
Let $\mathcal{K}_{\rm max}\subseteq\mathcal{K}$ be the collection of all maximal structures in $\mathcal{K}$.
We regard $\mathcal{K}_{\rm max}$ as a full subcategory of $\K$.

\begin{obs}\label{obs:groupoid-topology}~
\begin{enumerate}
\item $\mathcal{K}_{\rm max}$ is a groupoid.
\item Every presheaf on a groupoid $G$ is a $J_{\rm at}$-sheaf.
\end{enumerate}
\end{obs}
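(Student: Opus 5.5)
For part (1), I will show directly that every morphism in $\K_{\rm max}$ is an isomorphism. Let $f\colon A\embed B$ be an embedding between maximal structures in $\K=\K_\lambda$. Since $A$ is maximal, every extension of $A$ in $\K$ is an isomorphism, so $f$ is one. Its inverse $f^{-1}\colon B\to A$ is again a $\K$-morphism between objects of $\K_{\rm max}$. Because $\K_{\rm max}$ is a full subcategory, $f^{-1}$ lies in $\K_{\rm max}$. Hence $\K_{\rm max}$ is a groupoid. (If size matters, I replace it by its intersection with the small skeleton $\K_\lambda^-$ of Observation \ref{obs:essentially-small}.)

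For part (2), I will use the sheaf criterion of Observation \ref{obs:characterization-atomic-sheaf-presheaf}. To apply it, I first note that a groupoid $G$ satisfies the right Ore condition, so by Observation \ref{obs:right-Ore-dense-atomic} the atomic topology is defined and the criterion is available. Indeed, given $x\colon a\to d$ and $y\colon b\to d$, take $c=a$, $z=1_a$ and $w=y^{-1}x$; then $yw=x=xz$.

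Now let $X$ be a presheaf on $G$, let $a\colon q\to p$ be a morphism (hence invertible), and let $x\in X_q$ be $a$-matching. Define $\hat x:=x\at{a^{-1}}\in X_p$. Then $\hat x\at a=x\at{a^{-1}}\at a=x\at{a^{-1}\circ a}=x\at{1_q}=x$. Note that this uses contravariance, $x\at{u}\at{v}=x\at{u\circ v}$. For uniqueness, suppose $\hat x'\at a=x$. Then $\hat x'=\hat x'\at{a\circ a^{-1}}=x\at{a^{-1}}=\hat x$.

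So every matching element extends uniquely; in fact the matching hypothesis is not even needed, since $a$ is monic in $G$. Hence $X$ is a $J_{\rm at}$-sheaf. There is no real obstacle here beyond keeping track of the contravariant composition order.
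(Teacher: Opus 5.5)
Your proof is correct and follows the paper's argument. Part (1) is immediate from maximality and fullness. For (2), the paper simply notes that every $J_{\rm at}$-cover in a groupoid is an isomorphism, so the topology is trivial and sheaves coincide with presheaves; your explicit extension $\hat x=x\at{a^{-1}}$ via Observation \ref{obs:characterization-atomic-sheaf-presheaf}, together with your check of the right Ore condition (which the paper leaves implicit), is a spelled-out version of that same observation.
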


\begin{proof}
(1)
By maximality, every morphism $M\embed N$ with $M,N\in\mathcal{K}_{\rm max}$ is an isomorphism.
(2)
In a groupoid, every $J_{\rm at}$-cover $q\to p$ is an isomorphism. Thus the atomic topology is trivial, so $J_{\rm at}$-sheaves are precisely presheaves.
\end{proof}

Consider the full subcategory $[M]\subseteq\K_{\max}$ consisting of the isomorphism class of a maximal structure $M\in\K_{\max}$.
This is a connected groupoid.
\begin{obs}[see e.g.~{\cite[Proposition 1.5.13]{Riehlbook}}]\label{obs:Riehl}
The category $[M]$ is equivalent to the automorphism group ${\rm Aut}(M)$ regarded as a one-object category.
\end{obs}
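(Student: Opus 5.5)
We build the comparison functor explicitly. Let $B{\rm Aut}(M)$ be the one-object category whose morphisms are the elements of ${\rm Aut}(M)$, with composition given by composition of automorphisms. There is an evident inclusion functor $\iota\colon B{\rm Aut}(M)\to[M]$, sending the unique object to $M$ and each automorphism $\sigma$ to $\sigma\colon M\embed M$. The plan is to show that $\iota$ is fully faithful and essentially surjective. Granting choice in the ambient set theory, this makes $\iota$ an equivalence of categories; this is the standard criterion, e.g.\ \cite[Theorem 1.5.9]{Riehlbook}.

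First, $\iota$ is fully faithful. By Observation \ref{obs:groupoid-topology} (1), every morphism $M\embed M$ in $\K_{\max}$ is an isomorphism. Hence ${\rm Hom}_{[M]}(M,M)={\rm Aut}(M)$, and $\iota$ is a bijection on this hom-set.

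Second, $\iota$ is essentially surjective, since by definition every object of $[M]$ is isomorphic to $M$.

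To make the equivalence explicit, we construct a quasi-inverse.
\begin{enumerate}
\item For each $N\in[M]$, choose an isomorphism $\phi_N\colon M\edge{\simeq}N$, taking $\phi_M=1_M$. If the size of $[M]$ is a worry, we work instead within the small skeleton-type subcategory from Observation \ref{obs:essentially-small}.
\item Define $R\colon[M]\to B{\rm Aut}(M)$ by sending a morphism $f\colon N\to N'$ to $\phi_{N'}^{-1}\circ f\circ\phi_N\in{\rm Aut}(M)$.
\item Check functoriality: $R$ preserves composition because the intermediate factors $\phi_{N'}\circ\phi_{N'}^{-1}$ cancel.
\item Check $R\iota=1$: this holds because $\phi_M=1_M$.
\item Check $\iota R\cong 1$: the family $(\phi_N)_N$ is a natural isomorphism $\iota R\Rightarrow 1_{[M]}$, which is exactly the conjugation formula defining $R$.
\end{enumerate}

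The only point needing care is the choice of the $\phi_N$, which requires global choice over a possibly large class. This is handled by passing to $\K_\lambda^-$, where $[M]$ becomes a set.
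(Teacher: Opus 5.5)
Your proposal is correct and follows essentially the paper's route. The paper likewise chooses isomorphisms $i_N\colon M\to N$ and uses the conjugation functor $f\mapsto i_N^{-1}\circ f\circ i_L$, leaving the verification as straightforward; your inclusion $\iota$, the normalization $\phi_M=1_M$, the explicit natural isomorphism $(\phi_N)_N$, and the size remark (matching the paper's standing identification of $\K_\lambda$ with $\K_\lambda^-$) simply spell out those routine checks.
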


\begin{proof}
For each $N\in[M]$, choose an isomorphism $i_N\colon M\edge{\simeq} N$.
Define a functor $[M]\to{\rm Aut}(M)$ by sending every object $N$ to the object $\ast$ and every morphism $L\edge{f}N$ to $F(f):=i_N^{-1}\circ f\circ i_L\colon M\to M$.
It is straightforward to verify that this is an equivalence of categories.
\end{proof}

Choosing a representative $M_i$ of each isomorphism class in $\K_{\max}$, we may write
\[\K_{\max}=\sum_{i\in I}M_i\simeq\sum_{i\in I}{\rm Aut}(M_i)\]
Moreover, by Observation \ref{obs:groupoid-topology}, $J_{\rm at}$-sheaves on a groupoid are precisely presheaves, and hence
\[
{\rm Sh}(\K_{\rm max}^{\rm op},J_{\rm at})\simeq\prod_{i\in I}{\rm Aut}(M_i)\text{-}{\bf Set}.
\]

A full subcategory $\D\embed\C$ is $J$-dense if, for every $p\in\C$, $\{q\edge{f}p:q\in\D\}$ is a $J$-cover of $p$.
In particular, when $J=J_{\rm at}$, this means that for every $p\in \C$ there exists a morphism $q\to p$ from some $q\in \D$.

\begin{obs}
The following are equivalent:
\begin{enumerate}
\item $\K$ has the maximal extension property.
\item $\K^{\rm op}_{\rm max}\embed\K^{\rm op}$ is $J_{\rm at}$-dense.
\end{enumerate}
\end{obs}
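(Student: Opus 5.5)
This is essentially an unwinding of definitions, so I will prove both directions directly from the description of $J_{\rm at}$-covers. By the definition given just above, $\K^{\rm op}_{\rm max}\embed\K^{\rm op}$ is $J_{\rm at}$-dense exactly when, for every $p\in\K^{\rm op}$, the family $\{q\edge{f}p : q\in\K_{\rm max}\}$ of $\K^{\rm op}$-morphisms is a $J_{\rm at}$-cover of $p$. A family of morphisms into $p$ is a $J_{\rm at}$-cover if and only if it is nonempty. So density amounts to this: for every $A\in\K$ there is at least one $\K^{\rm op}$-morphism $M\to A$ with $M\in\K_{\rm max}$.

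Next I translate back to $\K$. A $\K^{\rm op}$-morphism $M\to A$ is the same as a $\K$-morphism, that is, an embedding $A\embed M$. Hence density says that every $A\in\K$ admits an embedding $A\embed M$ into some maximal $M\in\K_{\rm max}$. Since $\K$ here abbreviates $\K_\lambda$, and $\K_{\rm max}$ is the collection of structures maximal in $\K_\lambda$, this is word for word the maximal extension property. Both implications (1)$\Rightarrow$(2) and (2)$\Rightarrow$(1) follow from this chain of equivalences.

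The only point needing care is one of size: the definition of $J$-density presupposes a small site. I will work with the small skeleton $\K_\lambda^-$ of Observation \ref{obs:essentially-small}. Maximality is invariant under isomorphism, and every structure is isomorphic to one in $\K_\lambda^-$, so the existence of an embedding into a maximal structure is unaffected by this replacement. There is no real obstacle; the argument is a direct comparison of definitions.
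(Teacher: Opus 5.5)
Your proposal is correct and follows the paper's own proof, which is simply ``compare the definitions'': in both, $J_{\rm at}$-density is read as ``every $A$ receives a $\K^{\rm op}$-morphism from some maximal $M$'', which is exactly an embedding $A\embed M$. Your remark that passing to the small skeleton $\K_\lambda^-$ preserves maximality is a harmless extra precaution.
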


\begin{proof}
Compare the definitions.
\end{proof}

\begin{lemma}[Comparison Lemma (see e.g.~{\cite[p.~590]{SGL}} and {\cite[C2.2.3]{Elephant2}}]
If $\D$ is a $J_{\rm at}$-dense full subcategory of $\C$, then ${\rm Sh}(\C,J_{\rm at})\simeq{\rm Sh}(\D,J_{at})$.
\end{lemma}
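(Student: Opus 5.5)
The plan is to show that restriction along the inclusion $u\colon\D\embed\C$ gives the equivalence. I would use the explicit sheaf condition of Observation \ref{obs:characterization-atomic-sheaf-presheaf}, and I assume, as in all our applications, that $\C$ satisfies the right Ore condition. First I check that $\D$ also satisfies the right Ore condition, so that $J_{\rm at}$ on $\D$ makes sense. Given $a\to d\leftarrow b$ in $\D$, take a cone $c\to a$, $c\to b$ in $\C$. Density then gives a morphism $q\to c$ with $q\in\D$, and composing produces a cone in $\D$ because $\D$ is full. Next I define the restriction functor $u^*\colon{\rm Sh}(\C,J_{\rm at})\to{\rm Sh}(\D,J_{\rm at})$ by $X\mapsto X\circ u^{\rm op}$. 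A sheaf restricts to a sheaf: a matching element for $q\edge{a}p$ in $\D$ is still $a$-matching in $\C$, since the relevant morphisms $r\to q$ with $r\in\C$ can be precomposed with a $\D$-cover of $r$ and separatedness on $\C$ pulls the equality back.

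Next I construct an inverse $u_!$, or equivalently a right Kan extension. For $Y\in{\rm Sh}(\D,J_{\rm at})$ and $p\in\C$, I choose a morphism $q\edge{a}p$ with $q\in\D$ and define
\[
\tilde Y_p=\{y\in Y_q : y \text{ is } a\text{-matching relative to } \D\}.
\]
Here "matching relative to $\D$" means that $y\at s=y\at t$ whenever $s,t\colon r\to q$, $r\in\D$, and $as=at$. I then show that this is independent of the choice of $a$ up to canonical bijection. Given two choices $q\edge{a}p$ and $q'\edge{a'}p$, I use the Ore condition together with density to find a common refinement $q''\in\D$ mapping to both, and I transport along it using the sheaf condition of $Y$ in $\D$. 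A more canonical formulation is $\tilde Y_p=\lim_{(q\to p)\in \D\downarrow p} Y_q$, which avoids choices altogether. I would adopt that definition and then verify the following: $\tilde Y$ is a presheaf on $\C$; it is a $J_{\rm at}$-sheaf; and $\tilde Y_p\cong Y_p$ when $p\in\D$, because the identity is initial in $\D\downarrow p$ and so the limit reduces to $Y_p$.

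Finally I produce the two natural isomorphisms. On one side, $u^*\tilde Y\cong Y$ is immediate from the last remark. On the other side, for $X\in{\rm Sh}(\C,J_{\rm at})$ there is an evident map $X_p\to\lim_{\D\downarrow p}X_q$, and I show it is bijective. Injectivity follows from separatedness: the map $X_p\to X_q$ is injective for any cover $q\to p$, since two elements agreeing on $X_q$ are both extensions of the same matching element. Surjectivity holds because a compatible family $(x_a)$ is determined by one component $x_a\in X_q$; this component is $a$-matching in $\C$, by the same refinement argument, and so it extends to an element of $X_p$.

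I expect the main obstacle to be the matching-condition bookkeeping. Specifically, I need to check that "matching with respect to test objects in $\D$" is equivalent to "matching with respect to all test objects in $\C$". That equivalence is used both in showing that $u^*$ preserves sheaves and in the surjectivity step, and it is exactly where density and the Ore condition are essential. The alternative is to cite the general Comparison Lemma of \cite[C2.2.3]{Elephant2}, which is the honest route for a fully general proof, and to give only this atomic-case sketch as illustration.
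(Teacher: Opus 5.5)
Your proposal is correct. It differs from the paper only in that the paper gives no argument: it simply invokes the general Comparison Lemma \cite[p.~590]{SGL}, \cite[C2.2.3]{Elephant2}, valid for an arbitrary Grothendieck topology $J$ and a $J$-dense full subcategory $\D$ with the induced topology. You instead specialize to the atomic case and argue directly from the explicit sheaf condition of Observation~\ref{obs:characterization-atomic-sheaf-presheaf}.

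Your standing assumption that $\C$ satisfies the right Ore condition costs nothing, since that is exactly what makes $J_{\rm at}$ a Grothendieck topology on $\C$ at all. Your check that $\D$ inherits the Ore condition is what identifies the induced topology on $\D$ with $J_{\rm at}$.

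The one step you only announce is that $\tilde Y_p=\lim Y_q$ is a $J_{\rm at}$-sheaf on $\C$. (This is the right Kan extension, usually written $u_*$ rather than $u_!$.) That step is not an instance of your ``matching relative to $\D$ equals matching in $\C$'' principle, but it uses the same three ingredients: Ore, density, and separatedness. Take a cover $q\edge{a}p$ in $\C$, an $a$-matching $\xi\in\tilde Y_q$, and $d\edge{f}p$ with $d\in\D$.
\begin{itemize}
\item Ore plus density give $d'\in\D$ with $f\circ e=a\circ g$.
\item Then $\xi_g\in Y_{d'}$ is $e$-matching in $\D$: if $es=et$ then $ags=agt$, so $\xi_{gs}=\xi_{gt}$.
\item Hence $\xi_g$ glues to some $\hat\xi_f\in Y_d$.
\item Separatedness of $Y$ on $\D$ then gives independence of the choices, compatibility of $(\hat\xi_f)_f$, and uniqueness of the extension.
\end{itemize}

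Your route buys a self-contained proof and an explicit inverse. For any single $q\edge{a}p$ with $q\in\D$, $\tilde Y_p$ is just the set of $a$-matching elements of $Y_q$. In the application to $\K_{\rm max}$, with $A\embedge{\iota}M$ and $M$ maximal, this is the set of elements of $Y_M$ fixed by ${\rm Aut}(M/\iota[A])$. That makes the equivalence with $\prod_i{\rm Aut}(M_i)\text{-}{\bf Set}$ concrete. The citation buys full generality (arbitrary topologies and dense subcategories) at no cost in length.
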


\begin{cor}
If $\K$ has the maximal extension property, then ${\rm Sh}(\K^{\rm op},J_{\rm at})\simeq{\rm Sh}(\K_{\rm max}^{\rm op},J_{\rm at})$.
\end{cor}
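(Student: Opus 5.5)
This is a direct application of the Comparison Lemma stated just above, combined with the preceding observation relating density to the maximal extension property. We take $\C=\K^{\rm op}$ (with $\K=\K_\lambda$, identified with its small skeleton $\K_\lambda^-$ from Observation \ref{obs:essentially-small}) and $\D=\K_{\rm max}^{\rm op}$, regarded as a full subcategory of $\K^{\rm op}$.

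\emph{Step 1: density.} By the observation preceding the Comparison Lemma, the maximal extension property is equivalent to $\K_{\rm max}^{\rm op}\embed\K^{\rm op}$ being $J_{\rm at}$-dense. Concretely, for each $A\in\K$ we need a $\K^{\rm op}$-morphism $M\to A$ with $M\in\K_{\rm max}$, that is, an embedding $A\embed M$ into a maximal structure. The sieve this generates is nonempty, hence a $J_{\rm at}$-cover of $A$.

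\emph{Step 2: matching the topologies on $\D$.} The Comparison Lemma uses the topology on $\D$ induced from $J_{\rm at}$ on $\C$. This induced topology is again the atomic topology on $\D$, which by Observation \ref{obs:groupoid-topology} is trivial because $\D$ is a groupoid. It remains to check that the induced topology is non-degenerate and has every singleton as a cover.
\begin{itemize}
\item The only $\D$-morphisms are isomorphisms, and the sieve generated by an isomorphism is maximal.
\item For any nonempty $\D$-sieve $S$ on $M$, the $\C$-sieve it generates is nonempty, hence $J_{\rm at}$-covering. So $S$ is covering in the induced topology.
\item The empty sieve is never covering, because $J_{\rm at}$ is non-degenerate on $\C$.
\end{itemize}
Thus the induced topology coincides with $J_{\rm at}$ on $\D$.

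\emph{Step 3: conclusion.} The Comparison Lemma now gives ${\rm Sh}(\K^{\rm op},J_{\rm at})\simeq{\rm Sh}(\K_{\rm max}^{\rm op},J_{\rm at})$. No subcanonicity is needed anywhere in this argument.

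The only real subtlety is Step 2, namely that "the $J_{\rm at}$ on $\D$" in the Comparison Lemma means the induced topology; the argument above settles this. If one wants the right Ore condition on $\K^{\rm op}$ so that $J_{\rm at}$ is a genuine Grothendieck topology, this comes from the AP assumed throughout (Observation \ref{obs:right-Ore-dense-atomic2}). On $\D$ the right Ore condition holds trivially, since $\D$ is a groupoid.
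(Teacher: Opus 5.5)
Your proposal is correct and takes the same route as the paper, which obtains the corollary by combining the observation that the maximal extension property is equivalent to $J_{\rm at}$-density of $\K_{\rm max}^{\rm op}\embed\K^{\rm op}$ with the Comparison Lemma. Your Step 2, checking that the topology induced on $\K_{\rm max}^{\rm op}$ is again the atomic one, is a detail the paper leaves implicit, and your verification of it is sound.
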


Thus the maximal extension property says that the behavior of atomic sheaves on $\K$ depends only on the maximal structures.
Consequently, if $\K$ has the maximal extension property, then
\begin{align}\label{equ:at-sheaf-maximal-automorphism}
{\rm Sh}(\K^{\rm op},J_{\rm at})\simeq\prod_{i\in I}{\rm Aut}(M_i)\text{-}{\bf Set}
\end{align}
Moreover, the topos on the right-hand side satisfies the internal axiom of choice in general \cite{Fre,FrSc90}.
In fact, the right-hand side is a typical example of a Boolean \'etendue \cite{FrSc90}.
This explains Theorem \ref{thm:IAC-maximalcofinal} (1)$\Rightarrow$(2).

\subsection{The external axiom of choice}

We also analyze when the external axiom of choice holds.

\begin{definition}
A structure $M\in\mathcal{K}$ is {\bf rigid} if ${\rm Aut}_\K(M)=1$; that is, its only automorphism $M\embed M$ is the identity ${\rm id}_M$.
\end{definition}

\begin{theorem}\label{thm:EAC-main-thm}
Suppose that $\K_\lambda$ is a subcanonical AP class.
Then the following are equivalent:
\begin{enumerate}
\item $\K_\lambda$ has the maximal extension property, and every maximal structure is rigid.
\item ${\rm Sh}(\K_\lambda^{\rm op},J_{\rm at})$ satisfies the external axiom of choice.
\end{enumerate}
\end{theorem}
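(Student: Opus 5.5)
For (1)$\Rightarrow$(2), I will use the groupoid description of Section~\ref{sec:groupoid}. Assume the maximal extension property and that every maximal structure is rigid. By the Comparison Lemma and \eqref{equ:at-sheaf-maximal-automorphism},
\[{\rm Sh}(\K_\lambda^{\rm op},J_{\rm at})\simeq\prod_{i\in I}{\rm Aut}(M_i)\text{-}{\bf Set}.\]
Rigidity gives ${\rm Aut}(M_i)=1$ for every $i$, so the topos is equivalent to $\prod_{i\in I}{\bf Set}\simeq{\bf Set}^I$. Epimorphisms in ${\bf Set}^I$ are componentwise surjections, and these split componentwise using the axiom of choice in the ambient set theory. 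Hence the external axiom of choice holds.

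A more intrinsic version of the same step avoids the product decomposition. Every object is a coproduct of atoms (Observation~\ref{obs:yo-generator}). By Theorem~\ref{thm:maximal-cofinal-theorem} and Lemma~\ref{lem:maximal-projective-atom}, every atom $A$ receives an epimorphism $\yo M\surj A$ with $M$ maximal. Rigidity shows this map is an isomorphism. If $\yo M\to A$ identified two distinct elements of $(\yo M)_N$, these would be isomorphisms $M\to N$ and would differ by an automorphism of $M$. So every atom is of the form $\yo M$, hence projective by Lemma~\ref{lem:maximal-equ-projec}. Coproducts of projectives are projective, so every object is projective.

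For (2)$\Rightarrow$(1), the external axiom of choice implies the internal one. Theorem~\ref{thm:IAC-maximalcofinal} then gives the maximal extension property. For rigidity, let $M$ be maximal and $\sigma\in{\rm Aut}(M)$. Form the quotient $\yo M\surj Q$ that identifies an embedding $g\colon M\to N$ with $g\circ\tau$ for $\tau$ in ${\rm Aut}(M)$. Because $M$ is maximal, every such $g$ is an isomorphism, so this is the orbit presheaf $N\mapsto{\rm Iso}(M,N)/{\rm Aut}(M)$. In the groupoid picture it is just the terminal object of the ${\rm Aut}(M)$-component.

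By external choice the epimorphism $\yo M\surj Q$ splits. A section $Q\to\yo M$ picks an element $s\in(\yo M)_M={\rm Aut}(M)$, namely the image of the unique element of $Q_M$. Naturality along each automorphism $\tau$ of $M$ acting on $Q_M$ (which is a singleton) forces $s\circ\tau=s$ for all $\tau$, so $\tau=1$ and $M$ is rigid. Equivalently, ${\rm Aut}(M)$ acts freely on $\yo M$, so the terminal ${\rm Aut}(M)$-set must have a fixed point inside a free orbit, which forces the group to be trivial.

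The main obstacle is to check that $Q$ is a $J_{\rm at}$-sheaf on all of $\K_\lambda^{\rm op}$, not just on the maximal part, and that $\yo M\to Q$ is an epimorphism there. The cleanest approach is to work entirely in ${\rm Sh}(\K_{\max}^{\rm op},J_{\rm at})$ via the Comparison Lemma, which applies because the maximal extension property has already been established. On a groupoid every presheaf is a sheaf (Observation~\ref{obs:groupoid-topology}), so both facts become immediate.
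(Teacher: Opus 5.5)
Your proof is correct and follows the paper's own: external choice gives internal choice, hence the maximal extension property by Theorem~\ref{thm:IAC-maximalcofinal}. Both directions are then read off from ${\rm Sh}(\K_{\rm max}^{\rm op},J_{\rm at})\simeq\prod_{i\in I}{\rm Aut}(M_i)\text{-}{\bf Set}$, and your quotient $\yo M\surj Q$ is exactly the epimorphism ${\rm Aut}(M)\to 1$ of Lemma~\ref{lem:Gset-EAC}, placed on the component of $M$. (With the paper's conventions, naturality gives $\tau\circ s=s$ rather than $s\circ\tau=s$; this is harmless since $s$ is invertible.)

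Your ``intrinsic'' variant of (1)$\Rightarrow$(2) is also valid and avoids the Comparison Lemma. Maximality and rigidity make each $(\yo M)_N$ a subsingleton, so any epimorphism $\yo M\surj A$ is monic. Hence every atom is some $\yo M$, which is projective by Lemma~\ref{lem:maximal-equ-projec}.
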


We prepare for the proof.

%

\begin{lemma}\label{lem:Gset-EAC}
The presheaf category on a group $G$ satisfies the external axiom of choice if and only if $G=1$.
\end{lemma}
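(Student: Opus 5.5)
For the direction $G=1\Rightarrow$ external AC: the presheaf category on the trivial group is just ${\bf Set}$. There every epimorphism is a surjection, and a surjection $f\colon X\surj I$ splits by the (metatheoretic) axiom of choice: choose $s(i)\in f^{-1}(i)$ for each $i\in I$.

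For the converse, I would assume the presheaf category $G$-${\bf Set}$ satisfies the external axiom of choice and exhibit a non-split epimorphism when $G\neq 1$. The natural candidate is the unique map $G\to 1$, where $G$ carries the free (regular) action by translation.

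First, this map is an epimorphism. The topos has a single object, so epimorphisms are exactly the componentwise surjections, and $G\to 1$ is surjective because $G$ is nonempty.

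Second, I check that it does not split. A section $1\to G$ is an equivariant map, so it picks out a fixed point $g\in G$ with $hg=g$ for all $h\in G$. Cancelling $g$ gives $h=1$ for all $h$, so $G=1$. Hence if $G\neq 1$ the epimorphism $G\surj 1$ has no section, and the external axiom of choice fails.

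There is no real obstacle here. The only point to state explicitly is that in a presheaf category on a one-object category (with no topology), epimorphisms coincide with surjective equivariant maps. This may be obtained from Lemma~\ref{fact:SGL-epi-characterization-cover}: every morphism of a group is an isomorphism, so local surjectivity reduces to surjectivity. Alternatively, it follows from the standard fact that epimorphisms of presheaves are computed pointwise.
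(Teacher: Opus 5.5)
Your proof is correct and is essentially the paper's argument: the case $G=1$ reduces to ${\bf Set}$ and the metatheoretic axiom of choice, and for the converse the surjection $\yo\ast=G\surj 1$ cannot split, because a global element would be a point of $G$ fixed by every translation, which forces $G=1$. The differences are cosmetic: the paper writes the action on the right ($x\at g=xg$), as is natural for a presheaf on $G$, and it uses only the trivial implication ``surjective $\Rightarrow$ epimorphism'' rather than the full characterization of epimorphisms.
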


\begin{proof}
This is well known, but we provide a proof to make it self-contained.

($\Leftarrow$)
If $G=1$, then $G\text{-}{\bf Set}={\bf Set}$, so the external axiom of choice follows from the axiom of choice in the metatheory.

($\Rightarrow$)
Let $\ast$ be the unique object of the one-object category $G$. Then $\yo\ast$ is $G$ itself equipped with the right action $x\at g=xg$.
We denote it simply by $G$.
The morphism $p\colon G\to 1$ is surjective as a function, hence is an epimorphism.
If a section $s\colon 1\to G$ exists and $x:=s(\ast)\in G$, then for every $g\in G$ we have $x=s(\ast)=s(\ast\at g)=s(\ast)\at g=x\at g=xg$.
Multiplying both sides on the left by $x^{-1}$ gives $e=g$.
Therefore $G=1$.
\end{proof}

\begin{proof}[Proof of Theorem \ref{thm:EAC-main-thm}]
The external axiom of choice implies the internal axiom of choice, so Theorem \ref{thm:IAC-maximalcofinal} gives the maximal extension property.
Thus, under either (1) or (2), the equation (\ref{equ:at-sheaf-maximal-automorphism}) from Section \ref{sec:groupoid} holds.

(1)$\Rightarrow$(2):
Since each maximal structure $M_i$ is rigid, ${\rm Aut}(M_i)\simeq 1$, and the right-hand side is ${\bf Set}^I$.
By the axiom of choice in the metatheory, this clearly satisfies the external axiom of choice.

(2)$\Rightarrow$(1):
The right-hand side satisfies the external axiom of choice only if ${\rm Aut}(M_i)\text{-}{\bf Set}$ does so for every $i\in I$. By Lemma \ref{lem:Gset-EAC}, this occurs only when ${\rm Aut}(M_i)\simeq 1$.
Therefore, every maximal structure is rigid.
\end{proof}

\section{Concrete examples}

\subsection{Elementary examples (with elementary embeddings)}

Important examples to which our results apply are first-order classes.

\begin{fact}\label{fact:elementary-class}
For every first-order theory $T$, the class $\K={\rm Mod}(T)$ of all models of $T$ with elementary embeddings is a coherent structure-class satisfying the strong AP and $\lambda$-TV.
\end{fact}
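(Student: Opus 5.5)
This is a standard fact, so the plan is to check each clause of the definitions directly for $\K={\rm Mod}(T)$ with elementary embeddings. We work inside the ambient set theory and use only the compactness theorem, the Tarski--Vaught elementary chain theorem, and elementary diagrams.

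\emph{Structure-class.} Carriers are sets and morphisms are injective functions, so $|-|$ is faithful and sends morphisms to injections. Conservativity holds because a bijective elementary embedding preserves and reflects every formula, so its inverse is again an elementary embedding. Transportability holds because pushing the interpretation of each symbol along a bijection $\sigma$ gives an isomorphic model of $T$. Fiber-smallness holds because, for a fixed carrier $X$, the $L$-structures on $X$ form a set.

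\emph{Coherence.} Suppose $|g|=|h|\circ f$ with $g\colon A\to C$ and $h\colon B\to C$ elementary. For any formula $\varphi$ and tuple $\bar a$ in $A$:
\[A\models\varphi(\bar a)\iff C\models\varphi(g\bar a)\iff C\models\varphi(hf\bar a)\iff B\models\varphi(f\bar a).\]
So $f$ preserves all formulas. It is injective because $g$ is, and therefore it is an elementary embedding.

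\emph{Strong AP.} Take $A\embedge{f}B$ and $A\embedge{g}C$. Identify $A$ with its images and rename so that $B\cap C=A$. Consider the theory
\[T\cup{\rm eldiag}(B)\cup{\rm eldiag}(C)\cup\{b\neq c: b\in B\setminus A,\ c\in C\setminus A\},\]
in which constants from $A$ are shared. Any model $D$ of this theory gives elementary embeddings of $B$ and $C$ agreeing on $A$ with $h[B]\cap k[C]=A$. So it suffices to show the theory is consistent, and by compactness it suffices to check finite fragments.

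This finite-fragment check is the main step. A finite fragment involves one formula $\psi(\bar a,\bar c)$ from the diagram of $C$ together with finitely many inequalities $b_i\ne c_j$. Realize $\exists\bar y\,\psi(\bar a,\bar y)$ inside $B$; this is possible because $A\preceq C$ and $A\preceq B$. The realization must additionally avoid the finitely many named elements $b_i$. Each $c_j\notin A$ is not algebraic over $A$ in the relevant sense. More precisely, if every realization in $B$ were forced into the finite set $\{b_i\}$, then $\psi$ would isolate an algebraic set defined over $A$. An algebraic set over $A$ with finitely many elements lies entirely in $A$ by elementarity, which contradicts $c_j\notin A$. The inequalities tie each $c_j$ to the named $b_i$, so to make this rigorous it is cleaner to apply the standard lemma for several variables at once: if $\bar c$ is disjoint from $A$, then ${\rm tp}(\bar c/A)$ has a realization in an elementary extension of $B$ that is disjoint from $B\setminus A$. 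This follows from Neumann's lemma and compactness, and equivalently from the fact that ${\rm acl}(A)=A$ for $A\preceq$ a model. I expect this to be the only nontrivial point, and I would cite it, e.g.\ Hodges, \emph{Model Theory}, Theorem 6.4.5 on strong amalgamation over models.

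\emph{$\lambda$-TV.} For a chain of length $\delta\le\lambda$, take the union structure. The Tarski--Vaught elementary chain theorem says the union is a model of $T$ and each inclusion is elementary. The union is a colimit in $\K$: given a cocone of elementary maps, the induced set map is elementary because every formula involves finitely many elements, all lying in some stage. The carrier of the union is the set-theoretic colimit, which gives the concreteness clause.
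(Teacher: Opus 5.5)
Your proposal is correct and takes essentially the paper's route. The paper just cites Hodges for strong elementary amalgamation and the Tarski--Vaught elementary chain theorem for $\lambda$-TV, treating the structure-class and coherence clauses as evident; you verify those clauses directly, along with the colimit property of the union.

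One simplification: your finite-fragment check needs no algebraicity argument or Neumann's lemma. By $A\preceq C$, the formula $\psi(\bar a,\bar y)$ has a realization $\bar d$ in $A$ itself. This $\bar d$ still satisfies $\psi$ in $B$ because $A\preceq B$, and it automatically avoids every $b_i\in B\setminus A$. So $B$, with the $c_j$ interpreted as $\bar d$, already models the fragment.
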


\begin{proof}
For the strong AP, see e.g.~\cite[Section 6.4.3]{Hod93}.
For $\lambda$-TV, it is the Tarski--Vaught elementary chain argument~\cite[Proposition 2.3.11]{Mar02}.
\end{proof}

\begin{remark}
When morphisms are embeddings (not necessarily elementary), the AP and $\lambda$-TV may not hold, so it is necessary to verify this on a case-by-case basis.
If a first-order theory $T$ is model-complete \cite[Definition 3.1.13]{Mar02}, every embedding is an elementary embedding, so it is automatic.
\end{remark}

By Proposition \ref{prop:stAP-subcanonical} and Fact \ref{fact:elementary-class}, every such first-order class with elementary embeddings yields a subcanonical atomic site.
Therefore, all results in Sections \ref{sec:choice-stable} and \ref{sec:IAC-structure} hold for $\K={\rm Mod}(T)$.
In this case, since Galois types are simply ordinary complete types, Galois $\lambda$-stability corresponds to the classical model-theoretic notion of being stable in $\lambda$.
Hence, Theorem \ref{thm:main-stability-thm} implies:
\[
\text{${\rm Sh}({\rm Mod}(T)_\lambda^{\rm op},J_{\rm at})$ satisfies AC for $\lambda^+$-families}
\iff
\text{$T$ is stable in $\lambda$.}
\]

Let ${\rm LS}(T)$ be the least infinite cardinal greater than or equal to the cardinality of the language of $T$.
The {\bf stability spectrum} of a first-order theory $T$ is the class of cardinals $\lambda\geq {\rm LS}(T)$ such that $T$ is stable in $\lambda$.
Then the first stable cardinal $\lambda(T)$ is the least cardinal in the stability spectrum if it exists.

\begin{fact}[Shelah's Stability Spectrum Theorem {\cite[Theorem III.5.15]{She90}}]
If a complete first-order theory $T$ is stable, then there exists a cardinal $\kappa(T)$ such that:
\[
\text{$T$ is stable in $\lambda$}
\iff 
\text{$\lambda\geq\lambda(T)$ and $\lambda^{<\kappa(T)}=\lambda$}.
\]
\end{fact}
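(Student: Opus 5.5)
The plan is to reproduce the classical argument through forking and local ranks rather than derive anything from the topos-theoretic results above; the spectrum is purely a counting statement about complete types over sets of models of $T$. Define $\kappa(T)$ as the least regular cardinal $\kappa$ such that there is no \emph{dividing chain} of length $\kappa$. Such a chain is a sequence of formulas $\varphi_i(x,\bar a_i)$, $i<\kappa$, together with an increasing chain of sets $A_i\ni\bar a_j$ for $j<i$, such that the partial type $\{\varphi_i(x,\bar a_i):i<\kappa\}$ is consistent and each $\varphi_i(x,\bar a_i)$ divides over $A_i$. The first step is to show that stability (equivalently, the failure of the order property, equivalently finite $R(-,\varphi,2)$-rank for every $\varphi$) gives $\kappa(T)\le|T|^+$. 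For this I would use local $\varphi$-ranks and the definability of $\varphi$-types.

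The lower bound comes first. If $\lambda^{<\kappa(T)}>\lambda$, pick $\mu<\kappa(T)$ with $\lambda^\mu>\lambda$. A dividing chain of length $\mu$ exists. Using the $k$-inconsistency witnessing each dividing step, I would build a tree $\lambda^{<\mu}\to$ parameters such that each branch is consistent and incomparable nodes at the same level are contradictory. This gives a set $B$ of size $\lambda^{<\mu}\cdot|T|\le\lambda$ over which there are $\lambda^\mu>\lambda$ types, so $T$ is not stable in $\lambda$. Stability forces $\lambda\ge 2^{\aleph_0}$-type bounds only through $\lambda(T)$, and $\lambda\ge\lambda(T)$ is immediate from the definition of $\lambda(T)$. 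Hence ``stable in $\lambda$'' implies both conditions on the right. The unstable case is excluded by hypothesis; if $T$ is unstable, the order property gives $>\lambda$ types over a dense linear order of size $\lambda$, as in Example~\ref{exa:LO-non-stable}.

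For the upper bound, assume $\lambda\ge\lambda(T)$ and $\lambda^{<\kappa(T)}=\lambda$, and let $|B|\le\lambda$. Every $p\in S(B)$ does not fork over some $C\subseteq B$ with $|C|<\kappa(T)$; this is the main structural input, obtained from the absence of long dividing chains. Moreover, $p$ is determined by $p\restriction C$ together with its $\varphi$-definitions over $B$, each of which is a formula over $\mathrm{acl}^{\rm eq}(C)$ in the stable setting. Counting then gives $|S(B)|\le \lambda^{<\kappa(T)}\cdot|S(\text{models of size }\lambda(T))|^{\ldots}\le\lambda$. I would organize this by first extending $B$ to a model $M$ of size $\lambda$ and bounding $|S(M)|$ by the number of pairs $(C,\text{definition scheme})$. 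There are at most $\lambda^{<\kappa(T)}=\lambda$ choices of $C$, and for fixed $C$ the definition schemes are bounded by $|S(N)|$ for a model $N\supseteq C$ of size $\lambda(T)+|C|$. That bound is $\le\lambda$ by stability in $\lambda(T)$ and monotonicity below $\lambda$.

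The main obstacle is the step ``$p$ does not fork over a subset of size $<\kappa(T)$, and is determined by small data''. It requires developing forking, the finite character and transitivity of non-forking, and the fact that non-forking extensions of types over models are definable (hence counted by definition schemes). I would try first to use $\varphi$-ranks, $R(p,\varphi,2)<\omega$: choose for each $\varphi$ a finite subset of $p$ of minimal rank and degree. This yields a set of size $\le|T|$ determining $p$. I would then refine it to $<\kappa(T)$ by the dividing-chain analysis. This refinement is delicate when $\kappa(T)=\aleph_0$, which is the superstable case, and there the bound uses $\lambda\ge\lambda(T)$ in an essential way.
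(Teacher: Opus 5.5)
\paragraph{Overall.} The paper does not prove this statement. It imports it from Shelah as a black box and uses it only to read off Example~\ref{exa:stable-spectra}, so your proposal has to stand on its own as a reconstruction of the classical argument. The upper-bound half is the standard count and is fine once the forking calculus is available. One point there: take the auxiliary model $N_C$ of size $\lambda(T)$ to be an elementary submodel of $M$ (or count types over $\mathrm{acl}^{\rm eq}(C)$), so that stationarity applies.

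\paragraph{The gap: the lower bound.} Dividing only gives you, at each node $\nu$, an indiscernible sequence whose instances $\varphi_i(x,b_{\nu^\frown\alpha})$ are $k_i$-inconsistent. Nothing makes incomparable nodes contradictory, and in stable theories siblings really can be pairwise consistent. For example, in the projective plane over an algebraically closed field, ``$x$ lies on $\ell$'' divides over a model, because three generic lines have no common point. But it never $2$-divides, because any two distinct lines meet.

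With only $k_i$-inconsistency, one complete type over $B$ may extend up to $\prod_{i<\mu}(k_i-1)\le 2^\mu$ of your branch types. So your count yields only $|S(B)|\cdot 2^\mu\ge\lambda^\mu$. This contradicts stability in $\lambda$ when $2^\mu\le\lambda$. It gives nothing when $2^\mu>\lambda$, since then $\lambda\cdot 2^\mu=2^\mu\ge\lambda^\mu$. That case does arise, e.g.\ $\mu=\lambda=\aleph_1$ under GCH.

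\paragraph{How to close it.} You need an extra idea; here is one standard fix.
\begin{itemize}
\item Pass to a forking chain of types over models $M_0\prec M_1\prec\cdots$.
\item Build a $\lambda$-branching tree of conjugate models $M_\nu$ in which the subtrees above distinct successors of $\nu$ are independent over $M_\nu$.
\item Send each branch $\eta$ to the nonforking extension $q_\eta\in S(B)$ of $p_\eta$.
\end{itemize}
Suppose $q_\eta=q_{\eta'}$, where $\eta,\eta'$ split at $\nu$ through $\nu^\frown\alpha$ and $\nu^\frown\beta$. We have that $M_{\nu^\frown\beta}$ is independent from $M_\eta$ over $M_\nu$. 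We also have that a realization of $q_\eta$ is independent from $B$ over $M_\eta$. Symmetry and transitivity then show that $q_\eta\upharpoonright M_{\nu^\frown\beta}=p_{\eta'}\upharpoonright M_{\nu^\frown\beta}$ does not fork over $M_\nu$. That contradicts the choice of the chain. So you get $\lambda^\mu$ distinct types without any appeal to $2$-inconsistency.

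Alternatively, replace each dividing formula by a maximal consistent conjunction of its instances along the witness sequence, which $2$-divides. But then you must rebuild the chain so that it stays consistent and still divides at every step.

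\paragraph{A smaller fix.} Choose $\mu$ least with $\lambda^\mu>\lambda$. Only then is $\lambda^{<\mu}\le\lambda$, which gives $|B|\le\lambda$. Minimality also makes $\mu$ regular, which your definition of $\kappa(T)$ via regular cardinals needs in order to supply a dividing chain of length $\mu$.
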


\begin{cor}
Let $T$ be a stable complete first-order theory, and let $\lambda$ be an infinite cardinal greater than or equal to the language of $T$.
Then, the following are equivalent.
\begin{enumerate}
\item ${\rm Sh}({\rm Mod}(T)_\lambda^{\rm op},J_{\rm at})$ satisfies the axiom of choice for $\lambda^+$-indexed families.
\item $\lambda\geq\lambda(T)$ and $\lambda^{<\kappa(T)}=\lambda$.
\end{enumerate}
\end{cor}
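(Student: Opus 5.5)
This corollary is a direct combination of Theorem \ref{thm:main-stability-thm} with Shelah's Stability Spectrum Theorem. The plan has three steps.

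First, I check that the hypotheses of Theorem \ref{thm:main-stability-thm} hold for $\K={\rm Mod}(T)$ with elementary embeddings, at the given cardinal $\lambda$.
By Fact \ref{fact:elementary-class}, $\K$ is a structure-class satisfying the coherence axiom, the strong AP and the $\lambda$-TV. These properties restrict to $\K_\lambda$. The strong AP restricts because an amalgam of two structures in $\K_\lambda$ can be taken of size at most $\lambda$: apply downward L\"owenheim--Skolem to the union of the two images, using $\lambda\geq|L(T)|+\aleph_0$. The $\lambda$-TV restricts by the remark after Definition \ref{def:TV}.
Proposition \ref{prop:stAP-subcanonical} then gives that $J_{\rm at}$ on $\K_\lambda^{\rm op}$ is subcanonical. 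Hence $\K_\lambda$ is a subcanonical AP class with the $\lambda$-TV.

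Second, Theorem \ref{thm:main-stability-thm} (1)$\Leftrightarrow$(2) shows that condition (1) of the corollary is equivalent to Galois $\lambda$-stability of $\K$. I then identify Galois $\lambda$-stability with stability of $T$ in $\lambda$.
Galois types over $A\in\K_\lambda$ computed in $\K_\lambda$ coincide with complete $1$-types over $A$ (that is, over the model $A$). If two pointed extensions realize the same complete type over $A$, the elementary amalgamation argument identifies the two points, and the amalgam can be shrunk to size $\lambda$ by L\"owenheim--Skolem. Conversely, any complete type over $A$ is realized in some elementary extension, which can again be shrunk to size $\lambda$.
So the number of Galois types over $A$ equals $|S_1(A)|$. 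Stability in $\lambda$ is usually stated for all parameter sets of size at most $\lambda$. Counting over models suffices, however, because every set $B$ with $|B|\leq\lambda$ is contained in a model of size $\lambda$, and types over $B$ are restrictions of types over that model.

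Third, since $T$ is complete and stable and $\lambda\geq{\rm LS}(T)$, Shelah's Stability Spectrum Theorem says that $T$ is stable in $\lambda$ if and only if $\lambda\geq\lambda(T)$ and $\lambda^{<\kappa(T)}=\lambda$. Chaining the three equivalences proves the corollary.

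The only point needing care is the second step, namely matching the $\lambda$-restricted Galois types with the classical type count. I expect this to go through by routine L\"owenheim--Skolem arguments, as the paper already indicates.
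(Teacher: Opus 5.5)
Your proposal is correct and follows essentially the same route as the paper: Fact~\ref{fact:elementary-class} together with Proposition~\ref{prop:stAP-subcanonical} makes $\K_\lambda$ a subcanonical AP class with the $\lambda$-TV, Theorem~\ref{thm:main-stability-thm} reduces condition (1) to Galois $\lambda$-stability, and identifying Galois types with complete types turns this into stability of $T$ in $\lambda$, to which Shelah's Stability Spectrum Theorem applies. Your extra L\"owenheim--Skolem steps fill in details that the paper leaves implicit: shrinking strong amalgams and type-realizing extensions into $\K_\lambda$, and reducing the type count over arbitrary parameter sets to the count over models.
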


For the calculation of the values of $\kappa$ and $\lambda$, see, for example, \cite[Theorem 8.6.5]{TeZi12}.
From this, for example, we can obtain the following.

\begin{example}\label{exa:stable-spectra}
Let $T$ be a countable complete first-order theory.
\begin{enumerate}
\item If $T$ is $\om$-stable, then $\kappa(T)=\lambda(T)=\aleph_0$.
Thus:
\[\text{${\rm Sh}({\rm Mod}(T)_\lambda^{\rm op},J_{\rm at})$ satisfies AC for every (external-)set indexed families.}\]
\item If $T$ is superstable, but not $\om$-stable, then $\kappa(T)=\aleph_0$.
Thus: 
\[\text{${\rm Sh}({\rm Mod}(T)_\lambda^{\rm op},J_{\rm at})$ satisfies AC for $\lambda^+$-families $\iff$ $\lambda\geq\lambda(T)$}.\]
\item If $T$ is stable, but not superstable, then $\kappa(T)=\aleph_1$.
Thus:
\[\text{${\rm Sh}({\rm Mod}(T)_\lambda^{\rm op},J_{\rm at})$ satisfies AC for $\lambda^+$-families $\iff$ $\lambda^\om=\lambda$.}\]
\item If $T$ is unstable, for any infinite cardinal $\lambda$,
\[\text{${\rm Sh}({\rm Mod}(T)_\lambda^{\rm op},J_{\rm at})$ does not satisfy AC for $\lambda^+$-families}.\]
\end{enumerate}
\end{example}

\subsection{Elementary examples (with embeddings)}\label{sec:elementary-example-embedding}

We now explain the entries in Table \ref{table-1}, with the exception of well-orders.
As noted above, Galois types and quantifier-free types coincide for many classes $\K$ of structures \cite[Remark 3.8]{Vas17}.
Let $S(\K,\lambda;A)$ be the set of all Galois types over $A$ in $\K_\lambda$.
Then the Galois type spectrum $s(\K,\lambda)$ is defined as 
\[\sup\{\mbox{the cardinality of }S(\K,\lambda;A):A\in\K_\lambda\}.\]

Keisler \cite{Kei76} introduced a notion essentially equivalent to that of a quantifier-free type and computed the type spectrum in a variety of concrete examples.
\begin{example}[see e.g.~Keisler \cite{Kei76}]
Let $\lambda$ be an infinite cardinal.
\begin{enumerate}
\item Let ${\rm Grp}$ be the class of all groups.
Then $s({\rm Grp};\lambda)=2^\lambda$.
\item Let ${\rm AbGrp}$ be the class of all abelian groups.
Then $s({\rm AbGrp};\lambda)=\lambda$.
\item Let ${\rm LO}$ be the class of all linear orders.
Then $s({\rm LO};\lambda)>\lambda$.
\item Let ${\rm BA}$ be the class of all Boolean algebras.
Then $s({\rm BA};\lambda)=2^\lambda$.
\item Let ${\rm Vec}_F$ be the class of all vector spaces over a field $F$.
Then $s({\rm Vec}_F;\lambda)=\lambda$ (if $\lambda$ is larger than or equal to the cardinality of $F$).
\end{enumerate}
\end{example}

All the classes of structures above have the strong AP and colimits of $\lambda$-chains.
\begin{enumerate}
\item Dependent choice: Since these classes have colimits of chains, Theorem \ref{lem:cone-characterization-dependent-choice} implies that they satisfy the axiom of dependent choice.
\item The axiom of choice for $\lambda$-indexed families:
Suppose a fan of size $\lambda$ is given.
By successively amalgamating the fan at successor stages of a transfinite recursion and taking colimits at limit stages, one obtains a cocone over the fan.
Thus, by Theorem \ref{lem:cone-characterization-countable-choice}, the axiom of choice for $\lambda$-indexed families holds.
\item The axiom of choice for $\lambda^+$-indexed families:
Among these examples, only sets, vector spaces, and abelian groups are stable; the other classes are unstable.
Thus Theorem \ref{thm:main-stability-thm} determines  for which of these examples the axiom of choice for $\lambda^+$-indexed families holds.
\item The internal axiom of choice:
None of sets, vector spaces, or abelian groups has the maximal extension property.
For sets, consider $X\embed X+1$; for $F$-vector spaces, consider $V\embed V\oplus F$; and for abelian groups, consider $A\embed A\oplus\mathbb{Z}$.
Hence, by Theorem \ref{thm:IAC-maximalcofinal}, the internal axiom of choice does not hold.
\end{enumerate}

The items (1), (2) and (4) hold whether morphisms are taken to be embeddings or elementary embeddings.
Note that the argument in the item (2) actually shows the following:
\begin{obs}
If a structure-class $\K$ satisfies the AP and the $\lambda$-TV, then ${\rm Sh}(\K_\lambda^{\rm op},J_{\rm at})$ always satisfies the axiom of choice for $\lambda$-families.
\end{obs}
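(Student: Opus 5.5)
By Theorem \ref{lem:cone-characterization-countable-choice} (specifically the direction (1)$\Rightarrow$(2), which by the Remark after Theorem \ref{lem:cone-characterization-dependent-choice} needs no subcanonicity), it suffices to show the following: for every $A\in\K_\lambda$ and every family of embeddings $(A\embedge{f_\alpha}B_\alpha)_{\alpha<\lambda}$ in $\K_\lambda$, there are $N\in\K_\lambda$ and embeddings $B_\alpha\embedge{d_\alpha}N$ with $d_\alpha f_\alpha$ independent of $\alpha$. This is a cone in $\K_\lambda^{\rm op}$. Families indexed by sets of cardinality $\lambda$ are covered after reindexing by $\lambda$.

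I would build the cocone by transfinite recursion along a continuous chain $(S_\beta)_{\beta\leq\lambda}$ in $\K_\lambda$. We carry embeddings $e_\beta\colon A\embed S_\beta$ and $g_\alpha\colon B_\alpha\embed S_{\alpha+1}$ for $\alpha<\beta$, and maintain two invariants. The first is the chain compatibility $s_{\gamma\beta}e_\gamma=e_\beta$. The second is $g_\alpha f_\alpha=e_{\alpha+1}$.
\begin{itemize}
\item Start with $S_0=A$ and $e_0=1_A$.
\item At a successor stage, apply the AP to $A\embedge{e_\alpha}S_\alpha$ and $A\embedge{f_\alpha}B_\alpha$. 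This gives $S_\alpha\embedge{s_{\alpha,\alpha+1}}S_{\alpha+1}$ and $B_\alpha\embedge{g_\alpha}S_{\alpha+1}$ with $s_{\alpha,\alpha+1}e_\alpha=g_\alpha f_\alpha$. Set $e_{\alpha+1}$ to be this common composite, and define $s_{\gamma,\alpha+1}=s_{\alpha,\alpha+1}s_{\gamma\alpha}$.
\item At a limit $\delta\leq\lambda$, the $\lambda$-TV (Definition \ref{def:TV}) gives $S_\delta=\colim_{\beta<\delta}S_\beta$ with colimit injections $s_{\beta\delta}$. Clause (2) of the $\lambda$-TV guarantees $|S_\delta|\leq\lambda$, so $S_\delta\in\K_\lambda$ (as remarked after Definition \ref{def:TV}). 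Put $e_\delta=s_{\beta\delta}e_\beta$, which does not depend on $\beta<\delta$ by the cocone property and the inductive compatibility.
\end{itemize}

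Finally, set $N=S_\lambda$ and $d_\alpha=s_{\alpha+1,\lambda}g_\alpha$. Then
\[
d_\alpha f_\alpha=s_{\alpha+1,\lambda}e_{\alpha+1}=e_\lambda
\]
for every $\alpha<\lambda$, so $(d_\alpha)_{\alpha<\lambda}$ is the required cocone.

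The step needing the most care is the limit stage. One must confirm that the colimit provided by the $\lambda$-TV stays inside $\K_\lambda$ and that $e_\delta$ is well defined; both are handled as above. When $\lambda$ itself is a limit ordinal, the stage $\delta=\lambda$ is exactly where clause (1) of the $\lambda$-TV with $\delta=\lambda$ is used. The recursion makes $\lambda$ arbitrary choices, which is legitimate in the metatheory (ZFC).
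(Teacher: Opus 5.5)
Your proposal is correct and is essentially the paper's argument. The paper derives this observation from the same transfinite recursion (amalgamate one leg of the fan at each successor stage, take $\lambda$-TV colimits at limits), combined with the direction (1)$\Rightarrow$(2) of Theorem~\ref{lem:cone-characterization-countable-choice}, which does not use subcanonicity. The only point to state explicitly is that ``AP'' must be read as AP inside $\K_\lambda$, so that each amalgam $S_{\alpha+1}$ stays in $\K_\lambda$; this is already implicit, since $J_{\rm at}$ on $\K_\lambda^{\rm op}$ is only a topology under that assumption.
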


\subsection{Non-elementary example}

It is easy to verify that the class ${\rm WO}$ of well-orders has the strong AP.
A crucial difference from the preceding examples, however, is that it has no colimits of chains (indeed, some chains do not even admit cocones).
This yields the following result.

\begin{prop}
Let $\lambda$ be an infinite cardinal.
$\mathrm{Sh}(\mathrm{WO}^{\rm op}_\lambda,J_{\rm at})$ does not satisfy the internal axiom of dependent choice.
\end{prop}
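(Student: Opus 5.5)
We plan to use Theorem \ref{lem:cone-characterization-dependent-choice}, applied to the site $\mathcal{C}={\rm WO}_\lambda^{\rm op}$. In the opposite category, a sequence of morphisms $p_{n+1}\to p_n$ is an $\omega$-chain of embeddings $A_0\embed A_1\embed A_2\embed\cdots$ in ${\rm WO}_\lambda$. A cone over it is a cocone $A_n\embed B$ in ${\rm WO}_\lambda$ compatible with the chain. So it suffices to check the hypotheses of that theorem and then exhibit a single $\omega$-chain of well-orders with no cocone in ${\rm WO}_\lambda$.

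First, we check subcanonicity. Since ${\rm WO}$ has the strong AP (as stated just above), Proposition \ref{prop:stAP-subcanonical} applies once the coherence axiom is verified. Embeddings of well-orders are order-preserving injections. If $|g|=|h|\circ f$ with $g,h$ order embeddings, then $f$ is injective and order-preserving because $h$ reflects order. Hence $f$ is an embedding, and ${\rm WO}_\lambda$ is a subcanonical AP class. It follows that $J_{\rm at}$ on ${\rm WO}_\lambda^{\rm op}$ is subcanonical, so the direction (2)$\Rightarrow$(1) of Theorem \ref{lem:cone-characterization-dependent-choice} is available.

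Second, we construct the counterexample chain; this is the main point. Let $A_n=\{0,1,\dots,n\}$ with the usual order. Define the embedding $A_n\embed A_{n+1}$ by $i\mapsto i+1$, which shifts everything up and adds a new least element. Suppose $(c_n\colon A_n\embed B)_n$ is a cocone with $B$ a well-order. Put $b_n:=c_n(0)\in B$. Compatibility gives $c_n=c_{n+1}\circ(i\mapsto i+1)$, so $b_n=c_{n+1}(1)$. Since $c_{n+1}$ is order-preserving and injective, $b_{n+1}=c_{n+1}(0)<c_{n+1}(1)=b_n$. Thus $(b_n)_n$ is a strictly decreasing $\omega$-sequence in $B$, contradicting well-foundedness. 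Hence no cocone exists in ${\rm WO}$ at all, and in particular none exists in ${\rm WO}_\lambda$. All the $A_n$ are finite, so they lie in ${\rm WO}_\lambda$ for every infinite $\lambda$.

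Finally, we conclude. Theorem \ref{lem:cone-characterization-dependent-choice} says that internal DC in ${\rm Sh}({\rm WO}_\lambda^{\rm op},J_{\rm at})$ would force every such sequence to have a cone. The chain above has none, so internal DC fails. The only step needing genuine care is the bookkeeping of directions between $\mathcal{C}$ and ${\rm WO}$, so that the chain indeed yields the required sequence in the opposite category. Beyond that, the coherence check is routine.
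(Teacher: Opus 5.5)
Your proof is correct and follows the paper's route. You reduce, via Mejak's DC criterion (Theorem \ref{lem:cone-characterization-dependent-choice}, with subcanonicity obtained from the strong AP plus coherence), to exhibiting an $\omega$-chain with no cocone, and then use a shift-by-one chain to extract the strictly decreasing sequence $c_{n+1}(0)<c_{n+1}(1)=c_n(0)$. The only cosmetic differences are that the paper uses $\omega\xedge{s}\omega\xedge{s}\cdots$ with $s(n)=n+1$ instead of your finite orders $\{0,\dots,n\}$, and that you spell out the coherence check, which the paper leaves implicit.
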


\begin{proof}
By subcanonicity and Theorem \ref{lem:cone-characterization-dependent-choice}, it suffices to show that a sequence of morphisms $\{\alpha_n\embed\alpha_{n+1}\}$ in the category $\mathrm{WO}_\lambda$ need not admit a cocone.

Define an embedding $\om\embedge{s}\om$ by $s(n)=n+1$.
Suppose that the sequence $\om\embedge{s}\om\embedge{s}\om\embedge{s}\cdots$ admits a cocone $e_n\colon\om\to\delta$.
By the definition of a cocone, $e_{n+1}s=e_n$.
In particular, $e_{n+1}(0)<e_{n+1}(1)=e_n(0)$.
This gives an infinite descending sequence $e_0(0)>e_1(0)>e_2(0)>\cdots$ in $\delta$.
\end{proof}

To determine the validity of the axiom of choice, one might examine stability.
\begin{obs}
For every infinite cardinal $\lambda$, ${\rm WO}$ is Galois $\lambda$-stable.
\end{obs}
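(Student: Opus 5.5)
The plan is to bound the number of Galois types over a fixed $A\in{\rm WO}_\lambda$ by the number of \emph{cuts} of $A$, and then to count the cuts. Fix $A\in{\rm WO}_\lambda$ and a pointed extension $(A\embedge{f}M;z)$. To it I attach one of two invariants:
\begin{enumerate}
\item if $z=f(a)$ for some $a\in A$, the element $a$ (unique, since $f$ is injective);
\item otherwise, the set $C_z=\{a\in A: f(a)<z\}$, which is an initial segment of $A$.
\end{enumerate}
The main step is to show that pointed extensions with the same invariant are equivalent. It then follows that the number of Galois types over $A$ is at most $|A|$ plus the number of initial segments of $A$.

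Initial segments of a well-order are either all of $A$ or of the form $\{a:a<b\}$ for some $b\in A$. Hence there are at most $|A|+1$ of them, and the total is at most $|A|+|A|+1\le\lambda$ since $\lambda$ is infinite. This gives Galois $\lambda$-stability.

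For the equivalence I will build amalgamations by hand. The case $z=f(a)$ and $t=g(a)$ is immediate. Any amalgamation $(h,k)$ of $f$ and $g$ satisfies $h(z)=hf(a)=kg(a)=k(t)$, and amalgamations exist because ${\rm WO}$ has the (strong) AP.

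The substantive case is $z\notin f[A]$, $t\notin g[A]$ with $C_z=C_t=:C$. Here I form the set $L=(M\sqcup N)/\!\sim$, identifying $f(a)\sim g(a)$ for $a\in A$ and also $z\sim t$. I order $L$ gap by gap. Every element of $M$ or $N$ outside the image of $A$ lies in a definite cut of $A$, namely the set of elements of $A$ below it. Within each cut I place the $M$-elements before the $N$-elements. In the distinguished cut $C$, I refine this to
\[
\text{$M$-elements below $z$},\ \text{$N$-elements below $t$},\ z=t,\ \text{$M$-elements above $z$},\ \text{$N$-elements above $t$}.
\]
Elements of the image of $A$ are placed according to $A$.

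I then have to check three things:
\begin{enumerate}
\item This is a linear order extending both $M$ and $N$; this is routine case checking on cuts.
\item It has cardinality at most $\lambda$, which is clear.
\item It is a well-order.
\end{enumerate}
For (3) I use the general fact that a linear order which is the union of two well-ordered suborders is well-ordered. An infinite descending sequence would have infinitely many terms in the copy of $M$ or in the copy of $N$, giving a descending sequence there. The inclusions $M\embed L$ and $N\embed L$ then witness the equivalence of $(f;z)$ and $(g;t)$.

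I expect the main obstacle to be only the careful verification that the gap-by-gap ordering is transitive and compatible with both $M$ and $N$. The cleanest route is to define an explicit key for each element and compare keys lexicographically. An element $f(a)$ gets the key $(\text{cut }\{x:x<a\},\ \text{rank }1)$, where ranks are compared after the cut. A non-image element gets its cut together with a finite tag drawn from the five-block scheme above, with $M$-elements compared within a block by the order of $M$, and similarly for $N$.
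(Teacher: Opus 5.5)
Your proposal is correct and takes the same route as the paper. The paper also observes that the Galois type of a pointed extension is determined by the left cut $\{\xi<\alpha: i(\xi)<\gamma\}$ together with whether the point lies in the image of $\alpha$, and then counts the at most $|\alpha|+1$ initial segments. Your gap-by-gap amalgamation in ${\rm WO}_\lambda$, with well-foundedness coming from the union of the two well-ordered copies of $M$ and $N$, supplies the verification that the paper leaves as ``easy to verify''.
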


\begin{proof}
For a pointed extension $(\alpha\embedge{i}\beta;\gamma)$ of ordinals of cardinality at most $\lambda$, it is easy to verify that its Galois type is determined by the left cut $L_i(\gamma)=\{\xi<\alpha:i(\xi)<\gamma\}$ together with the truth value of $\gamma\in i[\alpha]$.
However, the initial segments of an ordinal $\alpha$ are precisely $\{\beta:\beta\leq\alpha\}$, whose cardinality is $\lambda$.
Thus $s({\rm WO},\lambda)=\lambda$, and hence ${\rm WO}$ is Galois $\lambda$-stable.
\end{proof}

However, $\mathrm{WO}$ is not an AEC, so Shelah's result (Fact \ref{fact:Shelah-AEC}) does not apply.
To be explicit, $\mathrm{WO}$ does not satisfy $\lambda$-TV.
Consequently, one cannot infer from this the axiom of choice for every external cardinal.
Indeed, this gives an interesting counterexample in which stability holds but the relevant axiom of choice fails.

\begin{prop}
${\rm Sh}({\rm WO}_\lambda^{\rm op},J_{\rm at})$ does not satisfy the axiom of choice for $\lambda^+$-indexed families.
\end{prop}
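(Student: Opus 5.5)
By Theorem \ref{lem:cone-characterization-countable-choice}, using the subcanonicity of the atomic topology on ${\rm WO}_\lambda^{\rm op}$ (which follows from the strong AP via Proposition \ref{prop:stAP-subcanonical}), it suffices to exhibit a fan $\{A\embed B_\alpha\}_{\alpha<\lambda^+}$ in ${\rm WO}_\lambda$ that admits no cocone in ${\rm WO}_\lambda$. Stability alone cannot give this, since ${\rm WO}$ is Galois $\lambda$-stable. The obstruction therefore has to come from the lack of chain colimits, and I will realize it as a cardinality clash: the fan is arranged so that any cocone must contain an order-type of size $\lambda^+$.

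The construction is as follows. Take $A=\om$, the least infinite ordinal, and for each $\alpha<\lambda^+$ with $\alpha\geq\om$ of cardinality at most $\lambda$, let $B_\alpha=\alpha+\om$. Let the embedding $e_\alpha\colon\om\embed\alpha+\om$ send $n\mapsto\alpha+n$, so that the image of $A$ is the final copy of $\om$ and exactly the ordinal $\alpha$ sits below it.

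Now suppose $(d_\alpha\colon\alpha+\om\embed\delta)_\alpha$ is a cocone in ${\rm WO}_\lambda$. Set $\gamma:=d_\alpha e_\alpha(0)$; by the cocone condition this ordinal is independent of $\alpha$. Each $d_\alpha$ is an order embedding, so it maps $\alpha=\{\xi:\xi<\alpha\}$ into $\{\eta\in\delta:\eta<\gamma\}=\gamma$. Hence the order type of $\alpha$ is at most $\gamma$, which gives $\alpha\leq\gamma$ for every $\alpha<\lambda^+$. It follows that $\gamma\geq\lambda^+$, so $|\delta|\geq\lambda^+$, contradicting $\delta\in{\rm WO}_\lambda$.

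The index set has cardinality $\lambda^+$, so this is a genuine $\lambda^+$-fan, and its failure to have a cocone refutes choice for $\lambda^+$-indexed families. The only points that need care are that the objects of ${\rm WO}_\lambda$ are well-orders of size at most $\lambda$ (each $\alpha+\om$ qualifies, since $|\alpha|\leq\lambda$) and that embeddings here are order embeddings. The key step is the monotonicity fact that an order embedding of $\alpha$ into $\gamma$ forces $\alpha\leq\gamma$, a standard property of ordinals.
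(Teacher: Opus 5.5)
Your proposal is correct and follows essentially the paper's route: reduce via Mejak's criterion (using subcanonicity) to exhibiting a $\lambda^+$-fan in ${\rm WO}_\lambda$ with no cocone, then get a cardinality clash from the fact that an order embedding of ordinals $\beta\embed\delta$ forces $\beta\leq\delta$. The paper uses the inclusions $\lambda\embed\lambda+\gamma+1$ for $\gamma<\lambda^+$, and your common point $\gamma=d_\alpha e_\alpha(0)$ is valid but not needed, since each $\alpha+\om$ embedding into $\delta$ already gives $\alpha+\om\leq\delta$ for all $\alpha<\lambda^+$.
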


\begin{proof}
Consider the family of inclusion maps $\{\lambda\embed\lambda+\gamma+1\}_{\gamma<\lambda^+}$.
By Theorem \ref{lem:cone-characterization-countable-choice}, it suffices to show that this family admits no cocone in ${\rm WO}_\lambda$.
Suppose a cocone $\{\lambda+\gamma+1\embed\delta\}$ exists. Since $\delta\in{\rm WO}_\lambda$, the cardinality of $\delta$ is at most $\lambda$.
On the other hand, every ordinal below $\lambda^+$ embeds into $\delta$, which implies $\delta\geq\lambda^+$.
\end{proof}

Since cocones over chains need not exist, the validity of the axiom of choice must be verified by a different argument.

\begin{prop}
${\rm Sh}({\rm WO}_\lambda^{\rm op},J_{\rm at})$ satisfies the axiom of choice for $\lambda$-indexed families.
\end{prop}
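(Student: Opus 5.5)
By Theorem \ref{lem:cone-characterization-countable-choice}, applied with subcanonicity (which holds here because ${\rm WO}$ has the strong AP and satisfies the coherence axiom, so Proposition \ref{prop:stAP-subcanonical} applies), it suffices to show the following: every fan $\{\alpha\embedge{f_\xi}\beta_\xi\}_{\xi<\lambda}$ in ${\rm WO}_\lambda$ admits a cocone in ${\rm WO}_\lambda$. Here all $\alpha,\beta_\xi$ are ordinals below $\lambda^+$, and the $f_\xi$ are order embeddings. We cannot use the transfinite amalgamation argument, because chains need not have colimits. So I will build the cocone directly, by a lexicographic ``interleaving'' construction.

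\emph{Step 1: cut decomposition.} For each $\xi$ and each $\eta\le\alpha$, let $I^\xi_\eta$ be the set of points $\gamma\in\beta_\xi\setminus f_\xi[\alpha]$ whose left cut $L_{f_\xi}(\gamma)=\{\zeta<\alpha: f_\xi(\zeta)<\gamma\}$ equals $\eta$. Each $I^\xi_\eta$ is a well-ordered set of size at most $\lambda$, hence order-isomorphic to an ordinal. Then $\beta_\xi$ is the ordered sum, over $\eta<\alpha$, of the block $I^\xi_\eta$ followed by the point $f_\xi(\eta)$, with $I^\xi_\alpha$ at the end. Note that each $I^\xi_\eta$ lies between $\sup_{\zeta<\eta}f_\xi(\zeta)$ and $f_\xi(\eta)$.

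\emph{Step 2: amalgamate the gaps.} For each $\eta\le\alpha$, let $J_\eta$ be the ordered sum $\sum_{\xi<\lambda}I^\xi_\eta$, with the blocks placed one after another in the order of $\xi$. This is a well-order of size at most $\lambda\cdot\lambda=\lambda$. Define
\[
\delta=\sum_{\eta<\alpha}\bigl(J_\eta+1\bigr)+J_\alpha .
\]
This is a well-order, being an ordered sum of well-orders indexed by an ordinal. Its cardinality is at most $|\alpha|\cdot\lambda\le\lambda$, so $\delta\in{\rm WO}_\lambda$.

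\emph{Step 3: the cocone maps.} Define $g_\xi\colon\beta_\xi\to\delta$ as follows. It sends $f_\xi(\eta)$ to the point ``$1$'' in the $\eta$-th summand, and it sends $I^\xi_\eta$ identically onto the $\xi$-th block of $J_\eta$. Checking that $g_\xi$ is order preserving is a routine case analysis using the block decomposition from Step 1. The composite $g_\xi f_\xi$ sends $\eta$ to the $\eta$-th distinguished point, and this does not depend on $\xi$. So $\{g_\xi\}$ is a cocone.

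I expect the main point to be Step 2: we must ensure that placing $\lambda$ many gap-pieces side by side stays well-ordered and of size at most $\lambda$. Interleaving with the \emph{ordinal} index $\xi<\lambda$ handles this, whereas indexing by $\lambda^+$ would break the cardinality bound, consistent with the preceding proposition. A minor detail is identifying ${\rm WO}_\lambda$ with ordinals via transportability, so that $\delta$ may be replaced by its order type.
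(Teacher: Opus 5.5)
Your proof is correct and is essentially the paper's argument: the paper also cuts each $\beta_\eta$ into the $\alpha+1$ gap blocks determined by the embedding and takes the cocone $\sum_{\xi<\alpha}\bigl(\sum_{\eta<\lambda}B^\eta_\xi+1\bigr)+\sum_{\eta<\lambda}B^\eta_\alpha$, which is exactly your $\delta$ with the indices renamed. You only use the direction (1)$\Rightarrow$(2) of Theorem~\ref{lem:cone-characterization-countable-choice}, and that direction does not require subcanonicity, so your appeal to Proposition~\ref{prop:stAP-subcanonical} is harmless but unnecessary.
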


\begin{proof}
Let $\alpha$ and $\beta$ be ordinals of cardinality at most $\lambda$.
An embedding $\alpha\embedge{i}\beta$ determines a decomposition of $\beta$ into $\alpha+1$ blocks $B_\xi$ by cutting $\beta$ at each point $i(\xi)\in\beta$.
More precisely, for each $\xi\leq \alpha$, let $A_\xi=\{x\in\beta\setminus i[\alpha]:x<i(\xi)\}$ and define $B_\xi=A_\xi\setminus\bigcup_{\zeta<\xi}A_\zeta$.
Here we set $A_\alpha=\beta\setminus i[\alpha]$.
Thus $\beta$ decomposes as follows.
\[
\beta\simeq\sum_{\xi<\alpha}(B_\xi+1)+B_\alpha
\]

By Theorem \ref{lem:cone-characterization-countable-choice}, it suffices to show that every $\lambda$-indexed family of morphisms $\{\alpha\embed \beta_\eta\}_{\eta<\lambda}$ admits a cocone.
For this purpose, suppose that each embedding $i_\eta\colon\alpha\embed\beta_\eta$ yields a decomposition $\beta_\eta\simeq\sum_{\xi<\alpha}(B_\xi^\eta+1)+B_\alpha^\eta$.
Define a well-order $\gamma$ by
\[
\gamma=\sum_{\xi<\alpha}\left(\sum_{\eta<\lambda}B^\eta_\xi+1\right)+\sum_{\eta<\lambda}B^\eta_\alpha.
\]

This identifies $(i_\eta(\xi))_{\eta<\lambda}$ with $1$ for each $\xi<\alpha$, while preserving, in order, copies of $\beta_\eta\setminus i[\alpha]$.
The cardinality of $\gamma$ is at most that of $\alpha\cdot(\lambda+1)+\lambda$.
\end{proof}

%
%
%

\subsection{Examples for the internal AC}\label{exa:IAC}

By Theorem \ref{thm:IAC-maximalcofinal}, the maximal extension property is necessary for the internal AC.
Most familiar classes of structures, however, have no maximal structures at all; indeed, studies of AECs often explicitly assume the absence of maximal models.

One simple way to guarantee the existence of maximal structures is to impose a finite bound on cardinality or dimension.
For example, consider the class ${\rm Set}_{\leq n}$ of sets of cardinality at most $n$, or the class ${\rm Vec}_{F,\leq n}$ of vector spaces of dimension at most $n$, and both classes have the maximal extension property.
Their maximal structures are, respectively, the $n$-element sets and the $n$-dimensional vector spaces.
%

Consider the groupoids ${\rm Set}_{=n}$ of all $n$-element sets and ${\rm Vec}_{F,=n}$ of all $n$-dimensional vector spaces. By Observation \ref{obs:Riehl}, the former is equivalent to the symmetric group $S_n$ regarded as a one-object category, and the latter to the general linear group ${\rm GL}_n(F)$ regarded as a one-object category.
Thus, by the discussion in Section \ref{sec:groupoid},
\begin{align*}
{\rm Sh}({\rm Set}_{\leq n}^{\rm op},J_{\rm at})\simeq S_n\text{-}{\bf Set},
& &
{\rm Sh}({\rm Vec}_{F,\leq n}^{\rm op},J_{\rm at})\simeq {\rm GL}_n(F)\text{-}{\bf Set}
\end{align*}
and these toposes satisfy the internal but not the external axiom of choice (if $n>1$).

\subsection{Examples for the external AC}

By the equation (\ref{equ:at-sheaf-maximal-automorphism}) in Section \ref{sec:groupoid}, and Theorem \ref{thm:EAC-main-thm}, ${\rm Sh}(\K^{\rm op},J_{\rm at})$ satisfies the external axiom of choice only when ${\rm Sh}(\K^{\rm op},J_{\rm at})\simeq{\bf Set}^I$.
Thus, in a sense, it holds only in the trivial cases.

\subsection{AC/DC criterion for finite structures}

Since Theorem \ref{thm:main-stability-thm} concerns classes of infinite structures, it does not apply to classes of finite structures.
However, most of the prior research mentioned in Section \ref{sec:Introduction} deals with classes of finite structures.
Therefore, it is also important to establish criteria for the validity of the axiom of choice for atomic toposes over classes of finite structures.


As seen in Section \ref{exa:IAC}, the internal axiom of choice holds when a specific finite upper bound is imposed.
In fact, the validity of the axiom of choice for the atomic topos over a class $\K$ of finite structures depends on whether there is a finite upper bound on the cardinalities of the structures in $\K$.
More precisely, the essential factor is the existence of such a finite upper bound for each connected component.

Here, a category can be viewed as an undirected graph if the direction of morphisms is disregarded.
The connected components of a category are the connected components of such an undirected graph.

\begin{theorem}\label{thm:main-finite-structures}
Let $\K$ be a subcanonical AP class consisting of finite structures; that is, $|A|$ is a finite set for every object $A\in\K$.
Then the following are equivalent:
\begin{enumerate}
\item For every connected component $\K'$ of $\K$, there exists a number $b\in\N$ such that every object $A\in \K'$ has cardinality at most $b$.
\item ${\rm Sh}(\K^{\rm op},J_{\rm at})$ satisfies the internal axiom of countable choice.
\item ${\rm Sh}(\K^{\rm op},J_{\rm at})$ satisfies the internal axiom of dependent choice.
\item ${\rm Sh}(\K^{\rm op},J_{\rm at})$ satisfies the internal axiom of choice.
\end{enumerate}
\end{theorem}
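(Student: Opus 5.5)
We will prove the cycle (1)$\Rightarrow$(4)$\Rightarrow$(3)$\Rightarrow$(2)$\Rightarrow$(1). The implications (4)$\Rightarrow$(3)$\Rightarrow$(2) are the general ones recorded in the diagram after the definition of the axioms of choice (internal AC implies internal DC, which implies AC for $\om$-families). So the work is in (1)$\Rightarrow$(4) and (2)$\Rightarrow$(1).

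For (1)$\Rightarrow$(4), we use Theorem \ref{thm:IAC-maximalcofinal}, which reduces the claim to the maximal extension property. Take $A\in\K$ and let $\K'$ be its connected component, with bound $b$.

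Consider the set of cardinalities of objects $B$ admitting an embedding $A\embed B$. This set is bounded by $b$, since any such $B$ lies in $\K'$. Choose $A\embed M$ with $|M|$ of maximal cardinality.

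Then $M$ is maximal. Indeed, any $M\embed N$ gives $A\embed N$, so $|N|\le|M|$. Since the map is injective between finite sets, it is bijective, and conservativity of $|-|$ makes it an isomorphism. Hence every structure extends to a maximal one.

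For (2)$\Rightarrow$(1), we argue by contraposition, using Theorem \ref{lem:cone-characterization-countable-choice} with $I=\om$ (subcanonicity is assumed). Suppose some component $\K'$ contains objects of unbounded finite size. We aim to produce a single object $A$ and a countable fan $\{A\embed B_n\}_{n\in\om}$ with $|B_n|\ge n$. Such a fan admits no cocone: any cocone vertex $C$ would satisfy $|C|\ge|B_n|\ge n$ for all $n$, contradicting finiteness of $|C|$.

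The main obstacle is that connectivity is only a zigzag of embeddings, so we must pull the large objects back to extensions of one fixed $A$. First, the AP together with connectedness gives the JEP within $\K'$. To see this, induct along a zigzag $X_0 \embed X_1 \leftembed X_2\embed\cdots$. At each step, if $X_i$ and $Y$ jointly embed in $W$ and $X_{i+1}$ is related to $X_i$ by an embedding, then either compose (if $X_i\embed X_{i+1}$, amalgamate $W\leftembed X_i\embed X_{i+1}$) or amalgamate over the common domain. This yields a common extension of $X_{i+1}$ and $Y$.

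Now fix $A\in\K'$. For each $n$, pick $D_n\in\K'$ with $|D_n|\ge n$, and use the JEP to find $B_n$ with embeddings $A\embed B_n$ and $D_n\embed B_n$. Then $|B_n|\ge n$, giving the desired fan. By Theorem \ref{lem:cone-characterization-countable-choice}, the internal axiom of countable choice then fails.

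I expect the only delicate point to be making this zigzag-plus-amalgamation argument precise, in particular checking that amalgamating along a backward arrow $X_i\leftembed X_{i+1}$ correctly yields a joint embedding of $X_{i+1}$ and $Y$.
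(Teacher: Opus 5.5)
Your proof is correct and follows the paper's route. You get (1)$\Rightarrow$(4) from the maximal extension property via Theorem~\ref{thm:IAC-maximalcofinal}, and (2)$\Rightarrow$(1) from an unbounded fan $\{A\embed B_n\}_{n\in\om}$, built from the JEP on the component, together with Theorem~\ref{lem:cone-characterization-countable-choice}; the paper cites \cite[Lemma 3.7]{Car14} for the JEP where you prove it by zigzag induction. Two small points: choosing $M$ of maximal size among extensions of $A$, rather than in the whole component, slightly simplifies the paper's argument by removing the JEP from (1)$\Rightarrow$(4); and the step you flag as delicate is actually trivial, because a backward arrow $X_{i+1}\embed X_i$ only needs composition with $X_i\embed W$, while amalgamation is needed only for a forward arrow $X_i\embed X_{i+1}$.
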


\begin{proof}
(4)$\Rightarrow$(3)$\Rightarrow$(2):
Trivial.

(1)$\Rightarrow$(4):
By Theorem \ref{thm:IAC-maximalcofinal} (1)$\Rightarrow$(2), it suffices to show that $\K$ has the maximal extension property.

For each $A\in\K$, let $[A]_\K$ be the connected component to which $A$ belongs.
By the assumption (1), there exists a structure $M\in [A]_\K$ which has a maximal cardinality within $[A]_\K$.
To see that this $M$ is a maximal structure, let $M\hookrightarrow B$ be a morphism.
As $B$ and $M$ belong to the same connected component, the cardinality of $M$ is greater than or equal to that of $B$.
In this case, the injection $M\hookrightarrow B$ must be a bijection; thus, since $|-|$ is conservative, $M\hookrightarrow B$ is an isomorphism.
Hence, $M$ is a maximal structure.

It is clear that if $\K$ has the AP, then each connected component $[A]_\K$ also has the AP.
Thus, by \cite[Lemma 3.7]{Car14}, $[A]_K$ has the JEP, so there exists a morphism $A\embed B\leftembed M$.
Thus, the maximality of $M$ implies $B\simeq M$, yielding a morphism $A\embed M$.

(2)$\Rightarrow$(1):
Suppose there exists a connected component $\K'$ that does not satisfy condition (1).
Fix $A\in\K'$.
Then, by unboundedness, for any $n\in\N$, there exists some $B_n\in\K'$ whose cardinality  is at least $n$.
Since $\K$ has the AP, again by \cite[Lemma 3.7]{Car14}, the connected component $\K'$ has the JEP; hence, we obtain a morphism $A\embed C_n\leftembed B_n$.
Now, suppose that $(A\embed C_n)_{n\in\N}$ admits a cocone $(C_n\embed D)_{n\in\N}$ in $\K$.
Then, this yields an injection $B_n\embed C_n\embed D$, so the cardinality of $D$ is at least that of $B_n$; that is, at least $n$.
Since this holds for any $n\in\N$, the cardinality of $D\in\K$ must be infinite, which is impossible.
By Theorem \ref{lem:cone-characterization-countable-choice}, this shows the failure of the internal axiom of countable choice in the corresponding atomic sheaf topos.
\end{proof}

For most natural classes $\K$ of structures, the restriction $\K_{\rm fin}$ to finite structures does not satisfy the condition (1) in Theorem \ref{thm:main-finite-structures}; thus, ${\rm Sh}(\K_{\rm fin}^{\rm op},J_{\rm at})$ does not even satisfy the axiom of countable choice.
For instance, the Schanuel topos ${\rm Sh}({\rm Set}_{\rm fin}^{\rm op},J_{\rm at})$ does not satisfy the axiom of countable choice \cite{SGL,Mej19}.

\section{Set-theoretic permutation models}\label{sec:set-theory}

A topos can be converted into a model of a certain weak set theory; see e.g.~\cite{BlSc89}
However it is not clear whether the validity/failure of the axiom of choice is preserved under such a conversion.
Here we compare the atomic sheaf topos ${\rm Sh}(\K^{\rm op},J_{\rm at})$ with the permutation model obtained from the automorphism group ${\rm Aut}(M_\K)$ of a monster structure $M_\K\in\K$; the latter is a model of ZFA, set theory with atoms.
The results we present in this section are summarized in Table \ref{table-2}.

\begin{table}[t]\centering\footnotesize
\begin{tabular}{ccccc}
Atomic topos& Class $\mathcal{K}$ & Continuous  &  Zapletal's & Permutation  \\
${\rm Sh}(\K^{\rm op},J_{\rm at})$ & of structures & ${\rm Aut}(M_\K)$-actions & criteria \cite{Zap26}  & model $W[[M_\K]]$ \\ \hline
${\rm AC}$ for (external)  & cocone & nonemptiness & dynamical &${\rm AC}$ for  \\
$\kappa$-families &  for $\kappa$-fans & of $\kappa$-products &  $\kappa$-completeness & internal $\kappa$-families \\[0.5em]
${\rm DC}$ & cocone & nonemptiness & DC-completeness &${\rm DC}$  \\
&  for $\om$-chains & of inverse limits &   & \\[0.5em]
${\rm AC}$ for & universal & & cofinal &${\rm AC}$ for  \\
(external) sets & extension & & orbit & internal well-orders \\[1em]
\end{tabular}
\caption{\small The correspondence between the choice-criteria for atomic toposes and those for permutation models in set theory.}\label{table-2}
\end{table}

\subsection{Monster structures}

We first introduce an important tool for converting our topos models into set-theoretic models.

\begin{definition}
We call $M\in\K$ a {\bf $\lambda$-monster} structure if it satisfies the following conditions.
\begin{enumerate}
\item $\lambda$-universality:
For every $A\in\K_\lambda$, there exists an embedding $A\embed M$.
\item $\lambda$-ultrahomogeneity:
Let $A,B\in\K_\lambda$ be structures with embeddings $A\embedge{i}M$ and $B\embedge{j}M$.
For every isomorphism $f\colon A\edge{\simeq}B$, there exists an automorphism $\sigma\in{\rm Aut}_\K(M)$ such that $\sigma\circ i=j\circ f$.
\[
\xymatrix{
M\ar[rr]_\sigma^{\simeq} & & M \\
A \ar[u]^i\ar[rr]_f^{\simeq} & & B\ar[u]_j
}
\]
\item The $\lambda$-L\"owenheim--Skolem ($\lambda$-LS) property:
For every subset $S\subseteq|M|$ of cardinality at most $\lambda$, there exist $A\in\K_\lambda$ and an embedding $A\embedge{e}M$ such that $S\subseteq e[A]$.
\end{enumerate}
\end{definition}

\begin{example}
If $T$ is a complete first-order theory, and if an infinite cardinal $\lambda$ is greater than or equal to the cardinality of the language of $T$, then the class $\K={\rm Mod}(T)$ of models of $T$ with elementary embeddings has a $\lambda$-monster structure; see \cite[Definition 6.15 and Theorem 6.16]{TeZi12}.
\end{example}

Again, when morphisms are embeddings (not necessarily elementary), a monster structure may not exist.
A general approach to constructing a monster structure is a form of Fra\"iss\'e limit construction.
However, ultrahomogeneity in Fra\"iss\'e theory concerns extending isomorphisms between finite substructures to automorphisms and is weaker than the ultrahomogeneity required here.
Thus one must instead use the Fra\"iss\'e--J\'onsson construction \cite{Jon60}, which generalizes the Fra\"iss\'e construction to infinite cardinals.

For several classes of structures (with embeddings) mentioned in Section \ref{sec:elementary-example-embedding}, monster structures admit the following concrete constructions.

\begin{example}~
\begin{enumerate}
\item Let $\K={\rm Vec}_F$ be the class of all vector spaces.
If the cardinality of $F$ is at most $\lambda$, then any $F$-vector space satisfying $\dim M>\lambda$ is a $\lambda$-monster.
\item If $\K={\rm LO}$, a $\lambda^+$-saturated/strongly homogeneous DLO (dense linear order without endpoints) provides a $\lambda$-monster structure.
\item If $\K={\rm Graph}$, a sufficiently large saturated/strongly homogeneous random graph provides a $\lambda$-monster structure.
\item If $\K={\rm BA}$, a sufficiently large saturated/strongly homogeneous atomless Boolean algebra provides a $\lambda$-monster.
\end{enumerate}
\end{example}

For (2), (3), and (4), monster structures are constructed by taking saturated and strongly homogeneous models of the model completions of the relevant theories.
For groups and abelian groups, they form J\'onsson classes \cite{Jon60,BHK13}, so at cardinals satisfying the usual J\'onsson cardinal-arithmetic hypotheses, the Fra\"iss\'e--J\'onsson construction yields homogeneous-universal structures.
For other abstract conditions under which monster structures can be constructed, see, for example, \cite{Kub14}.

By transporting a given structure into the monster structure $M$, we may essentially restrict our discussion to substructures of $M$.
This makes comparison with set-theoretic notions considerably easier.

\begin{definition}
For $A,B\in\K$, we write $A\leq B$ if $|A|\subseteq|B|$ and the inclusion map $A\embedge{\subseteq}B$ is a morphism.
In this case, we say that $A$ is a {\bf substructure} of $B$.
\end{definition}

\begin{lemma}\label{lem:substructure-lem}
Suppose that $A\leq M$ and $A\leq B\in\K_\lambda$.
Then there exists an isomorphism $\tau\colon B\edge{\simeq}B'$ fixing every element of $A$ such that $A\leq B'\leq M$.
\end{lemma}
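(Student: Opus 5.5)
Presumably $M$ is a $\lambda$-monster structure here (the lemma sits right after that definition). The plan is to first embed $B$ into $M$ using $\lambda$-universality, and then correct the embedding on $A$ using $\lambda$-ultrahomogeneity.

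First, by $\lambda$-universality there is an embedding $B\embedge{j}M$. We now have two embeddings of $A$ into $M$: the inclusion $A\embedge{\subseteq}M$, and the composite $A\embedge{\subseteq}B\embedge{j}M$. Apply $\lambda$-ultrahomogeneity to the identity isomorphism $1_A\colon A\to A$, with the first copy of $A$ embedded via $j\circ(\subseteq)$ and the second via the inclusion. This gives $\sigma\in{\rm Aut}(M)$ with $\sigma\circ j\circ(A\subseteq B)=(A\subseteq M)$. Set $k:=\sigma\circ j\colon B\embed M$; then $k(a)=a$ for every $a\in A$.

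Next, let $B'$ be the image of $k$. Concretely, we use transportability with the bijection $|k|\colon|B|\to k[B]$, which yields a structure $B'$ with $|B'|=k[B]$ and an isomorphism $\tau\colon B\edge{\simeq}B'$ satisfying $|\tau|=|k|$. Because $k$ fixes $A$ pointwise, so does $\tau$.

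It remains to check the two substructure relations, and both follow from the coherence axiom.
\begin{itemize}
\item For $B'\leq M$: the inclusion function $\iota\colon|B'|\to|M|$ satisfies $|k|=\iota\circ|\tau|$, and equivalently $|k\circ\tau^{-1}|=\iota$. Since $k\circ\tau^{-1}$ is a morphism whose underlying function is $\iota$, the inclusion is a morphism. No coherence is needed here.
\item For $A\leq B'$: the inclusion $|A|\subseteq|B'|$ holds because $\tau$ fixes $A$. The composite $\tau\circ(A\subseteq B)$ is a morphism whose underlying function is exactly this inclusion, since $\tau$ is the identity on $A$.
\end{itemize}
The fact that $|B'|=k[B]$ has cardinality at most $\lambda$ is automatic.

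The only genuine step is the ultrahomogeneity application, where we must make sure the automorphism is composed on the correct side so that $\sigma\circ j$ fixes $A$. Everything after that is bookkeeping with faithfulness of $|-|$ and transportability. If the statement is instead meant for an arbitrary $M$ satisfying only universality and homogeneity, the same argument goes through, because the $\lambda$-LS property is not used.
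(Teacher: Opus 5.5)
Your proposal is correct and is essentially the paper's proof: embed $B$ into $M$ by universality, apply ultrahomogeneity to ${\rm id}_A$ to get $\sigma$ with $\sigma\circ j$ fixing $A$, and transport $B$ along the bijection onto its image $\sigma j[B]$. One harmless slip: you announce that the two substructure relations ``follow from the coherence axiom,'' but your own bullets correctly show that neither needs coherence, only transportability and functoriality of $|-|$, which matches the paper.
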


\begin{proof}
More generally, we show that every embedding $A\embedge{i}M$ with $A\in\K_\lambda$ is a universal extension in the following sense:
for every extension $A\embedge{e}B\in\K_\lambda$, there exists $B\embedge{\tau}M$ such that $\tau e=i$.

By universality, fix any embedding $B\embedge{u}M$.
We now have two embeddings $A\embedge{e}B\embedge{u}M$ and $A\embedge{i}M$.
By ultrahomogeneity, the identity map $A\embedge{{\rm id}_A}A$ extends to an automorphism $\sigma\in{\rm Aut}(M)$.
\[
\xymatrix{
	M \ar[rr]^{\simeq}_\sigma & & M\\
	A \ar[u]^{ue} \ar[rr]^{\simeq}_{{\rm id}_A} & & A \ar[u]_{i}
}
\]
Since $\sigma  ue=i$, the morphism $\sigma u\colon B\embed M$ has the desired property.

If $i$ and $e$ are inclusion maps, then $\sigma u(x)=x$.
Thus $\sigma u$ fixes every element of $A$.
By transportability, the bijection $B\edge{\simeq}\sigma u[B]$ gives an isomorphism $B\simeq \sigma u[B]\in\K_\lambda$.
Hence, setting $B'=\sigma u[B]$, we obtain $A\leq B'\leq M$.
\end{proof}

\begin{remark}
In general, even if $A\embedge{e}B\in\K_\lambda$ is not an inclusion map, we still obtain $A\leq B'\leq M$ for some $B'\simeq B$.
\end{remark}

\begin{remark}
By Lemma \ref{lem:substructure-lem}, the $\lambda$-LS property may, for example, be replaced by the following formulation:
for every subset $s\subseteq|M|$ of cardinality at most $\lambda$, there exists a substructure $A\in\K_\lambda$ of $M$ containing $s$.
\end{remark}

\subsection{Concrete formulations of Mejak's criteria}

The choice criteria in Theorems \ref{lem:cone-characterization-countable-choice} and \ref{lem:cone-characterization-dependent-choice}, due to Mejak \cite{Mej19}, show that the validity of choice principles in the atomic sheaf topos ${\rm Sh}(\K^{\rm op}_\lambda,J_{\rm at})$ depends on the existence of certain cocones (generalized amalgamations) in $\K_\lambda$.
\begin{itemize}
\item ${\rm AC}_\kappa$: Every $\kappa$-fan $\{A\embedge{e_\alpha}B_\alpha\}_{\alpha<\kappa}$ in $\K_\lambda$ admits a cocone.
\item ${\rm DC}$: Every $\om$-chain $\{A_n\embedge{e_n}A_{n+1}\}_{n\in\N}$ in $\K_\lambda$ admits a cocone.
\end{itemize}

Independently, Zapletal \cite{Zap26} introduced choice criteria in the context of set-theoretic permutation models.
Remarkably, Mejak's cocone/amalgamation criteria in topos theory and Zapletal's criteria in set theory turn out to be nearly identical.
To bring the topos-theoretic setting closer to the set-theoretic one, we first use transportability to give the following concrete formulation of Mejak's AC criterion.

\begin{lemma}\label{lem:Mejak-AC-criterion}
For a class of structures $\K$, the following are equivalent.
\begin{enumerate}
\item Every $\kappa$-indexed family $\{A\embedge{e_\alpha}B_\alpha\}_{\alpha<\kappa}$ of $\K_\lambda$-morphisms admits a cocone.
\item Suppose that $\kappa$ extensions $\{A\leq B_\alpha\in\K_\lambda\}_{\alpha<\kappa}$ of a structure $A\in\K_\lambda$ are given.
Then there exist isomorphisms $B_\alpha\simeq B_\alpha'\in\K_\lambda$ fixing every element of $A$ such that these $B'_\alpha$ have a common superstructure $\{B_\alpha'\leq C\in\K_\lambda\}_{\alpha<\kappa}$.
\end{enumerate}
\end{lemma}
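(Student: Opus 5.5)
Both directions come down to transportability (the isofibration property of $|-|$): it lets us replace an arbitrary embedding by an inclusion into an isomorphic copy. The coherence axiom is not needed, since all maps produced below are obtained directly as isomorphisms from transport, or as composites of existing morphisms.

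For (1)$\Rightarrow$(2), we are given inclusions $A\leq B_\alpha$, with $e_\alpha\colon A\embed B_\alpha$ the inclusion morphisms. By (1) there is a cocone $d_\alpha\colon B_\alpha\embed D$ with $d_\alpha e_\alpha=:u$ independent of $\alpha$. Our goal is a common superstructure $C$ whose copy of each $B_\alpha$ contains $A$ literally. First we rename $D$ so that $u$ becomes an inclusion. Choose a set $X$ with $A\subseteq X$ and a bijection $\sigma\colon|D|\to X$ satisfying $\sigma(u(a))=a$ for $a\in A$. To arrange this, let $X=A\sqcup Y$, where $Y$ is a set of fresh elements disjoint from $A$ in bijection with $|D|\setminus u[A]$; this is possible since $u$ is injective. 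By transportability we obtain $\bar\sigma\colon D\edge{\simeq}C$ with $|C|=X$, and $\bar\sigma u$ is, as a function, the inclusion $A\subseteq C$. Now set $B'_\alpha$ to be the transport of $B_\alpha$ along the bijection $|B_\alpha|\to\sigma d_\alpha[B_\alpha]$ induced by $\sigma d_\alpha$. Then $\tau_\alpha\colon B_\alpha\simeq B'_\alpha$ fixes $A$ pointwise, because $\sigma d_\alpha e_\alpha=\sigma u$ restricts to the identity on $A$.

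The one point needing care is that $B'_\alpha\leq C$, meaning the inclusion $|B'_\alpha|\subseteq|C|$ is a morphism. This holds because that inclusion equals the composite $\bar\sigma\circ d_\alpha\circ\tau_\alpha^{-1}$, which is a morphism of $\K$. Since $|-|$ is faithful, the function determines it, so no coherence argument is required. Each $B'_\alpha$, being isomorphic to $B_\alpha$, lies in $\K_\lambda$, and $C\simeq D$ lies in $\K_\lambda$.

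For (2)$\Rightarrow$(1), we start from a general fan $e_\alpha\colon A\embed B_\alpha$. The first step is to make all the $e_\alpha$ into inclusions of the same $A$. For each $\alpha$, pick a set $X_\alpha\supseteq A$ and a bijection $\rho_\alpha\colon|B_\alpha|\to X_\alpha$ extending $e_\alpha(a)\mapsto a$, using fresh elements for $|B_\alpha|\setminus e_\alpha[A]$ as before. Transport gives $\bar\rho_\alpha\colon B_\alpha\simeq B''_\alpha$, and $\bar\rho_\alpha e_\alpha$ is the inclusion $A\subseteq B''_\alpha$, so $A\leq B''_\alpha$. Applying (2) yields isomorphisms $\tau_\alpha\colon B''_\alpha\simeq B'_\alpha$ fixing $A$ and a structure $C$ with $B'_\alpha\leq C$; write $j_\alpha$ for these inclusions. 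We then put $d_\alpha:=j_\alpha\tau_\alpha\bar\rho_\alpha\colon B_\alpha\embed C$. Its composite with $e_\alpha$ is, as a function, the inclusion $A\subseteq C$, independent of $\alpha$. By faithfulness the morphisms $d_\alpha e_\alpha$ therefore coincide, and $\{d_\alpha\}$ is a cocone in $\K_\lambda$.

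The main (mild) obstacle is bookkeeping in the renaming step: making the fresh elements disjoint from $A$ so that the required bijections exist, and checking via faithfulness that the set-theoretic inclusions produced are genuinely $\K$-morphisms.
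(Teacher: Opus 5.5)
Your proposal is correct and follows essentially the same route as the paper: in both directions you use transportability to rename carriers so that the common image of $A$ (for (1)$\Rightarrow$(2)) or each $e_\alpha$ (for (2)$\Rightarrow$(1)) becomes a literal inclusion, and then you compose with the isomorphisms and inclusions supplied by the hypothesis. Two of your steps are left implicit in the paper: the observation that the inclusion $B'_\alpha\subseteq C$ is the morphism $\bar\sigma\circ d_\alpha\circ\tau_\alpha^{-1}$, and the appeal to faithfulness to make $d_\alpha e_\alpha$ independent of $\alpha$.
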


\begin{proof}
(1)$\Rightarrow$(2):
By the definition of substructure, the family of inclusions $\{A\leq B_\alpha\}_{\alpha<\kappa}$ is a family of $\K_\lambda$-morphisms; hence, by assumption (1), it admits a cocone $\{B_\alpha\embedge{e_\alpha}C\}_{\alpha<\kappa}$.
By the definition of a cocone, $e_\alpha|_A$ is independent of $\alpha$.
Rename the elements of $C$ as follows.
Rename every element of the form $e_\alpha(x)$ as $x$, and rename every other element $y$ as $y'\not\in A$.
Let $C'$ be the result of this renaming. By transportability, the bijection $r\colon C'\edge{\simeq}C$ is an isomorphism, and $C'\in\K_\lambda$.
Setting $B'_\alpha:=r e_\alpha[B_\alpha]$ gives an isomorphism $B_\alpha\simeq B_\alpha'\in\K_\lambda$.
Moreover, $A=re_\alpha[A]\subseteq re_\alpha[B_\alpha]=B_\alpha'\subseteq r[C]=C'$.
This $C'$ is a desired superstructure.

(2)$\Rightarrow$(1):
Suppose that $\{A\embedge{e_\alpha}B_\alpha\}_{\alpha<\kappa}$ is given.
Rename the elements of $B_\alpha$ as follows.
Rename every element of the form $e_\alpha(x)$ as $x$, and rename every other element $y$ as a distinct element $y^\alpha$.
If $B_\alpha'$ denotes the result, then $A\subseteq B_\alpha'$, and by transportability the bijection $\rho_\alpha$ induced by the renaming is an isomorphism $\rho_\alpha\colon B_\alpha\edge{\simeq} B_\alpha'\in\K_\lambda$.
Moreover, by construction, $\rho_\alpha e_\alpha$ is the inclusion map $A\embed B_\alpha'$.

Thus, by assumption (2), we obtain an isomorphism $\sigma_\alpha\colon B_\alpha'\simeq B_\alpha''$ fixing every element of $A$ and an inclusion map $B_\alpha''\leq C\in\K_\lambda$.
Then $\{B_\alpha\embedge{\rho_\alpha}B_\alpha'\embedge{\sigma_\alpha}B_\alpha''\embed C\}_{\alpha<\kappa}$ is a cocone.
Indeed, for every $x\in A$, we have $\sigma_\alpha\rho_\alpha e_\alpha(x)=\sigma_\alpha(x)=x$.
\end{proof}

Similarly, transportability yields the following concrete formulation of Mejak's DC criterion.
\begin{lemma}\label{lem:Mejak-DC-criterion}
For a class of structures $\K$, the following are equivalent.
\begin{enumerate}
\item Every $\om$-chain $A_0\embedge{e_0}A_1\embedge{e_1}A_2\embedge{e_2}\cdots$ of $\K_\lambda$-morphisms admits a cocone.
\item Suppose that a sequence of extensions $A_0\leq A_1\leq A_2\leq\dots$ of $K_\lambda$-structures is given.
Then there exist a sequence of isomorphisms $\{\sigma_n\colon A_{n}\edge{\simeq}A_{n}'\}_{n\in\N}$ and a structure $C\in\K_\lambda$ such that $\sigma_0={\rm id}_{A_0}$ and $\sigma_{n+1}|_{A_n}=\sigma_n$, and
\[A_0=A_0'\leq A_1'\leq \cdots\leq A_n'\leq A_{n+1}'\leq \dots\leq C.\]
\end{enumerate}
\end{lemma}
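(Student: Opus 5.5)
The plan is to mirror the proof of Lemma \ref{lem:Mejak-AC-criterion}: renaming elements via transportability, applied along the chain instead of across a fan.

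For (1)$\Rightarrow$(2), suppose $A_0\leq A_1\leq\cdots$ is given. The inclusions form an $\om$-chain of $\K_\lambda$-morphisms, so (1) gives a cocone $\{A_n\embedge{d_n}D\}_{n\in\N}$ with $d_{n+1}|_{A_n}=d_n$.

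Next I rename $D$: each element $d_0(x)$ with $x\in A_0$ becomes $x$, and every other element $y$ becomes a fresh $y'\notin A_0$. By transportability this yields $C\in\K_\lambda$ and an isomorphism $r\colon D\edge{\simeq}C$ with $rd_0={\rm id}$ on $A_0$.

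Then I set $A_n':=rd_n[A_n]$ and $\sigma_n:=rd_n$, corestricted to $A_n'$. Transportability makes $A_n'$ a structure with $\sigma_n\colon A_n\edge{\simeq}A_n'$ an isomorphism. The remaining conditions are checked as follows.
\begin{itemize}
\item $\sigma_0={\rm id}_{A_0}$ holds by the choice of renaming.
\item $\sigma_{n+1}|_{A_n}=\sigma_n$ holds because the $d_n$ form a cocone.
\item $A_n'\subseteq A_{n+1}'\subseteq C$ holds as sets.
\end{itemize}

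The one point needing care is that these inclusions are morphisms of $\K$, i.e.\ that $A_n'\leq A_{n+1}'\leq C$. The inclusion $A_n'\subseteq C$ agrees with the composite $r d_n\sigma_n^{-1}$, which is a morphism; so, by faithfulness and because morphisms are determined by underlying functions, we get $A_n'\leq C$. For $A_n'\leq A_{n+1}'$, I intend to use the same composition trick: the inclusion equals $\sigma_{n+1}\circ(A_n\leq A_{n+1})\circ\sigma_n^{-1}$, which is a morphism. Hence no coherence axiom is needed.

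For (2)$\Rightarrow$(1), given an arbitrary chain $A_0\embedge{e_0}A_1\embedge{e_1}\cdots$, I first replace it by an isomorphic chain of genuine substructures. I build $B_n$ recursively with $B_0=A_0$ and isomorphisms $\rho_n\colon A_n\edge{\simeq}B_n$, where $\rho_0={\rm id}$. Given $\rho_n$, I rename $A_{n+1}$ so that each $e_n(x)$ becomes $\rho_n(x)$, and each remaining element $y$ becomes a fresh label $y^{n+1}$ not occurring in $B_n$. Transportability gives $\rho_{n+1}$ with $\rho_{n+1}e_n=\rho_n$, as functions and hence as morphisms by faithfulness.

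It follows that the inclusion $B_n\subseteq B_{n+1}$ equals $\rho_{n+1}e_n\rho_n^{-1}$, a morphism, so $B_0\leq B_1\leq\cdots$. Applying (2) yields $\sigma_n$ and $C$ with $B_n'\leq C$. Then $\{A_n\embedge{\rho_n}B_n\embedge{\sigma_n}B_n'\leq C\}_{n\in\N}$ is a cocone, since
\[\sigma_{n+1}\rho_{n+1}e_n=\sigma_{n+1}|_{B_n}\rho_n=\sigma_n\rho_n.\]

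I expect the main obstacle to be purely bookkeeping: ensuring the fresh labels are chosen consistently, with disjoint fresh names at each stage, and that every inclusion is verified to be a morphism by writing it as a composite of morphisms.
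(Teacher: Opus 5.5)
Your proposal is correct and follows essentially the same route as the paper. In (1)$\Rightarrow$(2) you rename the cocone vertex so that $d_0$ becomes the identity on $A_0$ and set $A_n'=rd_n[A_n]$; in (2)$\Rightarrow$(1) you recursively rename each $A_{n+1}$ along $e_n$ to get a chain of genuine substructures, apply (2), and compose. You also check explicitly that each set inclusion $A_n'\subseteq A_{n+1}'\subseteq C$ (resp.\ $B_n\subseteq B_{n+1}$) is a $\K$-morphism, by writing it as a composite such as $\sigma_{n+1}\circ(A_n\leq A_{n+1})\circ\sigma_n^{-1}$; this fills in a step that the paper's proof verifies only at the level of underlying sets.
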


\begin{proof}
(1)$\Rightarrow$(2):
By assumption, the chain of inclusion maps $\{A_n\leq A_{n+1}\}_{n\in\N}$ admits a cocone $\{A_n\embedge{e_n}C\}_{n\in\N}$.
By the definition of a cocone, $e_{n+1}|_{A_n}=e_n$.
Rename the elements of $C$ as follows.
Rename every element of the form $e_0(x)$ as $x$, and rename every other element $y\in C$ as a new element $y'$.
Let $C'$ be the result. By transportability, the bijection $r\colon C'\edge{\simeq}C$ is an isomorphism, and $C'\in\K_\lambda$.
Setting $A'_n:=r e_n[A_n]$ likewise gives an isomorphism $A_n\simeq A_n'\in\K_\lambda$, and $re_0[A_0]=A_0$.
Moreover, $e_{n+1}|_{A_n}=e_n$ implies $re_{n+1}|_{A_n}=re_n$.
Since $A_n\subseteq A_{n+1}$ by assumption, we have $A_n'=re_n[A_n]\subseteq re_{n+1}[A_n]\subseteq re_{n+1}[A_{n+1}]=A_{n+1}'\subseteq r[C]=C'$.

(2)$\Rightarrow$(1):
Suppose that a chain $\{A_n\embedge{e_n}A_{n+1}\}_{n\in\N}$ is given.
Inductively, suppose that $\rho_n\colon A_n\edge{\simeq}A_n'$ has been constructed.
Rename the elements of $A_{n+1}$ as follows.
Rename every element of the form $e_n(x)$ as $\rho_n(x)$, and rename every other element $y$ as a distinct element $y^n$.
If $A_{n+1}'$ denotes the result, then by transportability the bijection $\rho_{n+1}$ induced by the renaming is an isomorphism $\rho_{n+1}\colon A_{n+1}\edge{\simeq} A_{n+1}'\in\K_\lambda$.
By definition, $\rho_{n+1}e_n(x)=\rho_n(x)$.
In particular, $A_n'=\rho_n[A_n]=\rho_{n+1}e_n[A_n]\subseteq\rho_{n+1}[A_{n+1}]=A_{n+1}'$.

Applying assumption (2) to the sequence $\{A_n'\leq A_{n+1}'\}_{n\in\N}$, we obtain isomorphisms $\sigma_n\colon A_n'\simeq A_n''$ and inclusion maps $A_n''\leq C\in\K_\lambda$ such that $\sigma_{n+1}|_{A_n'}=\sigma_n$.
Then $\{A_n\embedge{\rho_n}A_n'\embedge{\sigma_n}A_n''\embed C\}_{n\in\N}$ is a cocone.
Indeed, for every $x\in A_n$, we have $\sigma_{n+1}\rho_{n+1} e_{n}(x)=\sigma_{n+1}\rho_n(x)=\sigma_{n}\rho_n(x)$.
\end{proof}

\subsection{Dynamical completeness}

We next introduce Zapletal's choice criteria \cite{Zap26}.
Zapletal's framework applies to general group actions, but restricting it to automorphism groups brings it very close to our setting.
Let $\mathcal{P}_\lambda X$ denote the family of all subsets $S$ of a set $X$ such that the cardinality of $S$ is at most $\lambda$.
Given a structure $M\in\K$, the family $\mathcal{P}_\lambda M$ is a dynamical ideal (in the sense of \cite{Zap26}) for the obvious action ${\rm Aut}_\K(M)\acts M$ of the automorphism group.

For a structure $M\in\K$ and a subset $a\subseteq M$, we denote the set of all automorphisms of $M$ fixing every element of $a$ by ${\rm Aut}_\K(M/a)$, or simply by ${\rm Aut}(M/a)$.
\[
{\rm Aut}_\K(M/a)=\{\sigma\in{\rm Aut}_\K(M):\forall x\in a.\ \sigma(x)=x\}.
\]

For automorphism groups, Zapletal's AC criterion can be reformulated as follows.

\begin{definition}[Zapletal {\cite[Definition 5.2]{Zap26}}]\label{def:Zap-dynamical-completeness}
$(M,\lambda)$ is {\bf dynamically $\kappa$-complete} if, for every $a\in \mathcal{P}_\lambda M$ and every $\{ a \subseteq b_\alpha\in \mathcal{P}_\lambda M\}_{\alpha<\kappa}$, there exists $\{\sigma_\alpha\in{\rm Aut}_\K(M/a)\}_{\alpha<\kappa}$ such that $\bigcup_{\alpha<\kappa}\sigma_\alpha[b_\alpha]\in \mathcal{P}_\lambda M$.
\end{definition}

In this form, its close resemblance to Mejak's concrete AC criterion (Lemma \ref{lem:Mejak-AC-criterion} (2)) is apparent.
Indeed, for sufficiently well-behaved classes of structures, Mejak's and Zapletal's AC criteria are equivalent.

\begin{theorem}\label{thm:dynamical-completeness}
Let $\lambda$ be an infinite cardinal, and suppose that a structure-class $\K$ is coherent and has a $\lambda$-monster structure $M\in\K$.
For every infinite cardinal $\kappa$, the following are equivalent.
\begin{enumerate}
\item Every $\kappa$-indexed family $\{A\embedge{e_\alpha}B_\alpha\}_{\alpha<\kappa}$ of $\K_\lambda$-morphisms admits a cocone.
\item $(M,\lambda)$ is dynamically $\kappa$-complete.
\end{enumerate}
\end{theorem}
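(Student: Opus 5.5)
The strategy is to pass through the concrete reformulation of Mejak's criterion, Lemma \ref{lem:Mejak-AC-criterion}, and to use the monster $M$ to move every structure inside $M$. Coherence will let us treat subsets of $M$ that are images of embeddings as substructures. The key translation is between small substructures of $M$ and elements of $\mathcal{P}_\lambda M$. In one direction, the $\lambda$-LS property (in the substructure form of the Remark after Lemma \ref{lem:substructure-lem}) covers any $b\in\mathcal{P}_\lambda M$ by a substructure in $\K_\lambda$. In the other, Lemma \ref{lem:substructure-lem} puts any extension of a substructure $A\leq M$ inside $M$ over $A$.

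\emph{(1)$\Rightarrow$(2).} Fix $a\in\mathcal{P}_\lambda M$ and $\{a\subseteq b_\alpha\}_{\alpha<\kappa}$.
\begin{enumerate}
\item By $\lambda$-LS, choose $A\leq M$ in $\K_\lambda$ with $a\subseteq|A|$.
\item For each $\alpha$, choose $B_\alpha\leq M$ in $\K_\lambda$ containing $|A|\cup b_\alpha$. Coherence makes $A\leq B_\alpha$, since both inclusions into $M$ are morphisms.
\item By (1), the fan $\{A\leq B_\alpha\}$ has a cocone $\{B_\alpha\embedge{g_\alpha}C\}$ with $C\in\K_\lambda$. Let $h:=g_\alpha|_A$, which does not depend on $\alpha$.
\item Embed $C$ into $M$ over $A$. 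By $\lambda$-universality take some $C\embedge{u}M$. By ultrahomogeneity applied to the identity of $A$ (viewing $A$ via $uh$ and via the inclusion), get $\sigma\in{\rm Aut}(M)$ with $\sigma u h=\mathrm{incl}_A$.
\item For each $\alpha$, compare the embeddings $B_\alpha\leq M$ and $\sigma u g_\alpha\colon B_\alpha\embed M$. Ultrahomogeneity applied to $\mathrm{id}_{B_\alpha}$ gives $\sigma_\alpha\in{\rm Aut}(M)$ with $\sigma_\alpha|_{B_\alpha}=\sigma u g_\alpha$.
\item On $A$ this map is the identity, so $\sigma_\alpha\in{\rm Aut}(M/A)\subseteq{\rm Aut}(M/a)$.
\item Then $\bigcup_\alpha\sigma_\alpha[b_\alpha]\subseteq\sigma u[C]$, which has cardinality at most $\lambda$, giving dynamical $\kappa$-completeness.
\end{enumerate}

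\emph{(2)$\Rightarrow$(1).} Verify condition (2) of Lemma \ref{lem:Mejak-AC-criterion}. Let $\{A\leq B_\alpha\}_{\alpha<\kappa}$ be given in $\K_\lambda$.
\begin{enumerate}
\item Embed $A$ into $M$ by universality and transport (transportability), so that we may assume $A\leq M$. To transport back at the end, rename all of $\bigcup_\alpha B_\alpha$ compatibly on $A$.
\item By Lemma \ref{lem:substructure-lem}, replace each $B_\alpha$ by an isomorphic copy $B_\alpha'$ fixing $A$ pointwise with $A\leq B_\alpha'\leq M$.
\item Apply (2) to $a=|A|$ and $b_\alpha=|B_\alpha'|$ to get $\sigma_\alpha\in{\rm Aut}(M/A)$ with $S:=\bigcup_\alpha\sigma_\alpha[B'_\alpha]$ of size at most $\lambda$.
\item Each $\sigma_\alpha[B'_\alpha]$ is a substructure of $M$: the composite $B'_\alpha\embedge{\subseteq}M\edge{\sigma_\alpha}M$ is a morphism, so transportability and coherence apply. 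It is isomorphic to $B_\alpha$ over $A$.
\item By $\lambda$-LS, pick $C\leq M$ in $\K_\lambda$ containing $S$. Coherence gives $\sigma_\alpha[B'_\alpha]\leq C$, which is exactly Lemma \ref{lem:Mejak-AC-criterion}(2).
\end{enumerate}

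The step I expect to be delicate is the bookkeeping of identifications. One concern is that the lemma is phrased for abstract $A\leq B_\alpha$, not ones sitting in $M$, which forces a renaming that is compatible across all $\alpha$. Another is that each use of ultrahomogeneity extends the \emph{right} partial isomorphism, so that the resulting automorphisms genuinely fix $a$. Coherence is needed repeatedly to upgrade set inclusions between images in $M$ to $\K$-morphisms.
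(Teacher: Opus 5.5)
Your proposal is correct and follows essentially the same route as the paper. In (1)$\Rightarrow$(2) you cover $a$ and each $A\cup b_\alpha$ by substructures using $\lambda$-LS, move the cocone into $M$ over $A$, and extend via ultrahomogeneity to automorphisms fixing $A$; the only difference is that you inline the argument of Lemma \ref{lem:substructure-lem} and use the cocone directly instead of passing through Lemma \ref{lem:Mejak-AC-criterion}. In (2)$\Rightarrow$(1) you verify Lemma \ref{lem:Mejak-AC-criterion}(2) exactly as the paper does, and your preliminary renaming to get $A\leq M$ supplies a step the paper leaves implicit when it invokes Lemma \ref{lem:substructure-lem}.
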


\begin{proof}
(1)$\Rightarrow$(2):
Take $a\in \mathcal{P}_\lambda M$ and $(b_\alpha\in \mathcal{P}_\lambda M)_{\alpha<\kappa}$.
First, the LS property gives a substructure $A\in\K_\lambda$ of $M$ containing $a$.
Applying the LS property again, for each $\alpha<\kappa$ we obtain a substructure $B_\alpha\in\K_\lambda$ of $M$ containing $A\cup b_\alpha$.
By coherence, the inclusion $A\subseteq B_\alpha$ gives a substructure relation $A\leq B_\alpha$.

Thus, by assumption (1) and Lemma \ref{lem:Mejak-AC-criterion}, there exist isomorphisms $\sigma_\alpha\colon B_\alpha\simeq B_\alpha'\in\K_\lambda$ fixing every element of $A$ and a common superstructure $\{B_\alpha'\leq C\in\K_\lambda\}_{\alpha<\kappa}$.
By Lemma \ref{lem:substructure-lem}, we get an isomorphism $\tau\colon C\edge{\simeq}C'\in\K_\lambda$ preserving $A$ such that $C'\leq M$.
The composite $\tau\sigma_\alpha$ gives an isomorphism $B_\alpha\edge{\simeq}\tau[B_\alpha']$ preserving $A$.
By ultrahomogeneity, it extends to an automorphism $\sigma_\alpha'\in{\rm Aut}(M/A)\subseteq{\rm Aut}(M/a)$ preserving $A$.
Then
\[\sigma_\alpha'[b_\alpha]\subseteq\sigma_\alpha'[B_\alpha]=\tau\sigma_\alpha[B_\alpha]=\tau[B_\alpha']\subseteq\tau[C]=C'\]
holds for every $\alpha$.
Therefore, $\bigcup_{\alpha<\kappa}\sigma_\alpha'[b_\alpha]\subseteq C'\in\K_\lambda$, so its cardinality is at most $\lambda$.
Hence $(M,\lambda)$ is dynamically $\kappa$-complete.

(2)$\Rightarrow$(1):
It suffices to show Lemma \ref{lem:Mejak-AC-criterion} (2).
Suppose that extensions $\{A\leq B_\alpha\}_{\alpha<\kappa}$ of $\K_\lambda$-structures are given.
By Lemma \ref{lem:substructure-lem}, there exists an isomorphism $\tau_\alpha$ fixing every element of $A$ such that $A\leq B_\alpha\simeq\tau_\alpha[B_\alpha]\leq M$.

By dynamical completeness, there exist $\sigma_\alpha\in{\rm Aut}_\K(M/A)$ such that $S:=\bigcup_{\alpha<\kappa}\sigma_\alpha\tau_\alpha[B_\alpha]$ has cardinality at most $\lambda$.
The LS property gives a substructure $C\in\K_\lambda$ of $M$ containing $S$.
Since $\sigma_\alpha$ and $\tau_\alpha$ fix every element of $A$, the family $\{B_\alpha\xedge{\sigma_\alpha\tau_\alpha}\sigma_\alpha\tau_\alpha[B_\alpha]\embed C\}_{\alpha<\kappa}$ is a cocone.
As in the proof of Lemma \ref{lem:Mejak-AC-criterion} (1)$\Rightarrow$(2), we then obtain $B_\alpha',C'$ such that $A\leq B_\alpha'\leq C'$.
\end{proof}

For automorphism groups, Zapletal's DC criterion (see also \cite{KaSc24} as a precursor) can be reformulated as follows.

\begin{definition}[Zapletal {\cite[Definition 4.2]{Zap26}}]\label{def:Zap-DC-completeness}
$(M,\lambda)$ is {\bf DC-complete} if, for every sequence $a_0,a_1,a_2,\dots\in\mathcal{P}_\lambda M$, there exists a sequence of automorphisms $\sigma_0,\sigma_1,\sigma_2,\dots\in{\rm Aut}(M)$ such that $\sigma_0\in{\rm Aut}(M/a_0)$, $\sigma_{n+1}|_{a_n}=\sigma_n|_{a_n}$, and $\bigcup_{n\in\N}\sigma_n[a_n]\in\mathcal{P}_\lambda M$.
\end{definition}

In this form, its close resemblance to Mejak's concrete DC criterion (Lemma \ref{lem:Mejak-DC-criterion} (2)) is apparent.
Indeed, for sufficiently well-behaved classes of structures, Mejak's and Zapletal's DC criteria are equivalent.

\begin{theorem}\label{thm:DC-completeness}
Let $\lambda$ be an infinite cardinal, and suppose that a structure-class $\K$ is coherent and has a $\lambda$-monster structure $M\in\K$.
The following are equivalent.
\begin{enumerate}
\item Every chain $\{A_n\embedge{e_n}A_{n+1}\}_{n\in\N}$ of $\K_\lambda$-morphisms admits a cocone.
\item $(M,\lambda)$ is DC-complete.
\end{enumerate}
\end{theorem}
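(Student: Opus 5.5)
The proof will mirror Theorem \ref{thm:dynamical-completeness}, with Lemma \ref{lem:Mejak-DC-criterion} playing the role of Lemma \ref{lem:Mejak-AC-criterion}. By Lemma \ref{lem:Mejak-DC-criterion}, condition (1) is equivalent to its concrete form (2) there. So it suffices to show that the concrete DC criterion is equivalent to DC-completeness of $(M,\lambda)$.

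\textbf{(1)$\Rightarrow$(2).} Let $a_0,a_1,\dots\in\mathcal{P}_\lambda M$ be given.

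First I build a chain of substructures of $M$. Use the $\lambda$-LS property (in the substructure form of the Remark after Lemma \ref{lem:substructure-lem}) recursively: choose $A_0\le M$ in $\K_\lambda$ containing $a_0$, and $A_{n+1}\le M$ in $\K_\lambda$ containing $A_n\cup a_{n+1}$. By coherence, the inclusions $A_n\subseteq A_{n+1}$ are morphisms, so $A_0\le A_1\le\cdots$.

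Next, Lemma \ref{lem:Mejak-DC-criterion} (1)$\Rightarrow$(2) gives isomorphisms $\rho_n\colon A_n\simeq A_n'$ with $\rho_0={\rm id}$ and $\rho_{n+1}|_{A_n}=\rho_n$, together with some $C\in\K_\lambda$ such that $A_0=A_0'\le A_1'\le\cdots\le C$. Since $A_0\le C$ and $A_0\le M$, Lemma \ref{lem:substructure-lem} gives $\tau\colon C\simeq C'$ fixing $A_0$ with $C'\le M$.

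Then $\tau\rho_n\colon A_n\to\tau[A_n']\le M$ is an isomorphism between substructures of $M$, so ultrahomogeneity extends it to some $\sigma_n\in{\rm Aut}(M)$. These automorphisms satisfy the DC-completeness requirements:
\begin{itemize}
\item $\sigma_0$ fixes $A_0\supseteq a_0$, because $\tau\rho_0={\rm id}_{A_0}$.
\item $\sigma_{n+1}|_{a_n}=\sigma_n|_{a_n}$, because $a_n\subseteq A_n$ and $\tau\rho_{n+1}|_{A_n}=\tau\rho_n$.
\item $\bigcup_n\sigma_n[a_n]\subseteq C'$, which has size at most $\lambda$.
\end{itemize}
The ultrahomogeneity extension is the step needing the most care: the extensions are chosen independently for each $n$, and only their restrictions to $A_n$ matter. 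That is fine, since DC-completeness only constrains agreement on $a_n$.

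\textbf{(2)$\Rightarrow$(1).} I verify Lemma \ref{lem:Mejak-DC-criterion} (2). Let $A_0\le A_1\le\cdots$ in $\K_\lambda$ be given.

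First I move the chain into $M$. Apply Lemma \ref{lem:substructure-lem} recursively, in the general embedding form proved there: each embedding of a $\K_\lambda$-structure into $M$ is a universal extension. Start from an embedding $j_0\colon A_0\embed M$ given by universality. Then extend $j_n$ along $A_n\le A_{n+1}$ to $j_{n+1}\colon A_{n+1}\embed M$ with $j_{n+1}|_{A_n}=j_n$.

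The main obstacle is that Lemma \ref{lem:Mejak-DC-criterion} (2) demands $A_0'=A_0$ with $\sigma_0={\rm id}$, while $j_0$ need not be the identity. I handle this by renaming at the end.

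Put $a_n:=j_n[A_n]\in\mathcal{P}_\lambda M$. DC-completeness gives $\sigma_n$ with $\sigma_{n+1}|_{a_n}=\sigma_n|_{a_n}$, $\sigma_0$ fixing $a_0$, and $S=\bigcup_n\sigma_n[a_n]$ of size at most $\lambda$. The LS property gives a substructure $C\le M$ in $\K_\lambda$ containing $S$.

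The maps $f_n:=\sigma_nj_n\colon A_n\embed C$ are embeddings; this uses coherence, because their images lie in $C$. They are compatible: for $x\in A_n$,
\[
\sigma_{n+1}j_{n+1}(x)=\sigma_{n+1}j_n(x)=\sigma_nj_n(x),
\]
since $j_n(x)\in a_n$. So $(f_n)_n$ is a cocone over the chain.

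Finally, rename $C$ by transportability exactly as in the proof of Lemma \ref{lem:Mejak-DC-criterion} (1)$\Rightarrow$(2): send $f_0(x)$ to $x$ and every other element to a fresh name. This yields $C'$ and $A_n':=r f_n[A_n]$ with $A_0'=A_0$ and the required compatible isomorphisms.
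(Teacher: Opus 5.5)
Your proposal is correct and follows essentially the same route as the paper. In (1)$\Rightarrow$(2) you build the substructure chain by the LS property, apply Lemma~\ref{lem:Mejak-DC-criterion}, move the common superstructure back into $M$ with Lemma~\ref{lem:substructure-lem}, and extend each $\tau\rho_n$ by ultrahomogeneity; in (2)$\Rightarrow$(1) you copy the chain compatibly into $M$ via the universal-extension form of Lemma~\ref{lem:substructure-lem}, apply DC-completeness to the images, take an LS hull $C$, and finish with the renaming from Lemma~\ref{lem:Mejak-DC-criterion}, which is exactly what the paper's ``the remainder follows by the same argument'' refers to (your explicit handling of $A_0'=A_0$ is, if anything, slightly more careful than the paper's write-up).
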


\begin{proof}
(1)$\Rightarrow$(2):
Take any sequence $a_0,a_1,a_2,\dots\in\mathcal{P}_\lambda M$.
Since the cardinality of $a_0$ is at most $\lambda$, the LS property gives a substructure $A_0\in\K_\lambda$ of $M$ containing $a_0$.
Inductively, suppose that $A_n\in\K_\lambda$ has been constructed.
Since the cardinality of $A_n\cup a_{n+1}$ is at most $\lambda$, the LS property gives a substructure $A_{n+1}\in\K_\lambda$ of $M$ containing $A_n\cup a_{n+1}$.
Thus we obtain a sequence $A_0\leq A_1\leq A_2\leq\cdots\leq M$.

By Lemma \ref{lem:Mejak-DC-criterion}, there exist isomorphisms $\sigma_n\colon A_{n}\edge{\simeq}A_{n}'$ satisfying $\sigma_0=1_{A_0}$ and $\sigma_{n+1}|_{A_n}=\sigma_{n}$, together with an increasing chain $A_0=A_0'\leq A_1'\leq \cdots\leq A_n'\leq A_{n+1}'\leq \dots\leq C$.
By Lemma \ref{lem:substructure-lem}, there exists an isomorphism $\tau\colon C\edge{\simeq}C'\leq M$ fixing every element of $A_0$.
Setting $B_n=\tau[A_n']$, we then have $A_0=B_0\leq B_1\leq\cdots\leq C'\leq M$.

By ultrahomogeneity, the isomorphism $\tau\sigma_n\colon A_{n}\edge{\simeq}B_{n}$ extends to an automorphism $\rho_n\in{\rm Aut}_\K(M)$.
Since $\tau\sigma_0$ preserves $A_0$, we have $\rho_0\in{\rm Aut}_\K(M/A_0)\subseteq{\rm Aut}_\K(M/a_0)$.
Moreover, because $\sigma_{n+1}|_{A_n}=\sigma_{n}$, for every $x\in a_n\subseteq A_n$ we have $\rho_{n+1}(x)=\tau\sigma_{n+1}(x)=\tau\sigma_n(x)=\rho_n(x)$.
Hence $\rho_{n+1}|_{a_n}=\rho_n|_{a_n}$.
Also, $\rho_n[a_n]\subseteq\rho_n[A_n]=\tau\sigma_n[A_n]=B_n\subseteq C'$ holds for every $n$.
Thus $\bigcup_{n\in\N}\rho_n[a_n]\subseteq C'\in\K_\lambda$, so its cardinality is at most $\lambda$.

(2)$\Rightarrow$(1):
It suffices to show Lemma \ref{lem:Mejak-DC-criterion} (2).
Suppose that a chain of inclusion maps $A_0\leq A_1\leq A_2\leq\cdots$ is given.
Inductively construct isomorphisms $\tau_n\colon A_n\edge{\simeq} A_n'\leq M$ such that $\tau_{n+1}|_{A_{n}}=\tau_n$.
By universality, we have an embedding $A_0\embedge{\tau_0} M$, which yields $A_0'=\tau_0[A_0]\simeq A_0$.
Given $A_n\embedge{\tau_n}\tau_n[A_n]\leq M$, since it is a universal extension in the sense of Lemma \ref{lem:substructure-lem}, so we have $\tau_{n+1}\colon A_{n+1}\embed M$ such that $\tau_{n+1}|_{A_n}=\tau_n$.
Identify $\tau_{n+1}$ with the isomorphism $A_{n+1}\edge{\simeq} A_{n+1}':=\tau_{n+1}[A_{n+1}]$.

By DC-completeness, there exist automorphisms $\sigma_n\in{\rm Aut}(M)$ such that $\sigma_n\in{\rm Aut}(M/A_0')$, $\sigma_{n+1}|_{A_n'}=\sigma_n|_{A_n'}$, and $\bigcup_{n\in\N}\sigma_n[A_n']\in\mathcal{P}_\lambda M$.
By transportability, $A_n'\simeq \sigma_n[A_n']\in\K_\lambda$.
Furthermore, since $\sigma_n[A_n']=\sigma_{n+1}[A_n']\subseteq\sigma_{n+1}[A_{n+1}']$, we have $\sigma_n[A_n']\leq \sigma_{n+1}[A_{n+1}']$.
Since each $A_n'':=\sigma_n[A_n']$ has cardinality at most $\lambda$, the cardinality of $\bigcup_{n\in\N} A_n''$ is also at most $\lambda$.
The LS property therefore gives a substructure $C\in\K_\lambda$ of $M$ such that $\bigcup_{n\in\N}A_n''\subseteq C$.
Thus we obtain a sequence $A_0=A_0'\leq A_1''\leq A_2''\leq\cdots\leq C$.
Then $\{A_n\embedge{\tau_n}A_n'\embedge{\sigma_n}A_n''\embed C\}$ is a cocone.
The remainder follows by the same argument as in Lemma \ref{lem:Mejak-DC-criterion} (1)$\Rightarrow$(2).
\end{proof}

\subsection{Cofinal orbits}

Zapletal \cite{Zap26} also gave a criterion for well-ordered choice.
It is formulated in terms of cofinality of orbits.

\begin{definition}
For $S\subseteq M$, define its orbit over $X$ by
\[
{\rm Orb}_M(S;X)=\{\sigma[S]:\sigma\in {\rm Aut}(M/X)\}
\]

For $x\in M$, we use ${\rm Orb}_M(x;X)$ as an abbreviation for ${\rm Orb}_M(\{x\};X)$.
\end{definition}

\begin{definition}[Zapletal {\cite[Definition 3.2]{Zap26}}]
$(M,\lambda)$ has a {\bf cofinal orbit} if, for every $a\in \mathcal{P}_\lambda M$, there exists $b\in \mathcal{P}_\lambda M$ such that ${\rm Orb}_M(b;a)$ is cofinal in $\mathcal{P}_\lambda M$:
that is, for every $c\in \mathcal{P}_\lambda M$, there exists $\sigma\in {\rm Aut}(M/a)$ such that $c\subseteq\sigma[b]$.
\end{definition}

Remarkably, this is connected with the AEC-theoretic universal extension property (Definition \ref{def:universal-extension-prop}).

\begin{theorem}\label{thm:cofinal-orbit}
Let $\lambda$ be an infinite cardinal, and suppose that a structure-class $\K$ is coherent and has a $\lambda$-monster structure $M\in\K$.
Then the following are equivalent.
\begin{enumerate}
\item $\K_\lambda$ has the universal extension property.
\item $\mathcal{P}_\lambda M$ has a cofinal orbit.
\end{enumerate}
\end{theorem}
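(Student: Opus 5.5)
The plan is to argue both directions inside the monster $M$, in the style of the proofs of Theorems \ref{thm:dynamical-completeness} and \ref{thm:DC-completeness}. Two tools will do most of the work. First, the $\lambda$-LS property, together with coherence, turns small subsets of $M$ into substructures in $\K_\lambda$. Second, Lemma \ref{lem:substructure-lem} says that every embedding of a $\K_\lambda$-structure into $M$ is ``universal''; combined with $\lambda$-ultrahomogeneity, this lets us move structures around by elements of ${\rm Aut}(M/a)$.

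(1)$\Rightarrow$(2). Let $a\in\mathcal{P}_\lambda M$ be given.
\begin{enumerate}
\item By the LS property, choose a substructure $A\le M$ in $\K_\lambda$ with $a\subseteq A$.
\item Let $A\embedge{u}U_A$ be a universal extension. By Lemma \ref{lem:substructure-lem} (the general form proved there), realize it inside $M$: there is $U_A\embedge{\tau}M$ with $\tau u$ equal to the inclusion $A\le M$. Put $b:=\tau[U_A]\in\mathcal{P}_\lambda M$.
\item Given $c\in\mathcal{P}_\lambda M$, use LS and coherence to pick $A\le C\le M$ with $C\in\K_\lambda$ and $c\subseteq C$.
\item Universality of $u$ gives $C\embedge{g}U_A$ with $g|_A=u$. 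Then $\tau g\colon C\embed M$ fixes $A$ pointwise.
\item By transportability, $\tau g$ is an isomorphism $C\simeq\tau g[C]$ fixing $A$. Ultrahomogeneity, applied to the two embeddings $C\le M$ and $\tau g[C]\le M$, extends it to some $\rho\in{\rm Aut}(M/A)$ with $\rho|_C=\tau g$.
\item Set $\sigma:=\rho^{-1}\in{\rm Aut}(M/A)\subseteq{\rm Aut}(M/a)$. Since $\tau g[C]\subseteq b$, we get $c\subseteq C=\sigma[\tau g[C]]\subseteq\sigma[b]$.
\end{enumerate}
So ${\rm Orb}_M(b;a)$ is cofinal in $\mathcal{P}_\lambda M$.

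(2)$\Rightarrow$(1). Let $A\in\K_\lambda$ be given.
\begin{enumerate}
\item By universality and transportability, we may assume $A\le M$, transporting back along the isomorphism at the end.
\item Take $b$ witnessing a cofinal orbit over $a:=|A|$. By LS and coherence, choose $U\le M$ in $\K_\lambda$ with $A\cup b\subseteq U$. Let $u$ be the inclusion $A\le U$. Cofinality is preserved when we enlarge $b$ to $U$, since $\sigma[b]\subseteq\sigma[U]$.
\item Given any extension $A\embedge{f}N$ in $\K_\lambda$, Lemma \ref{lem:substructure-lem} gives $N\embedge{\tau}M$ with $\tau f$ the inclusion of $A$.
\item Apply cofinality to $c:=\tau[N]$: there is $\sigma\in{\rm Aut}(M/A)$ with $\tau[N]\subseteq\sigma[U]$.
\item Then $\sigma^{-1}\tau$ maps $N$ into $U$ and fixes the image of $A$, i.e.\ $\sigma^{-1}\tau f=u$ as functions.
\item Coherence, applied to $\sigma^{-1}\tau\colon N\to M$ and $U\le M$, makes the corestriction $N\to U$ a $\K$-morphism $g$, and faithfulness gives $g f=u$.
\end{enumerate}
Hence $u$ is a universal extension of $A$.

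The main obstacle is the bookkeeping that relates ``fixing $a$'' and ``fixing $A$''. In (1)$\Rightarrow$(2) we must work over a substructure $A\supseteq a$, so that ultrahomogeneity and the universal extension apply. This is harmless because ${\rm Aut}(M/A)\subseteq{\rm Aut}(M/a)$, and a smaller group only strengthens the conclusion. In (2)$\Rightarrow$(1) we need the orbit to be taken over all of $A$, which is why we set $a=|A|$; this is allowed since $|A|\in\mathcal{P}_\lambda M$. The remaining care points are each use of coherence, to certify inclusions and corestrictions as morphisms, and making sure the final $g$ satisfies $gf=u$ exactly and not merely up to an automorphism.
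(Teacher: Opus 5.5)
Your proposal is correct and follows essentially the same route as the paper. In both directions you work over a substructure $A\supseteq a$ (resp.\ $a=|A|$), realize extensions inside $M$ via Lemma \ref{lem:substructure-lem}, move them by automorphisms in ${\rm Aut}(M/A)$ supplied by ultrahomogeneity, and certify corestrictions by coherence. Your treatment of a non-inclusion extension $A\embed N$ in (2)$\Rightarrow$(1), through the embedding $\tau$ with $\tau f=\iota_A$ and a final appeal to faithfulness, is a slightly more explicit version of the paper's ``we may assume $A\le N\le M$''.
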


\begin{proof}
(1)$\Rightarrow$(2):
Take any $a\in \mathcal{P}_\lambda M$.
By the LS property, there exists a substructure $A\in\K_\lambda$ of $M$ containing $a$.
Choose a universal extension $A\embed U_A\in\K_\lambda$ over $A$.
By Lemma \ref{lem:substructure-lem}, we may assume that $A\leq U_A\leq M$.
In particular, $U_A\in\K_\lambda$.
We show that the orbit of $U_A$ over $a$ is cofinal.

For any $c\in \mathcal{P}_\lambda M$, the LS property gives a substructure $C\leq M$ containing $A\cup c$.
Since $U_A$ is a universal extension over $A$, there exists $C\embedge{e}U_A$ fixing every element of $A$.
By ultrahomogeneity, the bijection $C\edge{\simeq}e[C]$ extends to an automorphism $\hat{e}\in{\rm Aut}(M)$.
In particular, $\hat{e}$ fixes $a\subseteq A$, so $\hat{e}\in {\rm Aut}(M/a)$; moreover, $\hat{e}[c]\subseteq \hat{e}[C]=e[C]\subseteq U_A$.
Hence $c\subseteq\hat{e}^{-1}[U_A]$.
Since $\hat{e}^{-1}\in{\rm Aut}(M/a)$, the orbit of $U_A$ over $a$ is cofinal.

(2)$\Rightarrow$(1):
Take $A\in\K_\lambda$.
By universality and transportability, we may assume that $A\leq M$.
By assumption, the orbit of some $b\in \mathcal{P}_\lambda M$ over $A$ is cofinal.
The LS property gives a substructure $B\leq M$ containing $A\cup b$.
By coherence, the inclusion $A\subseteq B$ then gives an embedding $A\embedge{u}B$.
We show that $A\embedge{u}B$ is universal.

For any embedding $A\embedge{i} N\in\K_\lambda$, Lemma \ref{lem:substructure-lem} allows us to assume that $A\leq N\leq M$.
Thus $i$ is an inclusion map.
Since the orbit of $b$ over $A$ is cofinal, there exists $\sigma\in {\rm Aut}(M/A)$ such that $N\subseteq \sigma[b]$.
It follows that $\sigma^{-1}[N]\subseteq b\subseteq B$.
Since $\sigma$ fixes $A$, we have $A\subseteq \sigma^{-1}[N]$, and the isomorphism $N\edge{\simeq}\sigma^{-1}[N]\subseteq B$ fixes $A$.
By coherence, this again gives an embedding $N\embedge{j} B$.
Since $u,i,j$ are inclusion maps, $ji(x)=x=u(x)$ for every $x\in A$.
\end{proof}

\subsection{Definable closure}

Let $\K$ be a structure-class with a $\lambda$-monster structure $M\in\K$.

\begin{definition}
An {\bf epi-span} of a subset $A\subset M$ is a substructure $X\in\K_\lambda$ of $M$ containing $A$ such that, for all $f,g\colon X\to Y\in\K_\lambda$, the equality $f|_A=g|_A$ implies $f=g$.
We denote it by $\pair{A}$.
\end{definition}

\begin{definition}
We say that $\K$ admits {\bf one-point spans} if, for every substructure $A\in\K_\lambda$ of $M$ and every $x\in M$, there exists an epi-span $\pair{A\cup\{x\}}\in\K_\lambda$ of $A\cup\{x\}$.
\end{definition}

This is a weaker version of pseudo-universality \cite{Vas17} in the context of AEC.
Therefore, a pseudo-universal AEC always admits one-point span.

Now, we consider the following model-theoretic property.

\begin{definition}
The {\bf definable closure} of a subset $S\subseteq M$ is defined as follows:
\begin{align*}
{\rm dcl}_M(S)&=\{x\in M:\#{\rm Orb}_M(x;S)=1\}\\
&=\{x\in M:\forall \sigma\in{\rm Aut}(M/S),\sigma(x)=x\}
\end{align*}
\end{definition}

In the context of Fra\"{i}ss\'e theory, the strong AP is known to be equivalent to the triviality of algebraic/definable closure; see e.g.~\cite[Section 7.1]{Hod93}.
For a recent work, see also \cite[Corollary 5.11]{YoZa26}.
In a general setting that deals with structures beyond the finite substructures we consider, this equivalence breaks down significantly.
However, even in our abstract setting, we show that the purely topos-theoretic condition of subcanonicity can, remarkably, characterize the triviality of definable closure.

\begin{theorem}\label{thm:subcanonical-definably-closed}
Let $\lambda$ be an infinite cardinal, and suppose that a coherent structure-class $\K_\lambda$ has the AP, one-point spans, and a monster structure $M\in\K$.
Then the following are equivalent.
\begin{enumerate}
\item The atomic topology $J_{\rm at}$ on $\K_\lambda^{\rm op}$ is subcanonical.
\item For every substructure $A\in\K_\lambda$ of $M$, ${\rm dcl}_M(A)=A$.
\end{enumerate}
\end{theorem}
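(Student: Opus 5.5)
The plan is to use Lemma \ref{lem:subcanonical-strict}. In $\K_\lambda^{\rm op}$ it says that $J_{\rm at}$ is subcanonical iff every embedding $P\embedge{i}Q$ in $\K_\lambda$ is a strict monomorphism. Concretely, for every $A\embedge{x}Q$ such that $si=ti$ implies $sx=tx$ (an \emph{$i$-matching} $x$), there must be a unique $\hat{x}$ with $i\hat{x}=x$. Uniqueness is automatic because $i$ is monic, so only existence matters. Throughout I will use Lemma \ref{lem:substructure-lem} and transportability to replace structures by substructures of $M$. I will use coherence to turn inclusions of underlying sets between substructures into $\K$-morphisms.

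(2)$\Rightarrow$(1). Let $P\embedge{i}Q$ be given and let $A\embedge{x}Q$ be $i$-matching. By Lemma \ref{lem:substructure-lem} and transportability we may assume $P\leq Q\leq M$ with $i$ the inclusion.
\begin{itemize}
\item I claim $x[A]\subseteq P$. Suppose instead some $y=x(a)\notin P$. Since ${\rm dcl}_M(P)=P$, there is $\sigma\in{\rm Aut}(M/P)$ with $\sigma(y)\neq y$.
\item By the $\lambda$-LS property (in the substructure form of the remark after Lemma \ref{lem:substructure-lem}), choose a substructure $R\in\K_\lambda$ of $M$ containing $Q\cup\sigma[Q]$.
\item By coherence, both the inclusion $s\colon Q\embed R$ and the restriction $t=\sigma|_Q\colon Q\embed R$ are $\K$-morphisms. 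They agree on $P$, so $si=ti$.
\item Since $x$ is $i$-matching, $sx=tx$, and hence $\sigma(y)=y$. This is a contradiction.
\end{itemize}
So $x[A]\subseteq P$. Coherence then gives $\hat{x}\colon A\to P$ with $|\hat{x}|=x$, exactly as in the proof of Proposition \ref{prop:stAP-subcanonical}. Neither one-point spans nor AP is needed in this direction.

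(1)$\Rightarrow$(2). Let $A\leq M$ with $A\in\K_\lambda$, and suppose $x\in{\rm dcl}_M(A)$. Take the one-point span $X=\pair{A\cup\{x\}}\leq M$; coherence gives the inclusion $i\colon A\leq X$. The key step is to show that $1_X$ is $i$-matching, i.e.\ that $s|_A=t|_A$ implies $s=t$ for $s,t\colon X\to Y\in\K_\lambda$. By the epi-span property it suffices to prove $s(x)=t(x)$.
\begin{itemize}
\item Embed $Y$ into $M$ by some $u$, using $\lambda$-universality.
\item By ultrahomogeneity, pick $\tau\in{\rm Aut}(M)$ extending the isomorphism $X\edge{\simeq}ut[X]$ given by $ut$.
\item Again by ultrahomogeneity, pick $\rho\in{\rm Aut}(M)$ extending $us\circ(ut)^{-1}\colon ut[X]\edge{\simeq}us[X]$.
\item Because $s$ and $t$ agree on $A$, $\rho$ fixes $ut[A]=\tau[A]$ pointwise. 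Hence $\tau^{-1}\rho\tau\in{\rm Aut}(M/A)$, and this automorphism fixes $x$ since $x\in{\rm dcl}_M(A)$.
\item Therefore $\rho$ fixes $\tau(x)=ut(x)$, which gives $us(x)=ut(x)$. Injectivity of $u$ yields $s(x)=t(x)$.
\end{itemize}
Subcanonicity now makes $i$ a strict monomorphism, so $1_X=i\circ r$ for some $r$. Then $|i|$ is surjective, hence $X=A$ and $x\in A$. This gives ${\rm dcl}_M(A)\subseteq A$, and the reverse inclusion is trivial.

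The step I expect to be the main obstacle is this transfer argument: passing from an arbitrary target $Y$ into $M$, and conjugating by $\tau$ so that the definable closure of $A$ itself, rather than of its image $ut[A]$, can be applied. In particular, the two applications of ultrahomogeneity need the relevant isomorphisms to be between structures in $\K_\lambda$ embedded in $M$. This is where coherence and transportability are used to view $ut[X]$ and $us[X]$ as substructures.
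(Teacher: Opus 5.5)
Your proof is correct and follows essentially the same route as the paper. In both directions you reduce subcanonicity to strictness of embeddings via Lemma~\ref{lem:subcanonical-strict}. For (2)$\Rightarrow$(1) you use ${\rm dcl}_M(P)=P$, an automorphism moving a point outside $P$, and a $\lambda$-LS substructure containing $Q\cup\sigma[Q]$ to show that a non-factoring element is not $i$-matching. For (1)$\Rightarrow$(2) you use the epi-span $\pair{A\cup\{x\}}$, embed the target into $M$, and apply ultrahomogeneity twice.

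The differences are cosmetic. You first normalize to $P\leq Q\leq M$, whereas the paper works with the image $ui[A]$. You also argue (1)$\Rightarrow$(2) directly, showing that $1_X$ is $i$-matching when $x\in{\rm dcl}_M(A)$ via the conjugate $\tau^{-1}\rho\tau$; the paper instead argues contrapositively, building an automorphism in ${\rm Aut}(M/A)$ that moves $x$.
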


\begin{proof}
(1)$\Rightarrow$(2):
Take a substructure $A\in\K_\lambda$ of $M$.
For $x\in M\setminus A$, we wish to construct an automorphism $\sigma\in{\rm Aut}(M/A)$ that moves $x$.
The epi-span condition gives $B=\pair{A\cup\{x\}}\in\K_\lambda$.
By subcanonicity, the inclusion map $A\embedge{i} B$ is a strict monomorphism (Lemma \ref{lem:subcanonical-strict}).
On the other hand, since $x\not=i(a)$ for every $a\in A$, there is no $B\embedge{a}A$ such that $1_B=ia$.
Thus, by the contrapositive of strictness, $1_B$ is not an $i$-matching element; that is, there are embeddings $f,g\colon B\embed C$ such that $f|_A=g|_A$ but $f\not=g$.
By the definition of an epi-span, we must have $f(x)\not=g(x)$.
By universality, there is an embedding $C\embedge{u} M$; the morphisms $uf,ug\colon B\to M$ satisfy $uf|_A=ug|_A$ and, by injectivity, $uf(x)\not=ug(x)$.
By ultrahomogeneity, take an extension $\sigma\in{\rm Aut}(M)$ of the isomorphism $B\simeq uf[B]$, and consider $\sigma^{-1}ug\colon B\to M$.
For any $a\in A$, we have $\sigma^{-1}ug(a)=\sigma^{-1}uf(a)=a$.
On the other hand, $\sigma(x)=uf(x)\not=ug(x)$, so $\sigma^{-1}ug(x)\not=x$.
Using ultrahomogeneity again, extend the isomorphism $B\simeq\sigma^{-1} ug[B]$ to an automorphism $\tau\in{\rm Aut}(M)$.
Then $\tau\in{\rm Aut}(M/A)$ and $\tau(x)\not=x$, so $x\not\in{\rm dcl}(A)$.
Therefore, ${\rm dcl}(A)=A$.

(2)$\Rightarrow$(1):
By Lemma \ref{lem:subcanonical-strict}, it suffices to show that every embedding $A\embedge{i}B$ is a strict monomorphism.
Take $X\embedge{x}B$ and suppose that no $\hat{x}$ satisfies $x=i\hat{x}$.
It suffices to show that $x$ is not an $i$-matching element.

By universality, choose $B\embedge{u}M$.
If $x[X]\subseteq i[A]$, coherence gives $i^{-1}x\colon X\embed A$, so $i\hat{x}=x$, contrary to the assumption.
Thus $x[X]\not\subseteq i[A]$, and hence $x(z)\not\in i[A]$ for some $z\in X$.
Since $u$ is injective, $ux(z)\not\in ui[A]$.
As $A\simeq ui[A]\subseteq M$, transportability allows us to regard $A':=ui[A]\leq M$ as a substructure.
By assumption, ${\rm dcl}(A')=A'$, so $ux(z)\not\in{\rm dcl}(A')$.
Consequently, there exists an automorphism $\sigma\in{\rm Aut}(M/\A')$ such that $\sigma ux(z)\not=ux(z)$.
The cardinality of $u[B]\cup\sigma u[B]$ is at most $\lambda$, so the LS property gives a substructure $C\in\K_\lambda$ of $M$ containing it.
By coherence, restricting the codomains of $u,\sigma u\colon B\to M$ to $C$ again gives embeddings.
Since $\sigma$ fixes every element of $A'$, we have $u i=\sigma ui$.
On the other hand, $u(x(z))\not=\sigma u(x(z))$, and hence in particular $ux\not=\sigma ux$.
Therefore, $x$ is not an $i$-matching element.
\end{proof}

One-point spans are necessary only to show (1)$\Rightarrow$(2).
If $\K$ has the strong amalgamation property, epi-spans are unnecessary.

\begin{prop}\label{prop:strongAP-definably-closed}
Let $\lambda$ be an infinite cardinal, and suppose that the coherent class of structures $\K_\lambda$ has the strong amalgamation property and has a monster structure $M\in\K$.
Then ${\rm dcl}_M(A)=A$ for every substructure $A\in\K_\lambda$ of $M$.
\end{prop}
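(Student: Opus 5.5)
Fix a substructure $A\in\K_\lambda$ of $M$ and a point $x\in M\setminus A$. The plan is to produce some $\tau\in{\rm Aut}(M/A)$ with $\tau(x)\neq x$; this shows $x\notin{\rm dcl}_M(A)$. Since $A\subseteq{\rm dcl}_M(A)$ trivially, this gives ${\rm dcl}_M(A)=A$. The idea is to amalgamate two copies of a structure containing $x$ strongly over $A$, so that the two copies of $x$ are forced apart, and then pull this back into $M$ using universality and ultrahomogeneity.

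\emph{Step 1 (a small structure containing $x$).} The set $A\cup\{x\}$ has cardinality at most $\lambda$, so the $\lambda$-LS property, combined with the Remark after Lemma \ref{lem:substructure-lem}, gives a substructure $B\in\K_\lambda$ of $M$ with $A\cup\{x\}\subseteq B$. By coherence, the inclusion $A\subseteq B$ is an embedding $i\colon A\embed B$, so $A\leq B\leq M$.

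\emph{Step 2 (strong amalgamation).} Apply the strong AP in $\K_\lambda$ to the pair $(i,i)$. This gives $h,k\colon B\embed D$ with $D\in\K_\lambda$, $hi=ki$, and $h[B]\cap k[B]=hi[A]$. Since $x\notin A$ and $h$ is injective, $h(x)\notin hi[A]$. Because $h(x)\in h[B]$, it follows that $h(x)\notin k[B]$, and in particular $h(x)\neq k(x)$. This step is exactly where the strong AP replaces the one-point spans used in Theorem \ref{thm:subcanonical-definably-closed}: there, $f(x)\neq g(x)$ came from the epi-span, whereas here it follows directly from the disjointness of the two images outside $A$.

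\emph{Step 3 (transfer into $M$).} By $\lambda$-universality, pick an embedding $u\colon D\embed M$. The maps $uh,uk\colon B\to M$ agree on $A$, and $uh(x)\neq uk(x)$ because $u$ is injective. Now argue as in the proof of Theorem \ref{thm:subcanonical-definably-closed}.
\begin{itemize}
\item[(a)] Use ultrahomogeneity, applied to the inclusion $B\leq M$ and the embedding $uh$ with $f={\rm id}_B$, to obtain $\sigma\in{\rm Aut}(M)$ extending $uh$.
\item[(b)] Then $\sigma^{-1}uk\colon B\to M$ fixes $A$ pointwise, since $\sigma^{-1}uk(a)=\sigma^{-1}uh(a)=a$. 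It also moves $x$, since $\sigma(x)=uh(x)\neq uk(x)$.
\item[(c)] Apply ultrahomogeneity once more, to the inclusion of $B$ and the embedding $\sigma^{-1}uk$, to get $\tau\in{\rm Aut}(M)$ extending $\sigma^{-1}uk$.
\end{itemize}
Then $\tau\in{\rm Aut}(M/A)$ and $\tau(x)\neq x$, as required.

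The main point needing care is Step 1. We must make sure that $B$ is a genuine substructure of $M$ with $A\leq B$ as a $\K_\lambda$-morphism, so that the strong AP applies to $i$ and ultrahomogeneity applies to the embedding $B\leq M$. This is handled by LS together with coherence, exactly as in the proof of Theorem \ref{thm:dynamical-completeness}. The rest is routine diagram-chasing.
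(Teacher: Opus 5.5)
Your proof is correct and follows essentially the same route as the paper. You take $B\supseteq A\cup\{x\}$ inside $M$ via LS and coherence, strongly amalgamate two copies of $A\leq B$ so that the two images of $x$ are forced apart, and transfer back into $M$ using universality and ultrahomogeneity. The only difference is cosmetic: the paper invokes Lemma~\ref{lem:substructure-lem} to get an embedding $D\embed M$ whose composite with $h$ is the inclusion of $B$, then applies ultrahomogeneity once. You instead inline that lemma, and your $\sigma^{-1}u$ plays exactly that role.
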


\begin{proof}
The strong amalgamation property makes it possible to separate completely the part outside a substructure $A\leq B$. Thus, in the presence of a monster structure, one can always construct an automorphism that fixes $A$ but moves everything outside it.

For $x\in M\setminus A$, we wish to construct an automorphism $\sigma\in{\rm Aut}(M/A)$ that moves $x$.
By the LS property, there exists a substructure $B\in\K_\lambda$ of $M$ such that $A\cup\{x\}\subseteq B$.
A strong amalgamation $e_0,e_1\colon B\embed C$ of two copies of the inclusion map $A\embed B$ satisfies $e_0(a)=e_1(a)$ for every $a\in A$ and $e_0(b)\not=e_1(b)$ for every $b\in B\setminus A$.

More precisely, as in the proof of Lemma \ref{lem:substructure-lem}, there first exists $C\embedge{\tau} M$ such that $B\embedge{\tau e_0}M$ fixes $B$.
By ultrahomogeneity, the isomorphism $B\simeq\tau e_1[B]$ extends to an automorphism $\sigma\in{\rm Aut}(M)$.
Then $\sigma(a)=\tau e_1(a)=\tau e_0(a)=a$ for every $a\in A$, while $\sigma(b)=\tau e_1(b)\not=\tau e_0(b)=b$ for every $b\in B\setminus A$.
In particular, $\sigma(x)\not=x$.
Thus $\sigma\in{\rm Aut}(M/A)$ does not fix $x$, and hence $x\not\in{\rm dcl}_M(A)$.
\end{proof}

As a hypothesis for his characterizations of choice principles, Zapletal introduced the definable closedness of an ideal.
For automorphism groups, this notion is expressed as follows.

\begin{definition}
$(M,\lambda)$ is {\bf definably closed} if $a\in\mathcal{P}_\lambda M$ implies ${\rm dcl}(a)\in\mathcal{P}_\lambda M$.
That is, if a subset $a\subseteq M$ has cardinality at most $\lambda$, then ${\rm dcl}(a)$ also has cardinality at most $\lambda$.
\end{definition}

\begin{cor}
Suppose that $\K_\lambda$ is a coherent class of structures with the amalgamation property and has a monster structure $M\in\K$.
Assume, in addition, that one of the following holds.
\begin{enumerate}
\item $\K_\lambda$ admits one-point spans, and $(\K_\lambda^{\rm op},J_{\rm at})$ is subcanonical.
\item $\K_\lambda$ has the strong amalgamation property.
\end{enumerate}

%
%
Then $(M,\lambda)$ is definably closed.
\end{cor}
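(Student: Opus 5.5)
The plan is to reduce the corollary to the preceding two results: in either case (1) or (2), we will show that ${\rm dcl}_M(A)=A$ for every substructure $A\in\K_\lambda$ of $M$. Given that, definable closedness of $(M,\lambda)$ follows from monotonicity of ${\rm dcl}$ together with the $\lambda$-LS property.

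\emph{Step 1.} Under hypothesis (1), $\K_\lambda$ is coherent, has the AP and one-point spans, and has the monster $M$, and $J_{\rm at}$ on $\K_\lambda^{\rm op}$ is subcanonical. The implication (1)$\Rightarrow$(2) of Theorem \ref{thm:subcanonical-definably-closed} then gives ${\rm dcl}_M(A)=A$ for every substructure $A\in\K_\lambda$ of $M$. Under hypothesis (2), Proposition \ref{prop:strongAP-definably-closed} applies directly and gives the same conclusion.

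\emph{Step 2.} Let $a\in\mathcal{P}_\lambda M$. By the $\lambda$-LS property, in the substructure form given in the remark following Lemma \ref{lem:substructure-lem}, there is a substructure $A\in\K_\lambda$ of $M$ with $a\subseteq A$. Since $a\subseteq A$, we have ${\rm Aut}(M/A)\subseteq{\rm Aut}(M/a)$. Hence any $x$ fixed by every element of ${\rm Aut}(M/a)$ is fixed by every element of ${\rm Aut}(M/A)$, which gives ${\rm dcl}_M(a)\subseteq{\rm dcl}_M(A)$.

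\emph{Step 3.} Combining Steps 1 and 2 gives ${\rm dcl}_M(a)\subseteq A$. Since $A\in\K_\lambda$ has cardinality at most $\lambda$, it follows that ${\rm dcl}_M(a)\in\mathcal{P}_\lambda M$, as required.

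There is no real obstacle here. The only point needing care is that the LS property yields a genuine substructure $A\leq M$, rather than merely an embedding $A\embed M$ whose image contains $a$. Transportability handles this: replace the image $e[A]$ by the structure transported along $e$, which is an isomorphic copy of $A$ sitting inside $M$ as a substructure.
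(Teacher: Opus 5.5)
Your proposal is correct and follows the paper's proof essentially verbatim: use the LS property to get a substructure $A\in\K_\lambda$ of $M$ containing $a$, apply Theorem~\ref{thm:subcanonical-definably-closed} in case (1) or Proposition~\ref{prop:strongAP-definably-closed} in case (2) to get ${\rm dcl}_M(A)=A$, and conclude ${\rm dcl}_M(a)\subseteq{\rm dcl}_M(A)=A\in\mathcal{P}_\lambda M$. Your extra justifications are sound and are details the paper leaves implicit: monotonicity of ${\rm dcl}$, and transporting $e[A]$ to a genuine substructure of $M$, where the inclusion is a morphism because it equals $e\circ\bar{e}^{-1}$.
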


\begin{proof}
Take a subset $a\subseteq M$ of cardinality at most $\lambda$.
By the LS property, there exists a substructure $A\in\K_\lambda$ of $M$ containing $a$.
For (1), Theorem \ref{thm:subcanonical-definably-closed} gives the desired conclusion; for (2), Proposition \ref{prop:strongAP-definably-closed} gives the same conclusion. Thus, in either case, ${\rm dcl}_M(a)\subseteq{\rm dcl}_M(A)=A$.
Therefore, ${\rm dcl}_M(a)$ also has cardinality at most $\lambda$.
\end{proof}

\subsection{Topological Galois representation}

The correspondences established in Theorems \ref{thm:dynamical-completeness} and \ref{thm:DC-completeness} admit a common interpretation through the topological Galois representation of the atomic topos \cite{Car16,CaLa19}.
In this subsection we briefly explain this interpretation.
Its purpose is to explain conceptually why the Mejak and Zapletal criteria coincide.

%
First, we topologize ${\rm Aut}_\K(M)$ as follows.

\begin{definition}
Let $M\in\K$ be a $\lambda$-monster structure.
Then, ${\rm Aut}(M)$ is equipped with the topology in which ${\rm Aut}(M/A)$ is an open neighborhood of the identity $\text{id}_M$ for each substructure $A\in\K_\lambda$ of $M$.
\end{definition}

By the $\lambda$-LS property, this topology matches the topology generated by $\{{\rm Aut}(M/a):a\in\mathcal{P}_\lambda M\}$.
The notion of definable closedness has the following role in this context.

\begin{lemma}
If $(M,\lambda)$ is definably closed, then for every subset $S\subseteq |M|$,
\[
{\rm Aut}(M/S)\text{ is open}
\iff
S\in P_\lambda M.
\]
\end{lemma}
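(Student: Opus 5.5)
The plan is to prove both directions directly from the definition of the topology, together with definable closedness.

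For ($\Leftarrow$), take $S\in\mathcal{P}_\lambda M$. By the $\lambda$-LS property there is a substructure $A\in\K_\lambda$ of $M$ with $S\subseteq A$. Then ${\rm Aut}(M/A)\subseteq{\rm Aut}(M/S)$, and ${\rm Aut}(M/A)$ is an open neighbourhood of ${\rm id}_M$. Since ${\rm Aut}(M/S)$ is a subgroup, it is the union of the cosets $\sigma\,{\rm Aut}(M/A)$ over $\sigma\in{\rm Aut}(M/S)$. Each such coset is open because left translation is a homeomorphism of the topological group ${\rm Aut}(M)$. Hence ${\rm Aut}(M/S)$ is open. This direction does not use definable closedness.

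For ($\Rightarrow$), suppose ${\rm Aut}(M/S)$ is open. Since it contains ${\rm id}_M$, it contains a basic neighbourhood of the identity, so there is $a\in\mathcal{P}_\lambda M$ with ${\rm Aut}(M/a)\subseteq{\rm Aut}(M/S)$. By the LS remark we may equally take a substructure $A\in\K_\lambda$ here. I claim $S\subseteq{\rm dcl}_M(a)$. Indeed, for $x\in S$ every $\sigma\in{\rm Aut}(M/a)$ also lies in ${\rm Aut}(M/S)$, so $\sigma(x)=x$, which is exactly the condition $x\in{\rm dcl}_M(a)$. Definable closedness gives ${\rm dcl}_M(a)\in\mathcal{P}_\lambda M$, and therefore $S$ has cardinality at most $\lambda$, i.e.\ $S\in\mathcal{P}_\lambda M$.

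The one point needing care is the claim that the basic open neighbourhoods of ${\rm id}_M$ are precisely the ${\rm Aut}(M/a)$ (equivalently ${\rm Aut}(M/A)$), so that any open subgroup contains one of them. I will justify this by noting that the family $\{{\rm Aut}(M/A):A\in\K_\lambda,\ A\leq M\}$ is downward directed. Given $A,A'$, the LS property yields $C\in\K_\lambda$ with $A\cup A'\subseteq C\leq M$, and then ${\rm Aut}(M/C)\subseteq{\rm Aut}(M/A)\cap{\rm Aut}(M/A')$. Each member is also closed under conjugation up to another member, since $\sigma\,{\rm Aut}(M/A)\,\sigma^{-1}={\rm Aut}(M/\sigma[A])$. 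Consequently these subgroups form a neighbourhood base at the identity of a group topology, and the argument above goes through.
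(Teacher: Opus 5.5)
Your proof is correct and follows the paper's argument. For ($\Rightarrow$) you extract $a\in\mathcal{P}_\lambda M$ with ${\rm Aut}(M/a)\subseteq{\rm Aut}(M/S)$, deduce $S\subseteq{\rm dcl}_M(a)$, and conclude by definable closedness; for ($\Leftarrow$), which the paper calls obvious, you spell out the $\lambda$-LS and union-of-cosets argument, and your check that the subgroups ${\rm Aut}(M/A)$ form a neighbourhood base of a group topology makes explicit what the paper takes for granted.
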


\begin{proof}
$(\Leftarrow)$
Obvious.
$(\Rightarrow)$
If ${\rm Aut}(M/S)$ is open, there is $a\in P_\lambda M$ such that ${\rm Aut}(M/a)\subseteq{\rm Aut}(M/S)$.
Therefore, every $\sigma\in{\rm Aut}(M/a)$ fixes all elements of $S$; thus, $S\subseteq{\rm dcl}_M(a)$.
Since $(M,\lambda)$ is definably closed, we get $S\subseteq {\rm dcl}_M(a)\in P_\lambda M$.
\end{proof}

For a topological group $G$, a {\bf continuous $G$-set} is a set $X$, equipped with the discrete topology, together with a continuous $G$-action.
Let ${\rm Cont}(G)$ be the topos of continuous $G$-sets; see e.g.~\cite[Section III.9]{SGL}.

\begin{fact}[see e.g.~Caramello {\cite[Theorem 3.5]{Car16}}]\label{fact:caramello}
Assume that $\K_\lambda$ is a subcanonical structure-class which has the AP and a $\lambda$-monster structure $M\in\K$.
\begin{enumerate}
\item ${\rm Sh}(\K_\lambda^{\rm op},J_{\rm at})\simeq{\rm Cont}({\rm Aut}(M))$.
\item For each substructure $A\in\K_\lambda$ of $M$, this equivalence associates $\yo A$ with ${\rm Aut}(M)/{\rm Aut}(M/A)$ 
\item ${\rm Sh}(\K_\lambda^{\rm op},J_{\rm at})/\yo A\simeq{\rm Cont}({\rm Aut}(M/A))$.
\item For each substructures $A\leq B\in\K_\lambda$ of $M$, this equivalence associates $\yo B\to\yo A$ with ${\rm Aut}(M/A)/{\rm Aut}(M/B)$.
\end{enumerate}
\end{fact}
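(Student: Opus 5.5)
The plan is to obtain all four items from the general theory of atomic toposes with a point, namely Caramello's topological Galois representation. We take the point to be the monster $M$ itself, via the functor $\K_\lambda\to{\bf Set}$ sending $A$ to ${\rm Hom}_\K(A,M)$. First we define the comparison functor
\[
F\colon{\rm Sh}(\K_\lambda^{\rm op},J_{\rm at})\to{\rm Cont}({\rm Aut}(M)),\qquad F(X)=\colim_{(A,A\embed M)}X_A,
\]
where the colimit is filtered and is taken over the category of elements of the point, i.e.\ over pairs $(A,e\colon A\embed M)$. The group ${\rm Aut}(M)$ acts by postcomposition on the embeddings $e$. Continuity of this action holds because an element represented at $(A,e)$ is fixed by ${\rm Aut}(M/e[A])$, which is open.

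Filteredness of the indexing category uses the $\lambda$-LS property together with coherence: given $e,e'$ into $M$, cover $e[A]\cup e'[A']$ by a substructure $C\leq M$. Parallel maps get equalized because $|-|$ is faithful.

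For item (2), we compute $F(\yo A)$ for a substructure $A\leq M$. Since $(\yo A)_B={\rm Hom}(A,B)$, the colimit is ${\rm Hom}_\K(A,M)$. By $\lambda$-ultrahomogeneity, ${\rm Aut}(M)$ acts transitively on this set, and the stabilizer of the inclusion is ${\rm Aut}(M/A)$. Hence $F(\yo A)\cong{\rm Aut}(M)/{\rm Aut}(M/A)$.

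For item (1), we need $F$ to be an equivalence. Full faithfulness and essential surjectivity can be reduced to atoms. Every continuous transitive $G$-set has the form $G/U$ with $U$ open, so $U\supseteq{\rm Aut}(M/A)$ for some $A$, and $G/U$ is a quotient of $F(\yo A)$. On the sheaf side every atom is a quotient of some $\yo A$, by Observation~\ref{obs:yo-generator}. So it suffices to show that $F$ induces a bijection
\[
{\rm Hom}(\yo A,\yo B)\;\cong\;{\rm Hom}_G\bigl(G/{\rm Aut}(M/A),\,G/{\rm Aut}(M/B)\bigr),
\]
and that it matches the equivalence relations defining quotients.

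The left side is ${\rm Hom}_\K(B,A)$, by subcanonicity and Yoneda. The right side is the set of cosets $g\,{\rm Aut}(M/B)$ with ${\rm Aut}(M/A)\subseteq g\,{\rm Aut}(M/B)g^{-1}$, i.e.\ with ${\rm Aut}(M/A)$ fixing $g[B]$ pointwise. Such a $g$ corresponds to $g|_B\colon B\to M$. We then need this map to land in $A$ whenever it is fixed by ${\rm Aut}(M/A)$, which means ${\rm dcl}(A)=A$. This is exactly where subcanonicity enters, via Theorem~\ref{thm:subcanonical-definably-closed}. I expect this identification to be the main obstacle, since that theorem uses one-point spans. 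To avoid that hypothesis, I would argue directly with strict monomorphisms (Lemma~\ref{lem:subcanonical-strict}): given $x\colon B\to M$ fixed by ${\rm Aut}(M/A)$, take a substructure $C\supseteq A\cup x[B]$. Any two maps from $C$ agreeing on $A$ can be pushed into $M$ by universality and realigned by ultrahomogeneity, so they must agree on $x[B]$. Thus $x$ is an $i$-matching element for $A\embed C$, and strictness factors it through $A$. Coherence then turns this set map into an embedding.

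Items (3) and (4) follow formally. A slice of ${\rm Cont}(G)$ over $G/H$ with $H$ open is equivalent to ${\rm Cont}(H)$, so using (2) we get ${\rm Sh}/\yo A\simeq{\rm Cont}({\rm Aut}(M/A))$. Under this equivalence, the object $\yo B\to\yo A$ corresponds to the fibre over the identity coset, which is ${\rm Aut}(M/A)/{\rm Aut}(M/B)$. All other steps are routine bookkeeping in Caramello's general result.
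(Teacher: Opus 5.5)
Your proposal is correct, but for item (1) it takes a genuinely different route from the paper. The paper does not prove (1). It only checks the hypotheses of Caramello's Theorem~3.5, the one nontrivial point being the JEP, which it obtains from $\lambda$-universality and the $\lambda$-LS property; this is the same fact you use for filteredness of the category of elements. Items (2) and (4) are then the orbit--stabilizer computations you give, and (3) is the standard equivalence ${\rm Cont}(G)/(G/U)\simeq{\rm Cont}(U)$.

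You instead essentially reprove Caramello's theorem in the subcanonical case, by showing that $A\mapsto{\rm Emb}(A,M)$ is fully faithful from $\K_\lambda^{\rm op}$ into ${\rm Cont}({\rm Aut}(M))$. Your fullness argument is sound once ``realigned by ultrahomogeneity'' is unpacked. Take $s,t\colon C\to D$ agreeing on $A$ and $u\colon D\embed M$, and extend $us,ut$ to $\rho_s,\rho_t\in{\rm Aut}(M)$. Then $\rho_s^{-1}\rho_t\in{\rm Aut}(M/A)$ fixes $x[B]$, so $s,t$ agree on $x[B]$. Strictness (Lemma~\ref{lem:subcanonical-strict}) then directly yields the $\K$-morphism $B\to A$, so your final appeal to coherence is redundant. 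This is a nice observation: subcanonicity alone makes every embedding $B\embed M$ with image in ${\rm dcl}_M(A)$ factor through $A$, so the one-point-span hypothesis of Theorem~\ref{thm:subcanonical-definably-closed} is not needed here.

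Your passage from representables to all objects ``via quotients'' tacitly uses that $F$ preserves sheaf epimorphisms. That is, it needs ${\rm Hom}_\K(-,M)$ to be $J_{\rm at}$-continuous (every $A\embed M$ extends along every $A\embed B$, Lemma~\ref{lem:substructure-lem}), which you did not state. It is cleaner to finish with the Comparison Lemma, applied to the full subcategory of ${\rm Cont}({\rm Aut}(M))$ on the sets ${\rm Emb}(A,M)$. This subcategory is dense because every open subgroup contains some ${\rm Aut}(M/A)$. The induced canonical topology on it is atomic because equivariant maps between nonempty transitive sets are onto.

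As for what each approach buys: yours is self-contained and makes the topological Galois representation (full faithfulness of $A\mapsto{\rm Emb}(A,M)$) explicit, but it needs subcanonicity already for (1). The paper's citation gives (1) from the AP, the JEP and the monster alone. There, subcanonicity enters only in (2), to identify $\yo A$ with its own sheafification.
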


\begin{proof}[Proof (Sketch)]
(1)
By $\lambda$-universality and $\lambda$-LS of $M$, $K_\lambda$ has the JEP.
Indeed, one simply embeds $A,B\in K_\lambda$ into $M$ and includes the union of their two images within a single $K_\lambda$-substructure by the $\lambda$-LS.
Thus, by the AP, the JEP, and the universality and ultrahomogeneity of $M$, one can apply Caramello \cite[Theorem 3.5]{Car16}.
(2)
By $\lambda$-ultrahomogeneity, ${\rm Aut}(M)$ acts transitively on the set ${\rm Emb}(A,M)$ of embeddings from $A$ into $M$.
Since the stabilizer of the inclusion map $A\embed M$ is ${\rm Aut}(M/A)$, we get ${\rm Emb}(A,M)\simeq {\rm Aut}(M)/{\rm Aut}(M/A)$.
(3)
In general, for a topological group $G$ and an open subgroup $U\leq G$, we have ${\rm Cont}(G)/(G/U)\simeq{\rm Cont}(U)$.
(4)
The transitive ${\rm Aut}(M/A)$-set corresponding to $A\leq B$ is ${\rm Aut}(M/A)/{\rm Aut}(M/B)$, by an orbit-stabilizer argument similar to the one above.
\end{proof}

Mejak's choice criterion (Theorems \ref{lem:cone-characterization-countable-choice} and \ref{lem:cone-characterization-dependent-choice}) examines the existence of a cone for a certain diagram sharing a common codomain $A$.
Such a diagram can be understood as a diagram in the slice category $\mathcal{C}/A$.

Let $(\C,J)$ be a site, and fix a $\C$-object $a$.
Given $(b\edge{u} a)\in \C/a$ defines $Y_a(u):=(\yo b\xedge{\yo u}\yo a)\in{\rm Sh}(\C,J)/\yo a$.
This yields a functor $Y_a\colon \C/a\to {\rm Sh}(\C,J)/\yo a$.

\begin{lemma}\label{lemma:cocone-in-slice-topos}
Let $(\C,J_{\rm at})$ be a subcanonical atomic site.
For a $\C$-object $a$, let $d\colon I\to \C/a$ be a diagram, which yields $Y_a(d)\colon I\to{\rm Sh}(\C,J_{\rm at})/\yo a$.
Then the following are equivalent.
\begin{enumerate}
\item $d$ admits a cone in $\C/a$.
\item $\lim_{i\in I} Y_a(d_i)\not\simeq{\bf 0}$.
\end{enumerate}
\end{lemma}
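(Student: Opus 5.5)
The plan is to compute the limit in the slice topos concretely and then read off a cone from a nonzero element of it. Limits in ${\rm Sh}(\C,J_{\rm at})/\yo a$ over a connected or non-connected diagram $I$ are computed as limits in ${\rm Sh}(\C,J_{\rm at})$ of the diagram $I^{\triangleleft}$ obtained by adjoining $\yo a$ as a cone point. For $I$ empty, the limit is the terminal object $\yo a\to\yo a$. Since sheaf limits are computed pointwise, for each $\C$-object $q$ the component $(\lim_i Y_a(d_i))_q$ is the set of families $(q\edge{x_i}b_i)_{i\in I}$ together with $q\edge{c}a$. These are required to satisfy $u_i\circ x_i=c$ for every $i$, where $d_i=(b_i\edge{u_i}a)$, and $d(\phi)\circ x_i=x_j$ for every morphism $\phi\colon i\to j$ in $I$. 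In the empty case we simply take the set of $c$'s.

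The key observation is that an element of this set at stage $q$ is precisely a cone over $d$ in $\C/a$ with vertex $(q\edge{c}a)$. Here we use subcanonicity: the $\yo b_i$ are already sheaves, so no sheafification intervenes, and the pointwise limit is the sheaf limit.

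(1)$\Rightarrow$(2): a cone $(q\edge{c}a,\ x_i)$ gives an element of $(\lim_i Y_a(d_i))_q$, so the limit sheaf has a nonempty component. It is therefore not initial, because $\mathbf{0}$ in ${\rm Sh}(\C,J_{\rm at})$ has all components empty. Non-degeneracy of $J_{\rm at}$ guarantees that the empty presheaf is a sheaf.

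(2)$\Rightarrow$(1): if the limit $L$ is not $\mathbf{0}$, then some component $L_q$ is nonempty. Otherwise $L$ would be the empty presheaf, which is initial. Any element of $L_q$ is then, by the computation above, a cone over $d$ in $\C/a$.

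The only delicate step is justifying that limits in the slice topos are the limits of the extended diagram computed pointwise, and that ``$\not\simeq\mathbf{0}$'' means exactly ``some component nonempty''. Both are standard: the forgetful functor from a slice creates connected limits, and in the general case the slice limit is the wide pullback over $\yo a$. Both facts follow directly from Observation~\ref{obs:characterization-atomic-sheaf-presheaf} together with the fact that limits of sheaves are computed componentwise, as already used in the proof of Theorem~\ref{lem:cone-characterization-dependent-choice}.
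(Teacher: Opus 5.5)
Your proof is correct, but it uses different ingredients from the paper's.

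The paper works with objects of the slice topos. For (1)$\Rightarrow$(2), a cone yields a morphism $Y_a(c)\to\lim_i Y_a(d_i)$ out of a nonzero object, tacitly using that $\mathbf{0}$ is a strict initial object. For (2)$\Rightarrow$(1), it uses that the objects $Y_a(c)$ form a generating family of the slice, so a nonzero limit receives some morphism $Y_a(c)\to\lim_i Y_a(d_i)$. Composing with the projections gives a cone of representables, which full faithfulness of $\yo$ pulls back to a cone in $\C/a$.

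You instead compute the limit componentwise. You observe that $(\lim_i Y_a(d_i))_q$ is exactly the set of cones over $d$ whose vertex has the form $q\edge{c}a$. You replace the generating-family step by the fact that $\mathbf{0}$ is the empty presheaf, which is precisely where non-degeneracy of $J_{\rm at}$ enters.

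The two arguments are linked by Yoneda: a morphism $\yo q\to L$ over $\yo a$ is an element of $L_q$. Your version yields the sharper, explicit statement that the limit is the sheaf of cones. This is in the same spirit as the paper's later computations in Lemmas~\ref{lem:product-calculation-cont} and~\ref{lem:inverse-limit-calculation-cont}. The paper's version uses only formal properties (nonzero generators, a strict initial object, a fully faithful Yoneda embedding), so it needs no pointwise description of limits.

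One small slip: $\yo a$ receives the maps $\yo u_i$, so it is adjoined as a cocone vertex, i.e.\ a new terminal object (usually written $I^{\triangleright}$), not a cone point. Also, for non-discrete $I$ the slice limit is the limit of this extended diagram, not a wide pullback. Neither affects your argument.
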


\begin{proof}
(1)$\Rightarrow$(2):
Suppose $d$ admits a cone.
This consists of $(\C/a)$-morphisms $\{c \to d_i\}_{i \in I}$ from a $\mathcal{C}$-morphism $t\edge{c} a$.
Then $\{Y_a(c) \to Y_a(d_i)\}_{i \in I}$ is a cone.
Taking the limit in the sheaf topos, we obtain $Y_a(c)\to\lim_{i\in I}Y_a(d_i)$ by the universal property.
Since $Y_a(c)=(\yo c\to \yo a)$, we have $Y_a(c) \not\simeq \mathbf{0}$.
Thus, we obtain $\lim_{i\in I}Y_a(d_i)\not\simeq\mathbf{0}$.

(2)$\Rightarrow$(1):
Since $J_{\text{at}}$ is subcanonical, $\{\yo c\}_{c\in\C}$ forms a generating family.
Therefore, in the slice category, $\{\yo c\to \yo a\}_{c\in\C}$, that is, $\{Y_a(c)\}_{c\in\C}$, forms a generating family.
Thus, by the assumption $\lim_{i\in I}Y_a(d_i)\not\simeq{\bf 0}$, there exist $t\edge{c}a$ and a morphism $Y_a(c)\to \lim_{i\in I}Y_a(d_i)$.
By composing with the projections, we obtain a cone $\{Y_a(c)\to Y_a(d_i)\}_{i\in I}$.
Since the Yoneda embedding is fully faithful, $\{c\to d_i\}_{i\in I}$ is a cone for $d$.
\end{proof}

\begin{remark}
Fix a structure $A \in K_\lambda$.
For $A\embedge{e}B$, we have $(\yo B\xedge{\yo e}\yo A)\in{\rm Sh(\K_\lambda^{\rm op},J_{\rm at})}/\yo A$, and by Fact \ref{fact:caramello}, this corresponds to ${\rm Aut}(M/A)/{\rm Aut}(M/B)$.
In other words, we obtain $Y_A(A\embed B)\simeq {\rm Aut}(M/A)/{\rm Aut}(M/B)$.
\end{remark}

Thus, Fact \ref{fact:caramello} and Lemma \ref{lemma:cocone-in-slice-topos} provide a translation of Mejak's AC-criterion (Theorem \ref{lem:cone-characterization-countable-choice}) into the language of continuous group actions.
That is, the following are equivalent.
\begin{enumerate}
\item ${\rm Cont}({\rm Aut}(M))$ satisfies the axiom of choice for $\kappa$-families.
\item For every $\kappa$-family $\{A\leq B_\alpha\}_{\alpha<\kappa}$, the product 
\[\prod_{\alpha<\kappa}{\rm Aut}(M/A)/{\rm Aut}(M/B_\alpha)\]
 in ${\rm Cont}({\rm Aut}(M/A))$ is nonempty.
\end{enumerate}

Similarly, Fact \ref{fact:caramello} and Lemma \ref{lemma:cocone-in-slice-topos} provide a translation of Mejak's DC-criterion (Theorem \ref{lem:cone-characterization-dependent-choice}) into the language of continuous group actions.
That is, the following are equivalent.
\begin{enumerate}
\item ${\rm Cont}({\rm Aut}(M))$ satisfies the axiom of dependent choice.
\item For every $\om$-chain $\{A_n\leq A_{n+1}\}_{n\in\om}$, the inverse limit
\[\lim_{n\in\om^{\rm op}}{\rm Aut}(M/A_0)/{\rm Aut}(M/A_n)\]
in ${\rm Cont}({\rm Aut}(M/A_0))$ is nonempty.
\end{enumerate}

Regarding the AC-criterion, we explicitly calculate the product in ${\rm Cont}({\rm Aut}(M))$.
Interestingly, this calculation leads directly to the emergence of Zapletal's dynamical $\kappa$-completeness (Definition \ref{def:Zap-dynamical-completeness}).

\begin{prop}\label{prop:AC-critetion-contG-Zap}
Let $\lambda$ be an infinite cardinal, and suppose that a structure-class $\K$ has a $\lambda$-monster structure $M\in\K$.
Assume that $(M,\lambda)$ is definably closed.
For a $\kappa$-family $(A\leq B_\alpha)_{\alpha<\kappa}$ of extensions, the following are equivalent.
\begin{enumerate}
\item $\displaystyle{\prod_{\alpha<\kappa}{\rm Aut}(M/A)/{\rm Aut}(M/B_\alpha)}$ in ${\rm Cont}({\rm Aut}(M/A))$ is nonempty.
\item There exists $\{\sigma_\alpha\in {\rm Aut}(M/A)\}_{\alpha<\kappa}$ such that $\bigcup_{\alpha<\kappa}\sigma_\alpha[B_\alpha]\in P_\lambda M$.
\end{enumerate}
\end{prop}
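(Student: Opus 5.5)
The plan is to compute the product in ${\rm Cont}(G)$, $G:={\rm Aut}(M/A)$, directly. In ${\rm Cont}(G)$ the product of a family $(X_\alpha)$ is the set of \emph{continuous elements} of the set-theoretic product $\prod_\alpha X_\alpha$ (with diagonal action): tuples whose stabilizer is open in $G$. This is the standard description, since the inclusion ${\rm Cont}(G)\to G\text{-}{\bf Set}$ has a right adjoint taking a $G$-set to its set of points with open stabilizer. So (1) says that some tuple $(\sigma_\alpha{\rm Aut}(M/B_\alpha))_{\alpha<\kappa}$, with $\sigma_\alpha\in G$, has open stabilizer.

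Next I would compute that stabilizer. An element $\tau\in G$ fixes the coset $\sigma_\alpha{\rm Aut}(M/B_\alpha)$ exactly when $\sigma_\alpha^{-1}\tau\sigma_\alpha\in{\rm Aut}(M/B_\alpha)$. Equivalently, $\tau\in\sigma_\alpha{\rm Aut}(M/B_\alpha)\sigma_\alpha^{-1}={\rm Aut}(M/\sigma_\alpha[B_\alpha])$. Therefore the stabilizer of the tuple is
\[\bigcap_{\alpha<\kappa}{\rm Aut}(M/\sigma_\alpha[B_\alpha])={\rm Aut}(M/S),\qquad S:=\bigcup_{\alpha<\kappa}\sigma_\alpha[B_\alpha].\]
Since every $\sigma_\alpha$ fixes $A$, we have $A\subseteq S$, so ${\rm Aut}(M/S)\le G$. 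Openness in $G$ and openness in ${\rm Aut}(M)$ coincide, because $G$ is itself an open subgroup.

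Both directions then follow from the lemma just proved: under definable closedness, ${\rm Aut}(M/S)$ is open iff $S\in\mathcal{P}_\lambda M$. For (2)$\Rightarrow$(1), the given $\sigma_\alpha$ yield a tuple with open stabilizer ${\rm Aut}(M/S)$, hence a global element of the product. Only the easy direction of the lemma is used here, so definable closedness is not needed. For (1)$\Rightarrow$(2), take any element of the product. Each component is a coset $\sigma_\alpha{\rm Aut}(M/B_\alpha)$ with $\sigma_\alpha\in G$; choosing representatives uses metatheoretic AC. Its stabilizer ${\rm Aut}(M/S)$ is open, so the lemma gives $S\in\mathcal{P}_\lambda M$.

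The main obstacle is justifying the description of products in ${\rm Cont}(G)$: infinite products there are not computed as in ${\bf Set}$. I would handle this by checking the universal property directly. If $Z$ is a continuous $G$-set and $f_\alpha\colon Z\to X_\alpha$ are equivariant, then for $z\in Z$ the stabilizer of $(f_\alpha(z))_\alpha$ contains the open stabilizer of $z$. Hence the tuple is continuous, and the continuous part of the set product is the product. I also need that the underlying set of this object is what ``nonempty'' refers to; nonemptiness means not ${\bf 0}$, and the initial object of ${\rm Cont}(G)$ is the empty set, so this holds.
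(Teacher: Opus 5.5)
Your proposal is correct and follows the paper's proof essentially step for step. Like you, the paper computes the product in ${\rm Cont}({\rm Aut}(M/A))$ as the tuples with open joint stabilizer, checking the universal property directly. It then identifies the stabilizer of the tuple of cosets as ${\rm Aut}(M/\bigcup_{\alpha<\kappa}\sigma_\alpha[B_\alpha])$ and uses definable closedness, as in the preceding lemma, for (1)$\Rightarrow$(2); (2)$\Rightarrow$(1) is immediate from the definition of the topology.

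One small wording fix: in (2)$\Rightarrow$(1) the tuple you obtain is an element of the underlying set of the product, not a global element. A global element $\mathbf{1}\to\prod_{\alpha}X_\alpha$ would be an ${\rm Aut}(M/A)$-fixed point. Nonemptiness is all that is needed, so the argument is unaffected.
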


To prove this, we first calculate the product explicitly and obtain the following.

\begin{lemma}\label{lem:product-calculation-cont}
The underlying set of the product $\prod_{\alpha<\kappa}{\rm Aut}(M/A)/{\rm Aut}(M/B_\alpha)$ consists of left cosets $(\sigma_\alpha{\rm Aut}(M/B_\alpha))_{\alpha<\kappa}$, where $\sigma_\alpha\in {\rm Aut}(M/A)$ for any $\alpha<\kappa$, such that ${\rm Aut}(M/\bigcup_{\alpha<\kappa}\sigma_\alpha[B_\alpha])$ is open.
\end{lemma}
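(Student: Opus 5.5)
The plan is to compute the product in ${\rm Cont}(G)$, where $G={\rm Aut}(M/A)$, by starting from the product in $G$-${\bf Set}$ and then restricting to its continuous part. ${\rm Cont}(G)$ is coreflective in $G$-${\bf Set}$, so limits in ${\rm Cont}(G)$ are obtained by taking the limit of the underlying $G$-sets and keeping only the continuous elements. An element $x$ of a $G$-set is continuous when its stabilizer ${\rm Stab}(x)$ is open in $G$. So the underlying set of $\prod_{\alpha<\kappa}G/{\rm Aut}(M/B_\alpha)$ in ${\rm Cont}(G)$ should be the set of tuples $(\sigma_\alpha{\rm Aut}(M/B_\alpha))_{\alpha<\kappa}$ in the set-theoretic product whose stabilizer under the diagonal left action is open. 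I will justify the coreflector description briefly. For a $G$-set $X$, let $X^{\rm c}=\{x: {\rm Stab}(x)\text{ open}\}$. This set is $G$-invariant, because ${\rm Stab}(gx)=g\,{\rm Stab}(x)g^{-1}$ and conjugates of open subgroups are open. Every equivariant map from a continuous $G$-set into $X$ lands in $X^{\rm c}$, since equivariance gives ${\rm Stab}(y)\subseteq{\rm Stab}(f(y))$, and a subgroup containing an open subgroup is open. Hence $X\mapsto X^{\rm c}$ is right adjoint to the inclusion, so it preserves limits, and the product in ${\rm Cont}(G)$ is $(\prod_\alpha G/H_\alpha)^{\rm c}$ with $H_\alpha={\rm Aut}(M/B_\alpha)$.

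Next I compute the stabilizer of a tuple. An element $\tau\in G$ fixes the coset $\sigma_\alpha H_\alpha$ iff $\sigma_\alpha^{-1}\tau\sigma_\alpha\in H_\alpha$. Since $\sigma_\alpha^{-1}\tau\sigma_\alpha$ fixes $B_\alpha$ pointwise exactly when $\tau$ fixes $\sigma_\alpha[B_\alpha]$ pointwise, this condition is equivalent to $\tau\in\sigma_\alpha H_\alpha\sigma_\alpha^{-1}={\rm Aut}(M/\sigma_\alpha[B_\alpha])$. Intersecting over $\alpha$ and over $G$ gives
\[
{\rm Stab}\bigl((\sigma_\alpha H_\alpha)_\alpha\bigr)={\rm Aut}(M/A)\cap\bigcap_{\alpha<\kappa}{\rm Aut}(M/\sigma_\alpha[B_\alpha])={\rm Aut}\Bigl(M/\textstyle\bigcup_{\alpha<\kappa}\sigma_\alpha[B_\alpha]\Bigr).
\]
The last equality uses $A\subseteq\sigma_\alpha[B_\alpha]$, which holds because $\sigma_\alpha$ fixes $A\subseteq B_\alpha$. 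If $\kappa=0$ the product is the terminal object, and the formula should then be read with $A$ added to the union; I will note this convention.

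One subtlety remains: openness must be measured in $G={\rm Aut}(M/A)$ with the subspace topology, rather than in ${\rm Aut}(M)$. Since $G$ is itself an open subgroup of ${\rm Aut}(M)$, a subgroup of $G$ is open in $G$ iff it is open in ${\rm Aut}(M)$, so the two notions agree. The main obstacle I expect is the coreflector step, where I must make sure that ${\rm Cont}(G)$ limits really are computed this way, i.e. that infinite products are not computed pointwise. I will verify the right adjointness directly as above rather than cite it, and also check that cosets are well-defined representatives. Both checks are routine.
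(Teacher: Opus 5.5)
Your proposal is correct and takes essentially the same approach as the paper. The paper's Claim is your coreflector description: the continuous part of the set-theoretic product, with the universal property checked via ${\rm Stab}(y)\subseteq\bigcap_\alpha{\rm Stab}(f_\alpha(y))$. It then does the same conjugation computation ${\rm Stab}(\sigma_\alpha{\rm Aut}(M/B_\alpha))={\rm Aut}(M/\sigma_\alpha[B_\alpha])$. Your extra checks are small points the paper leaves implicit: openness in ${\rm Aut}(M/A)$ agrees with openness in ${\rm Aut}(M)$ because ${\rm Aut}(M/A)$ is an open subgroup, and the condition does not depend on the choice of coset representatives.
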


\begin{proof}
In the following, if a group $H$ acts on $X$, for each $x\in X$, we write ${\rm Stab}_H(x)=\{g\in H:g\cdot x=x\}$.
First, we calculate the product in the category of continuous $H$-actions.

\begin{claim}
Let $H$ be a topological group, and $(X_\alpha)_{\alpha<\kappa}$ be continuous $H$-sets.
Then, the underlying set of their product in ${\rm Cont}(H)$ consists of tuples $(x_\alpha)_{\alpha<\kappa}$, where $x_\alpha\in X_\alpha$ for any $\alpha<\kappa$, such that $\bigcap_{\alpha<\kappa}{\rm Stab}_{H}(x_\alpha)$ is open in $H$.
\end{claim}

\begin{proof}[Proof (Claim)]
An action of $H$ on $X$ is continuous if and only if the stabilizer of each point is open.
Thus, the set in the claim is a continuous $H$-set.
To show the universality, let a continuous $H$-set $Y$ and a morphism $f_\alpha:Y\to X_\alpha$ be given.
Then, for $y\in Y$, we get ${\rm Stab}_{H}(y)\subseteq\bigcap_{\alpha<\kappa}{\rm Stab}_{H}(f_\alpha(y))$.
Since the left-hand side is open, $(f_\alpha(y))_{\alpha<\kappa}$ also has an open stabilizer.
Thus, the induced map $Y\to\prod_{\alpha<\kappa}X_\alpha$ takes values in the subset defined above.
This shows the universal property of the product.
\end{proof}


Now, for each left coset $\sigma_\alpha {\rm Aut}(M/{B_\alpha})\in {\rm Aut}(M/A)/{\rm Aut}(M/{B_\alpha})$, we have ${\rm Stab}_{{\rm Aut}(M/A)}(\sigma_\alpha {\rm Aut}(M/{B_\alpha}))=\sigma_\alpha {\rm Aut}(M/{B_\alpha})\sigma_\alpha^{-1}={\rm Aut}(M/{\sigma_\alpha[B_\alpha]})$.
Moreover, one may see $\bigcap_{\alpha<\kappa}{\rm Aut}(M/{\sigma_\alpha[B_\alpha]})={\rm Aut}(M/{\bigcup_{\alpha<\kappa}\sigma_\alpha[B_\alpha]})$.
By the above claim, this completes the proof.
\end{proof}

\begin{proof}[Proof (Proposition \ref{prop:AC-critetion-contG-Zap})]
(1)$\Rightarrow$(2):
Assume that $\prod_{\alpha<\kappa}{\rm Aut}(M/A)/{\rm Aut}(M/{B_\alpha})$ has an element $(\sigma_\alpha {\rm Aut}(M/{B_\alpha}))_{\alpha<\kappa}$.
Then, by Lemma \ref{lem:product-calculation-cont}, ${\rm Aut}(M/{\bigcup_{\alpha<\kappa}\sigma_\alpha[B_\alpha]})$ is open.
By the definition of our topology, there is $a\in P_\lambda M$ such that ${\rm Aut}(M/a)\subseteq {\rm Aut}(M/{\bigcup_\alpha\sigma_\alpha[B_\alpha]})$.
This implies $\bigcup_\alpha\sigma_\alpha[B_\alpha]\subseteq {\rm dcl}_M(a)$.
By definable closedness, we get $\bigcup_\alpha\sigma_\alpha[B_\alpha]\subseteq{\rm dcl}_M(a)\in P_\lambda M$.

(2)$\Rightarrow$(1):
For $\{\sigma_\alpha \in {\rm Aut}(M/A)\}_{\alpha<\kappa}$ satisfying the condition (2), it is evident from the definition of our topology that $(\sigma_\alpha {\rm Aut}(M/{B_\alpha}))$ satisfies the condition in Lemma \ref{lem:product-calculation-cont}.
\end{proof}

Next, regarding the DC-criterion, we explicitly calculate the inverse limit in ${\rm Cont}({\rm Aut}(M))$.
This calculation leads directly to the emergence of Zapletal's DC-completeness (Definition \ref{def:Zap-DC-completeness}).

\begin{prop}\label{prop:DC-critetion-contG-Zap}
Let $\lambda$ be an infinite cardinal, and suppose that a structure-class $\K$ has a $\lambda$-monster structure $M\in\K$.
Assume that $(M,\lambda)$ is definably closed.
For an $\omega$-chain $(A_n\leq A_{n+1})_{n\in\om}$ of extensions, the following are equivalent.
\begin{enumerate}
\item $\displaystyle{\lim_{n\in\om^{\rm op}}{\rm Aut}(M/A_0)/{\rm Aut}(M/{A_n})}$  in ${\rm Cont}({\rm Aut}(M/A_0))$ is nonempty.
\item There exists $\{\sigma_n\in {\rm Aut}(M/A_0)\}_{n\in\om}$ such that $\sigma_{n+1}|_{A_n}=\sigma_n|_{A_n}$ for any $n\in\om$ and $\bigcup_{n\in\om}\sigma_n[A_n]\in P_\lambda M$.
\end{enumerate}
\end{prop}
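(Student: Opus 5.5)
We follow the pattern of Proposition \ref{prop:AC-critetion-contG-Zap}. First we compute the inverse limit explicitly, in analogy with Lemma \ref{lem:product-calculation-cont}. Then we use definable closedness to replace ``open stabilizer'' by ``small support''.

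\emph{Step 1 (limits in ${\rm Cont}(H)$).} Let $H$ be a topological group and $\cdots\to X_{n+1}\to X_n\to\cdots\to X_0$ a sequence of continuous $H$-sets. We claim that the limit in ${\rm Cont}(H)$ is the set of compatible threads $(x_n)_{n\in\om}$ of the set-theoretic limit for which $\bigcap_n{\rm Stab}_H(x_n)$ is open. The proof is the same as the Claim in Lemma \ref{lem:product-calculation-cont}. This set is a continuous $H$-set, being closed under the action, since conjugates of open subgroups are open. Any cone from a continuous $Y$ factors through it, because ${\rm Stab}_H(y)\subseteq\bigcap_n{\rm Stab}_H(f_n(y))$.

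\emph{Step 2 (unwinding the threads).} Take $H={\rm Aut}(M/A_0)$ and $X_n=H/{\rm Aut}(M/A_n)$. The connecting maps are the canonical projections $\sigma{\rm Aut}(M/A_{n+1})\mapsto\sigma{\rm Aut}(M/A_n)$, which are well defined since $A_n\subseteq A_{n+1}$. A thread is therefore a sequence of cosets $\sigma_n{\rm Aut}(M/A_n)$ with $\sigma_n\in H$ and $\sigma_{n+1}{\rm Aut}(M/A_n)=\sigma_n{\rm Aut}(M/A_n)$, i.e.\ $\sigma_n^{-1}\sigma_{n+1}\in{\rm Aut}(M/A_n)$. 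The latter condition is exactly $\sigma_{n+1}|_{A_n}=\sigma_n|_{A_n}$. As in the AC case, the stabilizer of $\sigma_n{\rm Aut}(M/A_n)$ is ${\rm Aut}(M/\sigma_n[A_n])$, so
\[
\bigcap_n{\rm Stab}_H(x_n)={\rm Aut}\Bigl(M/\bigcup_n\sigma_n[A_n]\Bigr).
\]
Here one should note that this is an intersection with $H$, but $A_0=\sigma_n[A_0]\subseteq\sigma_n[A_n]$, so it lies in $H$ anyway. Moreover, openness in $H$ is the same as openness in ${\rm Aut}(M)$, since $H$ is open. Hence the limit is nonempty iff there are $\sigma_n\in{\rm Aut}(M/A_0)$ with $\sigma_{n+1}|_{A_n}=\sigma_n|_{A_n}$ and ${\rm Aut}(M/\bigcup_n\sigma_n[A_n])$ open.

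\emph{Step 3 (openness versus smallness).} For (1)$\Rightarrow$(2), openness gives some $a\in\mathcal{P}_\lambda M$ with ${\rm Aut}(M/a)\subseteq{\rm Aut}(M/\bigcup_n\sigma_n[A_n])$. Hence $\bigcup_n\sigma_n[A_n]\subseteq{\rm dcl}_M(a)$, which lies in $\mathcal{P}_\lambda M$ by definable closedness; this is the preceding lemma on open pointwise stabilizers. For (2)$\Rightarrow$(1), if $S=\bigcup_n\sigma_n[A_n]\in\mathcal{P}_\lambda M$, then ${\rm Aut}(M/S)$ is open by the definition of the topology, so the thread from Step 2 lies in the limit.

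The only step with real content is Step 1, the description of limits in ${\rm Cont}(H)$. There one must check that the pointwise-open-stabilizer condition gives a continuous $H$-set with the correct universal property. A countable limit is not automatically computed as in ${\bf Set}$, because an infinite intersection of open stabilizers need not be open; this is exactly why the openness condition has to be imposed. The remaining steps are the coset bookkeeping $\sigma_n^{-1}\sigma_{n+1}\in{\rm Aut}(M/A_n)$ and an appeal to definable closedness, as in Proposition \ref{prop:AC-critetion-contG-Zap}.
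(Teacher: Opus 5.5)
Your proposal is correct and follows essentially the paper's route. You compute the inverse limit in ${\rm Cont}(H)$ as the compatible threads with open joint stabilizer (the Claim in Lemma \ref{lem:inverse-limit-calculation-cont}), unwind the coset condition to $\sigma_{n+1}|_{A_n}=\sigma_n|_{A_n}$, and use definable closedness for (1)$\Rightarrow$(2) as in Proposition \ref{prop:AC-critetion-contG-Zap}. Your extra checks, namely that the thread set is closed under the action and that ${\rm Aut}(M/\sigma_n[A_n])$ already lies in $H$ because $\sigma_n$ fixes $A_0$, fill in details the paper leaves implicit.
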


To prove this, we calculate the inverse limit explicitly and obtain the following.

\begin{lemma}\label{lem:inverse-limit-calculation-cont}
The underlying set of the inverse limit $\lim_{n\in\om^{\rm op}}{\rm Aut}(M/A_0)/{\rm Aut}(M/A_n)$ consists of left cosets $(\sigma_n{\rm Aut}(M/A_n))_{n\in\om}$, where $\sigma_n\in {\rm Aut}(M/A_0)$ for any $n\in\om$, such that $\sigma_{n+1}|_{A_n}=\sigma_n|_{A_n}$ for any $n\in\om$ and ${\rm Aut}(M/\bigcup_{n\in\om}\sigma_n[A_n])$ is open.
\end{lemma}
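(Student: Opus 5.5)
We compute the inverse limit in two stages, in parallel with the proof of Lemma \ref{lem:product-calculation-cont}: first in ${\bf Set}$ (equivalently in the category of all ${\rm Aut}(M/A_0)$-sets), then cut down to the continuous part. Put $H={\rm Aut}(M/A_0)$ and $X_n=H/{\rm Aut}(M/A_n)$. The transition map $X_{n+1}\to X_n$ is the canonical projection $\sigma{\rm Aut}(M/A_{n+1})\mapsto\sigma{\rm Aut}(M/A_n)$. It is well defined because $A_n\subseteq A_{n+1}$ gives ${\rm Aut}(M/A_{n+1})\subseteq{\rm Aut}(M/A_n)$, and under Fact \ref{fact:caramello} (4) it is the image of $\yo A_{n+1}\to\yo A_n$.

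\emph{Step 1: limits in ${\rm Cont}(H)$.} As in the Claim inside the proof of Lemma \ref{lem:product-calculation-cont}, the limit in ${\rm Cont}(H)$ of a diagram of continuous $H$-sets is the subset of the ${\bf Set}$-limit consisting of the points with open stabilizer. That subset is $H$-invariant, and it is a continuous $H$-set because continuity means exactly that every stabilizer is open. For the universal property, take $y$ in a continuous $H$-set $Y$ with compatible maps $f_n\colon Y\to X_n$. Then ${\rm Stab}(y)\subseteq\bigcap_n{\rm Stab}(f_n(y))$, so the induced point of the ${\bf Set}$-limit has open stabilizer and the induced map lands in this subset.

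\emph{Step 2: the ${\bf Set}$-limit.} An element of the ${\bf Set}$-limit is a sequence of cosets $(\sigma_n{\rm Aut}(M/A_n))_n$ with $\sigma_{n+1}{\rm Aut}(M/A_n)=\sigma_n{\rm Aut}(M/A_n)$. That condition says $\sigma_n^{-1}\sigma_{n+1}$ fixes $A_n$ pointwise, i.e.\ $\sigma_{n+1}|_{A_n}=\sigma_n|_{A_n}$. The coset $\sigma_n{\rm Aut}(M/A_n)$ depends only on $\sigma_n|_{A_n}$, so we may choose the representatives and phrase the compatibility directly in terms of them. The stabilizer of $\sigma_n{\rm Aut}(M/A_n)$ in $H$ is $\sigma_n{\rm Aut}(M/A_n)\sigma_n^{-1}={\rm Aut}(M/\sigma_n[A_n])$. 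The intersection of these stabilizers over all $n$ is ${\rm Aut}(M/\bigcup_n\sigma_n[A_n])$, since fixing each $\sigma_n[A_n]$ pointwise is the same as fixing their union pointwise. Combining this with Step 1 gives the description in the statement.

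The only delicate point is keeping the conventions straight. First, stabilizers and openness are computed in $H$ with the subspace topology from ${\rm Aut}(M)$. This is harmless because $H$ is open in ${\rm Aut}(M)$, so a subgroup is open in $H$ if and only if it is open in ${\rm Aut}(M)$. Second, we must check that the stabilizer of the whole sequence equals the intersection of the componentwise stabilizers, which holds because the action on the limit is componentwise. Beyond these bookkeeping checks I expect no real obstacle.
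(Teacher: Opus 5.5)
Your proposal is correct and follows the paper's proof essentially step for step. You compute the limit in ${\rm Cont}(H)$ as the open-stabilizer part of the set-theoretic limit, rewrite the compatibility condition as $\sigma_n^{-1}\sigma_{n+1}\in{\rm Aut}(M/A_n)$, and identify the intersection of the stabilizers with ${\rm Aut}(M/\bigcup_n\sigma_n[A_n])$; your extra remark that openness in $H$ agrees with openness in ${\rm Aut}(M)$, since $H$ is open, is a harmless detail the paper leaves implicit.
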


\begin{proof}
We calculate the inverse limit in the category of continuous $H$-actions.

\begin{claim}
Let $H$ be a topological group, and $X_0\leftedge{p_0}X_1\leftedge{p_1}X_2\leftedge{p_2}\cdots$ be an inverse system in ${\rm Cont}(H)$.
Then, the underlying set of the inverse limit consists of tuples $(x_n)_{n\in\om}$, where $x_n\in X_n$ for any $n<\om$, such that the following holds.
\begin{enumerate}
\item $p_n(x_{n+1})=x_n$ for every $n\in\om$.
\item $\bigcap_{n\in\om}{\rm Stab}_{H}(x_n)$ is open in $H$.
\end{enumerate}
\end{claim}

The proof of the claim is omitted as it is the same as before.
Now, we apply this claim to $X_n={\rm Aut}(M/{A_0})/{\rm Aut}(M/{A_n})$.
Here, the map 
\[{\rm Aut}(M/{A_0})/{\rm Aut}(M/{A_{n+1}})\edge{p_n} {\rm Aut}(M/{A_0})/{\rm Aut}(M/{A_n})\]
is given by $p_n(\sigma {\rm Aut}(M/{A_{n+1}}))=\sigma {\rm Aut}(M/{A_n})$.
In this case, the condition (1) in the claim applied to $(\sigma_n {\rm Aut}(M/{A_n}))_{n\in\om}$ means $\sigma_{n+1}{\rm Aut}(M/{A_n})=p_n(\sigma_{n+1}{\rm Aut}(M/{A_{n+1}}))=\sigma_n {\rm Aut}(M/{A_n})$.
This implies $\sigma_n^{-1}\sigma_{n+1}\in {\rm Aut}(M/{A_n})$, which means $\sigma_{n+1}|_{A_n}=\sigma_n|_{A_n}$.
The condition (2) is the same as in Lemma \ref{lem:product-calculation-cont}.
\end{proof}

\begin{proof}[Proof (Proposition \ref{prop:DC-critetion-contG-Zap})]
Using Lemma \ref{lem:inverse-limit-calculation-cont}, we obtain the desired assertion by a similar argument as in the proof of Proposition \ref{prop:AC-critetion-contG-Zap}.
\end{proof}

Here, note that in the proofs of Propositions \ref{prop:AC-critetion-contG-Zap} and \ref{prop:DC-critetion-contG-Zap}, the definably closed hypothesis is only used for the direction (1)$\Rightarrow$(2).

In summary, dynamical $\kappa$-completeness is an explicit description of the non-emptiness of products, while DC-completeness is an explicit description of the non-emptiness of inverse limits in ${\rm Cont}({\rm Aut}(M))$.
This yields alternative proofs for Theorems \ref{thm:dynamical-completeness} and \ref{thm:DC-completeness} under a stronger hypothesis (such as the strong AP).

\subsection{Set-theoretic permutation models}
Here, we explain only the idea of constructing a model of set theory from a monster structure; for details, see \cite{Zap26}.
First extend the ${\rm Aut}(M)$-action hereditarily to the cumulative hierarchy, and denote by $W_\lambda[[M]]$ the class of hereditarily $\mathcal{P}_\lambda M$-symmetric sets.
This is a Fraenkel--Mostowski-style permutation model (see, e.g., \cite{JechAC,Zap26}).
Then $W_\lambda[[M]]$ is a model of ZFA.
We now introduce variants of the axiom of choice in the context of set theory.

\begin{definition}[see e.g.~\cite{JechAC,HoRu98}]~
\begin{enumerate}
\item ${\rm AC}_\kappa$ states that every $\kappa$-indexed family of nonempty sets has a choice function.
That is, for every $(X_\xi)_{\xi<\kappa}$ such that $X_\xi\not=\emptyset$, there exists a function $f$ such that $f(\xi)\in X_\xi$.
\item ${\rm AC}_{\rm WO}$ states that every well-ordered family of nonempty sets has a choice function.
That is, ${\rm AC}_\kappa$ holds for every (internal) ordinal $\kappa$.
\item ${\rm DC}$ states that for every total relation $R$ on a nonempty set $X$, there exists a sequence $(x_n)_{n\in\N}$ such that $x_nRx_{n+1}$ for every $n\in\N$.
\end{enumerate}
\end{definition}

We have the following implications in set theory \cite[Section 8.1]{JechAC}:
\[{\rm AC}\implies{\rm AC}_{\rm WO}\implies{\rm DC}\implies{\rm AC}_\om\]
where ${\rm AC}$ is the full axiom of choice.
We now present Zapletal's choice criteria.

\begin{theorem}[Zapletal {\cite[Theorem 5.3]{Zap26}}]\label{thm:Zapletal1}
For a structure $M\in\K$, if $(M,\lambda)$ is definably closed, then the following are equivalent.
\begin{enumerate}
\item $(M,\lambda)$ is dynamically $\kappa$-complete.
\item $W_\lambda[[M]]$ satisfies ${\rm AC}_\kappa$.
\end{enumerate}
\end{theorem}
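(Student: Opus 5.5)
The plan is to work directly inside the permutation model $W=W_\lambda[[M]]$ and to use its two defining features. First, an object $x$ lies in $W$ exactly when $x\subseteq W$ and the stabilizer ${\rm Stab}(x)$ contains ${\rm Aut}(M/a)$ for some support $a\in\mathcal{P}_\lambda M$. Second, definable closedness lets us replace any support $a$ by the larger support ${\rm dcl}_M(a)$, which still lies in $\mathcal{P}_\lambda M$. Both directions will then be translations between choice functions in $W$ and families of automorphisms $\sigma_\alpha$, in the same way that Lemma \ref{lem:product-calculation-cont} translated elements of a product into such families.

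(2)$\Rightarrow$(1). Fix $a\in\mathcal{P}_\lambda M$ and a family $\{a\subseteq b_\alpha\}_{\alpha<\kappa}$ in $\mathcal{P}_\lambda M$. For each $\alpha$, let $X_\alpha$ be the orbit ${\rm Orb}_M(e_\alpha;a)$, where $e_\alpha\colon b_\alpha\to M$ is an enumeration of $b_\alpha$ (injective, indexed by an ordinal). The key properties of these objects are as follows.
\begin{enumerate}
\item Each element $\sigma\circ e_\alpha$ of $X_\alpha$ is supported by its range $\sigma[b_\alpha]$, so it lies in $W$.
\item Each set $X_\alpha$ is supported by $a$, and is nonempty.
\item The sequence $(X_\alpha)_{\alpha<\kappa}$ is supported by $a$, since ordinals are fixed by every automorphism.
\end{enumerate}
By ${\rm AC}_\kappa$ in $W$ there is a choice function $f\in W$ with $f(\alpha)=\sigma_\alpha\circ e_\alpha$ for some $\sigma_\alpha\in{\rm Aut}(M/a)$. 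Let $c\in\mathcal{P}_\lambda M$ be a support of $f$. Every $\tau\in{\rm Aut}(M/c)$ fixes each ordinal $\alpha$, hence fixes $f(\alpha)$, hence fixes every point of $\sigma_\alpha[b_\alpha]$. Therefore $\bigcup_\alpha\sigma_\alpha[b_\alpha]\subseteq{\rm dcl}_M(c)$, and this set lies in $\mathcal{P}_\lambda M$ by definable closedness. This gives dynamical $\kappa$-completeness.

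(1)$\Rightarrow$(2). Let $(X_\xi)_{\xi<\kappa}\in W$ be a family of nonempty sets, with support $a$. Enlarging $a$, we may assume $a={\rm dcl}_M(a)$. For each $\xi$, pick $x_\xi\in X_\xi$ with support $b_\xi\supseteq a$, again with $b_\xi\in\mathcal{P}_\lambda M$. Dynamical $\kappa$-completeness gives $\sigma_\xi\in{\rm Aut}(M/a)$ with $c:=\bigcup_\xi\sigma_\xi[b_\xi]\in\mathcal{P}_\lambda M$. Since $\sigma_\xi$ fixes $a$, it fixes $X_\xi$, so $y_\xi:=\sigma_\xi(x_\xi)\in X_\xi$, and $y_\xi$ is supported by $\sigma_\xi[b_\xi]\subseteq c$. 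The function $f=\{(\xi,y_\xi)\}$ is then supported by $c$, so $f\in W$, and $f$ is a choice function.

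The main obstacle is the (2)$\Rightarrow$(1) direction. There, the ${\rm AC}_\kappa$ family must be chosen so that a choice function in $W$ genuinely encodes automorphisms $\sigma_\alpha$ whose images $\sigma_\alpha[b_\alpha]$ are pointwise controlled by a single small support. Choosing enumerations $e_\alpha$ as the elements, rather than the sets $\sigma[b_\alpha]$, is the step I would try first: it forces pointwise fixing, so that the definable closure bound applies. The remaining care is the routine check that the orbits and the sequence $(X_\alpha)_{\alpha<\kappa}$ are hereditarily symmetric.
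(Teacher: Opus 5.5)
Your argument is correct. The paper does not prove this statement itself: it imports it from Zapletal \cite[Theorem 5.3]{Zap26}. So there is no in-paper proof to match, and what you give is a self-contained verification carried out directly in $W_\lambda[[M]]$.

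\emph{Relation to the paper's computation.} Your proof is the set-theoretic counterpart of the paper's Lemma \ref{lem:product-calculation-cont} and Proposition \ref{prop:AC-critetion-contG-Zap}, whose computation works verbatim for arbitrary $b_\alpha\in\mathcal{P}_\lambda M$.
\begin{itemize}
\item The assignment $\sigma\circ e_\alpha\mapsto\sigma\,{\rm Aut}(M/b_\alpha)$ is an isomorphism of ${\rm Aut}(M/a)$-sets $X_\alpha\cong{\rm Aut}(M/a)/{\rm Aut}(M/b_\alpha)$. Indeed, two automorphisms give the same enumeration iff they agree on $b_\alpha$.
\item A choice function with support $c$ is then exactly a tuple of cosets whose joint stabilizer contains ${\rm Aut}(M/c)$. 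That is an element of the product in ${\rm Cont}({\rm Aut}(M/a))$, for the topology generated by the subgroups ${\rm Aut}(M/c)$ with $c\in\mathcal{P}_\lambda M$.
\item Your final step, bounding $\bigcup_\alpha\sigma_\alpha[b_\alpha]$ by ${\rm dcl}_M(c)$, is literally the paper's argument for (1)$\Rightarrow$(2) of Proposition \ref{prop:AC-critetion-contG-Zap}.
\end{itemize}

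\emph{Why enumerations are the right choice.} Using enumerations rather than the image sets $\sigma[b_\alpha]$ is what makes the argument work. The orbit of the set $b_\alpha$ corresponds to the quotient by the \emph{setwise} stabilizer. A support $c$ of a choice function from those orbits would only give that each $\sigma_\alpha[b_\alpha]$ is fixed setwise by ${\rm Aut}(M/c)$. That is an algebraic-closure-type condition, and definable closedness does not control it.

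\emph{A minor redundancy.} In your (1)$\Rightarrow$(2), the normalization $a={\rm dcl}_M(a)$ is never used. Dropping it shows that this direction needs no definable closedness, in line with the paper's remark after Zapletal's theorems that the hypothesis is needed only for (2)$\Rightarrow$(1).
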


\begin{theorem}[Zapletal {\cite[Theorem 4.3]{Zap26}}]\label{thm:Zapletal2}
For a structure $M\in\K$, if $(M,\lambda)$ is definably closed, then the following are equivalent.
\begin{enumerate}
\item $(M,\lambda)$ is {\rm DC}-complete.
\item $W_\lambda[[M]]$ satisfies ${\rm DC}$.
\end{enumerate}
\end{theorem}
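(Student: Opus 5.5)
The plan is the standard symmetric-support argument for Fraenkel--Mostowski models. I would work inside the construction of $W_\lambda[[M]]$ from \cite{Zap26}, using only two facts about it. First, every $x\in W_\lambda[[M]]$ has a support $a\in\mathcal{P}_\lambda M$, meaning that ${\rm Aut}(M/a)$ fixes $x$. Second, a hereditarily symmetric set built from elements of $W_\lambda[[M]]$ belongs to $W_\lambda[[M]]$ as soon as it has such a support.

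(1)$\Rightarrow$(2). Let $R$ be a total relation on a nonempty $X\in W_\lambda[[M]]$, and let $a_0\in\mathcal{P}_\lambda M$ support both $R$ and $X$. Using DC in the ground model, choose $x_0\in X$ and $x_{n+1}$ with $x_nRx_{n+1}$. At the same time choose supports $a_n\in\mathcal{P}_\lambda M$ of $x_n$ with $a_0\subseteq a_1\subseteq\cdots$, taking $a_{n+1}\supseteq a_n\cup{\rm supp}(x_{n+1})$. This sequence lies in $V$ but need not be symmetric. By DC-completeness, pick $\sigma_n$ with $\sigma_0\in{\rm Aut}(M/a_0)$, $\sigma_{n+1}|_{a_n}=\sigma_n|_{a_n}$, and $c:=\bigcup_n\sigma_n[a_n]\in\mathcal{P}_\lambda M$, and set $y_n:=\sigma_n(x_n)$.

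Three checks then finish this direction.
\begin{enumerate}
\item By induction every $\sigma_n$ fixes $a_0$ pointwise, so each $\sigma_n$ fixes $R$ and $X$.
\item Since $a_n$ supports $x_n$ and $\sigma_{n+1}^{-1}\sigma_n\in{\rm Aut}(M/a_n)$, we get $\sigma_{n+1}(x_n)=\sigma_n(x_n)=y_n$. Applying $\sigma_{n+1}$ to $x_nRx_{n+1}$ then gives $y_nRy_{n+1}$.
\item Each $y_n$ is supported by $\sigma_n[a_n]\subseteq c$, so ${\rm Aut}(M/c)$ fixes every $y_n$, hence fixes the whole sequence $(y_n)_n$. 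The sequence is therefore in $W_\lambda[[M]]$.
\end{enumerate}
Definable closedness is not needed in this direction.

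(2)$\Rightarrow$(1). Given $a_0,a_1,\dots\in\mathcal{P}_\lambda M$, I would first replace them by an increasing sequence; increasing the $a_n$ only strengthens the conclusion. Put $a:=\bigcup_na_n$, which lies in $\mathcal{P}_\lambda M$ because $\lambda$ is infinite. Let $X$ be the set of finite sequences $(\tau|_{a_0},\dots,\tau|_{a_n})$ with $\tau\in{\rm Aut}(M/a_0)$ and $n\in\N$, and let $pRq$ mean that $q$ properly extends $p$ by one further term. Each $\tau|_{a_i}$ is a small set of pairs supported by $a_i\cup\tau[a_i]$, and both $X$ and $R$ are invariant under ${\rm Aut}(M/a)$, since conjugation preserves the shape of $X$ once $a$ is fixed. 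So $X,R\in W_\lambda[[M]]$, $X\neq\emptyset$, and $R$ is total because any $\tau$ extends one step further. DC in $W_\lambda[[M]]$ yields a sequence whose terms cohere into maps $p_n=\tau_n|_{a_n}$ with $p_{n+1}|_{a_n}=p_n$. This sequence has a support $d\in\mathcal{P}_\lambda M$, so every $\rho\in{\rm Aut}(M/d\cup a)$ fixes each $p_n$, and hence fixes each point of $\tau_n[a_n]$. Thus $\bigcup_n\tau_n[a_n]\subseteq{\rm dcl}_M(d\cup a)$, which is in $\mathcal{P}_\lambda M$ by definable closedness. Setting $\sigma_n:=\tau_n$ gives $\sigma_0\in{\rm Aut}(M/a_0)$, $\sigma_{n+1}|_{a_n}=\sigma_n|_{a_n}$, and small union, so $(M,\lambda)$ is DC-complete.

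The main obstacle is the bookkeeping in (2)$\Rightarrow$(1). One must check that $X$ and $R$ really are hereditarily symmetric under the induced action on pairs. One must also check that fixing the partial maps $p_n$ as sets of pairs forces pointwise fixing of their ranges; this holds because $\rho$ fixes $a_n$, so $\rho(x,\tau x)=(x,\rho\tau x)$. This second check is exactly where definable closedness enters.
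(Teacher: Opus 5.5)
Your proof is correct in both directions. The paper gives no proof of this statement; it quotes it from Zapletal. The closest material in the paper is the topos-side computation in Lemma~\ref{lem:inverse-limit-calculation-cont} and Proposition~\ref{prop:DC-critetion-contG-Zap}, and your argument is essentially the internal, permutation-model version of that computation:
\begin{itemize}
\item Since $\tau|_{a_n}=\tau'|_{a_n}$ exactly when $\tau\,{\rm Aut}(M/a_n)=\tau'\,{\rm Aut}(M/a_n)$, your tree $X$ encodes the coset spaces ${\rm Aut}(M/a_0)/{\rm Aut}(M/a_n)$ by small, hereditarily symmetric objects. This encoding is needed because the cosets themselves, being sets of automorphisms, are not in general in $W_\lambda[[M]]$.
\item A branch through $R$ is then a compatible family $(\tau_n{\rm Aut}(M/a_n))_n$.
\item The existence of a support $d$ for that branch plays the role of the openness of $\bigcap_n{\rm Stab}_H(x_n)$ in Lemma~\ref{lem:inverse-limit-calculation-cont}.
\item Your passage to ${\rm dcl}_M(d\cup a)$ is exactly the step (1)$\Rightarrow$(2) of Proposition~\ref{prop:DC-critetion-contG-Zap}.
\end{itemize}
Your observation that definable closedness is used only for (2)$\Rightarrow$(1) agrees with the paper's remark.

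Your route gives a self-contained proof that avoids the citation. It needs neither the topological Galois representation nor the extra hypotheses (coherence, strong AP, a monster structure) under which the paper relates the two sides. The paper's route instead explains conceptually why DC-completeness and DC in $W_\lambda[[M]]$ both amount to nonemptiness of the same inverse limit.

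One small bookkeeping point in (1)$\Rightarrow$(2): checks 2 and 3 at $n=0$ use that $a_0$ supports $x_0$. So choose $a_0$ after $x_0$, or enlarge it, so that it supports $X$, $R$ and $x_0$ simultaneously.
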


\begin{theorem}[Zapletal {\cite[Theorem 3.3]{Zap26}}]\label{thm:Zapletal3}
For a structure $M\in\K$, if $(M,\lambda)$ is definably closed, then the following are equivalent.
\begin{enumerate}
\item $(M,\lambda)$ has a cofinal orbit.
\item $W_\lambda[[M]]$ satisfies ${\rm AC}_{\rm WO}$.
\end{enumerate}
\end{theorem}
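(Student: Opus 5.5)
The plan is to prove both directions directly inside the permutation model $W_\lambda[[M]]$, using only the basic facts about supports. An object $x$ is in $W_\lambda[[M]]$ exactly when it is hereditarily symmetric, that is, when it has a \emph{support} $s\in\mathcal{P}_\lambda M$ with ${\rm Aut}(M/s)$ fixing $x$. Also, any set that is pointwise fixed by some ${\rm Aut}(M/s)$ is well-orderable in $W_\lambda[[M]]$, since any ground-model well-order of it is itself supported by $s$. I expect definable closedness to be needed only in (2)$\Rightarrow$(1).

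For (1)$\Rightarrow$(2), let $(X_\xi)_{\xi<\kappa}\in W_\lambda[[M]]$ be an ordinal-indexed family of nonempty sets with support $a$. Ordinals are fixed by every automorphism, so each $X_\xi$ is fixed setwise by ${\rm Aut}(M/a)$. By the cofinal-orbit hypothesis, choose $b\in\mathcal{P}_\lambda M$ such that ${\rm Orb}_M(b;a)$ is cofinal.
\begin{enumerate}
\item For each $\xi$, pick some $x\in X_\xi$ and a support $c$ of $x$.
\item Cofinality gives $\sigma\in{\rm Aut}(M/a)$ with $c\subseteq\sigma[b]$.
\item Then $\sigma^{-1}x\in\sigma^{-1}X_\xi=X_\xi$, and $\sigma^{-1}x$ has support $\sigma^{-1}[c]\subseteq b$.
\end{enumerate}
So each $Y_\xi:=\{y\in X_\xi: b\text{ supports }y\}$ is nonempty. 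Using choice in the ground model, pick $y_\xi\in Y_\xi$ and set $f(\xi)=y_\xi$. Every $\tau\in{\rm Aut}(M/a\cup b)$ fixes each $\xi$ and each $y_\xi$, so $f$ is supported by $a\cup b\in\mathcal{P}_\lambda M$. Its values lie in $W_\lambda[[M]]$, so $f\in W_\lambda[[M]]$ is a choice function.

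For (2)$\Rightarrow$(1), fix $a\in\mathcal{P}_\lambda M$. Work with $\lambda$-sequences $t\colon\lambda\to M$; each such $t$ is in $W_\lambda[[M]]$ with support ${\rm ran}(t)$.
\begin{enumerate}
\item For each $t$, the orbit $O_t=\{\sigma t:\sigma\in{\rm Aut}(M/a)\}$ is nonempty and supported by $a$.
\item The set $I=\{O_t\}_t$ of all such orbits is pointwise fixed by ${\rm Aut}(M/a)$, hence well-orderable in $W_\lambda[[M]]$.
\item Applying ${\rm AC}_{\rm WO}$ to the well-ordered family $(O)_{O\in I}$ gives a choice function $g\in W_\lambda[[M]]$, with some support $d$.
\item For $\tau\in{\rm Aut}(M/a\cup d)$, $\tau$ fixes $g$ and each $O\in I$, so $\tau$ fixes each tuple $g(O)$, hence every entry of $g(O)$. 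Thus all entries of all $g(O)$ lie in $b:={\rm dcl}_M(a\cup d)$.
\item By definable closedness, $b\in\mathcal{P}_\lambda M$.
\end{enumerate}
Now let $c\in\mathcal{P}_\lambda M$ be arbitrary and enumerate it (with repetitions) as a $\lambda$-sequence $t$. Then $g(O_t)=\sigma t$ for some $\sigma\in{\rm Aut}(M/a)$, so $\sigma[c]\subseteq b$, that is, $c\subseteq\sigma^{-1}[b]$ with $\sigma^{-1}\in{\rm Aut}(M/a)$. Hence ${\rm Orb}_M(b;a)$ is cofinal.

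I expect the main obstacle to be the choice of coding in (2)$\Rightarrow$(1). If orbits of \emph{sets} $c$ were used instead of sequences, the chosen $g(O)$ would only be fixed setwise by ${\rm Aut}(M/d)$, and this does not place its elements inside the small set ${\rm dcl}(d)$. Using orbits of enumerations forces pointwise fixing, which is exactly where definable closedness enters. The remaining routine checks are that the orbits and $I$ genuinely lie in $W_\lambda[[M]]$, i.e.\ are hereditarily symmetric, which follows because all their elements are tuples of atoms with supports in $\mathcal{P}_\lambda M$.
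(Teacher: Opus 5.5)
Your proof is correct. Note that the paper gives no proof of this statement: it is imported from Zapletal \cite[Theorem 3.3]{Zap26}. The only thing the paper adds is the remark that definable closedness is needed only for (2)$\Rightarrow$(1), and your argument confirms this explicitly. So your proposal is a self-contained replacement for a citation, not an alternative to an in-paper argument.

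It closely parallels the paper's topos-side Theorem~\ref{thm:choice-vs-universal-extension}.
\begin{itemize}
\item Your (1)$\Rightarrow$(2) moves every witness, via ${\rm Aut}(M/a)$, into a single uniform support $b$. This is just as every extension $N_i$ of $M$ maps into the universal extension $U_M$.
\item Your (2)$\Rightarrow$(1) applies choice to the family of \emph{all} ${\rm Aut}(M/a)$-orbits of $\lambda$-enumerations. This is just as the paper applies choice to the family of all extensions of $M$ in $\K_\lambda^-$ to manufacture a universal extension.
\end{itemize}

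The paper's surrounding machinery (Theorem~\ref{thm:cofinal-orbit}) translates cofinal orbits into the universal extension property, and hence into Galois stability via Theorem~\ref{thm:main-stability-thm}. That translation needs a monster structure and coherence. Your argument needs neither: it uses only supports, and works for an arbitrary $M$, as the statement requires.

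Your choice to code by enumerations rather than subsets is the right one. It is exactly what makes definable closedness applicable.

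Two small points to tidy:
\begin{itemize}
\item The case $c=\emptyset$ cannot be enumerated as a $\lambda$-sequence, but it is trivial: take $\sigma={\rm id}$.
\item When you apply ${\rm AC}_{\rm WO}$ to $I$, first fix a bijection $h$ from an ordinal onto $I$ that lies in $W_\lambda[[M]]$. Such an $h$ exists with support $a$ by your pointwise-fixing observation. Then apply ${\rm AC}_\kappa$ to the ordinal-indexed family $h$. The support $d$ of the resulting $g$ should be taken to cover both $h$ and the choice function; $a\cup d$ already does this.
\end{itemize}
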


\begin{remark}
In each of Theorems \ref{thm:Zapletal1}, \ref{thm:Zapletal2}, and \ref{thm:Zapletal3}, the definably closed hypothesis is needed only for (2)$\Rightarrow$(1).
\end{remark}

In particular, when combined with Theorem \ref{thm:main-stability-thm},
model-theoretic stability provides a criterion for choice principles not only in atomic sheaf toposes but also in permutation models.

\begin{cor}\label{cor:AC-criterion-main}
Let $\lambda,\kappa$ be infinite cardinals, and suppose that a structure-class $\K_\lambda$ satisfies the coherence axiom, the strong AP, and has a monster structure $M_\K\in\K$.
Then the following conditions are equivalent.
%
%
\begin{enumerate}
\item ${\rm Sh}(\K_\lambda^{\rm op},J_{\rm at})$ satisfies the axiom of choice for (external) $\kappa$-families.
\item Mejak's AC-criterion: Every $\kappa$-fan $\{A\embed B_\alpha\}_{\alpha<\kappa}$ admits a cocone in $\K_\lambda$. 
\item For every $\kappa$-fan $\{A\embed B_\alpha\}_{\alpha<\kappa}$ in $\K_\lambda$, in the topos of continuous ${\rm Aut}(M_\K/A)$-actions, the product 
$\displaystyle{\prod_{\alpha<\kappa}{\rm Aut}(M_\K/A)/{\rm Aut}(M_\K/B_\alpha)}$ is nonempty.
\item Zapletal's AC-criterion: $(M_\K,\lambda)$ is dynamically $\kappa$-complete.
\item The permutation model $W_\lambda[[M_\K]]$ satisfies the axiom of choice for (internal) $\kappa$-families.
\end{enumerate}

If $\kappa$ and $\lambda$ and $\K_\lambda$ satisfies the $\lambda$-TV, then conditions (6)--(11) are also equivalent to conditions (1)--(5).
\begin{enumerate}\setcounter{enumi}{5}
\item $\K$ is Galois $\lambda$-stable.
\item ${\rm Sh}(\K_\lambda^{\rm op},J_{\rm at})$ satisfies the axiom of choice for (external) $\lambda^+$-families.
\item ${\rm Sh}(\K_\lambda^{\rm op},J_{\rm at})$ satisfies the axiom of choice for (external) set-indexed families.
\item $\K_\lambda$ has the universal extension property.
\item $(M_\K,\lambda)$ has a cofinal orbit.
\item The permutation model $W_\lambda[[M_\K]]$ satisfies the axiom of choice for (internal) well-orders.
\end{enumerate}
\end{cor}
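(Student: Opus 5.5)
The proof will be assembled from the equivalences already established, after first checking that the hypotheses of each are met. Under the coherence axiom and the strong AP, Proposition \ref{prop:stAP-subcanonical} makes $J_{\rm at}$ on $\K_\lambda^{\rm op}$ subcanonical, so $\K_\lambda$ is a subcanonical AP class. Proposition \ref{prop:strongAP-definably-closed} gives ${\rm dcl}_M(A)=A$ for every substructure $A\in\K_\lambda$ of $M_\K$. The corollary following it then shows that $(M_\K,\lambda)$ is definably closed. These are exactly the hypotheses needed by Theorem \ref{lem:cone-characterization-countable-choice}, Fact \ref{fact:caramello}, Propositions \ref{prop:AC-critetion-contG-Zap} and the Zapletal theorems.

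For (1)--(5), the links are as follows.
\begin{itemize}
\item (1)$\Leftrightarrow$(2) is Theorem \ref{lem:cone-characterization-countable-choice} with $I=\kappa$, read in $\K_\lambda^{\rm op}$: cones there are cocones of fans in $\K_\lambda$.
\item (2)$\Leftrightarrow$(4) is Theorem \ref{thm:dynamical-completeness}.
\item (4)$\Leftrightarrow$(5) is Theorem \ref{thm:Zapletal1}, which uses definable closedness.
\item (3)$\Leftrightarrow$(4) goes through Proposition \ref{prop:AC-critetion-contG-Zap}.
\end{itemize}
The step needing care is (3)$\Leftrightarrow$(4), because of a mismatch. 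Condition (3) ranges over fans of substructures $A\le B_\alpha\le M_\K$, whereas dynamical completeness ranges over arbitrary $a\subseteq b_\alpha$ in $\mathcal P_\lambda M$.
\begin{itemize}
\item For (4)$\Rightarrow$(3): apply (4) with $a=A$, $b_\alpha=B_\alpha$, then use Proposition \ref{prop:AC-critetion-contG-Zap} (2)$\Rightarrow$(1).
\item For (3)$\Rightarrow$(4): given $a\subseteq b_\alpha$, use the $\lambda$-LS property and coherence, as in the proof of Theorem \ref{thm:dynamical-completeness}, to enlarge them to substructures $A\le B_\alpha$. Since ${\rm Aut}(M/A)\subseteq{\rm Aut}(M/a)$ and $b_\alpha\subseteq B_\alpha$, the conclusion for $B_\alpha$ gives it for $b_\alpha$.
\item A fan $A\embed B_\alpha$ in $\K_\lambda$ that is not given inside $M_\K$ is first moved into $M_\K$ by Lemma \ref{lem:substructure-lem} and transportability, as in Lemma \ref{lem:Mejak-AC-criterion}; the product in (3) depends only on the isomorphism type over $A$.
\end{itemize}

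For (6)--(11), assuming the $\lambda$-TV (and reading the condition on $\kappa$ as $\kappa=\lambda^+$ for the link to (1)--(5)), the links are as follows.
\begin{itemize}
\item (6)$\Leftrightarrow$(7)$\Leftrightarrow$(8) is Theorem \ref{thm:main-stability-thm}.
\item (8)$\Leftrightarrow$(9) is Theorem \ref{thm:choice-vs-universal-extension}.
\item (9)$\Leftrightarrow$(10) is Theorem \ref{thm:cofinal-orbit}.
\item (10)$\Leftrightarrow$(11) is Theorem \ref{thm:Zapletal3}, again using definable closedness.
\item Finally, (7) is (1) with $\kappa=\lambda^+$, which ties the two blocks together. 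Since (8) gives choice for every $\kappa$, the first block also holds for all $\kappa$ in that case.
\end{itemize}

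The main obstacle is making sure the hypotheses line up. Subcanonicity and definable closedness must both be derived from the strong AP, the monster structure must supply JEP for Fact \ref{fact:caramello}, and the parameter sets in Zapletal's criteria must be reconciled with substructures. If the ambiguous clause about $\kappa$ were meant more generally, I would only claim the linkage at $\kappa=\lambda^+$ and record the remaining equivalences separately.
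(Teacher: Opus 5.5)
Your assembly is correct and uses only results proved in the paper. It differs from the paper's stated route only in how condition (3) is attached to the chain.

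\begin{itemize}
\item \emph{The paper's route.} The Introduction and the subsection on the topological Galois representation obtain (2)$\Leftrightarrow$(3) from Fact~\ref{fact:caramello} together with Lemma~\ref{lemma:cocone-in-slice-topos}. A cocone over the fan is the same as a nonzero limit of the $\yo B_\alpha\to\yo A$ in the slice over $\yo A$. The Galois representation identifies that limit with $\prod_\alpha{\rm Aut}(M_\K/A)/{\rm Aut}(M_\K/B_\alpha)$ in ${\rm Cont}({\rm Aut}(M_\K/A))$. Then (3)$\Leftrightarrow$(4) is the explicit product computation.
\item \emph{Your route.} You take (2)$\Leftrightarrow$(4) directly from Theorem~\ref{thm:dynamical-completeness}, and attach (3) to (4) through Proposition~\ref{prop:AC-critetion-contG-Zap} alone.
\end{itemize}

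What each route buys:
\begin{itemize}
\item Your chain never actually uses Fact~\ref{fact:caramello}, which the paper only sketches and cites from Caramello. So your remark about the monster supplying the JEP for it is superfluous.
\item What you give up is the conceptual point the paper is after. Mejak's cocone criterion and Zapletal's completeness become one statement, read on the two sides of ${\rm Sh}(\K_\lambda^{\rm op},J_{\rm at})\simeq{\rm Cont}({\rm Aut}(M_\K))$.
\end{itemize}

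The second block, read with $\kappa=\lambda^+$, is assembled exactly as the paper intends.

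One hypothesis-alignment point that neither you nor the paper makes explicit. The proofs of Theorems~\ref{thm:dynamical-completeness} and~\ref{thm:cofinal-orbit} apply coherence with the monster $M_\K$ as the ambient structure, and so does your enlargement of $a\subseteq b_\alpha$ to $A\le B_\alpha$. In general $M_\K$ is not in $\K_\lambda$. So the coherence hypothesis has to be read for $\K$, or at least relative to $M_\K$, not merely inside $\K_\lambda$.
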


\begin{cor}\label{cor:DC-criterion-main}
Let $\lambda$ be an infinite cardinal, and suppose that a structure-class $\K_\lambda$ satisfies the coherence axiom, the strong AP, and has a monster structure $M_\K\in\K$.
Then the following conditions are equivalent.
\begin{enumerate}
\item ${\rm Sh}(\K_\lambda^{\rm op},J_{\rm at})$ satisfies the axiom of dependent choice.
\item Mejak's DC-criterion: Every $\om$-chain $\{A_n\embed A_{n+1}\}_{n\in\om}$ admits a cocone in $\K_\lambda$. 
\item For every $\om$-chain $\{A_n\embed A_{n+1}\}_{n\in\om}$ in $\K_\lambda$, in the topos of continuous ${\rm Aut}(M_\K/A_0)$-actions, the inverse limit, 
$\displaystyle{\lim_{n\in\om^{\rm op}}{\rm Aut}(M_\K/A_0)/{\rm Aut}(M_\K/A_n)}$, is nonempty.
\item Zapletal's DC-criterion: $(M_\K,\lambda)$ is DC-complete.
\item The permutation model $W_\lambda[[M_\K]]$ satisfies the axiom of dependent choice.
\end{enumerate}
\end{cor}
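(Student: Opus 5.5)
The plan is to chain together equivalences already established, checking at each link that the hypotheses of Corollary \ref{cor:DC-criterion-main} suffice: coherence, strong AP, and a $\lambda$-monster $M_\K$. First, Proposition \ref{prop:stAP-subcanonical} applies because $\K_\lambda$ has the strong AP and satisfies the coherence axiom, so $J_{\rm at}$ on $\K_\lambda^{\rm op}$ is subcanonical. Hence $\K_\lambda$ is a subcanonical AP class, and Observation \ref{obs:right-Ore-dense-atomic2} gives the right Ore condition on $\K_\lambda^{\rm op}$.

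(1)$\Leftrightarrow$(2) is then Theorem \ref{lem:cone-characterization-dependent-choice} applied to $\C=\K_\lambda^{\rm op}$. A sequence $p_{n+1}\to p_n$ in $\C$ is exactly an $\om$-chain $A_n\embed A_{n+1}$ in $\K_\lambda$, and a cone in $\C$ is a cocone in $\K_\lambda$.

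(2)$\Leftrightarrow$(4) is Theorem \ref{thm:DC-completeness}, which needs only coherence and the monster structure.

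For (4)$\Leftrightarrow$(5), I would invoke Zapletal's Theorem \ref{thm:Zapletal2}. Its hypothesis is that $(M_\K,\lambda)$ is definably closed. This follows from the Corollary preceding the subsection on the topological Galois representation, case (2), which is the strong AP case built on Proposition \ref{prop:strongAP-definably-closed}.

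For (3), I would go through (2)$\Leftrightarrow$(3) via Fact \ref{fact:caramello} and Lemma \ref{lemma:cocone-in-slice-topos}. By Lemma \ref{lem:substructure-lem} and Lemma \ref{lem:Mejak-DC-criterion}, an arbitrary chain can be replaced up to isomorphism by a chain of substructures $A_0\leq A_1\leq\cdots\leq M_\K$. For such a chain, a cocone in $\K_\lambda$ is the same as a cone in $\C/A_0$ of the diagram $n\mapsto(A_0\embed A_n)$. Lemma \ref{lemma:cocone-in-slice-topos} says this cone exists iff $\lim_n Y_{A_0}(A_0\embed A_n)\not\simeq\mathbf 0$. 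Under Fact \ref{fact:caramello}(3),(4), which needs the monster and subcanonicity, that limit is computed in ${\rm Cont}({\rm Aut}(M_\K/A_0))$ as the stated inverse limit of coset spaces. Nonzero there means nonempty, because the equivalence preserves limits and the initial object.

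A parallel route is available: (3)$\Leftrightarrow$(4) follows from Proposition \ref{prop:DC-critetion-contG-Zap}, again using definable closedness, together with the remark that ${\rm Aut}(M/a)\supseteq{\rm Aut}(M/A)$ when $a\subseteq A$, which lets one pass between sets $a_n$ and substructures $A_n$ via the LS property. I would use this only as a consistency check.

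The main obstacle is bookkeeping in the step (2)$\Leftrightarrow$(3). One must make sure the reduction to substructures of $M_\K$ is legitimate: a chain admits a cocone iff an isomorphic chain does. One must also check that the slice equivalence really sends the diagram $Y_{A_0}(d)$ to the system of maps $p_n$ of Lemma \ref{lem:inverse-limit-calculation-cont}, so that limits correspond. Once that naturality is granted from Fact \ref{fact:caramello}, the rest of the proof is assembling the cited results.
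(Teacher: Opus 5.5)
Your proposal is correct and follows essentially the same route the paper indicates. You get subcanonicity from Proposition \ref{prop:stAP-subcanonical} to apply Mejak's criterion (Theorem \ref{lem:cone-characterization-dependent-choice}) for (1)$\Leftrightarrow$(2), use Fact \ref{fact:caramello} with Lemma \ref{lemma:cocone-in-slice-topos} for (2)$\Leftrightarrow$(3), Theorem \ref{thm:DC-completeness} or Proposition \ref{prop:DC-critetion-contG-Zap} for the link to (4), and Theorem \ref{thm:Zapletal2} with definable closedness from the strong AP for (4)$\Leftrightarrow$(5). The only difference is emphasis: the paper chains (2)$\Leftrightarrow$(3)$\Leftrightarrow$(4) and treats Theorem \ref{thm:DC-completeness} as the direct alternative, whereas you make the direct link primary and use the inverse-limit computation as a check.
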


Of course, since Zapletal's framework \cite{Zap26} is broad, extending beyond automorphism group actions, finding an extension that encompasses his framework is an interesting problem.

\begin{ack}
The author is very grateful to Alex Simpson for valuable discussions.
The author's research was partially supported by JSPS KAKENHI Grant Number 23K28036 and 26K06892, and the JSPS-MESI Bilateral Joint Research Project 120265001.
\end{ack}

\medskip
\noindent
{\bf AI Disclosure.}
This research originated when the author decided to introduce Mejak's atomic topos models (which separate internal CC, DC, and AC \cite{Mej19}) in a textbook currently under preparation.
Since the author wanted to use the simplest possible examples for the textbook, he asked ChatGPT to suggest simple atomic toposes separating CC/DC/AC.
This led to the accidental start of the research assisted by ChatGPT.

ChatGPT, GPT-5.6 Sol, was used extensively during exploratory proof development.
Additionally, Claude Opus 5, alongside ChatGPT, was employed during multiple rounds of local peer review prior to the upload of this article.
The author guided the direction of the research, verified all mathematical arguments, refined and streamlined the proofs, and wrote the final text for all of them.
The author assumes full responsibility for the content.




\bibliographystyle{plain}
\bibliography{atomic-references}
\end{document}